\documentclass[11pt,oneside,reqno]{amsart}
\usepackage{textcomp}
\usepackage[latin9]{inputenc}
\usepackage{mathrsfs}
\usepackage{bm}
\usepackage{amstext}
\usepackage{amsthm}
\usepackage{amssymb}
\usepackage{geometry}
\usepackage{setspace}
\usepackage[bookmarks=false,
 breaklinks=false,pdfborder={0 0 1},backref=section,colorlinks=false]
 {hyperref}

\makeatletter
\numberwithin{equation}{section}
\numberwithin{figure}{section}

\usepackage{amsthm}
\usepackage{mathrsfs}
\usepackage[noadjust]{cite}
\usepackage{enumitem}
	\setlist[itemize]{leftmargin=*}
	\setlist[enumerate]{leftmargin=*}
\usepackage{tikz}
\usetikzlibrary[patterns]
\usepackage{bbm}
\usepackage{stmaryrd}

\makeatother

\theoremstyle{plain}
\newtheorem{thm}{\protect\theoremname}[section]
\theoremstyle{remark}
\newtheorem{rem}[thm]{\protect\remarkname}
\theoremstyle{plain}
\newtheorem{assumption}[thm]{\protect\assumptionname}
\theoremstyle{remark}
\newtheorem{notation}[thm]{\protect\notationname}
\theoremstyle{plain}
\newtheorem{lem}[thm]{\protect\lemmaname}
\newtheorem{prop}[thm]{\protect\propositionname}
\newtheorem{cor}[thm]{\protect\corollaryname}
\theoremstyle{remark}
\newtheorem*{acknowledgement*}{\protect\acknowledgementname}
\providecommand{\acknowledgementname}{Acknowledgement}
\providecommand{\assumptionname}{Assumption}
\providecommand{\corollaryname}{Corollary}
\providecommand{\lemmaname}{Lemma}
\providecommand{\notationname}{Notation}
\providecommand{\propositionname}{Proposition}
\providecommand{\remarkname}{Remark}
\providecommand{\theoremname}{Theorem}

\begin{document}
\title[Metastability of Systems with Countably Many Metastable States]{Metastability of Interacting Stochastic Systems with Countably Many
Metastable States: Beyond Positive Recurrence}
\author{Seonwoo Kim and Jungkyoung Lee}
\address{S. Kim. Department of Mathematics, Yonsei University, South Korea.}
\email{seonwookim@yonsei.ac.kr}
\address{J. Lee. Department of Mathematics Education, Inha University.}
\email{jungkyoung@inha.ac.kr}
\begin{abstract}
Metastability is typically formulated for systems with finitely many
metastable states, most often in positively recurrent settings. In
this article, we extend the resolvent framework for metastability
to Markov processes with countably many metastable states, thereby
encompassing null-recurrent and transient dynamics. For a family of
processes on locally compact Polish spaces, we prove, under a mild
boundary regularity assumption, that the asymptotic flatness of microscopic
resolvent solutions, supplemented by two compactness conditions, is
equivalent to convergence in law of the projected trace processes
to a limiting Markov chain and to the negligibility of the time spent
outside the metastable sets. 

We apply this framework to two non-compact stochastic systems. First,
we study a condensing inclusion process on a countably infinite, uniformly
locally finite graph, in a setting where the process may be null recurrent
or transient. We prove that the condensate location converges to a
weighted random walk on the underlying graph, while the time spent
away from the fully condensed configurations is negligible. Second,
we study a small-noise one-dimensional Langevin diffusion with countably
many stable equilibria, without assuming ergodicity. In the Eyring--Kramers
time scale associated with the minimal energy barrier, we establish
local equilibration inside each well, convergence of the well-index
process to an explicit nearest-neighbor Markov chain on $\mathbb{Z}$,
and negligibility of inter-well excursions. Together, these results
broaden the scope of resolvent-based metastability theory beyond finite
metastable state spaces and positive recurrence.
\end{abstract}

\maketitle
\tableofcontents{}

\section{\label{sec1}Introduction and Main Results}

Metastability is a widespread phenomenon that occurs in various physical
systems with multiple stable states, and mathematical study of metastability
concerns rare transitions among such states. Most existing works describe
the metastable dynamics in terms of finitely many metastable states;
see, for instance, studies on Langevin dynamics \cite{BR16,BEGK,DGLLPN,LLS24,LS22a,LPM,Mic95},
ferromagnetic systems \cite{BAC96,BM02,KL26,KS25,LS16,NZ19,NS91},
and interacting particle systems \cite{AGL,BL12,GKR07,GRV13,KS21,LMS23,LMS25}.
Nevertheless, systems with infinitely many metastable states arise
naturally in several important models, particularly in non-compact
or infinite-volume settings. For example, in trap models for disordered
systems, the limiting dynamics can be described by a Markov process
on a countable collection of deep traps \cite{FM08,JLT}. Small-noise
diffusions in periodic environments provide another natural example,
where the limiting dynamics describe transitions among countably many
wells \cite{LM26,LS19}. Related phenomena also emerge in the thermodynamic
limits of interacting particle systems. For instance, in condensing
zero-range processes, the number of possible metastable locations
for the condensate diverges with the system size, and the rescaled
location converges to a macroscopic stochastic process on a continuum
state space \cite{AGL}.

The study of metastability with countably many metastable states is
therefore essential for understanding metastable dynamics in non-compact
metastable systems. In this article, we extend the resolvent approach
developed in \cite{LMS25} to metastable systems with countably many
stable states.

\subsection{Review on metastability}

\subsubsection{Three defining conditions for metastability}

To fix ideas, suppose that we have a family of Markov processes $\{X_{N}(t)\}_{t\ge0}$
on state spaces $\Omega_{N}$, where the parameter $N\ge1$ usually
indicates the number of particles or the inverse unit distance between
the particles, to be sent to infinity in the limit. Alternatively,
the processes may be indexed by temperature $\epsilon>0$ or its inverse
$\beta>0$, in which case a metastable scenario occurs when $\epsilon\to0$
or $\beta\to\infty$, i.e., in low temperatures.

Suppose that the dynamical system $X_{N}(t)$ has countable (or finite)
number of (potentially) metastable sets $\mathcal{E}^{j}_{N}$, $j\in S$,
where $S$ denotes the index set of such states. Define
\begin{equation}
\mathcal{E}_{N}:=\bigcup_{j\in S}\mathcal{E}^{j}_{N}\qquad\text{and}\qquad\Delta_{N}:=\Omega_{N}\setminus\mathcal{E}_{N}.\label{eq:EN-DeltaN}
\end{equation}
The remainder set $\Delta_{N}$ collects the intermediate states that
are not metastable. Then, we say that a (dynamical) metastable behavior
occurs in the system in a time scale $\theta_{N}\gg1$ if the following
three conditions $\mathfrak{M}$, $\mathfrak{D}$, and $\mathfrak{C}$
are satisfied, where, $a_{N}\gg b_{N}$ if $b_{N}/a_{N}\to0$ as $N\to\infty$.
Here we first describe them informally, and give the rigorous definitions
later in Section \ref{sec1.3}.
\begin{itemize}
\item ($\mathfrak{M}$, Mixing) The first condition $\mathfrak{M}$ indicates
that the process $X_{N}(t)$ achieves an asymptotic equilibrium inside
each set $\mathcal{E}^{j}_{N}$ before escaping the set. In other
words, the process mixes well inside $\mathcal{E}^{j}_{N}$ before
its exit. This is formulated in the sense that the exit time of the
set $\mathcal{E}^{j}_{N}$ is much larger than the mixing time of
the reflected process in $\mathcal{E}^{j}_{N}$.
\item ($\mathfrak{D}$, Delta-negligibility) If the time is accelerated
by $\theta_{N}$, the process spends negligible amount of time in
the remainder set $\Delta_{N}$. This property guarantees that the
dynamical behavior of the original process is well approximated by
the trace process on $\mathcal{E}_{N}=\Omega_{N}\setminus\Delta_{N}$
(to be defined rigorously in \eqref{eq:trace-def}), obtained by the
harmonic projection from $\Omega_{N}$ onto $\mathcal{E}_{N}$.
\item ($\mathfrak{C}$, Convergence) Denote by $\{X^{{\rm tr}}_{N}(t)\}_{t\ge0}$
the trace process on $\mathcal{E}_{N}$. For each $\eta\in\mathcal{E}^{j}_{N}$
define $\Phi_{N}(\eta):=j$, and consider the projected order process
$Y_{N}(t):=\Phi_{N}(X^{{\rm tr}}_{N}(t))$ on $S$. Then, the law
of the accelerated order process $Y_{N}(\theta_{N}t)$ on $S$ converges
weakly as $N\to\infty$ to the law of a certain Markov chain $\{{\bf y}(t)\}_{t\ge0}$
on $S$. In other words, the inter-valley transitions in the time
scale $\theta_{N}$ can be approximated by a simple Markov chain on
the index set $S$. Here, the approximating law should be Markovian
since, by condition $\mathfrak{M}$, the process reaches a near-equilibrium
before escaping $\mathcal{E}^{j}_{N}$, thus an inter-valley jump
from $\mathcal{E}^{j}_{N}$ to another $\mathcal{E}^{k}_{N}$ would
behave like a Markovian jump from $j$ to $k$.
\end{itemize}
\begin{rem}
The condition $\mathfrak{M}$ was first formulated in terms of the
existence of an attractor (cf. condition \textbf{(V1)} of \cite{BL10}),
which fails to hold in general. For instance, consider the symmetric
simple random walk on a discrete torus $\mathbb{T}^{d}_{N}:=(\mathbb{Z}/N\mathbb{Z})^{d}$
with $d\ge2$ and imagine that the system escapes $\mathbb{T}^{d}_{N}$
with a uniform rate $\alpha^{-1}_{N}\ll1$. The mixing time of the
reflected random walk on $\mathbb{T}^{d}_{N}$ is of order $N^{2}$,
but any point in the torus is visited in the time scale $\beta_{N}\gg N^{2}$
where
\[
\beta_{N}=\begin{cases}
N^{2}\log N & \text{if}\quad d=2,\\
N^{d} & \text{if}\quad d\ge3.
\end{cases}
\]
Thus if $N^{2}\ll\alpha_{N}\ll\beta_{N}$, the system reaches an asymptotic
equilibrium in $\mathbb{T}^{d}_{N}$ before exiting, whereas no point
in $\mathbb{T}^{d}_{N}$ serves as an attractor. Other formulations
of the mixing condition in more general senses can be found in, e.g.,
\cite[Section 6]{LMS25} or \cite[Section 3]{KL26}.
\end{rem}

\subsubsection{Resolvent approach to metastability}

The overall strategy to prove the three conditions $\mathfrak{M}$,
$\mathfrak{D}$, and $\mathfrak{C}$ goes as follows. First, condition
$\mathfrak{M}$ is fundamentally model-dependent and is usually proved
in a quite independent flavor compared to the other conditions.

Recently, it was proved \cite{LMS25} that the remaining two conditions
$\mathfrak{D}$ and $\mathfrak{C}$ can be verified at once if the
so-called resolvent condition $\mathfrak{R}$ holds true. We defer
its exact formulation to Section \ref{sec1.3} and focus here on explaining
the underlying principle by formulating condition $\mathfrak{C}$
on a heuristic level. First, let us assume that $S$ is a finite set.
To prove condition $\mathfrak{C}$, according to the martingale characterization
of Markov processes \cite{SV69}, we should prove that for any limit
point ${\bf Q}$ of the collection of the laws of $Y_{N}(\theta_{N}t)$,
\[
f({\bf y}(t))-f({\bf y}(0))-\int^{t}_{0}\mathfrak{L}f({\bf y}(s))\,{\rm d}s,\qquad t\ge0
\]
is a ${\bf Q}$-martingale for any $f:S\to\mathbb{R}$. Here, $\mathfrak{L}$
is the infinitesimal generator acting on $L^{2}(S)$ corresponding
to a certain limiting Markov chain ${\bf y}(t)$. We may consider
the following alternative martingale problem \cite[Lemma 4.3.2]{EK86}
for a fixed $\lambda>0$ that
\[
e^{-\lambda t}\,f({\bf y}(t))-f({\bf y}(0))+\int^{t}_{0}e^{-\lambda s}\,(\lambda-\mathfrak{L})f({\bf y}(s))\,{\rm d}s
\]
is a ${\bf Q}$-martingale. Since ${\bf Q}$ is a limit point, we
wish to verify that
\[
e^{-\lambda t}\,f(Y_{N}(\theta_{N}t))-f(Y_{N}(0))-\int^{t}_{0}e^{-\lambda s}\,(\lambda-\mathfrak{L})f(Y_{N}(\theta_{N}s))\,{\rm d}s
\]
is a martingale plus a negligible error as $N\to\infty$. Since $Y_{N}(t)=\Phi_{N}(X^{{\rm tr}}_{N}(t))$,
this becomes
\[
e^{-\lambda t}\,(f\circ\Phi_{N})(X^{{\rm tr}}_{N}(\theta_{N}t))-(f\circ\Phi_{N})(X^{{\rm tr}}_{N}(0))-\int^{t}_{0}e^{-\lambda s}\,((\lambda-\mathfrak{L})f\circ\Phi_{N})(X^{{\rm tr}}_{N}(\theta_{N}s))\,{\rm d}s.
\]
The trace process is defined via a time change (see \eqref{eq:SN})
$X^{{\rm tr}}_{N}(t)=X_{N}(S_{N}(t))$, where $S_{N}(t)$ denotes
the total time needed for the process to spend time $t$ inside $\mathcal{E}_{N}$.
By the negligibility condition $\mathfrak{D}$, $S_{N}(t)$ is close
to $t$ as $N\to\infty$ (see \eqref{eq:ST}), thus the last display
is asymptotically close to
\[
e^{-\lambda t}\,(f\circ\Phi_{N})(X_{N}(\theta_{N}t))-(f\circ\Phi_{N})(X_{N}(0))-\int^{t}_{0}e^{-\lambda s}\,((\lambda-\mathfrak{L})f\circ\Phi_{N})(X_{N}(\theta_{N}s))\,{\rm d}s.
\]
Let us assume that $F_{N}:\Omega_{N}\to\mathbb{R}$ solves $(\lambda-\theta_{N}\mathscr{L}_{N})F_{N}=(\lambda-\mathfrak{L})f\circ\Phi_{N}$,
where $\mathscr{L}_{N}$ denotes the infinitesimal generator of $X_{N}(t)$.
Then, we may rewrite this as
\begin{equation}
e^{-\lambda t}\,(f\circ\Phi_{N})(X_{N}(\theta_{N}t))-(f\circ\Phi_{N})(X_{N}(0))-\int^{t}_{0}e^{-\lambda s}\,(\lambda-\theta_{N}\mathscr{L}_{N})F_{N}(X_{N}(\theta_{N}s))\,{\rm d}s.\label{eq:res-term}
\end{equation}
Thus, if we verify that
\begin{equation}
f\circ\Phi_{N}\simeq F_{N}\qquad\text{as}\quad N\to\infty,\label{eq:res-simple}
\end{equation}
we obtain that the penultimate display is close to
\[
e^{-\lambda t}\,F_{N}(X_{N}(\theta_{N}t))-F_{N}(X_{N}(0))-\int^{t}_{0}e^{-\lambda s}\,(\lambda-\theta_{N}\mathscr{L}_{N})F_{N}(X_{N}(\theta_{N}s))\,{\rm d}s,
\]
which is now a martingale since $\theta_{N}\mathscr{L}_{N}$ is the
infinitesimal generator of $X_{N}(\theta_{N}t)$. Thus, condition
\eqref{eq:res-simple} is exactly what we need to prove condition
$\mathfrak{C}$. This is called the resolvent condition $\mathfrak{R}$.
Surprisingly, it was proved \cite[Theorem 2.3]{LMS25} that condition
$\mathfrak{R}$ is actually equivalent to conditions $\mathfrak{D}$
and $\mathfrak{C}$, so verifying conditions $\mathfrak{M}$ and $\mathfrak{R}$
completes the analysis of metastability for a given system.

\subsection{Novelty of the article}

\subsubsection{Main result: extension to infinite $S$ case}

Recall that we assumed $S$ to be finite in the last heuristic computation.
The main theoretical contribution of this article is to extend the
result to the case when $S$ is infinite. As $\Phi_{N}(\eta)=j$ for
all $\eta\in\mathcal{E}^{j}_{N}$, we may rewrite the resolvent condition
\eqref{eq:res-simple} as
\[
\sup_{\eta\in\mathcal{E}^{j}_{N}}|F_{N}(\eta)-f(j)|\to0\qquad\text{as}\quad N\to\infty.
\]
Thus, the exact same logic applies if we could verify that
\[
\sup_{j\in S}\sup_{\eta\in\mathcal{E}^{j}_{N}}|F_{N}(\eta)-f(j)|\to0\qquad\text{as}\quad N\to\infty.
\]
Unfortunately, this is not expected to hold in general, nor is it
necessary for metastability. This is primarily because metastability
is fundamentally a local phenomenon; hence, requiring such global
stability is overly restrictive to prove, or even to expect.

The novel idea to overcome this issue is as follows. Recall the term
\eqref{eq:res-term}. Since we expect that the inter-valley metastable
transitions are well approximated by a certain Markov chain, starting
from a metastable set $\mathcal{E}^{j}_{N}$, with high probability
the process will stay in a near neighborhood of $\mathcal{E}^{j}_{N}$
within a rescaled time window $\theta_{N}T$, where $T>0$ is any
fixed real number. Equivalently, this means that for any $T>0$ and
$\eta>0$, there exists a sequence of compact sets $\mathcal{K}_{N}\subset\Omega_{N}$
chosen near $\mathcal{E}^{j}_{N}$ such that the probability to escape
$\mathcal{K}_{N}$ within time $\theta_{N}T$ should be asymptotically
smaller than $\eta$. This is given by the condition $\mathfrak{K}$.

The first main result of this paper, Theorem \ref{t:main1}, states
the following equivalence:
\[
\mathfrak{R}+\mathfrak{K}_{1}+\mathfrak{K}_{2}\qquad\Longleftrightarrow\qquad\mathfrak{C}+\mathfrak{D},
\]
where conditions $\mathfrak{K}_{1}$ and $\mathfrak{K}_{2}$ are adequate
variants of the previous condition $\mathfrak{K}$ to guarantee the
implications of both directions.

\subsubsection{Applications: particle system on infinite lattice and diffusion with
infinite stable equilibria}

We emphasize that our theory is widely applicable, particularly to
models that are null-recurrent or even transient. To support this
claim, we establish two types of metastable behavior that could not
addressed by previous approaches to metastability. As a discrete example,
we study the condensing inclusion process defined on an arbitrary
infinite graph $\mathscr{G}=(\mathscr{V},\mathscr{E})$. We prove
that, as the diffusivity constant tends to zero, the position of the
condensate performs an asymptotic random walk on $\mathscr{G}$. As
a second continuous example, we consider the overdamped Langevin dynamics
on $\mathbb{R}$ with infinitely many stable minima. Previous works
have been restricted to systems with finitely many local minima, where
the limiting Markov chain lives on a finite set. By contrast, our
framework allows us to handle infinitely many local minima, leading
to a reduced Markov chain on a countably infinite index set, which
naturally induces compactness issues into the theory.

\subsection{\label{sec1.3}Resolvent theory}

Now, we formulate the main results in a mathematically rigorous manner.
Fix locally compact Polish spaces $\Omega_{N}$, $N\ge1$, which are
the state spaces. Consider a family of continuous-time Markov processes
$\{X_{N}(t)\}_{t\ge0}$ on $\Omega_{N}$. Let $\mathscr{L}_{N}$ be
the corresponding infinitesimal generator acting on $C_{0}(\Omega_{N})$,
the space of continuous functions on $\Omega_{N}$ vanishing at infinity.
Denote by $\mathbb{Q}^{N}_{\bm{x}}$ the law on the space of c\`adl\`ag
paths $D([0,\infty);\Omega_{N})$ of the process $\{X_{N}(t)\}_{t\ge0}$
starting from $\bm{x}\in\Omega_{N}$. For simplicity, let us assume
that the process is already accelerated; i.e., $\theta_{N}\equiv1$
in terms of the previous notation.

Fix a countable index set $S$, and suppose that we are given disjoint
nonempty precompact open subsets $\mathcal{E}^{j}_{N}\subset\Omega_{N}$
for $j\in S$, which correspond to the \emph{metastable} states in
$\Omega_{N}$. We assume that the metastable states are isolated,
in the sense that
\[
\overline{\mathcal{E}}^{i}_{N}\cap\overline{\mathcal{E}}^{j}_{N}=\emptyset\qquad\text{for each}\quad i\ne j\in S,\qquad\text{and}\qquad\bigcup_{i\in S}\overline{\mathcal{E}}^{i}_{N}=\overline{\mathcal{E}}_{N},
\]
where $\overline{\mathcal{E}}^{i}_{N}$ (resp. $\overline{\mathcal{E}}_{N}$)
denotes the topological closure of $\mathcal{E}^{i}_{N}$ (resp. $\mathcal{E}_{N}$).
This notation will be adopted throughout, such that overlines indicate
topological closures. Inside each $\mathcal{E}^{j}_{N}$ consider
an open set $\widehat{\mathcal{E}}^{j}_{N}$ such that $\widehat{\mathcal{E}}^{j}_{N}\Subset\mathcal{E}^{j}_{N}$.\footnote{For two sets $A,B$, we write $A\Subset B$ if $\overline{A}\subset B$.}
The set $\widehat{\mathcal{E}}^{j}_{N}$ is introduced to provide
a buffer near the boundary $\partial\mathcal{E}^{j}_{N}$ due to analytical
issues; in the discrete case, we may simply let $\widehat{\mathcal{E}}^{j}_{N}=\mathcal{E}^{j}_{N}$.

Define as in \eqref{eq:EN-DeltaN} and let
\[
\widehat{\mathcal{E}}_{N}:=\bigcup_{j\in S}\widehat{\mathcal{E}}^{j}_{N}.
\]
The following condition $\mathfrak{D}$ states that the non-metastable
sets are negligible in terms of the typical trajectories of the process.
Below, $\mathcal{F}^{j}_{N}$, $j\in S$ are arbitrarily chosen such
that $\widehat{\mathcal{E}}^{j}_{N}\Subset\mathcal{F}^{j}_{N}\Subset\mathcal{E}^{j}_{N}$,
and let
\begin{equation}
\mathcal{F}_{N}:=\bigcup_{j\in S}\mathcal{F}^{j}_{N}.\label{eq:FN}
\end{equation}
Then it is clear that $\widehat{\mathcal{E}}_{N}\Subset\mathcal{F}_{N}\Subset\mathcal{E}_{N}$.\medskip{}

\noindent\textbf{Condition $\mathfrak{D}$ }(Delta-negligibility)\textbf{.}
For all $\{\mathcal{F}^{i}_{N}:i\in S\}$, $j\in S$, and $T>0$,\footnote{In this article, for each probability measure $Q$ its corresponding
expectation is written as ${\rm E}^{Q}$.}
\begin{equation}
\lim_{N\to\infty}\sup_{\bm{x}\in\overline{\mathcal{E}}^{j}_{N}}{\rm E}^{\mathbb{Q}^{N}_{\bm{x}}}\left[\int^{T}_{0}{\bf 1}\left\{ X_{N}(t)\notin\mathcal{F}_{N}\right\} {\rm d}t\right]=0.\label{eq:D}
\end{equation}

\medskip{}

By condition $\mathfrak{D}$, since $\Delta_{N}\subset\Omega_{N}\setminus\mathcal{F}_{N}$,
for all $j\in S$ and $T>0$,
\begin{equation}
\lim_{N\to\infty}\sup_{\bm{x}\in\overline{\mathcal{E}}^{j}_{N}}{\rm E}^{\mathbb{Q}^{N}_{\bm{x}}}\left[\int^{T}_{0}{\bf 1}\left\{ X_{N}(t)\in\Delta_{N}\right\} {\rm d}t\right]=0.\label{eq:D-Delta}
\end{equation}

Next, we characterize the metastable behavior of the processes in
terms of the convergence of laws. Denote by $T_{N}(t)$ the local
time spent by $X_{N}(\cdot)$ in $\mathcal{E}_{N}$ up to time $t\ge0$:
\[
T_{N}(t):=\int^{t}_{0}{\bf 1}\left\{ X_{N}(s)\in\mathcal{E}_{N}\right\} {\rm d}s.
\]
Let $S_{N}(\cdot)$ be the generalized inverse of the non-decreasing
process $T_{N}(\cdot)$:
\begin{equation}
S_{N}(t):=\sup\left\{ s\ge0:T_{N}(s)\le t\right\} \qquad\text{for}\quad t\ge0.\label{eq:SN}
\end{equation}

\noindent Note that $S_{N}(t)$ attains the infinite value $\infty$
with positive probability if $T_{N}(s)$ remains bounded by $t$ for
all $s\ge0$. This may happen if the process is transient and does
not return to any metastable state. We do not exclude this possibility.

Define the \emph{trace process} $\{X^{{\rm tr}}_{N}(t)\}_{t\ge0}$
by
\begin{equation}
X^{{\rm tr}}_{N}(t):=\begin{cases}
X_{N}(S_{N}(t)) & \text{if}\quad S_{N}(t)<\infty,\\
\mathfrak{d} & \text{if}\quad S_{N}(t)=\infty,
\end{cases}\label{eq:trace-def}
\end{equation}
where $\mathfrak{d}$ is an arbitrarily fixed cemetery point. Then,
since the inverse occupation time $S_{N}(t)$ is a stopping time,
the trace process $X^{{\rm tr}}_{N}(t)$ becomes a strong Markov process
on $\overline{\mathcal{E}}_{N}\cup\{\mathfrak{d}\}$ (cf. \cite[Section 6.1]{BL10}).

Further, we define a projection function $\Phi_{N}:\overline{\mathcal{E}}_{N}\cup\{\mathfrak{d}\}\to S\cup\{\mathfrak{d}\}$
as
\begin{equation}
\Phi_{N}(\eta):=\begin{cases}
j & \text{if}\quad\eta\in\overline{\mathcal{E}}^{j}_{N},\\
\mathfrak{d} & \text{if}\quad\eta=\mathfrak{d}.
\end{cases}\label{eq:PhiN}
\end{equation}
Write $S_{\mathfrak{d}}:=S\cup\{\mathfrak{d}\}$. The \emph{order
process} $\{Y_{N}(t)\}_{t\ge0}$ on $S_{\mathfrak{d}}$ is then defined
by
\begin{equation}
Y_{N}(t):=\Phi_{N}(X^{{\rm tr}}_{N}(t))\qquad\text{for}\quad t\ge0.\label{eq:YN}
\end{equation}
Note that $\{Y_{N}(t)\}_{t\ge0}$ is a process on $S_{\mathfrak{d}}$
which is not necessarily Markovian. For $\bm{x}\in\Omega_{N}$, denote
by ${\bf Q}^{N}_{\bm{x}}$ the probability measure on $D([0,\infty);S_{\mathfrak{d}})$
induced by the accelerated order process $\{Y_{N}(t)\}_{t\ge0}$ starting
from $\bm{x}$.

The metastable behavior of the original process is described by a
convergence of $\{{\bf Q}^{N}_{\bm{x}_{N}}:N\ge1\}$ to a specific
law induced by a Markov chain on $S$. In this regard, consider an
$S$-valued continuous-time Markov chain $\{{\bf y}(t)\}_{t\ge0}$
which is the limiting Markov chain. Enlarge the space $S$ to $S_{\mathfrak{d}}$
by adding $\mathfrak{d}$ as an isolated element, and denote by $\mathfrak{L}$
its corresponding infinitesimal generator acting on $C_{0}(S_{\mathfrak{d}})$.
Let ${\bf Q}_{j}$, $j\in S$, be the probability measure on $D([0,\infty);S_{\mathfrak{d}})$
induced by the Markov chain ${\bf y}(\cdot)$ starting from $j$.\medskip{}

\noindent\textbf{Condition $\mathfrak{C}$ }(Convergence)\textbf{.}
For all $j\in S$ and $\bm{x}_{N}\in\overline{\mathcal{E}}^{j}_{N}$,
the laws ${\bf Q}^{N}_{\bm{x}_{N}}$ converge weakly in the Skorokhod
topology to ${\bf Q}_{j}$ as $N\to\infty$.

\medskip{}
Thus, combining conditions $\mathfrak{C}$ and $\mathfrak{D}$, the
Markov chain $\{{\bf y}(t)\}_{t\ge0}$ serves as the \emph{reduced
model} representing the metastable behavior of the collection of processes
$(\{X_{N}(t)\}_{t\ge0}:N\ge1)$.

Now, we present a completely different type of analytic condition
related to the asymptotic flatness of specific resolvent solutions.\medskip{}

\noindent\textbf{Condition $\mathfrak{R}$ }(Resolvent)\textbf{.}
For all $\{\mathcal{F}^{i}_{N}:i\in S\}$, $\lambda>0$, ${\bf g}\in C_{0}(S)$,
and lifts $G_{N}\in C_{0}(\Omega_{N})$ of ${\bf g}$ that satisfy
\begin{equation}
G_{N}|_{\mathcal{F}^{j}_{N}}={\bf g}(j)\qquad\text{for each}\quad j\in S\qquad\text{and}\qquad\sup_{N\ge1}\|G_{N}\|_{\infty}<\infty,\label{eq:lift}
\end{equation}
the unique solution\footnote{By \cite[Proposition 3.30]{Lig10}, since $\mathscr{L}_{N}$ is an
infinitesimal generator acting on $C_{0}(\Omega_{N})$, the existence
and uniqueness of the solution are guaranteed.} $F_{N}\in C_{0}(\Omega_{N})$ to the following \emph{microscopic}
resolvent equation
\begin{equation}
(\lambda-\mathscr{L}_{N})F_{N}=G_{N}\qquad\text{in}\quad\Omega_{N}\label{eq:res}
\end{equation}
satisfies
\begin{equation}
\lim_{N\to\infty}\sup_{\bm{x}\in\overline{\mathcal{E}}^{j}_{N}}|F_{N}(\bm{x})-{\bf f}(j)|=0\qquad\text{for each}\quad j\in S,\label{eq:R}
\end{equation}
where ${\bf f}\in C_{0}(S)$ is the unique solution to the \emph{macroscopic}
resolvent equation
\begin{equation}
(\lambda-\mathfrak{L}){\bf f}={\bf g}\qquad\text{in}\quad S.\label{eq:res-chain}
\end{equation}

\medskip{}
Next, we state two supplementary conditions regarding compactness
properties.\medskip{}

\noindent\textbf{Condition $\mathfrak{K1}$ }(Compactness, 1st)\textbf{.}
For all $j\in S$ and $T,\eta>0$, there exist compact sets $\mathcal{K}_{N}=\mathcal{K}_{N}(j,T,\eta)\subset\Omega_{N}$
such that
\begin{equation}
\left\{ i\in S:\mathcal{E}^{i}_{N}\cap\mathcal{K}_{N}\ne\emptyset\quad\text{for some}\enspace N\ge1\right\} \enspace\text{is a finite set},\label{eq:K1-1}
\end{equation}
and
\begin{equation}
\limsup_{N\to\infty}\sup_{\bm{x}\in\overline{\mathcal{E}}^{j}_{N}}{\rm E}^{\mathbb{Q}^{N}_{\bm{x}}}\left[\int^{T}_{0}{\bf 1}\left\{ X_{N}(t)\in\Omega_{N}\setminus\mathcal{K}_{N}\right\} {\rm d}t\right]<\eta.\label{eq:K1-2}
\end{equation}

\medskip{}
Let $H_{A}:=\inf\,\{t\ge0:Y_{N}(t)\in A\}$ be the hitting time of
the set $A$.\medskip{}

\noindent\textbf{Condition $\mathfrak{K2}$ }(Compactness, 2nd)\textbf{.}
For all $j\in S$ and $T,\eta>0$, there exists a finite collection
$K=K(j,T,\eta)\subset S$ such that 
\begin{equation}
\limsup_{N\to\infty}\sup_{\bm{x}\in\overline{\mathcal{E}}^{j}_{N}}{\bf Q}^{N}_{\bm{x}}\left[H_{S_{\mathfrak{d}}\setminus K}\le T\right]<\eta.\label{eq:K2}
\end{equation}

\medskip{}
We introduce a technical assumption. Denote by $\mathcal{H}_{\mathcal{A}}$
the hitting time of $\mathcal{A}$ with respect to $\{X_{N}(t)\}_{t\ge0}$.
\begin{assumption}
\noindent\label{assu-tech}For all $N\ge1$, $j\in S$, and $\bm{x}\in\partial\mathcal{E}^{j}_{N}$,
\begin{equation}
\mathbb{Q}^{N}_{\bm{x}}\left[\mathcal{H}_{\mathcal{E}^{j}_{N}}=0\right]=1.\label{eq:attr}
\end{equation}
\end{assumption}

Clearly, Assumption \ref{assu-tech} is trivial if $\partial\mathcal{E}^{j}_{N}=\emptyset$,
e.g., when the system is discrete.

Now, we are ready to state our first main theorem. 
\begin{thm}
\label{t:main1}Under Assumption \ref{assu-tech}, conditions $\mathfrak{R}$,
$\mathfrak{K1}$, and $\mathfrak{K2}$ hold if and only if $\mathfrak{C}$
and $\mathfrak{D}$ hold.
\end{thm}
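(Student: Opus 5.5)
We prove the two implications separately, following the scheme of \cite[Theorem 2.3]{LMS25}. The new ingredient is a localization through $\mathfrak{K1}$ and $\mathfrak{K2}$, which reduces every estimate to finitely many indices $i\in S$.

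\emph{($\mathfrak{R}+\mathfrak{K1}+\mathfrak{K2}\Rightarrow\mathfrak{D}+\mathfrak{C}$).}

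First we prove $\mathfrak{D}$. Fix $j$, $T$, $\eta$, and take the compact sets $\mathcal{K}_{N}$ from $\mathfrak{K1}$; by \eqref{eq:K1-1} they meet only a finite set $K_{0}\subset S$ of wells.

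Choose ${\bf g}\equiv 0$ and a lift $G_{N}$ with $G_{N}=0$ on $\mathcal{F}_{N}$, $G_{N}=1$ on $\mathcal{K}_{N}\setminus\mathcal{F}_{N}'$ (with $\mathcal{F}_{N}\Subset\mathcal{F}'_{N}\Subset\mathcal{E}_N$), and $0\le G_{N}\le1$. We can take $G_{N}\in C_{0}$ by Urysohn, since $\mathcal{K}_{N}$ is compact. Then ${\bf f}=0$ and $F_{N}(\bm{x})={\rm E}_{\bm{x}}\int_{0}^{\infty}e^{-\lambda t}G_{N}(X_{N}(t))\,dt$. Hence \eqref{eq:R} gives
\[
{\rm E}_{\bm{x}}\int_{0}^{T}{\bf 1}\{X_{N}\in\mathcal{K}_{N}\setminus\mathcal{F}'_{N}\}\,dt\le e^{\lambda T}\sup_{\overline{\mathcal{E}}^{j}_{N}}F_{N}\to0,
\]
and adding the contribution of $\Omega_{N}\setminus\mathcal{K}_{N}$, which is less than $\eta$, gives \eqref{eq:D} for $\mathcal{F}'$. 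Since the families $\mathcal{F}$ are arbitrary, this yields $\mathfrak{D}$.

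Next we prove $\mathfrak{C}$. Tightness of $\{{\bf Q}^{N}_{\bm{x}_{N}}\}$ in $D([0,\infty);S_{\mathfrak{d}})$ follows from two facts. By $\mathfrak{K2}$, the paths stay in a finite set $K$ up to time $T$ with probability at least $1-\eta$. Inside $K$, Aldous' criterion reduces, through the strong Markov property of the trace process, to the estimate
\[
\sup_{\bm{x}\in\overline{\mathcal{E}}^{i}_{N}}{\bf Q}^{N}_{\bm{x}}[\text{leave } i \text{ before }\delta]\to0
\]
as $\delta\to0$, uniformly in $N$ large and $i\in K$. We get this from $\mathfrak{R}$ with ${\bf g}={\bf 1}_{\{i\}}$, combined with $\mathfrak{D}$.

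To identify a limit point ${\bf Q}$, we use the resolvent martingale problem \cite[Lemma 4.3.2]{EK86} with ${\bf g}$ of finite support, which is enough since $\mathfrak{L}$ generates a Feller chain. Dynkin's formula for $F_{N}$ along $X_{N}$ gives a martingale. We then do three replacements:
\begin{itemize}
\item replace $F_{N}(X_{N}(t))$ by ${\bf f}(Y_{N})$ using \eqref{eq:R} on the finitely many wells in $K$, while the complement of $K$ costs at most $\eta$ by $\mathfrak{K2}$ together with $\|F_{N}\|_\infty\le\lambda^{-1}\sup_N\|G_{N}\|_\infty$;
\item replace $G_{N}(X_{N})$ by ${\bf g}(\Phi_N)$ using $\mathfrak{D}$;
\item pass from $X_{N}$ to the trace time $S_{N}(t)$, using $S_{N}(t)-t\to0$ in probability, which again follows from $\mathfrak{D}$.
\end{itemize}
Assumption~\ref{assu-tech} is used here so that $X_{N}(S_{N}(t))$, which may lie on $\partial\mathcal{E}^{j}_{N}$, starts the process correctly; in particular, the supremum over $\overline{\mathcal{E}}^{j}_{N}$ in $\mathfrak{D}$ and $\mathfrak{R}$ is legitimate for restarts. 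Since $\mathfrak{L}$ does not charge $\mathfrak{d}$ and ${\bf Q}[H_{\mathfrak{d}}\le T]<\eta$ for every $\eta$, the limit chain never reaches $\mathfrak{d}$, and uniqueness of the martingale problem gives ${\bf Q}={\bf Q}_{j}$.

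\emph{($\mathfrak{C}+\mathfrak{D}\Rightarrow\mathfrak{R}+\mathfrak{K1}+\mathfrak{K2}$).}

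$\mathfrak{K2}$ follows from $\mathfrak{C}$. The chain ${\bf y}$ is non-explosive on $[0,T]$, so it stays in a finite set $K$ with probability at least $1-\eta/2$. The event $\{H_{S_{\mathfrak{d}}\setminus K}\le T\}$ is closed in the Skorokhod topology up to the continuity point $T$, so the Portmanteau theorem applies; uniformity over $\bm{x}\in\overline{\mathcal{E}}^{j}_{N}$ comes from a standard argument by contradiction along worst-case sequences $\bm{x}_{N}$.

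For $\mathfrak{K1}$, take $\mathcal{K}_{N}:=\bigcup_{i\in K}\overline{\mathcal{E}}^{i}_{N}$, which is compact because the sets $\mathcal{E}^{i}_{N}$ are precompact. Time spent outside $\mathcal{K}_{N}$ is either time in $\Delta_{N}$, which is negligible by \eqref{eq:D-Delta}, or time in wells outside $K$. The latter is controlled by $\mathfrak{K2}$ after the time change, since $S_{N}(t)\approx t$.

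For $\mathfrak{R}$, write
\[
F_{N}(\bm{x})={\rm E}_{\bm{x}}\int_{0}^{\infty}e^{-\lambda t}G_{N}(X_{N}(t))\,dt,
\]
truncate at a large time $T$, replace $G_{N}(X_{N})$ by ${\bf g}(Y_{N})$ at the cost of $\mathfrak{D}$, and apply $\mathfrak{C}$. This gives convergence to $\int_{0}^{T}e^{-\lambda t}{\rm E}_{j}{\bf g}({\bf y}(t))\,dt$, and letting $T\to\infty$ yields ${\bf f}(j)$. Uniformity in $\bm{x}$ again comes from the sequence argument. The boundedness of $G_{N}$ off $\mathcal{F}_N$ is handled by $\mathfrak{D}$.

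\emph{Main obstacle.} We expect the hardest step to be the tightness and identification in the forward direction. The martingale-problem identification must be localized, because $\mathfrak{R}$ gives convergence only well by well and not uniformly in $j$. The natural localization uses stopping at $H_{S_{\mathfrak{d}}\setminus K}$, but this stopping time is not continuous on path space. Our first attempt will be to avoid stopping: bound the unstopped error terms directly by $C\eta$ via $\mathfrak{K2}$ and the uniform bound $\|F_{N}\|_{\infty}\le C/\lambda$, and then let $\eta\to0$.
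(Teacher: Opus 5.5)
Your proposal is correct and follows the paper's proof essentially step by step. The shared steps are: the same lift (${\bf g}\equiv0$, $G_N=1$ on $\mathcal{K}_N$ minus an inner buffer) for $\mathfrak{R}+\mathfrak{K1}\Rightarrow\mathfrak{D}$; tightness via $\mathfrak{K2}$ plus Aldous, reduced to a non-instantaneity estimate obtained from $\mathfrak{R}$; and a $\mathfrak{K2}$-localized resolvent martingale problem for identification. For the converse you also match the paper: the time-changed representation $F_N(\bm{x})={\rm E}^{\mathbb{Q}^N_{\bm x}}[\int_0^\infty e^{-\lambda t}G_N(X_N(t))\,{\rm d}t]$ together with $\mathfrak{D}$ and $\mathfrak{C}$ for $\mathfrak{R}$, $\mathfrak{K2}$ from $\mathfrak{C}$, and $\mathcal{K}_N=\bigcup_{i\in K}\overline{\mathcal{E}}^i_N$ for $\mathfrak{K1}$.

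The differences are cosmetic:
\begin{itemize}
\item Your test function ${\bf g}={\bf 1}_{\{i\}}$ for non-instantaneity works just like the paper's choice ${\bf f}=\delta_j$, ${\bf g}=(\lambda-\mathfrak{L}){\bf f}$. Like it, it requires letting $\lambda\to\infty$ after $t\to0$, because for fixed $\lambda$ one only obtains a bound of order $1/\lambda$; your sketch leaves this step implicit.
\item You get $\mathfrak{K2}$ directly from the Portmanteau theorem on the closed event $\{H_{S_{\mathfrak{d}}\setminus K}\le T\}$, instead of going through tightness and \cite[eq. (16.22)]{Bil99}.
\end{itemize}
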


\begin{rem}
\label{rem1}The compactness conditions $\mathfrak{K1}$ and $\mathfrak{K2}$
did not appear in the previous studies \cite{LMS25,LLS24,LLS25} for
two reasons. First, the limiting processes considered therein were
defined on finite state spaces, so the corresponding compactness properties
were automatic. Second, the lifts $G_{N}$ defined in \eqref{eq:lift}
were chosen to be indicator-type functions determined solely by the
valleys. In the present setting, by contrast, we need the flexibility
to allow $G_{N}$ to be an arbitrary lift in $C_{0}(\Omega_{N})$.
On the one hand, condition $\mathfrak{K2}$ is necessary for the convergence
result $\mathfrak{C}$ since it is directly related to the tightness
of the collection of the projected laws $({\bf Q}^{N}_{\bm{x}_{N}}:N\ge1)$
(cf. \cite[eq. (16.22)]{Bil99}). On the other hand, condition $\mathfrak{K1}$
is a purely technical condition to deal with the fact that the infinitesimal
generators are defined on $C_{0}$ spaces, and it is crucial in the
proof of the negligibility condition $\mathfrak{D}$ (see Lemma \ref{lem2.2}).
Surprisingly, $\mathfrak{K1}$ can also be recovered from $\mathfrak{C}$
and $\mathfrak{D}$ (see Lemmas \ref{lem2.8} and \ref{lem2.9}),
thus the equivalent characterization still holds.
\end{rem}

\begin{rem}
\label{rem2}Assumption \ref{assu-tech} holds true for all models
of interest so far. Indeed, in the case of continuous-time Markov
chains, $\partial\mathcal{E}^{j}_{N}=\emptyset$ thus it is trivial.
In the case of diffusions, the metastable nature indicates a drift
towards the interior $\mathcal{E}^{j}_{N}$ from any boundary point
$\bm{x}\in\partial\mathcal{E}^{j}_{N}$, thus the process hits $\mathcal{E}^{j}_{N}$
instantaneously with probability one.
\end{rem}

In practice, to prove metastability of a given large-scale interacting
system, proving the following single condition is more efficient than
proving two separate conditions $\mathfrak{K1}$ and $\mathfrak{K2}$.\medskip{}

\noindent\textbf{Condition $\mathfrak{K}$ }(Compactness)\textbf{.}
For all $j\in S$ and $T,\eta>0$, there exist compact sets $\mathcal{K}_{N}=\mathcal{K}_{N}(j,T,\eta)\subset\Omega_{N}$
such that \eqref{eq:K1-1} holds and
\begin{equation}
\limsup_{N\to\infty}\sup_{\bm{x}\in\overline{\mathcal{E}}^{j}_{N}}\mathbb{Q}^{N}_{\bm{x}}\left[\mathcal{H}_{\Omega_{N}\setminus\mathcal{K}_{N}}\le T\right]<\eta.\label{eq:K}
\end{equation}

\medskip{}

Moreover, the arbitrary buffers $\mathcal{F}^{j}_{N}$, $j\in S$,
need not be introduced for condition $\mathfrak{R}$.\medskip{}

\noindent\textbf{Condition $\widehat{\mathfrak{R}}$ }(Resolvent,
simplified)\textbf{.} For all $\lambda>0$, ${\bf g}\in C_{0}(S)$,
and lifts $G_{N}\in C_{0}(\Omega_{N})$ of ${\bf g}$ that satisfy
\begin{equation}
G_{N}|_{\widehat{\mathcal{E}}^{j}_{N}}={\bf g}(j)\qquad\text{for each}\quad j\in S\qquad\text{and}\qquad\sup_{N\ge1}\|G_{N}\|_{\infty}<\infty,\label{eq:lift-1}
\end{equation}
the unique solution $F_{N}\in C_{0}(\Omega_{N})$ to \eqref{eq:res}
satisfies \eqref{eq:R}.

\medskip{}
Then, the following alternative characterization holds.
\begin{thm}
\label{t:main2}Under Assumption \ref{assu-tech}, if conditions $\widehat{\mathfrak{R}}$
and $\mathfrak{K}$ are true, then conditions $\mathfrak{C}$ and
$\mathfrak{D}$ hold true. In particular, \eqref{eq:D-Delta} is satisfied
by condition $\mathfrak{D}$.
\end{thm}

The proofs of Theorems \ref{t:main1} and \ref{t:main2} are given
in Section \ref{sec2}.

\subsection{\label{sec1.4}Application I: Inclusion process on infinite graphs}

As our first application, we present how one can study the metastable
behavior of interacting particle systems defined on infinite graphs.
To fix ideas, we select the condensing inclusion process defined as
follows. Fix a countably infinite (or finite) connected graph $\mathscr{G}=(\mathscr{V},\mathscr{E})$.
We assume that $\mathscr{G}$ is uniformly locally finite, i.e.,
\begin{equation}
\mathfrak{n}_{\mathscr{G}}:=\sup_{x\in\mathscr{V}}|\mathscr{N}_{x}|<\infty\qquad\text{where}\quad\mathscr{N}_{x}:=\big\{ y\in\mathscr{V}:\{x,y\}\in\mathscr{E}\big\}.\label{eq:loc-fin}
\end{equation}
Assume further that symmetric edge weights $(c_{xy})_{x,y\in\mathscr{V}}$
and site weights $(\alpha_{x})_{x\in\mathscr{V}}$ are given on $\mathscr{G}$
such that
\begin{itemize}
\item $c_{xy}>0$ if and only if $\{x,y\}\in\mathscr{E}$;
\item $\alpha_{x}>0$ for each $x\in\mathscr{V}$;
\item $\alpha_{\sup}:=\sup_{x\in\mathscr{V}}\alpha_{x}<\infty$, $\alpha_{\inf}:=\inf_{x\in\mathscr{V}}\alpha_{x}>0$,
and $c_{\sup}:=\sup_{x,y\in\mathscr{V}}c_{xy}<\infty$;
\end{itemize}
Denote by $\Omega_{N}$ the space of $N$-particle configurations
on $\mathscr{G}$:
\[
\Omega_{N}:=\left\{ \eta\in\mathbb{N}^{\mathscr{V}}:|\eta|:=\sum_{x\in\mathscr{V}}\eta_{x}=N\right\} .
\]
In this article, $\mathbb{N}:=\{0,1,2,\dots\}$. The \emph{inclusion
process} $\{\eta_{N}(t)\}_{t\ge0}$ is defined as the continuous-time
Markov chain on $\Omega_{N}$ whose generator $\mathscr{L}_{N}$ acts
as
\begin{equation}
\mathscr{L}_{N}F(\eta):=\sum_{x,y\in\mathscr{V}}c_{xy}\eta_{x}\left(\alpha_{y}+\epsilon^{-1}_{N}\eta_{y}\right)\left(F(\eta-\delta_{x}+\delta_{y})-F(\eta)\right).\label{eq:LN-inc}
\end{equation}
Above, $\delta_{z}$ is defined as $\delta_{z}(w):={\bf 1}\{z=w\}$.
The system is in the \emph{condensing} regime, i.e., $\epsilon_{N}>0$
is a parameter which vanishes sufficiently fast as $N\to\infty$:
\begin{equation}
\lim_{N\to\infty}\epsilon_{N}N\log N=0.\label{eq:eN-cond}
\end{equation}
Since the transition rate from $\eta$ to $\eta-\delta_{x}+\delta_{y}$
is positive if and only if $\eta_{x}\ge1$ and $c_{xy}>0$, the process
is irreducible on $\Omega_{N}$. Moreover, recall from Remark \ref{rem2}
that Assumption \ref{assu-tech} is obviously satisfied. Denote by
$r_{N}(\cdot,\cdot)$ the corresponding transition rate function.
\begin{rem}
The generator $\mathscr{L}_{N}$ satisfies $\mathscr{L}_{N}=\epsilon^{-1}_{N}\mathscr{L}_{N}'$
where
\[
\mathscr{L}_{N}'F(\eta)=\sum_{x,y\in\mathscr{V}}c_{xy}\eta_{x}(\eta_{y}+\epsilon_{N}\alpha_{y})\left(F(\eta-\delta_{x}+\delta_{y})-F(\eta)\right),
\]
which has a more traditional form (cf. \cite{GKR07}). Thus, $\mathscr{L}_{N}$
can be understood as an accelerated version of the original inclusion
process defined via $\mathscr{L}_{N}'$ by the metastable time scale
$\theta_{N}=\epsilon^{-1}_{N}$ (cf. \cite{GRV13}).
\end{rem}

Denote by $\xi^{x}_{N}$, $x\in\mathscr{V}$, the configuration in
$\Omega_{N}$ such that
\[
(\xi^{x}_{N})_{x}=N\qquad\text{and}\qquad(\xi^{x}_{N})_{y}=0\qquad\text{for all}\quad y\ne x.
\]

\begin{rem}
It is easy to check that the product measure
\[
\nu_{N}(\eta):=\prod_{x\in\mathscr{V}}\frac{\Gamma(\alpha_{x}\epsilon_{N}+\eta_{x})}{\Gamma(\alpha_{x}\epsilon_{N})\eta_{x}!}\qquad\text{for}\quad\eta\in\Omega_{N}
\]
is stationary with respect to the generator. Furthermore, it is reversible:
\[
\nu_{N}(\eta)r_{N}(\eta,\xi)=\nu_{N}(\xi)r_{N}(\xi,\eta)\qquad\text{for any}\quad\eta,\xi\in\Omega_{N}.
\]
One can see that $\nu_{N}$ becomes an infinite measure as soon as
$\mathscr{G}$ is an infinite graph:
\[
\nu_{N}(\Omega_{N})\ge\sum_{x\in\mathscr{V}}\nu_{N}(\xi^{x}_{N})=\sum_{x\in\mathscr{V}}\frac{\Gamma(\alpha_{x}\epsilon_{N}+N)}{\Gamma(\alpha_{x}\epsilon_{N})N!}\ge\sum_{x\in\mathscr{V}}\alpha_{x}\epsilon_{N}\ge\alpha_{\min}\epsilon_{N}|\mathscr{V}|=\infty.
\]
Thus, the particle system is not necessarily positive recurrent, nor
it is necessarily recurrent.
\end{rem}

Define
\[
\mathcal{E}^{x}_{N}:=\{\xi^{x}_{N}\}\qquad\text{for}\quad x\in\mathscr{V},\qquad\text{and}\qquad\mathcal{E}_{N}:=\bigcup_{x\in\mathscr{V}}\mathcal{E}^{x}_{N}.
\]
Since the topology is discrete, we may simply define $\widehat{\mathcal{E}}^{x}_{N}\equiv\mathcal{E}^{x}_{N}$
for each $x\in\mathscr{V}$. In addition, since each metastable set
is singleton, the local mixing property is obvious.

Then, as done in Section \ref{sec1.3}, denote by $\{\eta^{{\rm tr}}_{N}(t)\}_{t\ge0}$
the trace process on the set $\mathcal{E}_{N}\cup\{\mathfrak{d}\}$,\footnote{Mind that $\overline{\mathcal{E}}_{N}=\mathcal{E}_{N}$ in the discrete
topology.} by $\Phi_{N}:\mathcal{E}_{N}\cup\{\mathfrak{d}\}\to\mathscr{V}_{\mathfrak{d}}$
the projection given as $\Phi_{N}(\xi^{x}_{N}):=x$ and $\Phi_{N}(\mathfrak{d}):=\mathfrak{d}$,
and by $Y_{N}(t):=\Phi_{N}(\eta^{{\rm tr}}_{N}(t))$ the corresponding
order process on $\mathscr{V}_{\mathfrak{d}}$. Denote by $\mathbb{Q}^{N}_{\eta}$
(resp. ${\bf Q}^{N}_{\eta}$), $\eta\in\Omega_{N}$, the law of $\{\eta_{N}(t)\}_{t\ge0}$
(resp. $\{Y_{N}(t)\}_{t\ge0}$) on $D([0,\infty);\Omega_{N})$ (resp.
$D([0,\infty);\mathscr{V}_{\mathfrak{d}})$) starting from $\eta$.

The edge/site-weight structure defines a natural (weighted) random
walk $\{{\bf y}(t)\}_{t\ge0}$ on $\mathscr{G}=(\mathscr{V},\mathscr{E})$,
which is defined via the following infinitesimal generator $\mathfrak{L}$
acting on $C_{0}(\mathscr{V}_{\mathfrak{d}})$ as
\begin{equation}
\mathfrak{L}f(x)=\sum_{y\in\mathscr{V}}c_{xy}\alpha_{y}\left(f(y)-f(x)\right)\qquad\text{for}\quad x\in\mathscr{V},\qquad\mathfrak{L}f(\mathfrak{d})=0.\label{eq:wRW}
\end{equation}
The following is the main theorem for the inclusion process. Denote
by ${\bf Q}_{x}$ the law of $\{{\bf y}(t)\}_{t\ge0}$ on $D([0,\infty);\mathscr{V}_{\mathfrak{d}})$
starting from $x\in\mathscr{V}$.
\begin{thm}
\label{t:inc}The inclusion process $\{\eta_{N}(t)\}_{t\ge0}$ is
metastable in the following sense:
\begin{enumerate}
\item For any $x\in\mathscr{V}$, the trace laws ${\bf Q}^{N}_{\xi^{x}_{N}}$
converge weakly to the limit law ${\bf Q}_{x}$ as $N\to\infty$.
\item The excursions outside $\mathcal{E}_{N}$ is negligible, i.e., for
each $x\in\mathscr{V}$ and $T>0$,
\[
\lim_{N\to\infty}{\rm E}^{\mathbb{Q}^{N}_{\xi^{x}_{N}}}\left[\int^{T}_{0}{\bf 1}\left\{ \eta_{N}(t)\notin\mathcal{E}_{N}\right\} {\rm d}t\right]=0.
\]
\end{enumerate}
\end{thm}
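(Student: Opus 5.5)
We derive Theorem \ref{t:inc} from Theorem \ref{t:main2}. Assumption \ref{assu-tech} is trivial because the topology is discrete, so it remains to verify conditions $\widehat{\mathfrak{R}}$ and $\mathfrak{K}$ with $S=\mathscr{V}$, $\mathcal{E}^{x}_{N}=\{\xi^{x}_{N}\}$ and $\mathfrak{L}$ as in \eqref{eq:wRW}. Statement (1) of the theorem is then condition $\mathfrak{C}$, and statement (2) is \eqref{eq:D-Delta}.

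\emph{Condition $\mathfrak{K}$.} Fix $x\in\mathscr{V}$, $T>0$ and $\eta>0$. Let $B_{R}$ be the graph ball of radius $R$ around $x$; it is finite by \eqref{eq:loc-fin}. Put $\mathcal{K}_{N}:=\{\zeta\in\Omega_{N}:\zeta_{y}=0\text{ for }y\notin B_{R}\}$. This set is finite, hence compact, and it meets $\mathcal{E}^{y}_{N}$ only for $y\in B_{R}$, so \eqref{eq:K1-1} holds. To leave $\mathcal{K}_{N}$, some particle must jump across the edge boundary of $B_{R}$. Along the way, the mass must in effect travel through $R$ successive shells, and each transfer of a macroscopic mass from one site to a neighbour costs an $O(1)$ amount of time in the accelerated scale, at rate at most $c_{\sup}\alpha_{\sup}\mathfrak{n}_{\mathscr{G}}$. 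I would make this precise by comparison with the limiting walk. Consider the function $\zeta\mapsto\sum_{y}\zeta_{y}\,\mathrm{d}(x,y)/N$ (or a suitable concave transform of it) and control its drift under $\mathscr{L}_{N}$. A more robust alternative is to couple with the statement that excursions from $\xi^{y}_{N}$ reach only neighbouring condensates, using the two-site estimates of \cite{GRV13,KS21}. Then the number of condensate relocations up to time $T$ is stochastically dominated by a Poisson variable with mean $CT$. Choosing $R$ large then gives \eqref{eq:K}. One also needs that a single excursion from $\xi^{y}_{N}$, before the process either returns or settles at a neighbour $\xi^{z}_{N}$, does not carry particles to distance two or more with non-vanishing probability. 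This follows from \eqref{eq:eN-cond}: while the mass is split between $y$ and $z$, a third site acquires particles only at rate $O(1)$, whereas the split resolves in time $O(\epsilon_{N}\log N)$.

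\emph{Condition $\widehat{\mathfrak{R}}$.} Fix $\lambda>0$, $\mathbf{g}\in C_{0}(\mathscr{V})$ and a lift $G_{N}$. Let $F_{N}$ solve \eqref{eq:res}, and let $\mathbf{f}=(\lambda-\mathfrak{L})^{-1}\mathbf{g}$. I would build an explicit approximate solution $\widetilde F_{N}$, following the test-function construction of \cite{KS21,LMS25} for the finite-graph case. On configurations supported on a single edge $\{y,z\}$ with $\zeta_{y}+\zeta_{z}=N$, interpolate $\mathbf{f}(y)$ and $\mathbf{f}(z)$ through the one-dimensional equilibrium potential of the two-site inclusion dynamics, roughly a function of $\zeta_{z}/N$ which is harmonic up to errors $o(1)$. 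Elsewhere, extend by a partition of unity; since $\mathbf{f}\in C_{0}$, the construction is uniform over infinitely many edges because all the weights are bounded above and below. Then show that $(\lambda-\mathscr{L}_{N})\widetilde F_{N}=G_{N}+R_{N}$, where $R_{N}$ is small in the sense that $\sup_{\zeta}\mathbb{E}_{\zeta}[\int_{0}^{\infty}e^{-\lambda t}|R_{N}(\eta_{N}(t))|\,dt]\to0$. The generator acting at $\xi^{y}_{N}$ gives $\sum_{z}c_{yz}\alpha_{z}(\widetilde F_{N}(\xi^{y}_{N}-\delta_{y}+\delta_{z})-\mathbf{f}(y))\cdot N/N\approx\mathfrak{L}\mathbf{f}(y)$, which is exactly the macroscopic equation. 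This is where the choice $\epsilon_{N}^{-1}$ acceleration and the rate $\eta_{x}\alpha_{y}$ produce $c_{yz}\alpha_{z}$: the probability of completing a transition from one particle is $\approx1/N$. By the Dynkin formula, $F_{N}-\widetilde F_{N}=(\lambda-\mathscr{L}_{N})^{-1}R_{N}$. The remaining task is to bound the resolvent of the error. Off the edge-supported configurations, this uses that the process relaxes to $\mathcal{E}_{N}$ in time $o(1)$, uniformly in the starting point, via \eqref{eq:eN-cond}.

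I expect the main obstacle to be this last step: controlling $R_{N}$ uniformly on the infinite, non-compact $\Omega_{N}$, without any invariant probability measure. Capacity or $L^{2}(\nu_{N})$ arguments fail because $\nu_{N}$ is infinite. My first attempt would therefore be a purely pathwise estimate. I would bound the expected time spent in configurations where at least two sites carry more than $\delta N$ particles, or where at least $N-\delta N$ particles are not on one or two adjacent sites, by a local-in-space Lyapunov argument that depends only on finitely many coordinates and hence is uniform in the graph.
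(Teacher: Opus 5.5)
Your reduction to Theorem \ref{t:main2} is correct. Your plan for $\mathfrak{K}$ follows the paper's outline: the process leaves a neighbourhood of $\xi^x_N$ through distant condensates rather than through off-edge configurations, and the trace on condensates jumps at rates $c_{yz}\alpha_z(1+o_N(1))$, so reaching graph distance $m$ costs $m$ exponential clocks.

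\textbf{The gap is in $\widehat{\mathfrak{R}}$}, at exactly the step you flag as the main obstacle, and it is not a technicality.
\begin{itemize}
\item The lift $G_N$ is prescribed only at the points $\xi^y_N$ and is an arbitrary bounded function on $\Delta_N$. So any $\widetilde F_N$ built from $\mathbf{f}$ leaves a residual $R_N=(\lambda-\mathscr{L}_N)\widetilde F_N-G_N$ of order one on most of $\Delta_N$.
\item Making the resolvent of $R_N$ small at $\xi^x_N$ therefore requires showing that the process started at $\xi^x_N$ spends negligible discounted time in $\Delta_N$. That is essentially condition $\mathfrak{D}$.
\item In the paper, $\mathfrak{D}$ is an \emph{output}, derived from $\mathfrak{R}$ and $\mathfrak{K1}$ in Lemma \ref{lem2.2}. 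You supply no argument for it beyond an unspecified Lyapunov function.
\item The uniform statement you aim for is false. If the $N$ particles start far apart, they move as independent walks with $O(1)$ rates until they meet. So there is no relaxation to $\mathcal{E}_N$ in time $o(1)$, and on a transient graph they need not condense at all.
\item At such configurations $F_N$ genuinely depends on the values of $G_N$ in $\Delta_N$. Hence $\sup_\zeta \mathrm{E}_\zeta[\int_0^\infty e^{-\lambda t}|R_N(\eta_N(t))|\,\mathrm{d}t]\to0$ cannot hold for a generic lift.
\end{itemize}

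\textbf{The missing idea} is that $F_N$ never needs to be controlled off the condensates beyond the a priori bound $\|F_N\|_\infty\le\|G_N\|_\infty/\lambda$ from \eqref{eq:FG}. The paper tests the resolvent equation against the local weight $\mu_{N,x}h_{N,x}$. Here $\mu_{N,x}$ is the reversible product measure restricted to the edge lines $\mathcal{A}^{xy}_N$, $y\in\mathscr{N}_x$, and $h_{N,x}(\xi^{xy}_{i,N-i})=i/N$. By detailed balance:
\begin{itemize}
\item the coefficient of each interior value $F_N(\xi^{xy}_{i,N-i})$ is $O(\epsilon_N^2/(i(N-i)))$;
\item jumps leaving the edge lines contribute $O(\epsilon_N^2\log N)$;
\item the endpoint coefficients give exactly $\mu_{N,x}(\xi^x_N)\sum_{y}c_{xy}\alpha_y\bigl(F_N(\xi^x_N)-F_N(\xi^y_N)\bigr)$.
\end{itemize}
Since $\mu_{N,x}(\xi^x_N)\ge\alpha_x\epsilon_N/N$ and $\epsilon_N N\log N\to0$, this gives the closed discrete equation $(\lambda-\mathfrak{L})f_N=\mathbf{g}+o_N(1)$, uniformly on $\mathscr{V}$, for $f_N(y):=F_N(\xi^y_N)$. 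Inverting $\lambda-\mathfrak{L}$ gives $\widehat{\mathfrak{R}}$ without any occupation-time estimate. Condition $\mathfrak{D}$ then follows from Theorem \ref{t:main2}.

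This also shows that your remark that measure-based arguments fail because $\nu_N$ is infinite is misplaced. Both this computation and the capacity estimates for $\mathfrak{K}$, which are done on the finite restricted space $\Omega_{N,m}$, use $\nu_N$ only locally.

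\textbf{Two smaller points on your $\mathfrak{K}$ sketch.}
\begin{itemize}
\item The drift of $\sum_y\zeta_y\,d(x,y)/N$ is indeed bounded, because the $\epsilon_N^{-1}$ terms cancel by symmetry of $c$. But it controls only a centre of mass. It cannot prevent a single particle from leaving $B_R$, which already means exiting your $\mathcal{K}_N$. Only your second, trace-based route works.
\item A site adjacent to a pile of order $N$ particles receives particles at rate of order $N$, not $O(1)$. With the correct rate, the expected number of such escapes per unit time is of order $N\epsilon_N\log N$. That is exactly why the full strength of \eqref{eq:eN-cond} is needed. The paper makes this rigorous via $\mathrm{cap}_{N,m}(\xi^x_N,\mathcal{Y}_{N,m})/\mathrm{cap}_{N,m}(\xi^x_N,\mathcal{X}_{N,m})\le C\epsilon_N N\log N$.
\end{itemize}
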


The proof of Theorem \ref{t:inc} is presented in Section \ref{sec3}.

\subsection{\label{sec1.5}Application II: Non-ergodic one-dimensional diffusions}

In this subsection, we establish metastability of the one-dimensional
diffusion described by the following stochastic differential equation
(SDE):
\begin{equation}
{\rm d}\bm{x}_{\epsilon}(t)=b(\bm{x}_{\epsilon}(t))\,{\rm d}t+\sqrt{2\epsilon}\,{\rm d}\bm{w}_{t},\label{eq:SDE}
\end{equation}
where $b:\mathbb{R}\to\mathbb{R}$ is a $C^{3}$ function, $(\bm{w}_{t}:t\ge0)$
is a standard Brownian motion, and $\epsilon>0$ is a small parameter
representing the temperature of the system. We assume throughout that
this SDE admits a unique non-explosive strong solution, but does not
assume that the process is ergodic; i.e., it may be null-recurrent
or even be transient.

Set
\begin{equation}
U(x):=\begin{cases}
-\int^{x}_{0}b(y)\,{\rm d}y & \text{if}\quad x\ge0,\\
\int^{0}_{x}b(y)\,{\rm d}y & \text{if}\quad x<0,
\end{cases}\label{eq:U-def}
\end{equation}
such that $-U'=b$, and denote by $\mathcal{C}_{0}$ the set of all
critical points of $U$. We assume that $U$ is a Morse function;
that is, 
\[
U''(\mathfrak{c})\ne0\qquad\text{for every}\quad\mathfrak{c}\in\mathcal{C}_{0}.
\]
Consequently, every critical point is either a local minimum or a
local maximum. Let $\mathcal{M}_{0}$ and $\mathcal{S}_{0}$ denote
the sets of all local minima and local maxima of $U$, respectively,
such that $\mathcal{C}_{0}=\mathcal{M}_{0}\cup\mathcal{S}_{0}$.
\begin{assumption}
\label{assu:U}We assume that the following conditions hold:
\begin{enumerate}
\item We have at least two local minima: $|\mathcal{M}_{0}|\ge2$.
\item There exists $\delta>0$ such that $|\mathfrak{c}-\mathfrak{c}'|>\delta$
for all distinct $\mathfrak{c},\mathfrak{c}'\in\mathcal{C}_{0}$.
\item The second derivatives are bounded away from zero and infinity:
\[
0<\inf_{\mathfrak{c}\in\mathcal{C}_{0}}\left|U''(\mathfrak{c})\right|\le\sup_{\mathfrak{c}\in\mathcal{C}_{0}}\left|U''(\mathfrak{c})\right|<\infty.
\]
\end{enumerate}
\end{assumption}

\begin{rem}
Assumption \ref{assu:U}-(3) is imposed for technical convenience.
It ensures that the jump rates in \eqref{eq:rY} are uniformly bounded,
and consequently, that the limiting Markov chain is non-explosive.
In practice, this assumption should be readily relieved.
\end{rem}

For simplicity, assume in addition that $0\in\mathcal{M}_{0}$. We
enumerate critical points so that $\mathfrak{m}_{0}=0$ and
\[
\cdots<\mathfrak{m}_{n}<\mathfrak{s}_{n}<\mathfrak{m}_{n+1}<\mathfrak{s}_{n+1}<\cdots;\qquad n\in\mathbb{Z},
\]
where $\mathfrak{m}_{n}\in\mathcal{M}_{0}$ and $\mathfrak{s}_{n}\in\mathcal{S}_{0}$.
In particular, by Assumption \ref{assu:U}, $\lim_{n\to\infty}\mathfrak{m}_{n}=\infty$
and $\lim_{n\to-\infty}\mathfrak{m}_{n}=-\infty$. For each $n\in\mathbb{Z}$,
define the right and left barrier heights of the well $(\mathfrak{s}_{n-1},\mathfrak{s}_{n})$
by
\[
\mathfrak{h}^{+}_{n}:=U(\mathfrak{s}_{n})-U(\mathfrak{m}_{n}),\qquad\mathfrak{h}^{-}_{n}:=U(\mathfrak{s}_{n-1})-U(\mathfrak{m}_{n}),
\]
respectively, and set 
\begin{equation}
D:=\inf_{n\in\mathbb{Z}}\min\{\mathfrak{h}^{+}_{n},\mathfrak{h}^{-}_{n}\}.\label{eq:leb}
\end{equation}
Thus, $D$ is the lowest energy barrier among all the wells. We impose
the following assumptions on the barrier heights.
\begin{assumption}
\label{assu:M}
\begin{enumerate}
\item The infimum in \eqref{eq:leb} is attained, i.e., there exists $n\in\mathbb{Z}$
such that $D=\mathfrak{h}^{+}_{n}$ or $D=\mathfrak{h}^{-}_{n}$.
\item There exists $c>0$ such that for every $n\in\mathbb{Z}$, 
\[
\mathfrak{h}^{+}_{n}>D\ \Rightarrow\ \mathfrak{h}^{+}_{n}\ge D+c\qquad\text{and}\qquad\mathfrak{h}^{-}_{n}>D\ \Rightarrow\ \mathfrak{h}^{-}_{n}\ge D+c.
\]
\end{enumerate}
\end{assumption}

For $n\in\mathbb{Z}$, denote by $\nu_{n}$ and $\omega_{n}$ the
masses of $\mathfrak{m}_{n}$ and the Eyring--Kramers constant at
$\mathfrak{s}_{n}$, respectively:
\begin{equation}
\nu_{n}:=\frac{1}{\sqrt{U''(\mathfrak{m}_{n})}},\qquad\omega_{n}:=\frac{\sqrt{-U''(\mathfrak{s}_{n})}}{2\pi}.\label{eq:mass}
\end{equation}
By Assumption \ref{assu:U}-(3), the constants $\nu_{n}$ and $\omega_{n}$
are uniformly bounded away from zero and infinity.

Define a jump rate function $r_{{\bf y}}:\mathbb{Z}\times\mathbb{Z}\to[0,\infty)$
by
\begin{equation}
r_{{\bf y}}(n,k):=\begin{cases}
\frac{\omega_{n}}{\nu_{n}}{\bf 1}\{\mathfrak{h}^{+}_{n}=D\} & \text{if}\quad k=n+1,\\
\frac{\omega_{n-1}}{\nu_{n}}{\bf 1}\{\mathfrak{h}^{-}_{n}=D\} & \text{if}\quad k=n-1,\\
0 & \text{otherwise}.
\end{cases}\label{eq:rY}
\end{equation}
Let $\{{\bf y}(t)\}_{t\ge0}$ be the continuous-time Markov chain
on $\mathbb{Z}$ with jump rate $r_{{\bf y}}$.

Fix $r_{0}>0$ sufficiently small such that
\[
\mathfrak{s}_{n-1}<\mathfrak{m}_{n}-3r_{0}<\mathfrak{m}_{n}+3r_{0}<\mathfrak{s}_{n}\qquad\text{for every}\quad n\in\mathbb{Z},
\]
which is possible by Assumption \ref{assu:U}-(2). Define the metastable
sets by
\[
\mathcal{E}^{n}:=\left(\mathfrak{m}_{n}-r_{0},\mathfrak{m}_{n}+r_{0}\right),\quad\widehat{\mathcal{E}}^{n}:=\left(\mathfrak{m}_{n}-\frac{r_{0}}{2},\mathfrak{m}_{n}+\frac{r_{0}}{2}\right),\quad\text{and}\quad\mathcal{E}:=\bigcup_{n\in\mathbb{Z}}\mathcal{E}^{n}.
\]

The metastable behavior of the dynamics is observed in the time scale
$\theta_{\epsilon}:=e^{D/\epsilon}$. Define the accelerated process
$X_{\epsilon}(t):=\bm{x}_{\epsilon}(\theta_{\epsilon}t)$, the trace
process $X^{{\rm tr}}_{\epsilon}$ of $X_{\epsilon}$ on $\overline{\mathcal{E}}$
and the order process $Y_{\epsilon}$ on $\mathcal{M}_{0}$ as in
\eqref{eq:YN}.\footnote{In this one-dimensional case, it is straightforward to verify that
the inverse occupation time $S_{N}(t)$ does not blow up, and thus
the cemetery point $\mathfrak{d}$ does not have to be introduced.} Denote by $\mathbb{Q}^{\epsilon}_{x}$, ${\bf Q}^{\epsilon}_{x}$,
and $\mathbf{Q}_{n}$ the laws of $\{X_{\epsilon}(t)\}_{t\ge0}$,
$\{Y_{\epsilon}(t)\}_{t\ge0}$ starting from $x\in\mathbb{R}$, and
the law of $\{{\bf y}(t)\}_{t\ge0}$ starting from $n\in\mathbb{Z}$,
respectively.

It is well known in the one-dimensional setting, e.g. via the Girsanov
transform and Blumenthal's zero-one law, that the diffusion process
satisfies Assumption \ref{assu-tech}: For all $\epsilon>0$ and $n\in\mathbb{Z}$,
\[
\mathbb{Q}^{\epsilon}_{\mathfrak{m}_{n}-r_{0}}\left[\mathcal{H}_{\mathcal{E}^{n}}=0\right]=\mathbb{Q}^{\epsilon}_{\mathfrak{m}_{n}+r_{0}}\left[\mathcal{H}_{\mathcal{E}^{n}}=0\right]=1.
\]
The following is our last main result of the article.
\begin{thm}
\label{t:diff}Suppose that Assumptions \ref{assu:U} and \ref{assu:M}
are in force. Then, the diffusion process $\{X_{\epsilon}(t)\}_{t\ge0}$
is metastable in the following sense:
\begin{enumerate}
\item (Mixing) The process satisfies the local mixing condition: For each
$n\in\mathbb{Z}$, $\mathfrak{m}_{n}$ is the attractor, i.e., for
any collection $(x_{\epsilon})_{\epsilon>0}$ in $\mathcal{E}^{n}$,
\[
\lim_{\epsilon\to0}\mathbb{Q}^{\epsilon}_{x_{\epsilon}}\left[\mathcal{H}_{\mathfrak{m}_{n}}>\mathcal{H}_{\mathcal{E}\setminus\mathcal{E}^{n}}\right]=0.
\]
\item (Convergence in law) For any $n\in\mathbb{Z}$ and $(x_{\epsilon})_{\epsilon>0}$
in $\mathcal{E}^{n}$, the trace laws ${\bf Q}^{\epsilon}_{x_{\epsilon}}$
converge weakly to the limit law ${\bf Q}_{n}$ as $\epsilon\to0$.
\item (Delta-negligibility) The excursions outside $\mathcal{E}$ is negligible,
i.e. for each $n\in\mathbb{Z}$, $(x_{\epsilon})_{\epsilon>0}$ in
$\mathcal{E}^{n}$, and $T>0$,
\[
\lim_{\epsilon\to0}{\rm E}^{\mathbb{Q}^{\epsilon}_{x_{\epsilon}}}\left[\int^{T}_{0}{\bf 1}\left\{ X_{\epsilon}(t)\notin\mathcal{E}\right\} {\rm d}t\right]=0.
\]
\end{enumerate}
\end{thm}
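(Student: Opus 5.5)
Items (2) and (3) follow from Theorem \ref{t:main2} once we verify $\widehat{\mathfrak{R}}$ and $\mathfrak{K}$ for $X_\epsilon$; Assumption \ref{assu-tech} was noted above. Item (1) is a separate one-dimensional estimate.

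\emph{Mixing.} Starting from $x_\epsilon\in\mathcal{E}^n$, the process must pass through $\mathfrak{m}_n$ to reach $\mathfrak{m}_n\pm 3r_0$ on the opposite side of $x_\epsilon$, and every point of $\mathcal{E}\setminus\mathcal{E}^n$ lies beyond $\mathfrak{s}_{n-1}$ or $\mathfrak{s}_n$. It therefore suffices to show that the scale-function probability of reaching $\mathfrak{s}_n$ before $\mathfrak{m}_n$, starting from $x\in(\mathfrak{m}_n,\mathfrak{m}_n+r_0)$, vanishes, and similarly on the left. This probability equals $\int_{\mathfrak{m}_n}^{x}e^{U/\epsilon}\big/\int_{\mathfrak{m}_n}^{\mathfrak{s}_n}e^{U/\epsilon}$, which is at most $e^{-c/\epsilon}$ since $U(\mathfrak{m}_n+r_0)<U(\mathfrak{s}_n)$.

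\emph{Compactness $\mathfrak{K}$.} Fix $n$, $T$, $\eta$. Take $\mathcal{K}=[\mathfrak{m}_{n-L},\mathfrak{m}_{n+L}]$, which meets only finitely many $\mathcal{E}^i$. To leave $\mathcal{K}$ before time $\theta_\epsilon T$, the process must cross roughly $L$ consecutive saddles. Each crossing from a neighbourhood of $\mathfrak{m}_k$ takes a time that stochastically dominates a variable of order $\theta_\epsilon$; one also has to rule out fast direct crossings, which again follows from one-dimensional capacity estimates. Using the uniform bounds on $\nu_k,\omega_k$ from Assumption \ref{assu:U}-(3), together with the gap $c$ in Assumption \ref{assu:M}, these times are bounded below uniformly in $k$. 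Choosing $L$ large then makes the probability of $L$ crossings within time $T$ smaller than $\eta$, as for a Poisson-type count with bounded rate. Concretely, the mean exit time from $(\mathfrak{s}_{k-1},\mathfrak{s}_k)$, started at $\mathfrak{m}_k$, is computed exactly by the Green function formula and is $\gtrsim\theta_\epsilon$ uniformly in $k$. A strong Markov argument then controls sums of $L$ such exit times.

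\emph{Resolvent $\widehat{\mathfrak{R}}$.} This is the core step. Given $\mathbf{g}\in C_0(\mathbb{Z})$ and a lift $G_\epsilon$ constant equal to $\mathbf{g}(k)$ on $\widehat{\mathcal{E}}^k$, let $F_\epsilon$ solve $(\lambda-\theta_\epsilon\mathscr{L}_\epsilon)F_\epsilon=G_\epsilon$. I would proceed in three steps.
\begin{enumerate}
\item \emph{Flatness within each well.} The oscillation of $F_\epsilon$ on $\overline{\mathcal{E}}^k$ tends to $0$ uniformly in $k$. This follows from $\|F_\epsilon\|_\infty\le\|G_\epsilon\|_\infty/\lambda$ combined with the attractor property: by the stochastic representation, starting anywhere in $\mathcal{E}^k$ the process reaches $\mathfrak{m}_k$ in time $o(\theta_\epsilon)$ with high probability.
\item \emph{The discrete equation.} Evaluate the equation in a test-function form, integrating against $e^{-U/\epsilon}$ over $(\mathfrak{s}_{k-1},\mathfrak{s}_k)$. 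Using the explicit one-dimensional structure, the flux of $F_\epsilon$ across the saddle $\mathfrak{s}_k$ equals $\theta_\epsilon\epsilon\,e^{-U(\mathfrak{s}_k)/\epsilon}$ times $(F_\epsilon(\mathfrak{m}_{k+1})-F_\epsilon(\mathfrak{m}_k))$ divided by a Laplace integral, up to a factor $1+o(1)$. Dividing by the Laplace mass $\approx\sqrt{2\pi\epsilon}\,\nu_k e^{-U(\mathfrak{m}_k)/\epsilon}$, the values $a_k^\epsilon:=F_\epsilon(\mathfrak{m}_k)$ satisfy $(\lambda-\mathfrak{L}_\epsilon)a^\epsilon=\mathbf{g}+o(1)$. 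Here $\mathfrak{L}_\epsilon$ is a nearest-neighbour generator whose rates converge to $r_{\mathbf{y}}$ uniformly in $k$; barriers exceeding $D+c$ contribute $e^{-c/\epsilon}$. The $o(1)$ error comes from the $\Delta$-region, where $G_\epsilon$ is not controlled, but its Gibbs weight is exponentially small relative to the well.
\item \emph{Stability.} Since $\lambda-\mathfrak{L}$ has an inverse of norm at most $1/\lambda$ on $\ell^\infty$ by the maximum principle, and $\mathfrak{L}_\epsilon\to\mathfrak{L}$ uniformly in operator norm (rates are bounded), we obtain $\sup_k|a_k^\epsilon-\mathbf{f}(k)|\to0$. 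This gives \eqref{eq:R}, even uniformly.
\end{enumerate}

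The main obstacle is step 2: obtaining Laplace and flux asymptotics with errors uniform over infinitely many wells, using Assumptions \ref{assu:U}(2)--(3) to make all constants uniform.
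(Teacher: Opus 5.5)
Your plan is sound in outline and, like the paper, gets items (2)--(3) from Theorem \ref{t:main2}, but each ingredient is proved differently.

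\textbf{Item (1).} Your scale-function bound is direct and even gives an $e^{-c/\epsilon}$ rate. The paper instead splits on $\{\mathcal{H}_{\breve{\mathcal{E}}^{n}}<t\}$, uses Proposition \ref{p:hit}, and then needs condition $\mathfrak{N}$ from Lemma \ref{lem2.3}, i.e.\ the whole resolvent machinery.

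\textbf{Condition $\mathfrak{K}$.} The paper truncates the drift outside $[\mathfrak{m}_{-\mathfrak{k}}-2r_0,\mathfrak{m}_{\mathfrak{k}}+2r_0]$. It applies Theorem \ref{t:main2} to the resulting diffusion with finitely many wells and transfers the non-explosion of the reflected chain via \eqref{e:hit_conv}. Your direct exit-time count also works, but needs one more ingredient. In one dimension there are no fast direct crossings to exclude: every passage from $\mathfrak{s}_k$ to another saddle passes $\mathfrak{m}_k$ or $\mathfrak{m}_{k+1}$ and then exits that well. However, a lower bound on the \emph{mean} exit time $\tau_k$ of $(\mathfrak{s}_{k-1},\mathfrak{s}_k)$ does not control its lower tail. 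Combine it with $\sup_{y}{\rm E}^{\mathbb{Q}^{\epsilon}_{y}}[\tau_k]\le{\rm E}^{\mathbb{Q}^{\epsilon}_{\mathfrak{m}_k}}[\tau_k]+o_\epsilon(1)$, which follows from the strong Markov property at $\mathcal{H}_{\mathfrak{m}_k}\wedge\tau_k$ and Lemma \ref{l:hit1}. This gives $\mathbb{Q}^{\epsilon}_{\mathfrak{m}_k}[\tau_k\le s]\le s/{\rm E}^{\mathbb{Q}^{\epsilon}_{\mathfrak{m}_k}}[\tau_k]+o_\epsilon(1)$, and a Laplace-transform bound then handles $L$ crossings.

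\textbf{Resolvent.} The paper tests against the smoothed $\phi^{n}_{\epsilon}$ built from $p^{n}_{\epsilon}$. This costs Lemma \ref{lem:test0}, flatness of $F_\epsilon$ on $\mathcal{W}^{n}_{\epsilon}$ up to distance $O(\delta)$ from the saddles, and a core approximation. Your sharp basin cutoff is more elementary. Set $J:=\epsilon e^{-U/\epsilon}F_\epsilon'$, which is legitimate since $F_\epsilon\in C^2$ in one dimension. Then $\theta_\epsilon J'=e^{-U/\epsilon}(\lambda F_\epsilon-G)$, and monotonicity of $U$ on either side of $\mathfrak{s}_k$ gives
\[
F_\epsilon(\mathfrak{m}_{k+1})-F_\epsilon(\mathfrak{m}_k)=\frac{J(\mathfrak{s}_k)}{\epsilon}\int_{\mathfrak{m}_k}^{\mathfrak{m}_{k+1}}e^{U/\epsilon}\,{\rm d}x+O\Big(\frac{(\mathfrak{m}_{k+1}-\mathfrak{m}_k)^2}{\epsilon\theta_\epsilon}\Big).
\]
So only flatness near the minima is needed. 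Note the error is additive, not a factor $1+o(1)$. The price is that this is intrinsically one-dimensional, whereas the paper's test-function argument is the one meant to generalize.

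\textbf{Uniformity in $k$.} The step to change is the uniformity in $k$ that you aim for throughout and call the main obstacle. Assumption \ref{assu:U} controls $U''$ only \emph{at} the critical points. It gives no uniform modulus of continuity for $U''$ near them and no upper bound on $\mathfrak{m}_{k+1}-\mathfrak{m}_k$. So Laplace and flux asymptotics uniform over infinitely many wells are not available, and the operator-norm argument of your Step 3 cannot be run as stated.

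It is also unnecessary, since $\widehat{\mathfrak{R}}$ is a statement for each fixed $j$. Set $e_\epsilon:=(\lambda-\mathfrak{L})a^\epsilon-{\bf g}$ using the \emph{limiting} generator. Your Step 2, done one $k$ at a time, gives $e_\epsilon(k)\to0$ for each $k$. Meanwhile $\sup_{\epsilon,k}|e_\epsilon(k)|<\infty$, because $\|a^\epsilon\|_\infty\le\|G\|_\infty/\lambda$ and the rates $r_{\bf y}$ are bounded. Hence
\[
a^\epsilon(n)-{\bf f}(n)={\rm E}^{{\bf Q}_{n}}\Big[\int_0^\infty e^{-\lambda t}\,e_\epsilon({\bf y}(t))\,{\rm d}t\Big]\longrightarrow0
\]
by dominated convergence. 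This is also what actually justifies the last step of the paper's proof of Proposition \ref{p:R}, where likewise only convergence for each fixed $n$ is available.

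Similarly, in your $\mathfrak{K}$ argument only the limiting bound $\sup_k(\omega_{k-1}+\omega_k)/\nu_k<\infty$ needs to be uniform. It fixes $L$, after which only the finitely many wells between $n-L$ and $n+L$ matter.
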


The proof of Theorem \ref{t:diff} is presented in Section \ref{sec4}.
\begin{rem}
While we restrict our focus to the one-dimensional case to keep the
presentation simple and transparent, the underlying mechanisms and
proof techniques are fully manifested in this model. We strongly believe
that the main result can be generalized to higher dimensions, though
full details would require much more technical and geometric considerations.
For instance, the saddle structure is automatically simple in one
dimension, but immediately becomes a serious technical challenge in
two or higher dimensions.
\end{rem}

\begin{rem}
We have focused on the first time scale of metastable transitions,
which is determined by the lowest energy barrier $D$ defined in \eqref{eq:leb}.
The analysis can be extended, without substantial additional technical
difficulties, to all hierarchical time scales associated with the
higher energy barriers, following the approach developed in \cite{LLS25}.
\end{rem}

\section{\label{sec2}Proofs of Theorems \ref{t:main1} and \ref{t:main2}}

In Section \ref{sec2} we prove the theoretical results, Theorems
\ref{t:main1} and \ref{t:main2}. 
\begin{notation}
\label{nota:theta1}In this section, we assume that $\theta_{N}\equiv1$
for notational simplicity. This can be achieved by simply accelerating
the original process $X_{N}(t)$ \emph{a priori}, i.e., by considering
a new process $X_{N}'(t):=X_{N}(\theta_{N}t)$.
\end{notation}

Before proceeding, we present several auxiliary conditions that constitute
the building blocks of the proof.

The first condition states that the metastable jumps are asymptotically
non-instantaneous. Recall that $\mathbb{Q}^{N}_{\bm{x}}$ and ${\bf Q}^{N}_{\bm{x}}$
denote the laws of $X_{N}(\cdot)$ and $Y_{N}(\cdot)$ starting from
$\bm{x}\in\Omega_{N}$, respectively. For each $j\in S$, define $\breve{\mathcal{E}}^{j}_{N}:=\mathcal{E}_{N}\setminus\mathcal{E}^{j}_{N}$.\medskip{}

\noindent\textbf{Condition} $\mathfrak{N}$ (Non-instantaneity).
For all $j\in S$,
\begin{equation}
\lim_{t\to0}\limsup_{N\to\infty}\sup_{\bm{x}\in\overline{\mathcal{E}}^{j}_{N}}\mathbb{Q}^{N}_{\bm{x}}\left[\mathcal{H}_{\breve{\mathcal{E}}^{j}_{N}}<t\right]=0.\label{eq:N}
\end{equation}

\medskip{}
The following two conditions constitute the routine ingredients to
prove the convergence of measures in Condition $\mathfrak{C}$. The
first one corresponds to the tightness criterion due to Aldous (cf.
\cite[Theorem 16.10]{Bil99}). In below, $\mathscr{T}_{T}$ denotes
the set of all finite-range stopping times bounded by $T$.\medskip{}

\noindent\textbf{Condition} $\mathfrak{A}$ (Aldous). For any $T>0$,
$j\in S$, and $\bm{x}_{N}\in\overline{\mathcal{E}}^{j}_{N}$,
\begin{equation}
\lim_{a\to0}\limsup_{N\to\infty}\sup_{\tau\in\mathscr{T}_{T}}\sup_{\delta\in(0,a)}{\bf Q}^{N}_{\bm{x}_{N}}\left[Y_{N}(\tau+\delta)\ne Y_{N}(\tau)\right]=0.\label{eq:A}
\end{equation}

\medskip{}
The second condition corresponds to the uniqueness of limit points.\medskip{}

\noindent\textbf{Condition} $\mathfrak{U}$ (Uniqueness). For $j\in S$
and $\bm{x}_{N}\in\overline{\mathcal{E}}^{j}_{N}$, any limit point
of $\{{\bf Q}^{N}_{\bm{x}_{N}}\}_{N\ge1}$ is ${\bf Q}_{j}$.

\medskip{}
Refer to Figure \ref{fig2.1} for the overall structure of the arguments
presented in this section.

\begin{figure}
\begin{tikzpicture}[scale=0.9]
\fill[blue!30!white,rounded corners] (0,0) rectangle (1.5,1); \draw[rounded corners] (0,0) rectangle (1.5,1); \draw (0.75,0.5) node{$\mathfrak{K2}$};
\fill[blue!30!white,rounded corners] (0,1.5) rectangle (1.5,2.5); \draw[rounded corners] (0,1.5) rectangle (1.5,2.5); \draw (0.75,2) node{$\mathfrak{K1}$};
\fill[blue!30!white,rounded corners] (0,3) rectangle (1.5,4); \draw[rounded corners] (0,3) rectangle (1.5,4); \draw (0.75,3.5) node{$\mathfrak{R}$};
\draw[densely dashed,rounded corners] (2.5,0.75) rectangle (4,1.75); \draw (3.25,1.25) node{$\mathfrak{N}$};
\fill[red!30!white,rounded corners] (2.5,2.25) rectangle (4,3.25); \draw[rounded corners] (2.5,2.25) rectangle (4,3.25); \draw (3.25,2.75) node{$\mathfrak{D}$};
\draw[densely dashed,rounded corners] (5,2.25) rectangle (6.5,3.25); \draw (5.75,2.75) node{$\mathfrak{U}$};
\draw[densely dashed,rounded corners] (5,0.75) rectangle (6.5,1.75); \draw (5.75,1.25) node{$\mathfrak{A}$};
\fill[red!30!white,rounded corners] (7.5,1.5) rectangle (9,2.5); \draw[rounded corners] (7.5,1.5) rectangle (9,2.5); \draw (8.25,2) node{$\mathfrak{C}$};
\draw[thick,->] (1.6,2)--(2.1,2)--(2.1,2.75)--(2.4,2.75); \draw[thick] (1.6,3.5)--(2.1,3.5)--(2.1,2.75);
\draw[thick,->] (1.6,0.725)--(1.9,0.725)--(1.9,1.25)--(2.4,1.25); \draw[thick] (1.6,3.35)--(1.9,3.35)--(1.9,2.07); \draw[thick] (1.9,1.93)--(1.9,1.25);
\draw[thick,->] (4.1,1.25)--(4.9,1.25); \draw[thick] (1.6,0.575)--(4.4,0.575)--(4.4,1.25); \draw[thick] (4.1,2.6)--(4.4,2.6)--(4.4,1.25);
\draw[thick,->] (4.1,2.75)--(4.9,2.75); \draw[thick] (1.6,3.65)--(4.6,3.65)--(4.6,2.75); \draw[thick] (1.6,0.425)--(4.6,0.425)--(4.6,1.18); \draw[thick] (4.6,1.32)--(4.6,2.75); \draw[thick] (4.1,1.4)--(4.33,1.4); \draw[thick] (4.47,1.4)--(4.6,1.4);
\draw[thick,->] (6.6,1.25)--(7,1.25)--(7,2)--(7.4,2); \draw[thick] (6.6,2.75)--(7,2.75)--(7,2); \draw[thick] (1.6,0.275)--(7,0.275)--(7,1.25);

\begin{scope}[shift={(10,0.75)}]
\fill[red!30!white,rounded corners] (0,0) rectangle (1.5,1); \draw[rounded corners] (0,0) rectangle (1.5,1); \draw (0.75,0.5) node{$\mathfrak{D}$};
\fill[red!30!white,rounded corners] (0,1.5) rectangle (1.5,2.5); \draw[rounded corners] (0,1.5) rectangle (1.5,2.5); \draw (0.75,2) node{$\mathfrak{C}$};
\fill[blue!30!white,rounded corners] (2.5,0) rectangle (4,1); \draw[rounded corners] (2.5,0) rectangle (4,1); \draw (3.25,0.5) node{$\mathfrak{R}$};
\fill[blue!30!white,rounded corners] (2.5,1.5) rectangle (4,2.5); \draw[rounded corners] (2.5,1.5) rectangle (4,2.5); \draw (3.25,2) node{$\mathfrak{K2}$};
\fill[blue!30!white,rounded corners] (5,0.75) rectangle (6.5,1.75); \draw[rounded corners] (5,0.75) rectangle (6.5,1.75); \draw (5.75,1.25) node{$\mathfrak{K1}$};
\draw[thick,->] (1.6,2)--(2.4,2);
\draw[thick,->] (1.6,0.5)--(2.4,0.5); \draw[thick] (1.6,1.85)--(2.1,1.85)--(2.1,1.32); \draw[thick] (2.1,1.18)--(2.1,0.5);
\draw[thick,->] (4.1,2)--(4.5,2)--(4.5,1.25)--(4.9,1.25); \draw[thick] (1.6,0.65)--(1.9,0.65)--(1.9,1.25)--(4.5,1.25);
\end{scope}
\end{tikzpicture}

\caption{\label{fig2.1}Logical structure in Section \ref{sec2}. The left
diagram corresponds to the direction $\mathfrak{R}+\mathfrak{K1}+\mathfrak{K2}\Rightarrow\mathfrak{C}+\mathfrak{D}$
in Section \ref{sec2.1}, and the right diagram corresponds to the
opposite direction in Section \ref{sec2.2}.}
\end{figure}

Before going into the details, we record a technical lemma. For $A\subset S$,
define $\mathcal{E}_{N}(A):=\bigcup_{j\in A}\mathcal{E}^{j}_{N}$
and $\overline{\mathcal{E}}_{N}(A):=\bigcup_{j\in A}\overline{\mathcal{E}}^{j}_{N}$.
\begin{lem}
\label{lem:tech}
\begin{enumerate}
\item For all $\bm{x}\in\Omega_{N}$, $t\ge0$, and $A\subset S$, 
\begin{equation}
\mathbb{Q}^{N}_{\bm{x}}\left[\mathcal{H}_{\mathcal{E}_{N}(A)}\le S_{N}(t)\right]={\bf Q}^{N}_{\bm{x}}\left[H_{A\cup\{\mathfrak{d}\}}\le t\right].\label{eq:tech-1}
\end{equation}
\item Denote by $\Theta_{s}$, $s\ge0$, the time-translation operator on
$D([0,\infty);\Omega_{N})$ such that $\Theta_{s}\omega=\omega(\cdot+s)$.
Then for all $t,s\ge0$,
\[
S_{N}(t)\circ\Theta_{S_{N}(s)}+S_{N}(s)=S_{N}(t+s).
\]
\end{enumerate}
\end{lem}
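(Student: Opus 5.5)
Both parts are pathwise identities about the time change, so the plan is to prove them deterministically for every càdlàg path $\omega\in D([0,\infty);\Omega_N)$, using only the definitions \eqref{eq:SN}, \eqref{eq:trace-def} and \eqref{eq:YN}, and then take probabilities. I start with part (2), since it is used in part (1) only implicitly, and it is pure bookkeeping about $T_N$.

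For (2), the key observation is the additivity
\[
T_N(S_N(s)+r)=T_N(S_N(s))+T_N(r)\circ\Theta_{S_N(s)}.
\]
I will combine it with $T_N(S_N(s))=s$ whenever $S_N(s)<\infty$. This holds because $T_N$ is continuous and nondecreasing, so its generalized inverse satisfies $T_N(S_N(s))=s$. Then
\[
\{r\ge0: T_N(r)\circ\Theta_{S_N(s)}\le t\}=\{r\ge0: T_N(S_N(s)+r)\le t+s\}.
\]
Taking suprema gives $S_N(t)\circ\Theta_{S_N(s)}=S_N(t+s)-S_N(s)$. In the case $S_N(s)=\infty$ both sides equal $\infty$, with the convention that the shifted path is irrelevant. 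The only subtlety is to check that the supremum in \eqref{eq:SN} is taken over the same set after the shift; this is immediate because $S_N(t+s)\ge S_N(s)$.

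For (1), I show the event identity $\{\mathcal{H}_{\mathcal{E}_N(A)}\le S_N(t)\}=\{H_{A\cup\{\mathfrak{d}\}}\le t\}$ under the natural coupling $Y_N=\Phi_N(X_N(S_N(\cdot)))$. Both measures are images of $\mathbb{Q}^N_{\bm x}$, so the event identity suffices.

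For ``$\supseteq$'', suppose $Y_N(u)\in A\cup\{\mathfrak{d}\}$ for some $u\le t$. If $Y_N(u)=\mathfrak{d}$, then $S_N(u)=\infty$, so $S_N(t)=\infty$ and the inequality is trivial. Otherwise $X_N(S_N(u))\in\overline{\mathcal{E}}^j_N$ for some $j\in A$. If it lies in $\mathcal{E}^j_N$ we are done, since $S_N(u)\le S_N(t)$. If it lies on $\partial\mathcal{E}^j_N$, I use right-continuity together with the definition of $S_N$: the process must spend positive time in $\mathcal{E}_N$ immediately after $S_N(u)$, unless $S_N(u)$ is the last time before leaving. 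Since $\overline{\mathcal{E}}^j_N$ is separated from the other closures, the time right after $S_N(u)$ is spent in $\mathcal{E}^j_N$ itself. This gives $\mathcal{H}_{\mathcal{E}^j_N}\le S_N(u')$ for $u'$ slightly larger than $u$.

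This is where Assumption \ref{assu-tech} is needed. The point is to get the inequality at $S_N(t)$ itself and not only at $S_N(t+\delta)$. I will argue almost surely rather than pathwise: by the strong Markov property at the stopping time $S_N(u)$ and \eqref{eq:attr}, the process hits $\mathcal{E}^j_N$ immediately after $S_N(u)$, and an immediate hit of $\mathcal{E}^j_N$ is counted in $T_N$. If $u<t$, this gives $\mathcal{H}\le S_N(u+)\le S_N(t)$. If $u=t$ is the hitting time, I plan to use that $\mathcal{H}_{\mathcal{E}_N(A)}=S_N(t)$ holds when the boundary is entered at $S_N(t)$. This uses that $\mathcal{H}$ is an infimum over an open set, so immediate entry gives equality of the infimum.

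For ``$\subseteq$'', let $\sigma=\mathcal{H}_{\mathcal{E}_N(A)}\le S_N(t)<\infty$. Set $u:=T_N(\sigma)\le t$. Then $S_N(u)\ge\sigma$. On $[\sigma,S_N(u)]$ no occupation time in $\mathcal{E}_N$ accrues, so this interval is degenerate or is spent in $\Delta_N$. Hence either $S_N(u)=\sigma$, or the process sits at points of $\overline{\mathcal{E}}_N(A)$ reached from $\mathcal{E}_N(A)$ by right-continuity; in both cases $Y_N$ takes a value in $A$ at a time $\le u\le t$. The main obstacle is exactly this careful handling of boundary points and of the endpoint case $u=t$, where I expect to rely on Assumption \ref{assu-tech} and right-continuity of paths.
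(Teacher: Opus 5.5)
Your part (2) is correct and follows the paper's argument. The genuine gap is in the inclusion $\{\mathcal{H}_{\mathcal{E}_N(A)}\le S_N(t)\}\subseteq\{H_{A\cup\{\mathfrak{d}\}}\le t\}$ of part (1). With $\sigma=\mathcal{H}_{\mathcal{E}_N(A)}$ and $u=T_N(\sigma)\le t$, you allow a branch in which $[\sigma,S_N(u)]$ is a nondegenerate interval spent in $\Delta_N$. You then assert that $Y_N$ still visits $A$ by time $u$, but that does not follow. For $u'<u$ one has $S_N(u')<\sigma$, so nothing forces $X_N(S_N(u'))\in\overline{\mathcal{E}}_N(A)$. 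In that branch the path could also cross $\Delta_N$ and be, at time $S_N(u)$, in some $\overline{\mathcal{E}}^i_N$ with $i\notin A$.

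The missing idea is that this branch is impossible. Since $\mathcal{E}_N(A)$ is open and paths are right-continuous, every interval $[\sigma,\sigma+\epsilon)$ contains a nondegenerate subinterval on which $X_N\in\mathcal{E}_N(A)\subset\mathcal{E}_N$. Hence $T_N(\sigma+\epsilon)>T_N(\sigma)=u$ for all $\epsilon>0$, which gives $S_N(u)\le\sigma$ and so $S_N(u)=\sigma$. Then $Y_N(u)=\Phi_N(X_N(\sigma))\in A$, because $X_N(\sigma)\in\overline{\mathcal{E}}_N(A)$ by right-continuity. This is exactly the step the paper uses.

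In the reverse inclusion you misplace the difficulty. The caveat ``unless $S_N(u)$ is the last time before leaving'' is wrong: whenever $S_N(u)<\infty$, the supremum in \eqref{eq:SN} forces $T_N(S_N(u)+\epsilon)>u$ for every $\epsilon>0$. Combine this with right-continuity at $S_N(u)$ and the isolation of $\overline{\mathcal{E}}^j_N$, which you already invoke. It follows that the path spends positive time in $\mathcal{E}^j_N$ within every $(S_N(u),S_N(u)+\epsilon)$. So $\mathcal{H}_{\mathcal{E}_N(A)}\le S_N(u)\le S_N(t)$ holds pathwise, and the case $u=t$ needs no special treatment. In fact $S_N$ is right-continuous, so even a bound $\mathcal{H}_{\mathcal{E}_N(A)}\le S_N(u')$ for all $u'>u$ would already give $\mathcal{H}_{\mathcal{E}_N(A)}\le S_N(u)$.

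The detour through Assumption \ref{assu-tech} and the strong Markov property is therefore unnecessary. As written it is also not justified: the $u$ in question is random and only known to exist on the event, so $S_N(u)$ is not a stopping time you have constructed. The paper proves (1) entirely pathwise, without Assumption \ref{assu-tech}.
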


\begin{proof}
(1) Note that $H_{A\cup\{\mathfrak{d}\}}\le t$ if and only if $H_{\mathfrak{d}}\le t$
or $H_{A}\le t<H_{\mathfrak{d}}$. By the definition in \eqref{eq:YN},
this is equivalent to
\begin{equation}
S_{N}(t)=\infty\qquad\text{or}\qquad X_{N}(S_{N}(s))\in\overline{\mathcal{E}}_{N}(A)\qquad\text{for some}\quad s\in[0,t].\label{eq:tech-pf-1}
\end{equation}
It suffices to prove that \eqref{eq:tech-pf-1} is equivalent to $\mathcal{H}_{\mathcal{E}_{N}(A)}\le S_{N}(t)$.
First, suppose that \eqref{eq:tech-pf-1} holds. If $S_{N}(t)=\infty$
then clearly $\mathcal{H}_{\mathcal{E}_{N}(A)}\le\infty=S_{N}(t)$.
Suppose that $X_{N}(S_{N}(s))\in\overline{\mathcal{E}}_{N}(A)$ for
some $s\in[0,t]$. Let $a:=S_{N}(s)$ such that $X_{N}(a)\in\overline{\mathcal{E}}_{N}(A)$.
Then, by \eqref{eq:SN},
\[
\int^{a+\epsilon}_{a}{\bf 1}\left\{ X_{N}(u)\in\mathcal{E}_{N}\right\} {\rm d}u>0\qquad\text{for all}\quad\epsilon>0.
\]
This, along with the fact that $X_{N}(a)\in\overline{\mathcal{E}}_{N}(A)$,
implies that $\mathcal{H}_{\mathcal{E}_{N}(A)}\le a+\epsilon$ for
all $\epsilon>0$, thus $\mathcal{H}_{\mathcal{E}_{N}(A)}\le a=S_{N}(s)\le S_{N}(t)$.

On the other hand, suppose that $b:=\mathcal{H}_{\mathcal{E}_{N}(A)}\le S_{N}(t)$.
If $S_{N}(t)=\infty$ then \eqref{eq:tech-pf-1} clearly holds, so
suppose that $S_{N}(t)<\infty$. Since the trajectory is right-continuous,
$X_{N}(b)\in\overline{\mathcal{E}}_{N}(A)$. Let
\[
s:=T_{N}(b)=\int^{b}_{0}{\bf 1}\left\{ X_{N}(u)\in\mathcal{E}_{N}\right\} {\rm d}u.
\]
By \eqref{eq:SN}, 
\begin{equation}
S_{N}(s)=S_{N}(T_{N}(b))\ge b\qquad\text{and}\qquad s=T_{N}(b)\le T_{N}(S_{N}(t))=t.\label{eq:l_tech-1}
\end{equation}
Since the trajectory is right-continuous and $\mathcal{E}_{N}(A)$
is open, for all $\epsilon>0$, there exist $0<\epsilon_{1}<\epsilon_{2}<\epsilon$
such that $X_{N}(b+u)=X_{N}(\mathcal{H}_{\mathcal{E}_{N}(A)}+u)\in\mathcal{E}_{N}(A)$
for all $u\in[\epsilon_{1},\epsilon_{2})$, so that
\[
T_{N}(b+\epsilon)>T_{N}(b)=s.
\]
Therefore, $S_{N}(s)\le b$. This, together with \eqref{eq:l_tech-1},
implies that $S_{N}(s)=b$. Thus, $S_{N}(s)\le S_{N}(t)$ which in
particular implies $s\le t$. Then, $X_{N}(S_{N}(s))=X_{N}(b)\in\overline{\mathcal{E}}_{N}(A)$
where $s\le t$. This implies that \eqref{eq:tech-pf-1} holds, concluding
the proof of part (1).

\noindent (2) If $S_{N}(s)(\omega)=\infty$ then both-hand sides are
clearly $\infty$, since $\mathfrak{d}$ is a cemetery point. Let
$S_{N}(s)(\omega)<\infty$. We calculate via \eqref{eq:SN} as
\begin{align*}
\left(S_{N}(t)\circ\Theta_{S_{N}(s)}\right) & (\omega)=\sup\left\{ u\ge0:\int^{S_{N}(s)(\omega)+u}_{S_{N}(s)(\omega)}{\bf 1}\left\{ \omega_{v}\in\mathcal{E}_{N}\right\} {\rm d}v\le t\right\} \\
=\sup & \left\{ u\ge0:\int^{S_{N}(s)(\omega)+u}_{0}{\bf 1}\left\{ \omega_{v}\in\mathcal{E}_{N}\right\} {\rm d}v\le t+\int^{S_{N}(s)(\omega)}_{0}{\bf 1}\left\{ \omega_{v}\in\mathcal{E}_{N}\right\} {\rm d}v\right\} .
\end{align*}
Since $T_{N}(S_{N}(s))=s$, the right-hand side equals
\[
\sup\left\{ u\ge0:\int^{S_{N}(s)(\omega)+u}_{0}{\bf 1}\left\{ \omega_{v}\in\mathcal{E}_{N}\right\} {\rm d}v\le t+s\right\} =S_{N}(t+s)(\omega)-S_{N}(s)(\omega),
\]
which concludes the proof of part (2).
\end{proof}

\subsection{\label{sec2.1}From resolvent to metastability}

In Section \ref{sec2.1}, we prove the only if part of Theorem \ref{t:main1},
i.e., we assume $\mathfrak{R},\mathfrak{K1},\mathfrak{K2}$ and prove
$\mathfrak{C},\mathfrak{D}$. First, we handle $\mathfrak{D}$. Recall
the notation from \eqref{eq:FN}.
\begin{lem}
\label{lem2.2}Assume that $\mathfrak{R}$ and $\mathfrak{K1}$ are
in force. Then, $\mathfrak{D}$ holds.
\end{lem}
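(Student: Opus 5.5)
Fix $\{\mathcal{F}^{i}_{N}\}$, $j\in S$, $T>0$, and $\eta>0$. The idea is to turn the time spent outside $\mathcal{F}_{N}$ into a resolvent quantity, using a lift $G_{N}$ that sees $\Omega_{N}\setminus\mathcal{F}_{N}$ but vanishes on the metastable sets; condition $\mathfrak{R}$ then makes the resolvent solution asymptotically zero on $\overline{\mathcal{E}}^{j}_{N}$. Since ${\bf 1}\{\cdot\notin\mathcal{F}_{N}\}$ is not in $C_{0}(\Omega_{N})$, condition $\mathfrak{K1}$ will be used to restrict to a compact set $\mathcal{K}_{N}=\mathcal{K}_{N}(j,T,\eta)$.

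\emph{Reduction.} For any $\lambda>0$,
\[
{\rm E}^{\mathbb{Q}^{N}_{\bm{x}}}\Big[\int_{0}^{T}{\bf 1}\{X_{N}(t)\notin\mathcal{F}_{N}\}\,{\rm d}t\Big]\le e^{\lambda T}\,{\rm E}^{\mathbb{Q}^{N}_{\bm{x}}}\Big[\int_{0}^{\infty}e^{-\lambda t}{\bf 1}\{X_{N}(t)\in\mathcal{K}_{N}\setminus\mathcal{F}_{N}\}\,{\rm d}t\Big]+{\rm E}^{\mathbb{Q}^{N}_{\bm{x}}}\Big[\int_{0}^{T}{\bf 1}\{X_{N}(t)\notin\mathcal{K}_{N}\}\,{\rm d}t\Big].
\]
By \eqref{eq:K1-2}, the last term has $\limsup$ (uniformly over $\bm{x}\in\overline{\mathcal{E}}^{j}_{N}$) below $\eta$. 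I will take $\lambda=1$.

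\emph{Construction of the lift.} Choose intermediate open sets $\mathcal{F}^{i}_{N}\Subset\mathcal{G}^{i}_{N}\Subset\mathcal{E}^{i}_{N}$, and set $\mathcal{G}_{N}:=\bigcup_{i}\mathcal{G}^{i}_{N}$. Let $\widetilde{\mathcal{K}}_{N}\supset\mathcal{K}_{N}$ be a compact neighborhood of $\mathcal{K}_{N}$, which exists by local compactness. Using Urysohn's lemma on the locally compact Polish space $\Omega_{N}$, choose $G_{N}\in C_{c}(\Omega_{N})\subset C_{0}(\Omega_{N})$ with $0\le G_{N}\le1$, $G_{N}=1$ on $\mathcal{K}_{N}\setminus\mathcal{G}_{N}$, and $G_{N}=0$ on $\mathcal{F}_{N}$ and off $\widetilde{\mathcal{K}}_{N}$. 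This is possible because $\overline{\mathcal{F}_{N}}$ and $\mathcal{K}_{N}\setminus\mathcal{G}_{N}$ are disjoint closed sets, the latter compact. Then $G_{N}$ is a lift of ${\bf g}\equiv0$ in the sense of \eqref{eq:lift}, with respect to the buffers $\mathcal{F}^{i}_{N}$, and $\|G_{N}\|_{\infty}\le1$. Since $\mathfrak{R}$ holds for all admissible buffers, it gives $\sup_{\overline{\mathcal{E}}^{j}_{N}}|F_{N}|\to0$, because ${\bf f}=0$. Moreover $F_{N}(\bm{x})={\rm E}^{\mathbb{Q}^{N}_{\bm{x}}}\big[\int_{0}^{\infty}e^{-t}G_{N}(X_{N}(t))\,{\rm d}t\big]$, by the standard resolvent representation for Feller generators. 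This bounds the discounted time in $\mathcal{K}_{N}\setminus\mathcal{G}_{N}$.

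\emph{Main obstacle.} A single application of $\mathfrak{R}$ controls $\mathcal{K}_{N}\setminus\mathcal{G}_{N}$ rather than $\mathcal{K}_{N}\setminus\mathcal{F}_{N}$; the mismatch is the shell $\mathcal{G}_{N}\setminus\mathcal{F}_{N}$. I expect this to be the delicate point, and I would resolve it by exploiting the freedom in the buffers. Apply the construction above with the buffers $\mathcal{F}'^{i}_{N}$ chosen so that $\widehat{\mathcal{E}}^{i}_{N}\Subset\mathcal{F}'^{i}_{N}\Subset\mathcal{F}^{i}_{N}$, and take $\mathcal{G}^{i}_{N}:=\mathcal{F}^{i}_{N}$. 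The lift then satisfies $G_{N}=1$ on $\mathcal{K}_{N}\setminus\mathcal{F}_{N}$ and $G_{N}=0$ on $\mathcal{F}'_{N}$, so it is admissible for the buffer family $\{\mathcal{F}'^{i}_{N}\}$. Since $\mathfrak{R}$ is quantified over all buffer families, this is allowed, and the mismatch disappears.

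Combining the three steps yields $\limsup_{N}\sup_{\bm{x}\in\overline{\mathcal{E}}^{j}_{N}}{\rm E}^{\mathbb{Q}^{N}_{\bm{x}}}[\int_{0}^{T}{\bf 1}\{X_{N}\notin\mathcal{F}_{N}\}]\le\eta$, and letting $\eta\to0$ gives \eqref{eq:D}. Condition \eqref{eq:K1-1} is not needed in this argument, since compactness of $\mathcal{K}_{N}$ alone suffices to make $G_{N}$ belong to $C_{0}(\Omega_{N})$.
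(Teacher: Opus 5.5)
Your proposal is correct and follows essentially the same route as the paper. You split off the time spent outside the compact set $\mathcal{K}_N$ via \eqref{eq:K1-2}, dominate the time in $\mathcal{K}_N\setminus\mathcal{F}_N$ by $e^{\lambda T}$ times a discounted occupation time, and make the latter vanish by applying $\mathfrak{R}$ with ${\bf g}\equiv0$ to an Urysohn lift $G_N$ equal to $1$ on $\mathcal{K}_N\setminus\mathcal{F}_N$; your fix for the ``shell'' mismatch, invoking $\mathfrak{R}$ for a strictly smaller buffer family sitting between $\widehat{\mathcal{E}}^{i}_{N}$ and $\mathcal{F}^{i}_{N}$, is precisely the paper's use of the intermediate sets $\widehat{\mathcal{F}}^{i}_{N}$.
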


\begin{proof}
Let $\{\mathcal{F}^{i}_{N}:i\in S\}$ and $\{\widehat{\mathcal{F}}^{i}_{N}:i\in S\}$
be such that
\[
\widehat{\mathcal{E}}^{i}_{N}\Subset\widehat{\mathcal{F}}^{i}_{N}\Subset\mathcal{F}^{i}_{N}\Subset\mathcal{E}^{i}_{N}\quad;\quad i\in S,
\]
and write
\[
\widehat{\mathcal{F}}_{N}:=\bigcup_{i\in S}\widehat{\mathcal{F}}^{i}_{N}\qquad\text{and}\qquad\mathcal{F}_{N}:=\bigcup_{i\in S}\mathcal{F}^{i}_{N}.
\]
We prove condition $\mathfrak{D}$ with $\mathcal{F}^{i}_{N}$ by
assuming condition $\mathfrak{R}$ with $\widehat{\mathcal{F}}^{i}_{N}$.

Let $j\in S$, $T>0$, and fix an arbitrary $\eta>0$. Let $\mathcal{K}_{N}=\mathcal{K}_{N}(j,T,\eta)$,
$N\ge1$, be compact subsets that satisfy $\mathfrak{K1}$. Since
$\Omega_{N}\setminus\mathcal{F}_{N}\subset(\mathcal{K}_{N}\setminus\mathcal{F}_{N})\cup(\Omega_{N}\setminus\mathcal{K}_{N})$,
the expectation in the left-hand side of \eqref{eq:D} is bounded
by
\begin{equation}
{\rm E}^{\mathbb{Q}^{N}_{\bm{x}}}\left[\int^{T}_{0}{\bf 1}\left\{ X_{N}(t)\in\mathcal{K}_{N}\setminus\mathcal{F}_{N}\right\} {\rm d}t\right]+{\rm E}^{\mathbb{Q}^{N}_{\bm{x}}}\left[\int^{T}_{0}{\bf 1}\left\{ X_{N}(t)\in\Omega_{N}\setminus\mathcal{K}_{N}\right\} {\rm d}t\right].\label{eqeq1}
\end{equation}
By the condition $\mathfrak{K1}$ (recall Notation \ref{nota:theta1}),
\begin{equation}
\limsup_{N\to\infty}\sup_{\bm{x}\in\overline{\mathcal{E}}^{j}_{N}}{\rm E}^{\mathbb{Q}^{N}_{\bm{x}}}\left[\int^{T}_{0}{\bf 1}\left\{ X_{N}(t)\in\Omega_{N}\setminus\mathcal{K}_{N}\right\} {\rm d}t\right]<\eta.\label{eqeq2}
\end{equation}
Next, we claim that for all $\lambda>0$,
\begin{equation}
\limsup_{N\to\infty}\sup_{\bm{x}\in\overline{\mathcal{E}}^{j}_{N}}{\rm E}^{\mathbb{Q}^{N}_{\bm{x}}}\left[\int^{\infty}_{0}e^{-\lambda t}\,{\bf 1}\left\{ X_{N}(t)\in\mathcal{K}_{N}\setminus\mathcal{F}_{N}\right\} {\rm d}t\right]=0.\label{eqeq3}
\end{equation}
Take $G_{N}\in C_{0}(\Omega_{N})$ such that $G_{N}\equiv1$ in $\mathcal{K}_{N}\setminus\mathcal{F}_{N}$
and $G_{N}\equiv0$ in $\widehat{\mathcal{F}}_{N}$ (in fact, $G_{N}$
is compactly supported in this way), which is possible by Urysohn's
lemma since $\widehat{\mathcal{F}}_{N}\Subset\mathcal{F}_{N}$. Let
$F_{N}\in C_{0}(\Omega_{N})$ be the solution to \eqref{eq:res} such
that
\begin{equation}
G_{N}|_{\widehat{\mathcal{F}}^{i}_{N}}={\bf g}(i)\qquad\text{for each}\quad i\in S\qquad\text{and}\qquad\sup_{N\ge1}\|G_{N}\|_{\infty}<\infty.\label{eq:lift-2}
\end{equation}
By \eqref{eq:pr},
\begin{equation}
F_{N}(\bm{x})={\rm E}^{\mathbb{Q}^{N}_{\bm{x}}}\left[\int^{\infty}_{0}e^{-\lambda t}\,G_{N}(X_{N}(t))\,{\rm d}t\right].\label{eqeq4}
\end{equation}
Since $G_{N}\ge{\bf 1}_{\mathcal{K}_{N}\setminus\mathcal{F}_{N}}$,
\begin{equation}
F_{N}(\bm{x})\ge{\rm E}^{\mathbb{Q}^{N}_{\bm{x}}}\left[\int^{\infty}_{0}e^{-\lambda t}\,{\bf 1}\left\{ X_{N}(t)\in\mathcal{K}_{N}\setminus\mathcal{F}_{N}\right\} {\rm d}t\right].\label{eqeq5}
\end{equation}
Notice that the two functions, $0\in C_{0}(S)$ and $G_{N}\in C_{0}(\Omega_{N})$,
satisfy the lifting condition \eqref{eq:lift-2}. Since ${\bf f}\equiv0$
is the unique solution to 
\[
(\lambda-\mathfrak{L}){\bf f}=0,
\]
by condition $\mathfrak{R}$ we obtain
\begin{equation}
\lim_{N\to\infty}\sup_{\bm{x}\in\overline{\mathcal{E}}^{j}_{N}}|F_{N}(\bm{x})|=0,\label{eqeq6}
\end{equation}
so that \eqref{eqeq3} is verified by \eqref{eqeq5} and \eqref{eqeq6}.

Finally, by \eqref{eqeq3},
\begin{equation}
\begin{aligned} & \limsup_{N\to\infty}\sup_{\bm{x}\in\overline{\mathcal{E}}^{j}_{N}}{\rm E}^{\mathbb{Q}^{N}_{\bm{x}}}\left[\int^{T}_{0}{\bf 1}\left\{ X_{N}(t)\in\mathcal{K}_{N}\setminus\mathcal{F}_{N}\right\} {\rm d}t\right]\\
 & \le\limsup_{N\to\infty}\sup_{\bm{x}\in\overline{\mathcal{E}}^{j}_{N}}e^{\lambda T}\,{\rm E}^{\mathbb{Q}^{N}_{\bm{x}}}\left[\int^{\infty}_{0}e^{-\lambda t}\,{\bf 1}\left\{ X_{N}(t)\in\mathcal{K}_{N}\setminus\mathcal{F}_{N}\right\} {\rm d}t\right]=0.
\end{aligned}
\label{eqeq7}
\end{equation}
Therefore, by \eqref{eqeq1}, \eqref{eqeq2}, and \eqref{eqeq7},
we have proved that
\[
\limsup_{N\to\infty}\sup_{\bm{x}\in\overline{\mathcal{E}}^{j}_{N}}{\rm E}^{\mathbb{Q}^{N}_{\bm{x}}}\left[\int^{T}_{0}{\bf 1}\left\{ X_{N}(t)\in\Omega_{N}\setminus\mathcal{F}_{N}\right\} {\rm d}t\right]<\eta,
\]
which is valid for any $\eta>0$. This concludes the proof of Condition
$\mathfrak{D}$.
\end{proof}

Next, we prove the tightness condition $\mathfrak{A}$ via the non-instantaneity
condition $\mathfrak{N}$.
\begin{lem}
\label{lem2.3}Conditions $\mathfrak{R}$ and $\mathfrak{K2}$ imply
Condition $\mathfrak{N}$.
\end{lem}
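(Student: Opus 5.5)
The plan is to test condition $\mathfrak{R}$ against the indicator of the valley $j$, with $\lambda$ chosen large. The solution $F_N$ then measures the discounted time spent near $\mathcal{E}^j_N$. An early exit to another valley would force $F_N$ on $\overline{\mathcal{E}}^j_N$ to be visibly smaller than the macroscopic value ${\bf f}(j)$, which contradicts $\mathfrak{R}$. Condition $\mathfrak{K2}$ is needed only because $\mathfrak{R}$ gives convergence on each $\overline{\mathcal{E}}^i_N$ separately, so we must restrict to finitely many target valleys.

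\emph{Step 1 (macroscopic side).} Fix $j\in S$ and $\lambda>0$, and let ${\bf g}:={\bf 1}_{\{j\}}\in C_0(S)$. Let ${\bf f}$ solve $(\lambda-\mathfrak{L}){\bf f}={\bf g}$, so ${\bf f}(i)={\rm E}^{{\bf Q}_i}[\int_0^\infty e^{-\lambda s}{\bf 1}\{{\bf y}(s)=j\}{\rm d}s]$. Write $q_i$ for the total jump rate of ${\bf y}$ at $i$. Then $\lambda{\bf f}(j)\ge\lambda/(\lambda+q_j)$, and for $i\ne j$ the strong Markov property gives $\lambda{\bf f}(i)\le{\rm E}^{{\bf Q}_i}[e^{-\lambda H_j}]\le q_i/(\lambda+q_i)$. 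Given $\delta>0$ and a \emph{finite} set $K\subset S$, we can therefore pick $\lambda$ so large that $\lambda{\bf f}(j)\ge1-\delta$ and $\lambda{\bf f}(i)\le\delta$ for all $i\in K\setminus\{j\}$.

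\emph{Step 2 (lift and decomposition).} Since $\mathcal{E}^j_N$ is precompact open and $\overline{\mathcal{F}}^j_N\subset\mathcal{E}^j_N$, Urysohn's lemma gives $G_N\in C_0(\Omega_N)$ with $0\le G_N\le1$, $G_N\equiv1$ on $\mathcal{F}^j_N$, and $G_N$ supported in $\mathcal{E}^j_N$. In particular $G_N$ vanishes on every $\mathcal{F}^i_N$ with $i\ne j$, so it is a valid lift of ${\bf g}$. Let $F_N$ solve \eqref{eq:res}, so that $F_N(\bm{x})={\rm E}^{\mathbb{Q}^N_{\bm{x}}}[\int_0^\infty e^{-\lambda s}G_N(X_N(s)){\rm d}s]$, with $0\le F_N\le1/\lambda$. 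Set $\tau:=\mathcal{H}_{\breve{\mathcal{E}}^j_N}$ and apply the strong Markov property at $\tau$. By right continuity $X_N(\tau)\in\overline{\mathcal{E}}^i_N$ for some $i\ne j$. This gives
\[
\lambda F_N(\bm{x})\le{\rm E}^{\mathbb{Q}^N_{\bm{x}}}\big[1-e^{-\lambda\tau}\big]+{\rm E}^{\mathbb{Q}^N_{\bm{x}}}\big[e^{-\lambda\tau}\,\lambda F_N(X_N(\tau))\big].
\]
Let $p_N(\bm{x}):=\mathbb{Q}^N_{\bm{x}}[\tau<t]$ and split the right-hand side according to three events.
\begin{itemize}
\item On $\{\tau\ge t\}$, bound the contribution crudely by $1$.
\item On $\{\tau<t,\ X_N(\tau)\in\overline{\mathcal{E}}^i_N \text{ with } i\in K\}$, bound it by $(1-e^{-\lambda t})+e^{-\lambda t}(\delta+o_N(1))$. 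Here $\mathfrak{R}$ is applied to each of the finitely many $i\in K\setminus\{j\}$, uniformly over $\overline{\mathcal{E}}^i_N$.
\item On $\{\tau<t,\ X_N(\tau)\notin\overline{\mathcal{E}}_N(K)\}$, the contribution is at most $\mathbb{Q}^N_{\bm{x}}[\mathcal{H}_{\mathcal{E}_N(S\setminus K)}\le S_N(t)]$. This uses $S_N(t)\ge t$ and Assumption \ref{assu-tech}, which ensures that entering $\overline{\mathcal{E}}^i_N$ means entering $\mathcal{E}^i_N$ immediately. By Lemma \ref{lem:tech}(1) this probability equals ${\bf Q}^N_{\bm{x}}[H_{(S\setminus K)\cup\{\mathfrak{d}\}}\le t]$, which is $<\eta$ for large $N$ by $\mathfrak{K2}$ once $K=K(j,1,\eta)$ and $t\le1$.
\end{itemize}

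\emph{Step 3 (conclusion).} Condition $\mathfrak{R}$ on $\overline{\mathcal{E}}^j_N$ gives $\lambda F_N(\bm{x})\ge1-\delta-o_N(1)$ uniformly. Combining with Step 2,
\[
1-\delta-o_N(1)\le(1-p_N)+p_N\big(1-e^{-\lambda t}+\delta\big)+\eta,
\]
that is, $p_N(e^{-\lambda t}-\delta)\le 2\delta+\eta+o_N(1)$. The order of choices is: first $\eta$, which fixes $K$; then $\delta$; then $\lambda$, depending on $K$ and $\delta$; and finally $t\to0$, so that $e^{-\lambda t}\ge1/2$. This yields $\limsup_N\sup_{\bm{x}}p_N\le 2(2\delta+\eta)$, which gives \eqref{eq:N}. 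The main obstacle is the uniformity over target valleys: $\mathfrak{R}$ gives no control over $\sup_i$, and Step 1 needs $K$ finite. This is exactly why $K$ must be fixed through $\mathfrak{K2}$ \emph{before} $\lambda$ is chosen, and why the passage from $\tau$ to the trace-process hitting time via Lemma \ref{lem:tech}(1) has to be handled carefully, including the boundary issue covered by Assumption \ref{assu-tech}.
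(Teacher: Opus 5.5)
Your proof is correct, but it takes a different route from the paper's.

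\textbf{The paper's route.} It takes ${\bf f}:=\delta_j$ and ${\bf g}:=(\lambda-\mathfrak{L})\delta_j$. The macroscopic targets are then exactly $1$ on $\overline{\mathcal{E}}^j_N$ and $0$ on $\overline{\mathcal{E}}_N(K\setminus\{j\})$, for every $\lambda$. It builds a signed lift $G_N$ vanishing on $\Delta_N$ with $\sup G_N=\sup{\bf g}={\bf g}(j)$. It applies Dynkin's formula to $F_N$ at $t\wedge\mathcal{H}_{\breve{\mathcal{E}}^j_N}$, and combines three facts: $F_N\approx 1$ at the start, $F_N\approx 0$ on the finitely many target valleys, and the maximum-principle bound $\sup F_N\le 1+q_j/\lambda$. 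This gives an error of order $r(N)+t\|G_N\|_\infty+q_j/\lambda$; it sends $t\to0$ and then $\lambda\to\infty$. The $S\setminus K$ part is handled via $\mathfrak{K2}$, Lemma \ref{lem:tech}-(1) and Assumption \ref{assu-tech}, exactly as in your third bullet.

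\textbf{Your route.} You take ${\bf g}={\bf 1}_{\{j\}}$, a nonnegative bump lift with $0\le\lambda F_N\le 1$, and the strong Markov property at the hitting time itself. You pay for this with the holding-time estimates $\lambda{\bf f}(j)\ge\lambda/(\lambda+q_j)$ and $\lambda{\bf f}(i)\le q_i/(\lambda+q_i)$ on the limit chain.

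\textbf{What your version buys.} ${\bf 1}_{\{j\}}$ is always an admissible element of $C_0(S)$. The paper instead implicitly needs $\delta_j\in D(\mathfrak{L})$, i.e.\ that $(\lambda-\mathfrak{L})\delta_j\in C_0(S)$, so the rates into $j$ must vanish at infinity. You also avoid both the signed lift with controlled supremum and the maximum principle.

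\textbf{What the paper's version buys.} Its limiting values do not depend on $\lambda$, and the only $\lambda$-dependent error, $q_j/\lambda$, involves the single state $j$. So no estimate of the limit resolvent over $K$ is needed. In your argument $\lambda$ must be chosen after $K$, since it depends on $\max_{i\in K}q_i$; you identify and order this correctly.

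One harmless slip: with only $e^{-\lambda t}\ge 1/2$, your final constant is $(2\delta+\eta)/(1/2-\delta)$ rather than $2(2\delta+\eta)$. This does not matter, since $t\to0$ and then $\delta,\eta\to0$.
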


\begin{proof}
Fix $j\in S$ and an arbitrary $\eta>0$. By Condition $\mathfrak{K2}$
with any $T>0$ and Lemma \ref{lem:tech}-(1), there exists a finite
$K\subset S$ such that
\begin{equation}
\limsup_{N\to\infty}\sup_{\bm{x}\in\overline{\mathcal{E}}^{j}_{N}}\mathbb{Q}^{N}_{\bm{x}}\left[\mathcal{H}_{\mathcal{E}_{N}(S\setminus K)}\le S_{N}(T)\right]<\eta.\label{eq1}
\end{equation}
Consider the following probability for $T>0$ and $\bm{x}\in\overline{\mathcal{E}}^{j}_{N}$:
\[
\mathbb{Q}^{N}_{\bm{x}}\left[\mathcal{H}_{\overline{\mathcal{E}}_{N}(S\setminus K)}\le S_{N}(T)\right].
\]
This probability can be decomposed into
\begin{equation}
\mathbb{Q}^{N}_{\bm{x}}\left[\mathcal{H}_{\mathcal{E}_{N}(S\setminus K)}\le S_{N}(T)\right]+\mathbb{Q}^{N}_{\bm{x}}\left[\mathcal{H}_{\overline{\mathcal{E}}_{N}(S\setminus K)}\le S_{N}(T)<\mathcal{H}_{\mathcal{E}_{N}(S\setminus K)}\right].\label{eq2}
\end{equation}
By applying the strong Markov property at the stopping time $\mathcal{H}_{\overline{\mathcal{E}}_{N}(S\setminus K)}$,
the second probability in \eqref{eq2} can be bounded by
\begin{equation}
\sup_{\bm{y}\in\overline{\mathcal{E}}_{N}(S\setminus K)}\mathbb{Q}^{N}_{\bm{y}}\left[\mathcal{H}_{\mathcal{E}_{N}(S\setminus K)}>0\right]=\sup_{\bm{y}\in\partial\mathcal{E}_{N}(S\setminus K)}\mathbb{Q}^{N}_{\bm{y}}\left[\mathcal{H}_{\mathcal{E}_{N}(S\setminus K)}>0\right]=0,\label{eq3}
\end{equation}
where the second equality vanishes uniformly by Assumption \ref{assu-tech}.
Thus, collecting \eqref{eq1}, \eqref{eq2}, and \eqref{eq3}, we
have proved that
\begin{equation}
\limsup_{N\to\infty}\sup_{\bm{x}\in\overline{\mathcal{E}}^{j}_{N}}\mathbb{Q}^{N}_{\bm{x}}\left[\mathcal{H}_{\overline{\mathcal{E}}_{N}(S\setminus K)}\le S_{N}(T)\right]<\eta.\label{eq4}
\end{equation}
Substituting $T=1$ in \eqref{eq4} and applying the obvious bound
$S_{N}(1)\ge1$, we obtain that
\begin{equation}
\limsup_{N\to\infty}\sup_{\bm{x}\in\overline{\mathcal{E}}^{j}_{N}}\mathbb{Q}^{N}_{\bm{x}}\left[\mathcal{H}_{\overline{\mathcal{E}}_{N}(S\setminus K)}\le1\right]<\eta.\label{eq5}
\end{equation}

Now, abbreviate $\mathcal{H}:=\mathcal{H}_{\breve{\mathcal{E}}^{j}_{N}}$.
Observe that $\mathcal{H}<t$ implies $X_{N}(t\wedge\mathcal{H})=X_{N}(\mathcal{H})\in\overline{\breve{\mathcal{E}}}^{j}_{N}$.
Thus, to prove \eqref{eq:N} it suffices to prove that
\begin{equation}
\limsup_{t\to0}\limsup_{N\to\infty}\sup_{\bm{x}\in\overline{\mathcal{E}}^{j}_{N}}\mathbb{Q}^{N}_{\bm{x}}\left[X_{N}(t\wedge\mathcal{H})\in\overline{\breve{\mathcal{E}}}^{j}_{N}\right]=0.\label{eq6}
\end{equation}
For $t$ smaller than $1$, we may decompose the last probability
as
\begin{equation}
\begin{aligned} & \mathbb{Q}^{N}_{\bm{x}}\left[X_{N}(t\wedge\mathcal{H})\in\overline{\breve{\mathcal{E}}}^{j}_{N},\ \mathcal{H}_{\overline{\mathcal{E}}_{N}(S\setminus K)}>1\right]+\mathbb{Q}^{N}_{\bm{x}}\left[X_{N}(t\wedge\mathcal{H})\in\overline{\breve{\mathcal{E}}}^{j}_{N},\ \mathcal{H}_{\overline{\mathcal{E}}_{N}(S\setminus K)}\le1\right]\\
 & \le\mathbb{Q}^{N}_{\bm{x}}\left[X_{N}(t\wedge\mathcal{H})\in\overline{\mathcal{E}}_{N}(K\setminus\{j\})\right]+\mathbb{Q}^{N}_{\bm{x}}\left[\mathcal{H}_{\overline{\mathcal{E}}_{N}(S\setminus K)}\le1\right].
\end{aligned}
\label{eq7}
\end{equation}

Next, we handle the first probability in the right-hand side of \eqref{eq7}.
Consider ${\bf f}:=\delta_{j}\in C_{0}(S)$ and fix $\lambda>0$.
Define ${\bf g}:=(\lambda-\mathfrak{L}){\bf f}\in C_{0}(S)$. Note
that ${\bf g}(j)=\lambda-\mathfrak{L}{\bf f}(j)\ge\lambda$ and ${\bf g}(i)=-\mathfrak{L}{\bf f}(i)\le0$
for $i\ne j$, thus
\begin{equation}
\sup_{i\in S}{\bf g}(i)={\bf g}(j)=\lambda-\mathfrak{L}{\bf f}(j).\label{eq8}
\end{equation}
Define $G_{N}\in C_{0}(\Omega_{N})$ as a lift of ${\bf g}$ satisfying
\eqref{eq:lift} such that (cf. \eqref{eq:EN-DeltaN})
\begin{equation}
G_{N}=0\qquad\text{in}\quad\Delta_{N}\qquad\text{and}\qquad\sup_{\bm{x}\in\Omega_{N}}G_{N}(\bm{x})=\sup_{i\in S}{\bf g}(i).\label{eq9}
\end{equation}
Indeed, this is possible by Urysohn's lemma since $\overline{\mathcal{F}}_{N}\cap\Delta_{N}=\emptyset$.
Denote by $F_{N}$ the unique solution to the resolvent equation given
in \eqref{eq:res}. Then, the process $\{M_{N}(t)\}_{t\ge0}$ defined
by
\[
M_{N}(t):=F_{N}(X_{N}(t))-F_{N}(X_{N}(0))-\int^{t}_{0}(\mathscr{L}_{N}F_{N})(X_{N}(s))\,{\rm d}s
\]
is a $\mathbb{Q}^{N}_{\bm{x}}$-martingale. Evaluating both sides
at the stopping time $t\wedge\mathcal{H}$, by the stopping time theorem
and \eqref{eq:res}, it holds that
\begin{equation}
{\rm E}^{\mathbb{Q}^{N}_{\bm{x}}}\left[F_{N}(X_{N}(t\wedge\mathcal{H}))\right]=F_{N}(\bm{x})+{\rm E}^{\mathbb{Q}^{N}_{\bm{x}}}\left[\int^{t\wedge\mathcal{H}}_{0}(\lambda F_{N}-G_{N})(X_{N}(s))\,{\rm d}s\right].\label{eq10}
\end{equation}
By Condition $\mathfrak{R}$ and the fact that ${\bf f}(j)=1$,
\[
\lim_{N\to\infty}\sup_{\bm{x}\in\overline{\mathcal{E}}^{j}_{N}}|1-F_{N}(\bm{x})|=0.
\]
Note that $\|F_{N}\|_{\infty}\le\lambda^{-1}\|G_{N}\|_{\infty}$ by
\eqref{eq:pr}. Thus, $\|\lambda F_{N}-G_{N}\|_{\infty}\le2\|G_{N}\|_{\infty}$,
which implies along with the last displayed identity that
\begin{equation}
\sup_{\bm{x}\in\overline{\mathcal{E}}^{j}_{N}}\left|1-F_{N}(\bm{x})-{\rm E}^{\mathbb{Q}^{N}_{\bm{x}}}\left[\int^{t\wedge\mathcal{H}}_{0}(\lambda F_{N}-G_{N})(X_{N}(s))\,{\rm d}s\right]\right|\le r(N)+2t\|G\|_{\infty},\label{eq11}
\end{equation}
where $\lim_{N\to\infty}r(N)=0$. Moreover, by the fact that $\mathscr{L}_{N}$
is an infinitesimal generator (cf. \cite[Definition 3.12-(b)]{Lig10}),
\eqref{eq8}, and \eqref{eq9},
\[
\sup_{\bm{x}\in\Omega_{N}}F_{N}(\bm{x})\le\frac{1}{\lambda}\,\sup_{\bm{x}\in\Omega_{N}}G_{N}(\bm{x})=1-\frac{\mathfrak{L}{\bf f}(j)}{\lambda}.
\]
Finally, since $\lim_{N\to\infty}\sup_{\bm{x}\in\overline{\mathcal{E}}_{N}(K\setminus\{j\})}|F_{N}(\bm{x})|=0$
by Condition $\mathfrak{R}$ and the fact that $K\setminus\{j\}$
is finite, $1-F_{N}\ge\frac{1}{2}$ in $\overline{\mathcal{E}}_{N}(K\setminus\{j\})$
for $N$ big enough, which implies along with the last displayed inequality
that
\begin{equation}
{\rm E}^{\mathbb{Q}^{N}_{\bm{x}}}\left[1-F_{N}(X_{N}(t\wedge\mathcal{H}))\right]\ge\frac{1}{2}\,\mathbb{Q}^{N}_{\bm{x}}\left[X_{N}(t\wedge\mathcal{H})\in\overline{\mathcal{E}}_{N}(K\setminus\{j\})\right]+\frac{\mathfrak{L}{\bf f}(j)}{\lambda}.\label{eq12}
\end{equation}
Collecting \eqref{eq10}, \eqref{eq11}, and \eqref{eq12}, we obtain
that
\[
\sup_{\bm{x}\in\overline{\mathcal{E}}^{j}_{N}}\mathbb{Q}^{N}_{\bm{x}}\left[X_{N}(t\wedge\mathcal{H})\in\overline{\mathcal{E}}_{N}(K\setminus\{j\})\right]\le2r(N)+4t\|G\|_{\infty}-\frac{2\mathfrak{L}{\bf f}(j)}{\lambda}.
\]
This proves that
\[
\limsup_{t\to0}\limsup_{N\to\infty}\sup_{\bm{x}\in\overline{\mathcal{E}}^{j}_{N}}\mathbb{Q}^{N}_{\bm{x}}\left[X_{N}(t\wedge\mathcal{H})\in\overline{\mathcal{E}}_{N}(K\setminus\{j\})\right]\le-\frac{2\mathfrak{L}{\bf f}(j)}{\lambda}.
\]
Sending $\lambda\to\infty$ implies
\begin{equation}
\limsup_{t\to0}\limsup_{N\to\infty}\sup_{\bm{x}\in\overline{\mathcal{E}}^{j}_{N}}\mathbb{Q}^{N}_{\bm{x}}\left[X_{N}(t\wedge\mathcal{H})\in\overline{\mathcal{E}}_{N}(K\setminus\{j\})\right]=0.\label{eq13}
\end{equation}
Finally, collecting \eqref{eq5}, \eqref{eq7}, and \eqref{eq13},
we conclude that
\[
\limsup_{t\to0}\limsup_{N\to\infty}\sup_{\bm{x}\in\overline{\mathcal{E}}^{j}_{N}}\mathbb{Q}^{N}_{\bm{x}}\left[X_{N}(t\wedge\mathcal{H})\in\overline{\breve{\mathcal{E}}}^{j}_{N}\right]<\eta,
\]
which proves \eqref{eq6} since $\eta>0$ was chosen arbitrarily.
\end{proof}

\begin{lem}
\label{lem2.4}Conditions $\mathfrak{K2}$, $\mathfrak{D}$, and $\mathfrak{N}$
imply Condition $\mathfrak{A}$.
\end{lem}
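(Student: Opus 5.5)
Fix $T>0$, $j\in S$, $\bm{x}_N\in\overline{\mathcal{E}}^j_N$, $\eta>0$. The plan is to reduce the Aldous probability to a non-instantaneity estimate for the original process $X_N$, started afresh at the stopping time $S_N(\tau)$ via the strong Markov property. There are two sources of loss: the trace may jump immediately, or the trace may be instantaneous only in trace time while real time runs long in $\Delta_N$. Condition $\mathfrak{D}$ controls the second, and $\mathfrak{N}$ controls the first.

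\textbf{Step 1 (localization).} By $\mathfrak{K2}$ with horizon $T+1$, choose a finite $K\subset S$ such that, for large $N$, ${\bf Q}^N_{\bm{x}_N}[H_{S_{\mathfrak{d}}\setminus K}\le T+1]<\eta$. On the complementary event, $Y_N(\tau)=i\in K$ and $Y_N(\tau+\delta)\in K$ for every $\tau\in\mathscr{T}_T$ and $\delta<1$. It therefore suffices to bound
\[
\sum_{i\in K}{\bf Q}^N_{\bm{x}_N}\bigl[Y_N(\tau)=i,\ Y_N(\tau+\delta)\ne i,\ S_N(\tau+\delta)<\infty\bigr].
\]

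\textbf{Step 2 (Markov property at $S_N(\tau)$).} The time $\sigma:=S_N(\tau)$ is a stopping time for $X_N$, and $X_N(\sigma)\in\overline{\mathcal{E}}^i_N$ on $\{Y_N(\tau)=i\}$. By Lemma~\ref{lem:tech}-(2), $Y_N(\tau+\delta)$ equals $Y_N(\delta)$ evaluated on the shifted path $\Theta_\sigma$. The strong Markov property then bounds each summand by
\[
\sup_{\bm{y}\in\overline{\mathcal{E}}^i_N}{\bf Q}^N_{\bm{y}}\bigl[Y_N(s)\ne i\ \text{for some } s\le\delta\bigr]\le\sup_{\bm{y}\in\overline{\mathcal{E}}^i_N}\mathbb{Q}^N_{\bm{y}}\bigl[\mathcal{H}_{\breve{\mathcal{E}}^i_N}\le S_N(\delta)\bigr].
\]
The last inequality uses Lemma~\ref{lem:tech}-(1) with $A=S\setminus\{i\}$. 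The cemetery case is absorbed by $\mathfrak{K2}$, as in Step~1.

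\textbf{Step 3 (splitting real time).} Fix $t>0$ and write
\[
\mathbb{Q}^N_{\bm{y}}\bigl[\mathcal{H}_{\breve{\mathcal{E}}^i_N}\le S_N(\delta)\bigr]\le\mathbb{Q}^N_{\bm{y}}\bigl[\mathcal{H}_{\breve{\mathcal{E}}^i_N}<t\bigr]+\mathbb{Q}^N_{\bm{y}}\bigl[S_N(\delta)\ge t\bigr].
\]
Condition $\mathfrak{N}$ applied to each of the finitely many $i\in K$ makes the first term small once $t$ is small, uniformly in $\bm{y}$ and in large $N$. For the second term, $S_N(\delta)\ge t$ forces $T_N(t)\le\delta$, so the process spends time at least $t-\delta$ in $\Omega_N\setminus\mathcal{E}_N\subset\Omega_N\setminus\mathcal{F}_N$ during $[0,t]$. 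Take $\delta<a\le t/2$. Markov's inequality and $\mathfrak{D}$ (started from $\overline{\mathcal{E}}^i_N$, with horizon $t$) then give
\[
\mathbb{Q}^N_{\bm{y}}\bigl[S_N(\delta)\ge t\bigr]\le\frac{2}{t}\,{\rm E}^{\mathbb{Q}^N_{\bm{y}}}\Bigl[\int_0^t{\bf 1}\{X_N(s)\notin\mathcal{F}_N\}\,{\rm d}s\Bigr]\to0\qquad\text{as }N\to\infty,
\]
uniformly in $\bm{y}$. Finiteness of $K$ makes the maximum over $i\in K$ harmless. Letting $N\to\infty$, then $a\to0$, then $t\to0$, and finally $\eta\to0$ yields \eqref{eq:A}.

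\textbf{Main obstacle.} The delicate step is Step 2: making the strong Markov property rigorous at $S_N(\tau)$, where $\tau$ is a stopping time for the filtration of the trace process rather than of $X_N$. One must check that $S_N(\tau)$ is an $X_N$-stopping time, which holds because $\{S_N(\tau)\le u\}$ is determined by the path up to time $u$, and handle the event $S_N(\tau)=\infty$. The identification $Y_N(\tau+\cdot)=Y_N(\cdot)\circ\Theta_{S_N(\tau)}$ is exactly Lemma~\ref{lem:tech}-(2) with $s=\tau$; I would verify directly that this lemma remains valid for random $s$.
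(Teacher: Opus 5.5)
Your proof is correct and follows essentially the same route as the paper. Both arguments localize to a finite $K$ via $\mathfrak{K2}$, apply the strong Markov property at $S_N(\tau)$ with Lemma~\ref{lem:tech}, reduce to $\mathbb{Q}^N_{\bm{y}}[\mathcal{H}_{\breve{\mathcal{E}}^i_N}\le S_N(\delta)]$, control $S_N(\delta)$ by $\mathfrak{D}$ and Markov's inequality, and conclude with $\mathfrak{N}$. The differences are cosmetic: you localize directly on $Y_N$ with horizon $T+1$ instead of through the $X_N$-level bound \eqref{eq4}, so Assumption~\ref{assu-tech} is not needed at this step, and you keep a free splitting time $t\ge 2a$ where the paper takes $t=2a$.
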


\begin{proof}
Fix $T>0$, $j\in S$, and $\bm{x}_{N}\in\overline{\mathcal{E}}^{j}_{N}$.
For any $\eta>0$, recall from \eqref{eq4} that Condition $\mathfrak{K2}$
guarantees the existence of a finite index set $K\subset S$ such
that
\begin{equation}
\limsup_{N\to\infty}\sup_{\bm{x}\in\overline{\mathcal{E}}^{j}_{N}}\mathbb{Q}^{N}_{\bm{x}}\left[\mathcal{H}_{\overline{\mathcal{E}}_{N}(S\setminus K)}\le S_{N}(T)\right]<\eta.\label{eq:pf-A1}
\end{equation}
Then, we estimate
\begin{equation}
\begin{aligned} & {\bf Q}^{N}_{\bm{x}_{N}}\left[Y_{N}(\tau+\delta)\ne Y_{N}(\tau)\right]=\mathbb{Q}^{N}_{\bm{x}_{N}}\left[\Phi_{N}(X_{N}(S_{N}(\tau+\delta)))\ne\Phi_{N}(X_{N}(S_{N}(\tau)))\right]\\
 & \le\mathbb{Q}^{N}_{\bm{x}_{N}}\left[\mathcal{H}_{\overline{\mathcal{E}}_{N}(S\setminus K)}\le S_{N}(T)\right]+\mathbb{Q}^{N}_{\bm{x}_{N}}\left[\Phi_{N}(X_{N}(S_{N}(\tau+\delta)))\ne\Phi_{N}(X_{N}(S_{N}(\tau))),\ \mathcal{H}_{\overline{\mathcal{E}}_{N}(S\setminus K)}>S_{N}(T)\right].
\end{aligned}
\label{eq:pf-A2}
\end{equation}
We focus on the second probability in the right-hand side of \eqref{eq:pf-A2}.
Since $\tau\in\mathscr{T}_{T}$ and $\mathcal{H}_{\overline{\mathcal{E}}_{N}(S\setminus K)}>S_{N}(T)\ge S_{N}(\tau)$,
by the strong Markov property at $S_{N}(\tau)$ and Lemma \ref{lem:tech}-(2),
\begin{align*}
\sup_{\tau\in\mathscr{T}_{T}}\sup_{\delta\in(0,a)}\mathbb{Q}^{N}_{\bm{x}_{N}} & \left[\Phi_{N}(X_{N}(S_{N}(\tau+\delta)))\ne\Phi_{N}(X_{N}(S_{N}(\tau))),\ \mathcal{H}_{\overline{\mathcal{E}}_{N}(S\setminus K)}>S_{N}(T)\right]\\
 & \le\sup_{\delta\in(0,a)}\sup_{j\in K}\sup_{\bm{x}\in\overline{\mathcal{E}}^{k}_{N}}\mathbb{Q}^{N}_{\bm{x}}\left[\Phi_{N}(X_{N}(S_{N}(\delta)))\ne j\right].
\end{align*}
By Lemma \ref{lem:tech}-(1),
\begin{equation}
\begin{aligned}\sup_{\delta\in(0,a)}\sup_{j\in K}\sup_{\bm{x}\in\overline{\mathcal{E}}^{j}_{N}}\mathbb{Q}^{N}_{\bm{x}}\left[\Phi_{N}(X_{N}(S_{N}(\delta)))\ne j\right] & =\sup_{\delta\in(0,a)}\sup_{j\in K}\sup_{\bm{x}\in\overline{\mathcal{E}}^{j}_{N}}{\bf Q}^{N}_{\bm{x}}\left[Y_{N}(\delta)\ne j\right].\\
 & \le\sup_{j\in K}\sup_{\bm{x}\in\overline{\mathcal{E}}^{j}_{N}}{\bf Q}^{N}_{\bm{x}}\left[H_{S_{\mathfrak{d}}\setminus\{j\}}\le a\right].\\
 & \le\sup_{j\in K}\sup_{\bm{x}\in\overline{\mathcal{E}}^{j}_{N}}\mathbb{Q}^{N}_{\bm{x}}\left[\mathcal{H}_{\breve{\mathcal{E}}^{j}_{N}}\le S_{N}(a)\right].
\end{aligned}
\label{eq:pf-A3}
\end{equation}
Note that from the Markov inequality and \eqref{eq:D-Delta}, for
any $b>0$,
\begin{equation}
\begin{aligned}\sup_{\bm{x}\in\overline{\mathcal{E}}^{j}_{N}}\mathbb{Q}^{N}_{\bm{x}}\left[S_{N}(a)\ge a+b\right] & =\sup_{\bm{x}\in\overline{\mathcal{E}}^{j}_{N}}\mathbb{Q}^{N}_{\bm{x}}\left[\int^{a+b}_{0}{\bf 1}\left\{ X_{N}(t)\in\Delta_{N}\right\} {\rm d}t\ge b\right]\\
 & \le\sup_{\bm{x}\in\overline{\mathcal{E}}^{j}_{N}}\frac{1}{b}\,{\rm E}^{\mathbb{Q}^{N}_{\bm{x}}}\left[\int^{a+b}_{0}{\bf 1}\left\{ X_{N}(t)\in\Delta_{N}\right\} {\rm d}t\right]\xrightarrow{N\to\infty}0,
\end{aligned}
\label{eq:ST}
\end{equation}
where the equality in the first line holds by the definition of $S_{N}(a)$
in \eqref{eq:SN}. Applying \eqref{eq:ST} to \eqref{eq:pf-A3} for
all $j\in K$ with $b=a$, which is possible since $K$ is finite,
\[
\limsup_{N\to\infty}\sup_{j\in K}\sup_{\bm{x}\in\overline{\mathcal{E}}^{j}_{N}}\mathbb{Q}^{N}_{\bm{x}}\left[\mathcal{H}_{\breve{\mathcal{E}}^{j}_{N}}\le S_{N}(a)\right]\le\limsup_{N\to\infty}\sup_{j\in K}\sup_{\bm{x}\in\overline{\mathcal{E}}^{j}_{N}}\mathbb{Q}^{N}_{\bm{x}}\left[\mathcal{H}_{\breve{\mathcal{E}}^{j}_{N}}<2a\right].
\]
By Condition $\mathfrak{N}$, since $K$ is finite, the right-hand
side converges to $0$ as $a\to0$. Combining this result with \eqref{eq:pf-A1},
\eqref{eq:pf-A2}, and \eqref{eq:pf-A3}, we conclude that
\[
\lim_{a\to0}\limsup_{N\to\infty}\sup_{\tau\in\mathscr{T}_{T}}\sup_{\delta\in(0,a)}{\bf Q}^{N}_{\bm{x}_{N}}\left[Y_{N}(\tau+\delta)\ne Y_{N}(\tau)\right]<\eta.
\]
As this is valid for all $\eta>0$, we have proved Condition $\mathfrak{A}$.
\end{proof}

The last objective is to prove Condition $\mathfrak{U}$.
\begin{lem}
\label{lem2.5}Suppose that $\mathfrak{R}$, $\mathfrak{K2}$, $\mathfrak{D}$,
and $\mathfrak{N}$ are in force. Then, $\mathfrak{U}$ holds.
\end{lem}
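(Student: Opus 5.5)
Let ${\bf Q}^{*}$ be a limit point of $\{{\bf Q}^{N}_{\bm{x}_{N}}\}_{N\ge1}$ along a subsequence (still indexed by $N$). I will show that ${\bf Q}^{*}$ solves the resolvent martingale problem for $\mathfrak{L}$ started at $j$. That is, for every $\lambda>0$ and ${\bf f}\in C_{0}(S)$, writing ${\bf g}:=(\lambda-\mathfrak{L}){\bf f}$, the process
\[
e^{-\lambda t}{\bf f}({\bf y}(t))-{\bf f}({\bf y}(0))+\int_{0}^{t}e^{-\lambda s}\,{\bf g}({\bf y}(s))\,{\rm d}s
\]
should be a ${\bf Q}^{*}$-martingale, and ${\bf Q}^{*}[{\bf y}(0)=j]=1$. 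Here ${\bf f}({\bf y})$ is extended by ${\bf f}(\mathfrak{d})=0$. By \cite[Lemma 4.3.2]{EK86} and uniqueness of the martingale problem for the non-explosive chain ${\bf y}$ (with $\mathfrak{d}$ isolated and absorbing), this identifies ${\bf Q}^{*}={\bf Q}_{j}$.

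The initial condition is immediate: $\bm{x}_{N}\in\overline{\mathcal{E}}^{j}_{N}$, so $Y_{N}(0)=j$.

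For the martingale property, I fix $0\le s_{1}<\dots<s_{m}\le s<t$, outside the countable set of fixed discontinuity times of ${\bf Q}^{*}$, and a bounded $h:S_{\mathfrak{d}}^{m}\to\mathbb{R}$. I then reduce to showing
\begin{equation*}
\lim_{N\to\infty}{\rm E}^{{\bf Q}^{N}_{\bm{x}_{N}}}\Big[\Big(e^{-\lambda t}{\bf f}(Y_{N}(t))-e^{-\lambda s}{\bf f}(Y_{N}(s))+\int_{s}^{t}e^{-\lambda u}{\bf g}(Y_{N}(u))\,{\rm d}u\Big)\,h(Y_{N}(s_{1}),\dots,Y_{N}(s_{m}))\Big]=0 .
\end{equation*}
Since $S$ carries the discrete topology, ${\bf f}$, ${\bf g}$ and $h$ are continuous. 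So this limit passes to ${\bf Q}^{*}$ by weak convergence at continuity points.

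To prove the limit, I take a lift $G_{N}$ of ${\bf g}$ as in \eqref{eq:lift}, with $G_{N}=0$ on $\Delta_{N}$. Let $F_{N}$ be the resolvent solution of \eqref{eq:res}. Then
\[
M_{N}(r):=e^{-\lambda r}F_{N}(X_{N}(r))-F_{N}(X_{N}(0))+\int_{0}^{r}e^{-\lambda u}G_{N}(X_{N}(u))\,{\rm d}u
\]
is a $\mathbb{Q}^{N}_{\bm{x}_{N}}$-martingale. By optional stopping at the stopping times $S_{N}(s)\le S_{N}(t)$, using truncation at a large time and the bound $\|F_{N}\|_{\infty}\le\lambda^{-1}\sup_{N}\|G_{N}\|_{\infty}$, the increment of $M_{N}$ between these times is orthogonal to $\mathcal{F}_{S_{N}(s)}$-measurable bounded variables such as $h(Y_{N}(s_{1}),\dots)$. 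Three approximations then remain.

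\textbf{(i) Replace $F_{N}(X_{N}(S_{N}(r)))$ by ${\bf f}(Y_{N}(r))$ for $r=s,t$.} Fix $\eta>0$. By $\mathfrak{K2}$ and \eqref{eq:pf-A1} there is a finite $K$ such that, off an event of probability below $\eta$, $X_{N}(S_{N}(r))\in\overline{\mathcal{E}}_{N}(K)$. On that set, $\mathfrak{R}$ applied to the finitely many $i\in K$ gives uniform closeness to ${\bf f}(i)$. On $\{S_{N}(r)=\infty\}$ both sides vanish because of the factor $e^{-\lambda S_{N}(r)}$ and ${\bf f}(\mathfrak{d})=0$.

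\textbf{(ii) Replace $e^{-\lambda S_{N}(r)}$ by $e^{-\lambda r}$.} By \eqref{eq:ST}, $S_{N}(r)-r\to0$ in probability.

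\textbf{(iii) Replace the time integral.} Change variables $u=S_{N}(v)$ on the set where $X_{N}\in\mathcal{E}_{N}$. Since $G_{N}=0$ on $\Delta_{N}$, there is no contribution from $\Delta_{N}$, and
\[
\int_{S_{N}(s)}^{S_{N}(t)}e^{-\lambda u}G_{N}(X_{N}(u))\,{\rm d}u=\int_{s}^{t}e^{-\lambda S_{N}(v)}G_{N}(X^{{\rm tr}}_{N}(v))\,{\rm d}v .
\]
On $\mathcal{F}_{N}$ the integrand $G_{N}$ equals ${\bf g}\circ\Phi_{N}$. Condition $\mathfrak{D}$, carried through the time change, makes the time spent in $\mathcal{E}_{N}\setminus\mathcal{F}_{N}$ negligible, and (ii) handles the exponential factor.

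I expect the main obstacle to be the uniformity issue caused by infinite $S$. Condition $\mathfrak{R}$ is only pointwise in $j$, so step (i) must always be localized to finitely many valleys via $\mathfrak{K2}$, uniformly over the random times $S_{N}(s)$ and $S_{N}(t)$.

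A related subtlety is the choice of a single lift. It must vanish on $\Delta_{N}$, lie in $C_{0}(\Omega_{N})$, and have uniformly bounded sup norm; Urysohn's lemma gives this as in the proof of Lemma \ref{lem2.3}, possibly after first reducing to finitely supported ${\bf g}$ by density. The cemetery state also needs care: I must check that the martingale problem with $\mathfrak{d}$ isolated still characterizes ${\bf Q}_{j}$. Here non-explosiveness of ${\bf y}$ gives ${\bf Q}_{j}[H_{\mathfrak{d}}<\infty]=0$, and any limit point inherits this from the martingale identity applied to ${\bf f}$ approximating $1$ on large finite sets.

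Condition $\mathfrak{N}$ enters only to guarantee that $Y_{N}$ does not jump at fixed times in the limit, which justifies restricting to continuity points.
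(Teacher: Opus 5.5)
Your proof is correct, and its analytic core matches the paper's. You use a lift $G_{N}$ of ${\bf g}$ and the resolvent martingale of $X_{N}$ sampled at the stopping times $S_{N}(s)\le S_{N}(t)$. You localize to a finite $K$ via $\mathfrak{K2}$ and \eqref{eq4} before applying $\mathfrak{R}$. The time-change estimates come from $\mathfrak{D}$ and \eqref{eq:ST}. You also write out the optional-stopping and change-of-variables steps that the paper delegates to \cite{LMS25}.

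The genuine difference is how you make the evaluation maps ${\bf y}\mapsto{\bf y}(u)$ continuous in the limit. The paper first proves \eqref{eq:U1}, i.e.\ that ${\bf Q}$ has no fixed jump times, using $\mathfrak{K2}$, the strong Markov property, \eqref{eq:ST} and $\mathfrak{N}$; this lets it pass to the limit at every $s<t$. You instead work only at times outside $\{u:{\bf Q}^{*}[{\bf y}(u)\ne{\bf y}(u-)]>0\}$, which is countable for any law on $D([0,\infty);S_{\mathfrak{d}})$. You then extend to all times by right-continuity and bounded convergence. State that extension explicitly, since your limit identity is only established on a co-countable set of times.

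This is the standard Ethier--Kurtz device and needs no hypothesis, so your argument never uses $\mathfrak{N}$. Your closing remark that $\mathfrak{N}$ justifies restricting to continuity points is therefore inaccurate. In effect you prove $\mathfrak{U}$ from $\mathfrak{R}$, $\mathfrak{K2}$, $\mathfrak{D}$ and Assumption \ref{assu-tech} alone. The paper avoids the extension step, at the price of an extra argument and of $\mathfrak{N}$, which it needs anyway for $\mathfrak{A}$.

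Two small points. First, $Y_{N}(0)=j$ is not immediate from $\bm{x}_{N}\in\overline{\mathcal{E}}^{j}_{N}$. For $\bm{x}_{N}\in\partial\mathcal{E}^{j}_{N}$ one could have $S_{N}(0)>0$ with $X_{N}(S_{N}(0))$ in another valley. You need Assumption \ref{assu-tech}, together with right-continuity and openness of $\mathcal{E}^{j}_{N}$, to get $T_{N}(\epsilon)>0$ for all $\epsilon>0$, hence $S_{N}(0)=0$, as the paper does.

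Second, excluding the cemetery is simpler than approximating ${\bf 1}_{S}$, which would need such functions in the domain of $\mathfrak{L}$ with uniformly bounded images. Since $\mathfrak{d}\in S_{\mathfrak{d}}\setminus K$ for every finite $K$, $\mathfrak{K2}$ gives $\lim_{N\to\infty}{\bf Q}^{N}_{\bm{x}_{N}}[H_{\mathfrak{d}}\le T]=0$. The set $\{H_{\mathfrak{d}}\le T\}$ is closed (the appendix argument works for any discrete state space). The portmanteau theorem then yields ${\bf Q}^{*}[H_{\mathfrak{d}}\le T]=0$ for every $T$.
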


\begin{proof}
Let $j\in S$ and $\bm{x}_{N}\in\overline{\mathcal{E}}^{j}_{N}$.
Take a limiting probability measure ${\bf Q}$ of $\{{\bf Q}^{N}_{\bm{x}_{N}}:N\ge1\}$.
Note that $\mathcal{H}_{\mathcal{E}^{j}_{N}}=0$ in Assumption \ref{assu-tech}.
Since the trajectory is right-continuous and $\mathcal{E}^{j}_{N}$
is open, for all $\epsilon>0$, there exist $0<\epsilon_{1}<\epsilon_{2}<\epsilon$
such that $X_{N}(u)\in\mathcal{E}^{j}_{N}$ for all $u\in[\epsilon_{1},\epsilon_{2})$,
so that
\[
T_{N}(\epsilon)>T_{N}(0)=0.
\]
This implies $S_{N}(0)=0$, so that $X_{N}(S_{N}(0))=\bm{x}_{N}\in\overline{\mathcal{E}}^{j}_{N}$.
Hence, \eqref{eq:attr} implies that
\[
\liminf_{N\to\infty}{\bf Q}^{N}_{\bm{x}_{N}}\left[Y_{N}(0)=j\right]=1,
\]
and this implies that 
\begin{equation}
{\bf Q}\,[{\bf y}(0)=j]=1.\label{eq:ini-j}
\end{equation}

Next, we prove that 
\begin{equation}
{\bf Q}\left[{\bf y}(u)\ne{\bf y}(u-)\right]=0\qquad\text{for all}\quad u>0.\label{eq:U1}
\end{equation}
Since every trajectory in $D([0,\infty);S)$ is c\`adl\`ag,
\[
\{{\bf y}(u)\ne{\bf y}(u-)\}=\bigcup^{\infty}_{n=1}\left\{ {\bf y}(u-a)\ne{\bf y}(u),\enskip\forall a\in\left(0,\frac{1}{n}\right),\enskip{\bf y}(u-)\ne{\bf y}(u)\right\} ,
\]
where each set in the right-hand side is open. Thus, it suffices to
prove for each $n\ge1$ that
\[
\lim_{N\to\infty}{\bf Q}^{N}_{\bm{x}_{N}}\left\{ Y_{N}(u-a)\ne Y_{N}(u),\enskip\forall a\in\left(0,\frac{1}{n}\right),\enskip Y_{N}(u-)\ne Y_{N}(u)\right\} =0.
\]
Since the event in the left-hand side implies $Y_{N}(u-\frac{1}{2^{m}n})\ne Y_{N}(u)$
for all $m\ge1$, the last display holds if we can verify that
\begin{equation}
\limsup_{m\to\infty}\limsup_{N\to\infty}\mathbb{Q}^{N}_{\bm{x}_{N}}\left\{ \Phi_{N}\left(X_{N}\left(S_{N}\left(u-\frac{1}{2^{m}n}\right)\right)\right)\ne\Phi_{N}\left(X_{N}(S_{N}(u))\right)\right\} =0.\label{eq:U2}
\end{equation}
For any $\eta>0$, recall from \eqref{eq4} that Condition $\mathfrak{K2}$
provides a finite set $K$ such that
\[
\limsup_{N\to\infty}\sup_{\bm{x}\in\overline{\mathcal{E}}^{j}_{N}}\mathbb{Q}^{N}_{\bm{x}}\left[\mathcal{H}_{\overline{\mathcal{E}}_{N}(S\setminus K)}\le S_{N}(u)\right]<\eta.
\]
Thus, the left-hand side in \eqref{eq:U2} is bounded by
\[
\eta+\limsup_{m\to\infty}\limsup_{N\to\infty}\mathbb{Q}^{N}_{\bm{x}_{N}}\left[\Phi_{N}\left(X_{N}\left(S_{N}\left(u-\frac{1}{2^{m}n}\right)\right)\right)\ne\Phi_{N}\left(X_{N}(S_{N}(u))\right),\enskip\mathcal{H}_{\overline{\mathcal{E}}_{N}(S\setminus K)}>S_{N}(u)\right].
\]
By the strong Markov property at $S_{N}(u-\frac{1}{2^{m}n})$, along
with Lemma \ref{lem:tech}-(2), the above probability is bounded by
\[
\sum_{j\in K}\sup_{\bm{x}\in\overline{\mathcal{E}}^{j}_{N}}\mathbb{Q}^{N}_{\bm{x}}\left[\Phi_{N}\left(X_{N}\left(S_{N}\left(\frac{1}{2^{m}n}\right)\right)\right)\ne j\right].
\]
By Lemma \ref{lem:tech}-(1),
\[
\begin{aligned}\sum_{j\in K}\sup_{\bm{x}\in\overline{\mathcal{E}}^{j}_{N}}\mathbb{Q}^{N}_{\bm{x}}\left[\Phi_{N}\left(X_{N}\left(S_{N}\left(\frac{1}{2^{m}n}\right)\right)\right)\ne j\right] & =\sum_{j\in K}\sup_{\bm{x}\in\overline{\mathcal{E}}^{j}_{N}}{\bf Q}^{N}_{\bm{x}}\left[Y_{N}\left(\frac{1}{2^{m}n}\right)\ne j\right]\\
 & \le\sum_{j\in K}\sup_{\bm{x}\in\overline{\mathcal{E}}^{j}_{N}}{\bf Q}^{N}_{\bm{x}}\left[H_{S_{\mathfrak{d}}\setminus\{j\}}\le\frac{1}{2^{m}n}\right].\\
 & \le\sum_{j\in K}\sup_{\bm{x}\in\overline{\mathcal{E}}^{j}_{N}}\mathbb{Q}^{N}_{\bm{x}}\left[\mathcal{H}_{\breve{\mathcal{E}}^{j}_{N}}\le S_{N}\left(\frac{1}{2^{m}n}\right)\right].
\end{aligned}
\]
In addition, recall from \eqref{eq:ST} that \eqref{eq:D} implies
\[
\limsup_{N\to\infty}\sum_{j\in K}\sup_{\bm{x}\in\overline{\mathcal{E}}^{j}_{N}}\mathbb{Q}^{N}_{\bm{x}}\left[\mathcal{H}_{\breve{\mathcal{E}}^{j}_{N}}\le S_{N}\left(\frac{1}{2^{m}n}\right)\right]\le\sum_{j\in K}\limsup_{N\to\infty}\sup_{\bm{x}\in\overline{\mathcal{E}}^{j}_{N}}\mathbb{Q}^{N}_{\bm{x}}\left[\mathcal{H}_{\breve{\mathcal{E}}^{j}_{N}}<\frac{1}{2^{m-1}n}\right].
\]
By condition $\mathfrak{N}$, the right-hand side converges to $0$
as $m\to\infty$. Since $\eta>0$ was arbitrary, we have proved \eqref{eq:U2}
thus \eqref{eq:U1}.

Finally, take any $\lambda>0$ and ${\bf f}:S_{\mathfrak{d}}\to\mathbb{R}$
in the domain of $\mathfrak{L}$. Let ${\bf g}:=(\lambda-\mathfrak{L}){\bf f}$,
let $G_{N}$ be any lift of ${\bf g}|_{S}$ such that $G_{N}|_{\mathcal{F}^{j}_{N}}={\bf g}(j)$
for $j\in S$ (which exists by Urysohn's lemma), and let $F_{N}$
be the solution to \eqref{eq:res}. To prove that ${\bf Q}={\bf Q}_{j}$,
by the martingale characterization and \eqref{eq:ini-j}, it suffices
to prove that
\[
{\rm E}^{{\bf Q}}\left[\mathfrak{M}^{s,t}_{{\bf f}}({\bf y})\,\mathfrak{H}({\bf y})\right]=0,
\]
where $\mathfrak{M}^{s,t}_{{\bf f}}$, $s<t$, are the martingale
differences,
\[
\mathfrak{M}^{s,t}_{{\bf f}}({\bf y}):={\bf f}({\bf y}(t))-{\bf f}({\bf y}(s))+\int^{t}_{s}e^{-\lambda u}\,{\bf g}({\bf y}(u))\,{\rm d}u,
\]
and $\mathfrak{H}:D([0,\infty);S_{\mathfrak{d}})\to\mathbb{R}$ is
an arbitrary bounded continuous function that is measurable up to
time $s$, i.e., $\mathfrak{H}\in\sigma({\bf y}(u):0\le u\le s)$.
Note that \eqref{eq:U1} guarantees the convergence
\[
\lim_{N\to\infty}{\rm E}^{{\bf Q}^{N}_{\bm{x}_{N}}}\left[\mathfrak{M}^{s,t}_{{\bf f}}(Y_{N})\,\mathfrak{H}(Y_{N})\right]={\rm E}^{{\bf Q}}\left[\mathfrak{M}^{s,t}_{{\bf f}}({\bf y})\,\mathfrak{H}({\bf y})\right],
\]
since outside the set $\{{\bf y}(u-)\ne{\bf y}(u)\}$ the projection
function at time $u$ is continuous. Thus, to prove the lemma we are
left to verify that
\[
\lim_{N\to\infty}{\rm E}^{{\bf Q}^{N}_{\bm{x}_{N}}}\left[\mathfrak{M}^{s,t}_{{\bf f}}(Y_{N})\,\mathfrak{H}(Y_{N})\right]=0.
\]
By \eqref{eq4} and Condition $\mathfrak{K2}$, for any $\eta>0$
there exists finite $K\subset S$ such that 
\[
\limsup_{N\to\infty}\mathbb{Q}^{N}_{\bm{x}_{N}}\left[\mathcal{H}_{\overline{\mathcal{E}}_{N}(S\setminus K)}\le S_{N}(t)\right]<\eta.
\]
Then,
\begin{align*}
 & {\rm E}^{{\bf Q}^{N}_{\bm{x}_{N}}}\left[\mathfrak{M}^{s,t}_{{\bf f}}(Y_{N})\,\mathfrak{H}(Y_{N})\right]\\
 & ={\rm E}^{\mathbb{Q}^{N}_{\bm{x}_{N}}}\left[\mathfrak{M}^{s,t}_{{\bf f}}(Y_{N})\,\mathfrak{H}(Y_{N})\left({\bf 1}\left\{ \mathcal{H}_{\overline{\mathcal{E}}_{N}(S\setminus K)}\le S_{N}(t)\right\} +{\bf 1}\left\{ \mathcal{H}_{\overline{\mathcal{E}}_{N}(S\setminus K)}>S_{N}(t)\right\} \right)\right]\\
 & \le C\eta+{\rm E}^{\mathbb{Q}^{N}_{\bm{x}_{N}}}\left[\mathfrak{M}^{s,t}_{{\bf f}}(Y_{N})\,\mathfrak{H}(Y_{N})\,{\bf 1}\left\{ \mathcal{H}_{\overline{\mathcal{E}}_{N}(S\setminus K)}>S_{N}(t)\right\} \right],
\end{align*}
where $C>0$ is a finite constant that depends only on $\|{\bf f}\|_{\infty}$,
$\|{\bf g}\|_{\infty}$, and $\|\mathfrak{H}\|_{\infty}$. Next, we
may apply Condition $\mathfrak{R}$ on the set $\overline{\mathcal{E}}_{N}(K)$
to rewrite the last term above as
\begin{align*}
 & {\rm E}^{\mathbb{Q}^{N}_{\bm{x}_{N}}}\left[\mathfrak{M}^{s,t}_{{\bf f}}(Y_{N})\,\mathfrak{H}(Y_{N})\,{\bf 1}\left\{ \mathcal{H}_{\overline{\mathcal{E}}_{N}(S\setminus K)}>S_{N}(t)\right\} \right]\\
 & ={\rm E}^{\mathbb{Q}^{N}_{\bm{x}_{N}}}\left[\left(F_{N}(X^{{\rm tr}}_{N}(t))-F_{N}(X^{{\rm tr}}_{N}(s))+\int^{t}_{s}e^{-\lambda u}\,{\bf g}(Y_{N}(u))\,{\rm d}u\right)\mathfrak{H}(Y_{N})\right]+o_{N}(1),
\end{align*}
where $o_{N}(1)$ denotes a term vanishing as $N\to\infty$. Moreover,
combining \eqref{eq:ST} (from \eqref{eq:D}) and Condition $\mathfrak{D}$,
we may approximate ${\bf g}(Y_{N}(u))$ by $G_{N}(X^{{\rm tr}}_{N}(u))$
in the right-hand side to obtain that the last expectation equals
\[
{\rm E}^{\mathbb{Q}^{N}_{\bm{x}_{N}}}\left[\left(F_{N}(X^{{\rm tr}}_{N}(t))-F_{N}(X^{{\rm tr}}_{N}(s))+\int^{t}_{s}e^{-\lambda u}\,G_{N}(X^{{\rm tr}}_{N}(u))\,{\rm d}u\right)\mathfrak{H}(Y_{N})\right]+o_{N}(1).
\]
Finally, to conclude the proof of the lemma, it suffices to apply
the martingale characterization of the original process $X_{N}(\cdot)$
in $\Omega_{N}$, along with the time-changing formulas
\begin{equation}
\limsup_{N\to\infty}\sup_{\bm{x}\in\overline{\mathcal{E}}^{j}_{N}}{\rm E}^{\mathbb{Q}^{N}_{\bm{x}}}\left[e^{-\lambda t}-e^{-\lambda S_{N}(t)}\right]=0=\limsup_{N\to\infty}\sup_{\bm{x}\in\overline{\mathcal{E}}^{j}_{N}}{\rm E}^{\mathbb{Q}^{N}_{\bm{x}}}\left[\int^{t}_{0}\left(e^{-\lambda u}-e^{-\lambda S_{N}(u)}\right){\rm d}u\right],\label{eq:tc}
\end{equation}
which follows by Condition $\mathfrak{D}$. Rigorous verifications
of the final two assertions can be found in \cite[Proposition 4.5 and Lemma 4.4]{LMS25},
respectively, thus we omit the detail (see from \eqref{eq:ST} that
$S_{N}(t)<\infty$ uniformly starting from $\overline{\mathcal{E}}^{j}_{N}$
with high probability).
\end{proof}

Finally, we formally state that Condition $\mathfrak{C}$ follows.
\begin{lem}
\label{lem2.6}Conditions $\mathfrak{K2}$, $\mathfrak{A}$, and $\mathfrak{U}$
imply Condition $\mathfrak{C}$.
\end{lem}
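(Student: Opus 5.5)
The plan is the standard tightness-plus-uniqueness argument. Fix $j\in S$ and $\bm{x}_{N}\in\overline{\mathcal{E}}^{j}_{N}$. By Condition $\mathfrak{U}$, every limit point of $\{{\bf Q}^{N}_{\bm{x}_{N}}\}_{N\ge1}$ equals ${\bf Q}_{j}$. Hence it suffices to show that the family is tight in $D([0,\infty);S_{\mathfrak{d}})$. Once tightness holds, every subsequence has a further subsequence converging to some limit, which must be ${\bf Q}_{j}$; therefore the whole sequence converges weakly to ${\bf Q}_{j}$.

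For tightness I would use the Aldous criterion (\cite[Theorem 16.10]{Bil99}), in its version for $D([0,\infty);E)$ with $E=S_{\mathfrak{d}}$ a discrete (Polish) space. It has two ingredients.

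\textbf{(i) Compact containment.} For every $T,\eta>0$ there should be a compact set $\mathcal{K}\subset S_{\mathfrak{d}}$ with $\limsup_{N}{\bf Q}^{N}_{\bm{x}_{N}}[Y_{N}(t)\notin\mathcal{K}\text{ for some }t\le T]<\eta$.

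\textbf{(ii) Aldous' modulus condition.} This is exactly Condition $\mathfrak{A}$. In the discrete metric, the event $d(Y_{N}(\tau+\delta),Y_{N}(\tau))>\varepsilon$ coincides with $Y_{N}(\tau+\delta)\ne Y_{N}(\tau)$ for $\varepsilon<1$.

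For (i), I take the finite set $K=K(j,T,\eta)$ given by $\mathfrak{K2}$. The set $K\cup\{\mathfrak{d}\}$ is finite, hence compact. The event that $Y_{N}$ leaves $K\cup\{\mathfrak{d}\}$ before $T$ is contained in $\{H_{S_{\mathfrak{d}}\setminus K}\le T\}$, whose probability is eventually below $\eta$ by \eqref{eq:K2}, uniformly over starting points in $\overline{\mathcal{E}}^{j}_{N}$. One could equally take $\mathcal{K}=K$, since reaching $\mathfrak{d}$ is already counted in $H_{S_{\mathfrak{d}}\setminus K}$.

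The main technical point is matching the hypotheses of the cited theorem. Theorem 16.10 of \cite{Bil99} is formulated on $D[0,\infty)$ with real values, with a bound on $\sup_{t\le T}|X(t)|$. I would handle this in one of two ways:
\begin{itemize}
\item embed $S_{\mathfrak{d}}$ isometrically into a bounded discrete subset of a Polish space and invoke the general Polish-space form of Aldous' criterion (e.g.\ Jakubowski's or Ethier--Kurtz \cite[Theorem 3.8.6 with Remark 3.8.7]{EK86}), which uses compact containment in place of boundedness; or
\item after enumerating $S$, compose with an injection into $\mathbb{R}$ that makes $K$ a bounded set of isolated points.
\end{itemize}
In either route, the uniformity in $\tau\in\mathscr{T}_{T}$ and $\delta\in(0,a)$ required by Aldous' condition is exactly how $\mathfrak{A}$ is stated. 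Tightness therefore follows, and together with $\mathfrak{U}$ this gives Condition $\mathfrak{C}$.
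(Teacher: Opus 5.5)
Your proposal is correct and follows the paper's argument. Tightness comes from Aldous' criterion, with compact containment supplied by the finite set $K$ from $\mathfrak{K2}$ and the modulus condition by $\mathfrak{A}$. Every limit point is then identified as ${\bf Q}_{j}$ via $\mathfrak{U}$. Your remarks on carrying the real-valued Theorem~16.10 of \cite{Bil99} over to the countable discrete space $S_{\mathfrak{d}}$ make explicit a step the paper leaves implicit. For the injection route, the image of $S_{\mathfrak{d}}$ should be a closed discrete set such as $\mathbb{Z}$; the Polish-space version of the criterion works directly.
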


\begin{proof}
Two conditions $\mathfrak{K2}$ and $\mathfrak{A}$ guarantee that
$\{{\bf Q}^{N}_{\bm{x}_{N}}:N\ge1\}$ is tight by Aldous' tightness
criterion (see, e.g., \cite[Theorem 16.10]{Bil99}). Condition $\mathfrak{U}$
implies that any limit point must be ${\bf Q}_{j}$. These two ingredients
prove Condition $\mathfrak{C}$.
\end{proof}

\begin{proof}[Proof of the only if part of Theorem \ref{t:main1}]
 Lemmas \ref{lem2.2}, \ref{lem2.3}, \ref{lem2.4}, \ref{lem2.5},
and \ref{lem2.6} complete the proof of the only if part. See also
Figure \ref{fig2.1}-left.
\end{proof}

\subsection{\label{sec2.2}From metastability to resolvent}

In this subsection, we prove the other direction, i.e., $\mathfrak{C}+\mathfrak{D}\Rightarrow\mathfrak{R}+\mathfrak{K1}+\mathfrak{K2}$.
\begin{lem}
\label{lem2.7}Conditions $\mathfrak{C}$ and $\mathfrak{D}$ imply
Condition $\mathfrak{R}$.
\end{lem}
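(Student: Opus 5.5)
We prove Lemma~\ref{lem2.7} through the stochastic representation of the resolvent solution. Fix $\{\mathcal{F}^{i}_{N}\}$, $\lambda>0$, ${\bf g}\in C_{0}(S)$, and a lift $G_{N}$ satisfying \eqref{eq:lift}, and set $C_{0}:=\sup_{N}\|G_{N}\|_{\infty}$. By \eqref{eq:pr},
\[
F_{N}(\bm{x})={\rm E}^{\mathbb{Q}^{N}_{\bm{x}}}\left[\int^{\infty}_{0}e^{-\lambda t}\,G_{N}(X_{N}(t))\,{\rm d}t\right],
\]
while the macroscopic resolvent solution has the analogous representation ${\bf f}(j)={\rm E}^{{\bf Q}_{j}}[\int^{\infty}_{0}e^{-\lambda t}\,{\bf g}({\bf y}(t))\,{\rm d}t]$, where ${\bf g}(\mathfrak{d}):=0$. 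Fix $\eta>0$ and truncate both time integrals at a level $T$ with $C_{0}e^{-\lambda T}/\lambda<\eta$. It then suffices to show
\[
\limsup_{N\to\infty}\sup_{\bm{x}\in\overline{\mathcal{E}}^{j}_{N}}\left|{\rm E}^{\mathbb{Q}^{N}_{\bm{x}}}\left[\int^{T}_{0}e^{-\lambda t}\,G_{N}(X_{N}(t))\,{\rm d}t\right]-{\rm E}^{{\bf Q}_{j}}\left[\int^{T}_{0}e^{-\lambda t}\,{\bf g}({\bf y}(t))\,{\rm d}t\right]\right|\le C\eta .
\]
We will prove this along an arbitrary sequence $\bm{x}_{N}\in\overline{\mathcal{E}}^{j}_{N}$, which handles the supremum.

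The first step replaces $G_{N}(X_{N}(t))$ by ${\bf g}(Y_{N}(t))$. On $\mathcal{F}_{N}$ we have $G_{N}(X_{N}(t))={\bf g}(\Phi_{N}(X_{N}(t)))$, and Condition $\mathfrak{D}$ makes the time spent outside $\mathcal{F}_{N}$ negligible, so the integral equals $\int^{T}_{0}e^{-\lambda t}{\bf g}(\Phi_{N}(X_{N}(t))){\bf 1}\{X_{N}(t)\in\mathcal{E}_{N}\}\,{\rm d}t+o_{N}(1)$. We change variables $t=S_{N}(u)$. On $\mathcal{E}_{N}$ one has ${\rm d}T_{N}(t)={\rm d}t$ and $\Phi_{N}(X_{N}(S_{N}(u)))=Y_{N}(u)$, so this becomes $\int^{T_{N}(T)}_{0}e^{-\lambda S_{N}(u)}\,{\bf g}(Y_{N}(u))\,{\rm d}u$. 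Condition $\mathfrak{D}$ and \eqref{eq:ST} give $T-T_{N}(T)\to0$ in $L^{1}$, and the time-change estimate \eqref{eq:tc} gives $e^{-\lambda S_{N}(u)}\approx e^{-\lambda u}$. Both estimates are uniform over $\bm{x}\in\overline{\mathcal{E}}^{j}_{N}$. The expression therefore reduces to ${\rm E}^{{\bf Q}^{N}_{\bm{x}_{N}}}[\int^{T}_{0}e^{-\lambda u}{\bf g}(Y_{N}(u))\,{\rm d}u]+o_{N}(1)$.

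The second step passes to the limit using Condition $\mathfrak{C}$. The functional $\omega\mapsto\int^{T}_{0}e^{-\lambda u}{\bf g}(\omega(u))\,{\rm d}u$ on $D([0,\infty);S_{\mathfrak{d}})$ is bounded, and it is continuous in the Skorokhod topology at every path that does not jump at time $T$. This holds because ${\bf g}$ is bounded on the discrete space $S_{\mathfrak{d}}$, and a Skorokhod-convergent sequence converges pointwise at all but countably many times, so dominated convergence applies. The limit law ${\bf Q}_{j}$ of a Markov chain puts zero mass on paths jumping at the fixed time $T$. Weak convergence ${\bf Q}^{N}_{\bm{x}_{N}}\Rightarrow{\bf Q}_{j}$ therefore gives convergence of the expectations, which proves \eqref{eq:R}.

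The main obstacle is the first step. The change of variables requires $S_{N}(u)<\infty$ for $u\le T$, and this must hold uniformly in the starting point. We will obtain it from \eqref{eq:ST}, which gives $\mathbb{Q}^{N}_{\bm{x}}[S_{N}(T)\ge T+b]\to0$ uniformly on $\overline{\mathcal{E}}^{j}_{N}$. We also need that the excursions where $X_{N}\notin\mathcal{F}_{N}$ contribute at most $C_{0}$ times a negligible occupation time, and this follows directly from \eqref{eq:D}. Compactness conditions are not needed in this direction.
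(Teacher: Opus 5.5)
Your proposal is correct and follows essentially the paper's route: the representation \eqref{eq:pr}, Condition $\mathfrak{D}$ to discard the time outside $\mathcal{F}_N$ and replace $G_N$ by ${\bf g}\circ\Phi_N$, the time change to the trace process with \eqref{eq:ST}/\eqref{eq:tc} to replace $e^{-\lambda S_N(u)}$ by $e^{-\lambda u}$, and then Condition $\mathfrak{C}$, made uniform in the starting point through arbitrary sequences $\bm{x}_N$, to reach ${\bf f}(j)$. Truncating at $T$ from the start and checking Skorokhod continuity of the integral functional explicitly are only cosmetic refinements: the paper carries out the same steps on $[0,\infty)$, and it leaves the continuity step implicit.
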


\begin{proof}
First, note that the convergence of measures ${\bf Q}^{N}_{\bm{x}_{N}}$
to ${\bf Q}_{j}$ is uniform over all possible $\bm{x}_{N}\in\overline{\mathcal{E}}^{j}_{N}$.
Indeed, if the uniformity fails, then we may subtract a sequence $\{\bm{x}_{N}':N\ge1\}$
with $\bm{x}_{N}'\in\overline{\mathcal{E}}^{j}_{N}$ such that ${\bf Q}^{N}_{\bm{x}_{N}'}$
stays bounded away from ${\bf Q}_{j}$, thus does not converge to
${\bf Q}_{j}$. This contradicts Condition $\mathfrak{C}$.

Fix $\lambda>0$, ${\bf g}\in C_{0}(S)$, and $G_{N}\in C_{0}(\Omega_{N})$
that satisfies \eqref{eq:lift} and $\|G_{N}\|_{\infty}=M<\infty$.
Fix $j\in S$. By Condition $\mathfrak{C}$, along with the uniformity
stated in the last paragraph,
\[
\lim_{N\to\infty}\sup_{\bm{x}\in\overline{\mathcal{E}}^{j}_{N}}\left|{\rm E}^{{\bf Q}^{N}_{\bm{x}}}\left[\int^{\infty}_{0}e^{-\lambda t}\,{\bf g}(Y_{N}(t))\,{\rm d}t\right]-{\rm E}^{{\bf Q}_{j}}\left[\int^{\infty}_{0}e^{-\lambda t}\,{\bf g}({\bf y}(t))\,{\rm d}t\right]\right|=0,
\]
where the second expectation equals ${\bf f}(j)$ by \eqref{eq:pr}.
Thus, it suffices to demonstrate that
\begin{equation}
\lim_{N\to\infty}\sup_{\bm{x}\in\overline{\mathcal{E}}^{j}_{N}}\left|{\rm E}^{\mathbb{Q}^{N}_{\bm{x}}}\left[\int^{\infty}_{0}e^{-\lambda t}\,G_{N}(X_{N}(t))\,{\rm d}t\right]-{\rm E}^{{\bf Q}^{N}_{\bm{x}}}\left[\int^{\infty}_{0}e^{-\lambda t}\,{\bf g}(Y_{N}(t))\,{\rm d}t\right]\right|=0.\label{eq:2.7-wts}
\end{equation}
We first claim that
\begin{equation}
\lim_{N\to\infty}\sup_{\bm{x}\in\overline{\mathcal{E}}^{j}_{N}}{\rm E}^{\mathbb{Q}^{N}_{\bm{x}}}\left[\int^{\infty}_{0}e^{-\lambda t}\,G_{N}(X_{N}(t))\,{\bf 1}\left\{ X_{N}(t)\notin\mathcal{F}_{N}\right\} {\rm d}t\right]=0.\label{eq:2.7-1}
\end{equation}
Fix $\eta>0$ and take $T>0$ such that $\lambda^{-1}e^{-\lambda T}M\le\eta$.
By Condition $\mathfrak{D}$, 
\[
\lim_{N\to\infty}\sup_{\bm{x}\in\overline{\mathcal{E}}^{j}_{N}}{\rm E}^{\mathbb{Q}^{N}_{\bm{x}}}\left[\int^{T}_{0}{\bf 1}\left\{ X_{N}(t)\notin\mathcal{F}_{N}\right\} {\rm d}t\right]=0.
\]
Thus, we may calculate the expectation in \eqref{eq:2.7-1} as
\begin{align*}
 & {\rm E}^{\mathbb{Q}^{N}_{\bm{x}}}\left[\left(\int^{T}_{0}+\int^{\infty}_{T}\right)e^{-\lambda t}\,G_{N}(X_{N}(t))\,{\bf 1}\left\{ X_{N}(t)\notin\mathcal{F}_{N}\right\} {\rm d}t\right]\\
 & \le M\left({\rm E}^{\mathbb{Q}^{N}_{\bm{x}}}\left[\int^{T}_{0}{\bf 1}\left\{ X_{N}(t)\notin\mathcal{F}_{N}\right\} {\rm d}t\right]+\lambda^{-1}e^{-\lambda T}\right)\xrightarrow{\lim_{N\to\infty}\sup_{\bm{x}\in\overline{\mathcal{E}}^{j}_{N}}}\lambda^{-1}e^{-\lambda T}M\le\eta.
\end{align*}
This proves \eqref{eq:2.7-1} since $\eta>0$ was chosen arbitrarily.

Next, by time changing we get
\begin{align*}
 & {\rm E}^{\mathbb{Q}^{N}_{\bm{x}}}\left[\int^{\infty}_{0}e^{-\lambda t}\,G_{N}(X_{N}(t))\,{\bf 1}\left\{ X_{N}(t)\in\mathcal{F}_{N}\right\} {\rm d}t\right]\\
 & ={\rm E}^{\mathbb{Q}^{N}_{\bm{x}}}\left[\int^{\infty}_{0}e^{-\lambda S_{N}(t)}\,G_{N}(X^{{\rm tr}}_{N}(t))\,{\bf 1}\left\{ X^{{\rm tr}}_{N}(t)\in\mathcal{F}_{N}\right\} {\rm d}t\right]\\
 & ={\rm E}^{\mathbb{Q}^{N}_{\bm{x}}}\left[\int^{\infty}_{0}e^{-\lambda S_{N}(t)}\,{\bf g}(Y_{N}(t))\,{\bf 1}\left\{ X^{{\rm tr}}_{N}(t)\in\mathcal{F}_{N}\right\} {\rm d}t\right],
\end{align*}
where the second equality holds by \eqref{eq:YN} and \eqref{eq:lift}.
As done in \eqref{eq:2.7-1}, it also holds that
\[
\lim_{N\to\infty}\sup_{\bm{x}\in\overline{\mathcal{E}}^{j}_{N}}{\rm E}^{\mathbb{Q}^{N}_{\bm{x}}}\left[\int^{\infty}_{0}e^{-\lambda S_{N}(t)}\,{\bf g}(Y_{N}(t))\,{\bf 1}\left\{ X^{{\rm tr}}_{N}(t)\notin\mathcal{F}_{N}\right\} {\rm d}t\right]=0,
\]
thus the $\lim_{N\to\infty}\sup_{\bm{x}\in\overline{\mathcal{E}}^{j}_{N}}$
of
\begin{equation}
\left|{\rm E}^{\mathbb{Q}^{N}_{\bm{x}}}\left[\int^{\infty}_{0}e^{-\lambda t}\,G_{N}(X_{N}(t))\,{\bf 1}\left\{ X_{N}(t)\in\mathcal{F}_{N}\right\} {\rm d}t\right]-{\rm E}^{\mathbb{Q}^{N}_{\bm{x}}}\left[\int^{\infty}_{0}e^{-\lambda S_{N}(t)}\,{\bf g}(Y_{N}(t))\,{\rm d}t\right]\right|\label{eq:2.7-2}
\end{equation}
equals $0$. By a time-changing argument via \eqref{eq:tc} (guaranteed
by Condition $\mathfrak{D}$), we know that
\begin{equation}
\lim_{N\to\infty}\sup_{\bm{x}\in\overline{\mathcal{E}}^{j}_{N}}\left|{\rm E}^{\mathbb{Q}^{N}_{\bm{x}}}\left[\int^{\infty}_{0}e^{-\lambda S_{N}(t)}\,{\bf g}(Y_{N}(t))\,{\rm d}t\right]-{\rm E}^{{\bf Q}^{N}_{\bm{x}}}\left[\int^{\infty}_{0}e^{-\lambda t}\,{\bf g}(Y_{N}(t))\,{\rm d}t\right]\right|=0.\label{eq:2.7-3}
\end{equation}
Combining \eqref{eq:2.7-1}, \eqref{eq:2.7-2}, and \eqref{eq:2.7-3}
proves \eqref{eq:2.7-wts}, and thus the lemma.
\end{proof}

\begin{lem}
\label{lem2.8}Condition $\mathfrak{C}$ implies Condition $\mathfrak{K2}$.
\end{lem}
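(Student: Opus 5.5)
We argue by contradiction, using that the limit chain ${\bf y}(\cdot)$ started from $j$ does not explode: it stays in a finite set up to time $T$ with high probability. Fix $j\in S$ and $T,\eta>0$. Since ${\bf y}$ is a non-explosive continuous-time Markov chain on the countable set $S$ (and $\mathfrak{d}$ is isolated and never reached from $S$), its path on $[0,T+1]$ visits only finitely many states almost surely. Choosing an increasing exhaustion $K_{1}\subset K_{2}\subset\cdots$ of $S$ by finite sets, we get ${\bf Q}_{j}[H_{S_{\mathfrak{d}}\setminus K_{m}}\le T+1]\to0$. We fix $m$ with this probability below $\eta/2$ and set $K:=K_{m}$.

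Next we transfer this bound to ${\bf Q}^{N}_{\bm{x}}$ via the Portmanteau theorem. The key observation is that, since $S_{\mathfrak{d}}$ is discrete, the set
\[
\mathcal{A}:=\{\omega\in D([0,\infty);S_{\mathfrak{d}}):\omega(s)\in K\ \text{for all}\ s\in[0,T+1)\}
\]
is open in the Skorokhod topology. Indeed, if $\omega\in\mathcal{A}$, then any $\omega'$ close to $\omega$ in the Skorokhod metric is, on $[0,T+1-\delta]$, a time change of $\omega$ up to small uniform error. Since $S_{\mathfrak{d}}$ is discrete, this forces $\omega'$ to take values in the range of $\omega$ on $[0,T+1)$, hence in $K$, on $[0,T]$. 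Rather than proving openness of $\mathcal{A}$ exactly, it is cleaner to work with the closed set
\[
\mathcal{B}:=\{\omega:\omega(s)\notin K\ \text{for some}\ s\in[0,T]\}
\]
and show $\overline{\mathcal{B}}\subset\{H_{S_{\mathfrak{d}}\setminus K}\le T+1\}$. If $\omega_{n}\to\omega$ in Skorokhod with $\omega_{n}(s_{n})\notin K$ for some $s_{n}\le T$, then the time changes $\lambda_{n}$ satisfy $\lambda_{n}(s_{n})\le T+1$ eventually, and $\omega(\lambda_{n}(s_{n}))=\omega_{n}(s_{n})\notin K$ for large $n$ because $S_{\mathfrak{d}}$ is discrete. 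Hence $H_{S_{\mathfrak{d}}\setminus K}(\omega)\le T+1$.

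With this in hand, Condition $\mathfrak{C}$ and Portmanteau give, for any $\bm{x}_{N}\in\overline{\mathcal{E}}^{j}_{N}$,
\[
\limsup_{N}{\bf Q}^{N}_{\bm{x}_{N}}[H_{S_{\mathfrak{d}}\setminus K}\le T]\le{\bf Q}_{j}[\overline{\mathcal{B}}]<\eta/2.
\]
To get the supremum over $\bm{x}\in\overline{\mathcal{E}}^{j}_{N}$ inside the limsup, we argue as at the start of the proof of Lemma \ref{lem2.7}. For each $N$ we pick $\bm{x}_{N}$ attaining the supremum up to $1/N$; applying the bound above to this particular sequence yields \eqref{eq:K2} with the constant $\eta/2<\eta$.

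The only delicate step is the topological one: the hitting event is not closed in the Skorokhod topology at the boundary time $T$. We handle this by enlarging the time horizon to $T+1$ when selecting $K$, and by using discreteness of $S_{\mathfrak{d}}$ to control the values of the limit path.
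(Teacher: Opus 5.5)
Your proof is correct, but it takes a different route from the paper's.

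\textbf{What the paper does.} The paper's proof never looks at the limit law itself. It uses only that $\mathfrak{C}$ makes $\{{\bf Q}^N_{\bm{x}_N}\}$ tight. It then invokes the compact-containment consequence of tightness in $D([0,\infty);S_{\mathfrak d})$ (eq.~(16.22) of \cite{Bil99}) to get \eqref{eq:2.8} for each fixed sequence. Finally it passes to the supremum over starting points by a contradiction/subsequence argument, which implicitly needs a diagonal choice of $\bm{x}_{N_i}$ together with monotonicity in $n$.

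\textbf{What you do instead.} You choose $K$ from ${\bf Q}_j$ alone. This uses two facts: paths in $D([0,\infty);S_{\mathfrak d})$ take finitely many values on compact time intervals, and ${\bf y}$ stays in $S$. You then transfer the bound to every sequence at once via Portmanteau and the lower semicontinuity of hitting times. Because $K$ does not depend on the sequence, near-maximizers give the supremum immediately and no diagonal step is needed.

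\textbf{Comparison.} Your route is more elementary and self-contained, and it is the same mechanism the paper itself uses for \eqref{e:hit_conv} in the proof of Lemma~\ref{l:K}. It also avoids the small mismatch in the paper, where the $K_n$ exhaust $S_{\mathfrak d}$ rather than $S$. The paper's route is shorter and needs only relative compactness, not the identification of the limit.

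\textbf{Two incorrect side remarks (not used in your proof).} First, $\mathcal{A}$ is not open: a path that leaves $K$ exactly at time $T+1$ is a Skorokhod limit of paths that leave at time $T+1-1/n$. Second, contrary to your last paragraph, $\{H_{S_{\mathfrak d}\setminus K}\le T\}$ \emph{is} closed. Your own argument gives $H_{S_{\mathfrak d}\setminus K}(\omega)\le T+\sup_t|\lambda_n(t)-t|$ for all large $n$, hence $H_{S_{\mathfrak d}\setminus K}(\omega)\le T$. This is exactly the closedness lemma in the paper's appendix. So the enlargement to $T+1$ is harmless but unnecessary.
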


\begin{proof}
Condition $\mathfrak{C}$ automatically implies tightness, which then
implies that (cf. \cite[eq. (16.22)]{Bil99}) given any $j\in S$,
$T>0$, and $\bm{x}_{N}\in\overline{\mathcal{E}}^{j}_{N}$,
\begin{equation}
\limsup_{n\to\infty}\limsup_{N\to\infty}{\bf Q}^{N}_{\bm{x}_{N}}\left[H_{S_{\mathfrak{d}}\setminus K_{n}}\le T\right]=0,\label{eq:2.8}
\end{equation}
where $K_{1},K_{2},\dots$ are increasing finite sets such that $\bigcup^{\infty}_{n=1}K_{n}=S_{\mathfrak{d}}$.
Now suppose the contrary of $\mathfrak{K2}$ such that there exist
$j\in S$ and $\eta,T>0$ such that for all finite $K\subset S$,
\[
\limsup_{N\to\infty}\sup_{\bm{x}\in\overline{\mathcal{E}}^{j}_{N}}{\bf Q}^{N}_{\bm{x}}\left[H_{S_{\mathfrak{d}}\setminus K}\le T\right]>\eta.
\]
Then, there exist subsequences $N_{1}<N_{2}<\cdots$ and $\bm{x}_{N_{1}},\bm{x}_{N_{2}},\dots\in\overline{\mathcal{E}}^{j}_{N}$
such that
\[
{\bf Q}^{N_{i}}_{\bm{x}_{N_{i}}}\left[H_{S_{\mathfrak{d}}\setminus K_{n}}\le T\right]\ge\eta\qquad\text{for all}\quad i,n\ge1.
\]
This contradicts to \eqref{eq:2.8}, which concludes the proof.
\end{proof}

\begin{lem}
\label{lem2.9}Conditions $\mathfrak{D}$ and $\mathfrak{K2}$ imply
Condition $\mathfrak{K1}$.
\end{lem}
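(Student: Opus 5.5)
Fix $j\in S$ and $T,\eta>0$. The natural candidate is $\mathcal{K}_{N}:=\overline{\mathcal{F}}_{N}(K):=\bigcup_{i\in K}\overline{\mathcal{F}}^{i}_{N}$, where $K=K(j,T',\eta/2)$ is the finite set supplied by Condition $\mathfrak{K2}$ for a suitable horizon $T'$ (say $T'=T$; taking $T'$ slightly larger is harmless), and $\mathcal{F}^{i}_{N}$ are any buffers with $\widehat{\mathcal{E}}^{i}_{N}\Subset\mathcal{F}^{i}_{N}\Subset\mathcal{E}^{i}_{N}$. Each $\overline{\mathcal{F}}^{i}_{N}$ is a closed subset of the precompact set $\mathcal{E}^{i}_{N}$, hence compact, and a finite union of compact sets is compact. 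Since $\overline{\mathcal{F}}^{i}_{N}\subset\mathcal{E}^{i}_{N}$ and the metastable sets are disjoint, $\mathcal{E}^{k}_{N}\cap\mathcal{K}_{N}\neq\emptyset$ forces $k\in K$. This gives \eqref{eq:K1-1} with a finite index set independent of $N$.

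For \eqref{eq:K1-2}, I would split $\Omega_{N}\setminus\mathcal{K}_{N}\subset(\Omega_{N}\setminus\mathcal{F}_{N})\cup\mathcal{F}_{N}(S\setminus K)$. The occupation time of the first set up to $T$ vanishes uniformly over $\bm{x}\in\overline{\mathcal{E}}^{j}_{N}$ by Condition $\mathfrak{D}$. For the second set, note that if $X_{N}(t)\in\mathcal{F}_{N}(S\setminus K)\subset\mathcal{E}_{N}(S\setminus K)$ for some $t\le T$, then $\mathcal{H}_{\mathcal{E}_{N}(S\setminus K)}\le t\le T\le S_{N}(T)$, using $S_{N}(T)\ge T$. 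Therefore
\[
{\rm E}^{\mathbb{Q}^{N}_{\bm{x}}}\left[\int^{T}_{0}{\bf 1}\left\{ X_{N}(t)\in\mathcal{F}_{N}(S\setminus K)\right\} {\rm d}t\right]\le T\,\mathbb{Q}^{N}_{\bm{x}}\left[\mathcal{H}_{\mathcal{E}_{N}(S\setminus K)}\le S_{N}(T)\right]=T\,{\bf Q}^{N}_{\bm{x}}\left[H_{(S\setminus K)\cup\{\mathfrak{d}\}}\le T\right],
\]
where the last equality is Lemma \ref{lem:tech}-(1). The right-hand side is bounded by $T\,{\bf Q}^{N}_{\bm{x}}[H_{S_{\mathfrak{d}}\setminus K}\le T]$ because $(S\setminus K)\cup\{\mathfrak{d}\}=S_{\mathfrak{d}}\setminus K$. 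It suffices, then, to invoke $\mathfrak{K2}$ with tolerance $\eta/(2T)$ rather than $\eta/2$ so that this term is $<\eta/2$ in the limsup. Summing the two contributions gives a limsup strictly less than $\eta$.

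The only delicate points are bookkeeping. First, $\mathcal{K}_{N}$ must be genuinely compact, which is why I would use the closed buffers $\overline{\mathcal{F}}^{i}_{N}$ inside the open precompact $\mathcal{E}^{i}_{N}$ rather than $\overline{\mathcal{E}}^{i}_{N}$; the latter also works but would need care with \eqref{eq:K1-1} at boundaries. Second, the occupation time in $\Omega_{N}\setminus\mathcal{F}_{N}$ must be handled by $\mathfrak{D}$, which is stated exactly for arbitrary buffers, so no extra argument is needed there. I do not expect any serious obstacle: the key step is the comparison of the real-time hitting event with the trace-time hitting event via $S_{N}(T)\ge T$ and Lemma \ref{lem:tech}-(1).
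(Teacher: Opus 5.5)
Your proof is correct and follows essentially the paper's route: you neutralize the set outside the metastable buffers with $\mathfrak{D}$, and you bound the occupation time of $\mathcal{E}_{N}(S\setminus K)$ by $T\,\mathbb{Q}^{N}_{\bm{x}}[\mathcal{H}_{\mathcal{E}_{N}(S\setminus K)}\le S_{N}(T)]$, using $S_{N}(T)\ge T$, Lemma \ref{lem:tech}-(1) and $\mathfrak{K2}$. The paper instead takes $\mathcal{K}_{N}=\bigcup_{i\in K}\overline{\mathcal{E}}^{i}_{N}$ and handles the remainder with \eqref{eq:D-Delta} rather than your $\overline{\mathcal{F}}$-buffers. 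No boundary care is needed in that version, since the closures $\overline{\mathcal{E}}^{i}_{N}$ are pairwise disjoint by the standing assumption.
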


\begin{proof}
Fix $j\in S$ and $T,\eta>0$. Since $S_{N}(T)\ge T$, it follows
from Lemma \ref{lem:tech}-(1) that Condition $\mathfrak{K2}$ provides
us a finite $K$ with
\begin{equation}
\limsup_{N\to\infty}\sup_{\bm{x}\in\overline{\mathcal{E}}^{j}_{N}}\mathbb{Q}^{N}_{\bm{x}}\left[\mathcal{H}_{\mathcal{E}_{N}(S\setminus K)}\le T\right]\le\limsup_{N\to\infty}\sup_{\bm{x}\in\overline{\mathcal{E}}^{j}_{N}}\mathbb{Q}^{N}_{\bm{x}}\left[\mathcal{H}_{\mathcal{E}_{N}(S\setminus K)}\le S_{N}(T)\right]<\frac{\eta}{T}.\label{eq2.9-1}
\end{equation}
Let $\mathcal{K}_{N}:=\bigcup_{i\in K}\overline{\mathcal{E}}^{i}_{N}$,
which is compact since each $\overline{\mathcal{E}}^{i}_{N}$ is compact
and $K$ is finite. Then the set in \eqref{eq:K1-1} equals $K$,
thus \eqref{eq:K1-1} holds. Next, since $\Omega_{N}\setminus\mathcal{K}_{N}\subset(\Omega_{N}\setminus\mathcal{E}_{N})\cup(\mathcal{E}_{N}\setminus\mathcal{K}_{N})$,
\begin{align*}
 & {\rm E}^{\mathbb{Q}^{N}_{\bm{x}}}\left[\int^{T}_{0}{\bf 1}\left\{ X_{N}(t)\in\Omega_{N}\setminus\mathcal{K}_{N}\right\} {\rm d}t\right]\\
 & \le{\rm E}^{\mathbb{Q}^{N}_{\bm{x}}}\left[\int^{T}_{0}{\bf 1}\left\{ X_{N}(t)\in\Omega_{N}\setminus\mathcal{E}_{N}\right\} {\rm d}t\right]+{\rm E}^{\mathbb{Q}^{N}_{\bm{x}}}\left[\int^{T}_{0}{\bf 1}\left\{ X_{N}(t)\in\mathcal{E}_{N}\setminus\mathcal{K}_{N}\right\} {\rm d}t\right].
\end{align*}
The first expectation in the right-hand side vanishes in the limit
$N\to\infty$ by \eqref{eq:D-Delta}. For the second expectation in
the right-hand side, note that $\mathcal{E}_{N}\setminus\mathcal{K}_{N}=\mathcal{E}_{N}(S\setminus K)$.
Thus,
\begin{equation}
{\rm E}^{\mathbb{Q}^{N}_{\bm{x}}}\left[\int^{T}_{0}{\bf 1}\left\{ X_{N}(t)\in\mathcal{E}_{N}\setminus\mathcal{K}_{N}\right\} {\rm d}t\right]\le T\,\mathbb{Q}^{N}_{\bm{x}}\left[\mathcal{H}_{\mathcal{E}_{N}(S\setminus K)}\le T\right].\label{eq:2.9-2}
\end{equation}
Thus, by \eqref{eq2.9-1},
\[
\limsup_{N\to\infty}\sup_{\bm{x}\in\overline{\mathcal{E}}^{j}_{N}}{\rm E}^{\mathbb{Q}^{N}_{\bm{x}}}\left[\int^{T}_{0}{\bf 1}\left\{ X_{N}(t)\in\Omega_{N}\setminus\mathcal{K}_{N}\right\} {\rm d}t\right]\le\limsup_{N\to\infty}\sup_{\bm{x}\in\overline{\mathcal{E}}^{j}_{N}}T\,\mathbb{Q}^{N}_{\bm{x}}\left[\mathcal{H}_{\mathcal{E}_{N}(S\setminus K)}\le T\right]<\eta.
\]
This finishes the proof.
\end{proof}

\begin{proof}[Proof of the if part of Theorem \ref{t:main1}]
 Lemmas \ref{lem2.7}, \ref{lem2.8}, and \ref{lem2.9} finish the
verification. See also Figure \ref{fig2.1}-right.
\end{proof}

\subsection{\label{sec2.3}Proof of Theorem \ref{t:main2}}

In this last subsection of Section \ref{sec2}, we prove Theorem \ref{t:main2},
i.e., we verify that conditions $\widehat{\mathfrak{R}}$ and $\mathfrak{K}$
are sufficient to verify Conditions $\mathfrak{C}$ and $\mathfrak{D}$.
By Theorem \ref{t:main1}, it suffices to prove that $\mathfrak{R}+\mathfrak{K}\Rightarrow\mathfrak{K1}+\mathfrak{K2}$
and $\widehat{\mathfrak{R}}\Rightarrow\mathfrak{R}$. First, we demonstrate
that $\mathfrak{K}$ implies $\mathfrak{K1}$.
\begin{lem}
\label{lem2.10}Condition $\mathfrak{K}$ implies condition $\mathfrak{K1}$.
\end{lem}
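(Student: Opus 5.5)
We keep the compact sets supplied by condition $\mathfrak{K}$ and show that they already work for $\mathfrak{K1}$, with suitably rescaled parameters. Fix $j\in S$ and $T,\eta>0$. We apply $\mathfrak{K}$ with the same $j$ and $T$, but with the smaller tolerance $\eta':=\eta/T$ (any $\eta'<\eta/T$ also works). This gives compact sets $\mathcal{K}_{N}=\mathcal{K}_{N}(j,T,\eta/T)$ satisfying \eqref{eq:K1-1}, together with
\[
\limsup_{N\to\infty}\sup_{\bm{x}\in\overline{\mathcal{E}}^{j}_{N}}\mathbb{Q}^{N}_{\bm{x}}\left[\mathcal{H}_{\Omega_{N}\setminus\mathcal{K}_{N}}\le T\right]<\frac{\eta}{T}.
\]
We set $\mathcal{K}_{N}(j,T,\eta)$ for $\mathfrak{K1}$ equal to these sets. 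Condition \eqref{eq:K1-1} is therefore inherited directly, and only the occupation-time bound \eqref{eq:K1-2} remains to be checked.

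That bound follows from a pathwise inequality. On the event $\{\mathcal{H}_{\Omega_{N}\setminus\mathcal{K}_{N}}>T\}$ we have $X_{N}(t)\in\mathcal{K}_{N}$ for every $t\in[0,T]$, so the integrand ${\bf 1}\{X_{N}(t)\in\Omega_{N}\setminus\mathcal{K}_{N}\}$ vanishes on $[0,T]$. On the complementary event the integral is at most $T$. Hence
\[
{\rm E}^{\mathbb{Q}^{N}_{\bm{x}}}\left[\int^{T}_{0}{\bf 1}\left\{ X_{N}(t)\in\Omega_{N}\setminus\mathcal{K}_{N}\right\} {\rm d}t\right]\le T\,\mathbb{Q}^{N}_{\bm{x}}\left[\mathcal{H}_{\Omega_{N}\setminus\mathcal{K}_{N}}\le T\right].
\]
This is the same device used in \eqref{eq:2.9-2}. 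Taking the supremum over $\bm{x}\in\overline{\mathcal{E}}^{j}_{N}$ and the $\limsup$ as $N\to\infty$ bounds the left-hand side by $T\cdot(\eta/T)=\eta$, with strict inequality, which is exactly \eqref{eq:K1-2}.

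The only point needing care is the meaning of the hitting time $\mathcal{H}_{\Omega_{N}\setminus\mathcal{K}_{N}}=\inf\{t\ge0:X_{N}(t)\in\Omega_{N}\setminus\mathcal{K}_{N}\}$. We need that if $X_{N}(t)\notin\mathcal{K}_{N}$ for some $t\le T$, then $\mathcal{H}_{\Omega_{N}\setminus\mathcal{K}_{N}}\le t\le T$. This holds immediately from the definition as an infimum over visiting times, with no regularity of paths required. We do not need the continuity subtleties that appear elsewhere, such as Assumption \ref{assu-tech}. I therefore expect no real obstacle; the proof is a one-line Markov-type bound once the tolerance is rescaled by $T$.
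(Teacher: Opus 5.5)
Your proposal is correct and follows the same route as the paper: keep the compact sets from $\mathfrak{K}$, bound the occupation time of $\Omega_{N}\setminus\mathcal{K}_{N}$ on $[0,T]$ by $T\,\mathbb{Q}^{N}_{\bm{x}}[\mathcal{H}_{\Omega_{N}\setminus\mathcal{K}_{N}}\le T]$ as in \eqref{eq:2.9-2}, and then apply $\mathfrak{K}$. Your explicit rescaling of the tolerance to $\eta/T$ is a detail the paper leaves implicit, and it is the right way to obtain the strict inequality in \eqref{eq:K1-2}.
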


\begin{proof}
The idea is exactly the same as in \eqref{eq:2.9-2}; we bound
\[
{\rm E}^{\mathbb{Q}^{N}_{\bm{x}}}\left[\int^{T}_{0}{\bf 1}\left\{ X_{N}(t)\in\Omega_{N}\setminus\mathcal{K}_{N}\right\} {\rm d}t\right]\le T\,\mathbb{Q}^{N}_{\bm{x}}\left[\mathcal{H}_{\Omega_{N}\setminus\mathcal{K}_{N}}\le T\right],
\]
and then apply Condition $\mathfrak{K}$.
\end{proof}

Now, by Lemma \ref{lem2.2} we know that $\mathfrak{R}+\mathfrak{K1}\Rightarrow\mathfrak{D}$,
thus we can make use of $\mathfrak{D}$ as well.
\begin{lem}
\label{lem2.11}Conditions $\mathfrak{K}$ and $\mathfrak{D}$ imply
condition $\mathfrak{K2}$.
\end{lem}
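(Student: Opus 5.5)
The plan is to turn the hitting-time bound of $\mathfrak{K}$ (stated in real time $T$) into the trace-time bound required in $\mathfrak{K2}$. By Lemma \ref{lem:tech}-(1),
\[
{\bf Q}^{N}_{\bm{x}}\left[H_{S_{\mathfrak{d}}\setminus K}\le T\right]=\mathbb{Q}^{N}_{\bm{x}}\left[\mathcal{H}_{\mathcal{E}_{N}(S\setminus K)}\le S_{N}(T)\right].
\]
The difficulty is that $S_{N}(T)\ge T$ may exceed $T$, and may even be infinite. I will therefore control $S_{N}(T)$ using $\mathfrak{D}$, in the form \eqref{eq:ST}.

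Fix $j\in S$ and $T,\eta>0$, and set $T':=2T$. Apply $\mathfrak{K}$ with $(j,T',\eta/2)$ to obtain compact sets $\mathcal{K}_{N}$. Define $K$ to be the finite set in \eqref{eq:K1-1},
\[
K:=\left\{ i\in S:\mathcal{E}^{i}_{N}\cap\mathcal{K}_{N}\ne\emptyset\ \text{for some}\ N\ge1\right\} .
\]
For every $N$ this gives $\mathcal{E}_{N}(S\setminus K)\subset\Omega_{N}\setminus\mathcal{K}_{N}$, hence
\[
\mathcal{H}_{\mathcal{E}_{N}(S\setminus K)}\ge\mathcal{H}_{\Omega_{N}\setminus\mathcal{K}_{N}}.
\]
Now split according to whether $S_{N}(T)<2T$:
\[
\mathbb{Q}^{N}_{\bm{x}}\left[\mathcal{H}_{\mathcal{E}_{N}(S\setminus K)}\le S_{N}(T)\right]\le\mathbb{Q}^{N}_{\bm{x}}\left[\mathcal{H}_{\Omega_{N}\setminus\mathcal{K}_{N}}\le2T\right]+\mathbb{Q}^{N}_{\bm{x}}\left[S_{N}(T)\ge2T\right].
\]
The event $\{S_{N}(T)=\infty\}$, which is exactly the event of hitting $\mathfrak{d}$, is contained in the second event. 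This matters because $H_{S_{\mathfrak{d}}\setminus K}$ counts hitting $\mathfrak{d}$.

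Take $\limsup_{N}$ of $\sup_{\bm{x}\in\overline{\mathcal{E}}^{j}_{N}}$ on each term. The first term is $<\eta/2$ by $\mathfrak{K}$ with horizon $2T$. For the second term, the equality in \eqref{eq:ST} with $a=b=T$ identifies $\{S_{N}(T)\ge2T\}$ with the event that the time spent in $\Delta_{N}$ during $[0,2T]$ is at least $T$. Markov's inequality together with \eqref{eq:D-Delta} then shows that its $\limsup$ vanishes uniformly over $\bm{x}\in\overline{\mathcal{E}}^{j}_{N}$. Hence the total is $<\eta$, which is \eqref{eq:K2}.

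The one step that needs care is that \eqref{eq:ST} remains valid when $S_{N}(T)=\infty$. If $S_{N}(T)=\infty$ then $T_{N}(2T)\le T$, so the time spent in $\Delta_{N}$ during $[0,2T]$ is at least $T$, and the inclusion still holds.
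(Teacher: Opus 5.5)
Your proposal is correct and takes essentially the same route as the paper: rewrite via Lemma \ref{lem:tech}-(1), use $\mathcal{H}_{\mathcal{E}_{N}(S\setminus K)}\ge\mathcal{H}_{\Omega_{N}\setminus\mathcal{K}_{N}}$, and control $S_{N}(T)$ through \eqref{eq:ST} so that $\mathfrak{K}$ can be applied on the horizon $2T$. Your version also spells out two points the paper leaves implicit: the choice of horizon $2T$ with tolerance $\eta/2$, and the fact that $\{S_{N}(T)=\infty\}$ is contained in $\{S_{N}(T)\ge2T\}$.
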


\begin{proof}
Recall from \eqref{eq:ST} that \eqref{eq:D-Delta} gives
\[
\limsup_{N\to\infty}\sup_{\bm{x}\in\overline{\mathcal{E}}^{j}_{N}}\mathbb{Q}^{N}_{\bm{x}}\left[S_{N}(a)\ge2a\right]=0\qquad\text{for all}\quad a>0.
\]
Combining this with Condition $\mathfrak{K}$, we obtain that for
any $j\in S$ and $T,\eta>0$, there exist a compact $\mathcal{K}_{N}\subset\Omega_{N}$,
$N\ge1$, such that
\[
K:=\left\{ i\in S:\mathcal{E}^{i}_{N}\cap\mathcal{K}_{N}\ne\emptyset\quad\text{for some}\enspace N\ge1\right\} 
\]
is finite and
\[
\limsup_{N\to\infty}\sup_{\bm{x}\in\overline{\mathcal{E}}^{j}_{N}}\mathbb{Q}^{N}_{\bm{x}}\left[\mathcal{H}_{\mathcal{K}^{c}_{N}}\le S_{N}(T)\right]<\eta.
\]
Then, since $\mathcal{H}_{\mathcal{E}_{N}(S\setminus K)}\ge\mathcal{H}_{\Omega_{N}\setminus\mathcal{K}_{N}}$,
via Lemma \ref{lem:tech}-(1), we conclude that
\begin{align*}
\limsup_{N\to\infty}\sup_{\bm{x}\in\overline{\mathcal{E}}^{j}_{N}}{\bf Q}^{N}_{\bm{x}}\left[H_{S_{\mathfrak{d}}\setminus K}\le T\right] & =\limsup_{N\to\infty}\sup_{\bm{x}\in\overline{\mathcal{E}}^{j}_{N}}\mathbb{Q}^{N}_{\bm{x}}\left[\mathcal{H}_{\mathcal{E}_{N}(S\setminus K)}\le S_{N}(T)\right]\\
 & \le\limsup_{N\to\infty}\sup_{\bm{x}\in\overline{\mathcal{E}}^{j}_{N}}\mathbb{Q}^{N}_{\bm{x}}\left[\mathcal{H}_{\Omega_{N}\setminus\mathcal{K}_{N}}\le S_{N}(T)\right]<\eta.
\end{align*}
This is exactly Condition $\mathfrak{K2}$.
\end{proof}

\begin{lem}
\label{lem:2.13}Condition $\widehat{\mathfrak{R}}$ implies condition
$\mathfrak{R}$.
\end{lem}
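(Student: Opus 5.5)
The plan is a direct inclusion of hypothesis classes: condition $\widehat{\mathfrak{R}}$ quantifies over a \emph{larger} family of lifts than condition $\mathfrak{R}$, and both have the same conclusion \eqref{eq:R}. So $\mathfrak{R}$ should follow by specializing $\widehat{\mathfrak{R}}$, with no analytic input needed.

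First, fix an arbitrary admissible family $\{\mathcal{F}^{i}_{N}:i\in S\}$ with $\widehat{\mathcal{E}}^{i}_{N}\Subset\mathcal{F}^{i}_{N}\Subset\mathcal{E}^{i}_{N}$. Also fix $\lambda>0$, ${\bf g}\in C_{0}(S)$, and a lift $G_{N}\in C_{0}(\Omega_{N})$ satisfying \eqref{eq:lift}. Next, observe that $\widehat{\mathcal{E}}^{j}_{N}\subset\overline{\widehat{\mathcal{E}}^{j}_{N}}\subset\mathcal{F}^{j}_{N}$. Hence the identity $G_{N}|_{\mathcal{F}^{j}_{N}}={\bf g}(j)$ restricts to $G_{N}|_{\widehat{\mathcal{E}}^{j}_{N}}={\bf g}(j)$ for every $j\in S$. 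The uniform bound $\sup_{N}\|G_{N}\|_{\infty}<\infty$ is common to both conditions. Therefore $G_{N}$ is a lift in the sense of \eqref{eq:lift-1}.

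Then, the solution $F_{N}\in C_{0}(\Omega_{N})$ of \eqref{eq:res} is the same object in both conditions, since it depends only on $\lambda$ and $G_{N}$. Likewise, ${\bf f}=(\lambda-\mathfrak{L})^{-1}{\bf g}$ depends only on ${\bf g}$. Applying $\widehat{\mathfrak{R}}$ to this $G_{N}$ therefore gives \eqref{eq:R} for every $j\in S$. Since the buffers $\mathcal{F}^{i}_{N}$, the parameter $\lambda$, the function ${\bf g}$, and the lift $G_{N}$ were all arbitrary, this is exactly condition $\mathfrak{R}$.

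I do not expect a genuine obstacle here. The only point to check is the direction of the inclusion: a function that is constant on the larger set $\mathcal{F}^{j}_{N}$ is automatically constant on the smaller set $\widehat{\mathcal{E}}^{j}_{N}$. Thus the class of lifts in $\mathfrak{R}$ is contained in the class in $\widehat{\mathfrak{R}}$. Together with Lemmas \ref{lem2.10} and \ref{lem2.11}, this will let Theorem \ref{t:main2} follow from Theorem \ref{t:main1}.
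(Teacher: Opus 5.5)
Your proposal is correct and is essentially the paper's proof. Both arguments note that a lift with $G_{N}|_{\mathcal{F}^{j}_{N}}={\bf g}(j)$ is a fortiori constant on $\widehat{\mathcal{E}}^{j}_{N}\subset\mathcal{F}^{j}_{N}$, so $\widehat{\mathfrak{R}}$ applies to the same $F_{N}$ and ${\bf f}$ and gives \eqref{eq:R} directly.
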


\begin{proof}
Let $\mathcal{F}_{j}$, $j\in S$, be any sets such that $\widehat{\mathcal{E}}^{j}\Subset\mathcal{F}^{j}\Subset\mathcal{E}^{j}$.
Fix $\lambda>0$ and ${\bf g}:S\to\mathbb{R}$. Let $G_{N}\in C_{0}(\Omega_{N})$
be the lift of ${\bf g}$ that satisfies the hypothesis \eqref{eq:lift}
of condition $\mathfrak{R}$. Since $\widehat{\mathcal{E}}^{j}\subset\mathcal{F}^{j}$,
$G_{N}$ also satisfies the hypothesis \eqref{eq:lift-1} of $\widehat{\mathfrak{R}}$.
Therefore, by condition $\widehat{\mathfrak{R}}$, the unique solution
$F_{N}\in C_{0}(\Omega_{N})$ to \eqref{eq:res} satisfies \eqref{eq:R},
which is condition $\mathfrak{R}$.
\end{proof}

\begin{proof}[Proof of Theorem \ref{t:main2}]
This is now immediate by Lemmas \ref{lem2.2}, \ref{lem2.10}, \ref{lem2.11},
and \ref{lem:2.13}, along with Theorem \ref{t:main1}.
\end{proof}

\section{\label{sec3}Metastability of the Inclusion Process}

In this section, we prove the metastable behavior of the inclusion
process on $\mathscr{G}=(\mathscr{V},\mathscr{E})$, i.e., we prove
Theorem \ref{t:inc}. Mind that we take $\widehat{\mathcal{E}}^{x}_{N}\equiv\mathcal{F}^{x}_{N}\equiv\mathcal{E}^{x}_{N}$
for any $x\in\mathscr{V}$.

By Theorem \ref{t:main2}, it suffices to check the following two
propositions corresponding to Conditions $\widehat{\mathfrak{R}}$
and $\mathfrak{K}$, respectively, in Section \ref{sec1.3}.
\begin{prop}
\label{p:inc1}For all $\lambda>0$, ${\bf g}\in C_{0}(\mathscr{V}_{\mathfrak{d}})$,
and $G_{N}\in C_{0}(\Omega_{N})$ such that $G_{N}(\xi^{x}_{N})={\bf g}(x)$
for all $x\in\mathscr{V}$ and $\sup_{N\ge1}\|G_{N}\|_{\infty}<\infty$,
the unique solution $F_{N}\in C_{0}(\Omega_{N})$ to $(\lambda-\mathscr{L}_{N})F_{N}=G_{N}$
satisfies
\begin{equation}
\lim_{N\to\infty}F_{N}(\xi^{x}_{N})={\bf f}(x)\qquad\text{for each}\quad x\in\mathscr{V},\label{eq:R-inc}
\end{equation}
where ${\bf f}\in C_{0}(\mathscr{V}_{\mathfrak{d}})$ solves $(\lambda-\mathfrak{L}){\bf f}={\bf g}$
(cf. \eqref{eq:wRW}).
\end{prop}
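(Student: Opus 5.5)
The plan is to build an explicit approximate solution (a test function) and control the error with the maximum principle for $\lambda-\mathscr{L}_{N}$. Set $F_{N}=\mathbb{E}[\int e^{-\lambda t}G_{N}(\eta_{N}(t))dt]$.

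\emph{Step 1: the approximate solution.} Since ${\bf g}\in C_{0}$ and $\mathfrak{L}$ has bounded rates (uniform local finiteness, $c_{\sup}$, $\alpha_{\sup}$), the function ${\bf f}$ is bounded, and $\|{\bf f}\|_\infty\le\lambda^{-1}\|{\bf g}\|_\infty$. Decompose $F_{N}$ by the strong Markov property at the first time $\tau$ the process leaves $\xi^{x}_{N}$. Starting from $\xi^{x}_{N}$, the total jump rate is
\[
\sum_{y}c_{xy}N(\alpha_{y}+0)=N\sum_{y}c_{xy}\alpha_{y},
\]
so the process jumps to $\xi^{x}_{N}-\delta_{x}+\delta_{y}$ at rate $Nc_{xy}\alpha_{y}$. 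From that configuration, with one particle at $y$ and $N-1$ at $x$, there are two competing moves. The particle at $y$ returns to $x$ at rate about $c_{xy}\epsilon_{N}^{-1}(N-1)$. Alternatively, the $y$-pile grows at rate $\epsilon_{N}^{-1}c_{xy}(N-1)\cdot 1$, which is of the same order. This is the classical gambler's-ruin competition for the inclusion process: a configuration $(N-k,k)$ on the pair $\{x,y\}$ evolves like a nearly symmetric walk, and the probability of reaching $k=N$ from $k=1$ is $\simeq 1/(N H_{N})$-type, with the exact form coming from the $\Gamma$-weights. The precise statement I would use is this: the probability, starting from $\xi^{x}_{N}-\delta_{x}+\delta_{y}$, of reaching $\xi^{y}_{N}$ before returning to $\xi^{x}_{N}$ equals $\frac{1}{N}(1+o(1))$ per excursion. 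This is obtained from the capacity of the one-dimensional birth-death chain on $\{0,\dots,N\}$ with rates $k(N-k)\epsilon_{N}^{-1}$ plus $O(1)$ corrections. Multiplying by the excursion rate $Nc_{xy}\alpha_{y}$ gives effective rate $c_{xy}\alpha_{y}$, matching \eqref{eq:wRW}. The durations of the excursions are $O(\epsilon_{N}\log N)$, which is negligible by \eqref{eq:eN-cond}. Excursions leaving the two-site line $\{x,y\}$, i.e. a third site becoming occupied, have probability $O(\epsilon_{N}N\log N)$ or so and are also negligible.

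\emph{Step 2: a rigorous comparison.} Rather than chaining path estimates, I would define a test function $\widetilde F_{N}$ on $\Omega_{N}$. On each two-site edge line $\{(N-k)\delta_{x}+k\delta_{y}\}$, set
\[
\widetilde F_{N}={\bf f}(x)+h^{xy}_{N}(k)\,({\bf f}(y)-{\bf f}(x)),
\]
where $h^{xy}_{N}$ is the equilibrium potential of the birth-death chain. Elsewhere, set $\widetilde F_{N}=\lambda^{-1}G_{N}$ or some bounded interpolation. Then compute $(\lambda-\mathscr{L}_{N})\widetilde F_{N}-G_{N}$. At $\xi^{x}_{N}$ this yields $\lambda{\bf f}(x)-\mathfrak{L}{\bf f}(x)+o(1)-{\bf g}(x)=o(1)$, using the harmonicity of $h$ and the $1/N$ computation. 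On the interior of the lines the error is the $O(1)$ rate perturbation, which is small relative to the $\epsilon_{N}^{-1}$ terms. The residual then has to be controlled in the weighted sense of the expected discounted time spent at bad configurations. This is handled via the resolvent: $F_{N}-\widetilde F_{N}=(\lambda-\mathscr{L}_{N})^{-1}R_{N}$, and I bound $\mathbb{E}_{\xi^x_N}\int e^{-\lambda t}|R_{N}|(\eta_N(t))dt$.

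\emph{Step 3: localization.} The sum over $y$ is infinite, but uniform local finiteness and boundedness of ${\bf f}$ make all error bounds uniform in $x$. Discounting at rate $\lambda$ means only finitely many jumps matter, so this expectation is essentially a local computation near $x$.

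\emph{Main obstacle.} The hardest step is Step 2. It requires showing that the time spent in configurations with three or more occupied sites, where $\widetilde F_{N}$ is crude, is negligible in discounted expectation, and doing so without any stationary measure of finite mass. My first attempt would be a Lyapunov-type bound: check that $(\lambda-\mathscr{L}_{N})$ applied to the indicator of ``not on an edge line'', suitably smoothed, produces a supersolution whose value at $\xi^{x}_{N}$ is $O(\epsilon_{N}N\log N)\to0$. This rests on the fact that from an edge-line configuration a third site is created only at rate $O(N)$, while the line is resolved within time $O(\epsilon_{N}\log N)$.
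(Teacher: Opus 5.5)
You use a different route from the paper: an explicit approximate solution $\widetilde F_N$, with the residual $R_N:=(\lambda-\mathscr{L}_N)\widetilde F_N-G_N$ controlled through the discounted occupation measure. It can be completed, but as written the decisive estimate is missing and the justifications you give for it are wrong.

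\textbf{Size of the residual on the lines.} On the interior of an edge line the residual is not an ``$O(1)$ rate perturbation''. From $\xi^{xy}_{i,N-i}$, the jumps that occupy a third site have total rate of order $N$ (e.g.\ $\sum_{z\in\mathscr{N}_y\setminus\{x\}}c_{yz}(N-i)\alpha_z$). They change $\widetilde F_N$ by an amount of order one, so $|R_N|$ is of order $N$ there. This is harmless only because the discounted time spent in line interiors is $O(\epsilon_N\log N)$. That gives an error $O(\epsilon_N N\log N)$, which is exactly where \eqref{eq:eN-cond} enters, and this occupation bound has to be proved.

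\textbf{Why your Step 3 does not supply it.} The pre-limit chain jumps at rates up to $\epsilon_N^{-1}N^2$, so ``only finitely many jumps matter'' is false. The reversibility bound $\mathbb{E}_{\xi^x_N}\int_0^\infty e^{-\lambda t}\mathbf{1}\{\eta_N(t)=\zeta\}\,{\rm d}t\le\mu_N(\zeta)/(\lambda\mu_N(\xi^x_N))$ also cannot be summed over $\zeta$, because $\mu_N$ has infinite mass. What works is a renewal estimate that is uniform over metastable points. Excursions from $\xi^z_N$ start at rate $O(N)$, and an excursion started at $\xi^{zw}_{N-1,1}$ spends expected time $\asymp\epsilon_N/(Nj)$ at level $j$, hence $O(\epsilon_N\log N/N)$ in total. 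Your heuristic ``rate $O(N)$ times resolution time $O(\epsilon_N\log N)$'' is a per-excursion bound. It is off by a factor $N$, since excursions occur at rate of order $N$.

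\textbf{Off the lines.} Do not take $\widetilde F_N=\lambda^{-1}G_N$ and try to bound $R_N$ there. The residual can be of order $\epsilon_N^{-1}N$, so small occupation time alone does not suffice. Instead, apply Dynkin's formula up to the exit time $\tau$ from the union of edge lines. Then bound $\mathbb{E}_{\xi^x_N}[e^{-\lambda\tau}]$ via the compensator by $CN$ times the discounted line-interior time, which is again $O(\epsilon_N N\log N)$.

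\textbf{How the paper avoids this.} The paper does no pathwise analysis at all. It multiplies the resolvent equation by $\mu_{N,x}h_{N,x}$ and sums by parts over a finite set. Here $\mu_{N,x}$ is the $\Gamma$-product weight restricted to the finitely many lines $\mathcal{A}^{xy}_N$, $y\in\mathscr{N}_x$, and $h(\xi^{xy}_{i,N-i})=i/N$.
\begin{itemize}
\item Because $h$ is almost harmonic for the reversible line dynamics, the coefficient of $F_N(\xi^{xy}_{i,N-i})$ is $O(\epsilon_N^2/(i(N-i)))$ for $1\le i\le N-1$.
\item Only the endpoint terms survive, giving $\mu_{N,x}(\xi^x_N)\sum_{y}c_{xy}\alpha_y\,(F_N(\xi^x_N)-F_N(\xi^y_N))$.
\item Every other contribution, including jumps off the lines, is bounded using only $\|F_N\|_\infty\le\lambda^{-1}\sup_N\|G_N\|_\infty$, and is $O(\epsilon_N^2\log N)=o(\mu_{N,x}(\xi^x_N))$.
\end{itemize}
This yields $(\lambda-\mathfrak{L})f_N={\bf g}+o_N(1)$ uniformly, with $f_N(z):=F_N(\xi^z_N)$, and the bounded inverse of $\lambda-\mathfrak{L}$ finishes the proof. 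The weight $\mu_{N,x}h_{N,x}$ is essentially the Green measure you would need to estimate. So the paper's weak formulation is dual to your construction, and it replaces the occupation-time analysis by a finite algebraic identity.
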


\begin{prop}
\label{p:inc2}For all $x\in\mathscr{V}$ and $T,\epsilon>0$, there
exist compact sets $\mathcal{K}_{N}$, $N\ge1$ such that
\begin{equation}
\left\{ y\in\mathscr{V}:\xi^{y}_{N}\in\mathcal{K}_{N}\quad\text{for some}\enspace N\ge1\right\} \quad\text{is a finite set},\label{eq:K1-1-inc}
\end{equation}
and
\begin{equation}
\limsup_{N\to\infty}\mathbb{Q}^{N}_{\xi^{x}_{N}}\left[\mathcal{H}_{\Omega_{N}\setminus\mathcal{K}_{N}}\le T\right]<\epsilon.\label{eq:K-inc}
\end{equation}
\end{prop}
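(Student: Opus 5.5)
Fix $x\in\mathscr{V}$, $T>0$ and $\epsilon>0$. The plan is to take for $\mathcal{K}_{N}$ the set of configurations supported on a large finite ball around $x$. For $R\ge1$ let $B_{R}:=\{y\in\mathscr{V}:{\rm dist}_{\mathscr{G}}(x,y)\le R\}$, which is finite by \eqref{eq:loc-fin}, and set
\[
\mathcal{K}_{N}:=\{\eta\in\Omega_{N}:\eta_{y}=0\ \text{for all}\ y\notin B_{R}\}.
\]
This is a finite, hence compact, subset of the discrete space $\Omega_{N}$. The set in \eqref{eq:K1-1-inc} is contained in $B_{R}$, so it is finite. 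It remains to choose $R=R(x,T,\epsilon)$ so that \eqref{eq:K-inc} holds.

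To leave $\mathcal{K}_{N}$, some particle must travel from $x$ to $\partial B_{R}$. The rate at which a particle leaves a site is at most $\mathfrak{n}_{\mathscr{G}}c_{\sup}(\alpha_{\sup}+\epsilon^{-1}_{N}N)$, which blows up as $N\to\infty$, so a single-particle comparison is useless. Instead I will use the macroscopic picture: the process spends most of its time at condensed configurations and performs effective jumps of the condensate with bounded rates $c_{zy}\alpha_{y}$. I would work with the order process and Lemma \ref{lem:tech}. The route is the following.

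First, prove Proposition \ref{p:inc1}; I assume it is established independently, by a local computation around $\xi^{x}_{N}$. Next, apply Lemma \ref{lem2.2}-type reasoning \emph{locally}: restricting to the stopped process before exiting a large ball $B_{R'}$ yields negligibility of $\Delta_{N}$ inside $B_{R'}$. Then use the limiting walk ${\bf y}$, which is non-explosive because its rates are bounded by $\mathfrak{n}_{\mathscr{G}}c_{\sup}\alpha_{\sup}$. For this walk, ${\bf Q}_{x}[H_{\mathscr{V}\setminus B_{R-1}}\le T]$ can be made at most $\epsilon/2$ by Poisson domination of the number of jumps: reaching distance $R-1$ needs at least $R-1$ jumps, and the jump count is dominated by a Poisson variable of mean $\mathfrak{n}_{\mathscr{G}}c_{\sup}\alpha_{\sup}T$.

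To avoid circularity in transferring this to the microscopic process, I would instead argue directly. Couple with a modified process on the finite graph induced on $B_{R}$, with $\partial B_{R}$ made absorbing, for which the finite-state theory of \cite{LMS25} (the finite-$S$ case of Theorem \ref{t:main1}, where $\mathfrak{K1}$ and $\mathfrak{K2}$ are automatic) gives convergence to the walk killed at $\partial B_{R}$. The two processes agree until the first time a particle reaches $\partial B_{R}$.

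The main obstacle is controlling the non-condensed excursions. A configuration outside $\mathcal{E}_{N}$ may spread a few particles to distance $R$ even while the condensate stays near $x$. The first thing I would try is a capacity/rate estimate: from $\xi^{z}_{N}$, a single particle detaches at rate $O(N)$ but returns at rate of order $\epsilon^{-1}_{N}N$. Each further step of an isolated particle away from the condensate has rate $O(1)$ and competes against absorption at rate of order $\epsilon^{-1}_{N}N$. Hence the probability that a detached particle travels distance $R-{\rm dist}(x,z)\ge1$ before reabsorption is $O(\epsilon_{N})$. The expected number of detachments in time $T$ is $O(NT)$, so the total probability is $O(N\epsilon_{N})\to0$ by \eqref{eq:eN-cond}.

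Combining this with the walk estimate gives \eqref{eq:K-inc} with bound $\epsilon/2+o_{N}(1)<\epsilon$. The logarithmic factor in \eqref{eq:eN-cond} should absorb the intermediate configurations in which the condensate itself is split.
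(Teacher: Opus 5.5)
Your overall architecture is reasonable. The set of configurations supported on $B_R$ satisfies \eqref{eq:K1-1-inc}, and controlling the condensate by coupling with the inclusion process on the induced subgraph on $B_R$ is exactly how the paper treats the diffusion in Lemma~\ref{l:K}. Two small corrections there: that subgraph process should be reflecting, since ``absorbing'' boundary sites do not define an inclusion process; and you must redo the computation of Proposition~\ref{p:inc1} on that finite graph before invoking the finite-$S$ case of Theorem~\ref{t:main2}.

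The genuine gap is the step you flag as the main obstacle, which is the whole content of the proposition. You give no proof that stray particles stay inside $B_R$, and the single-particle heuristic you offer is wrong. From $\xi^{zw}_{N-1,1}$ the lone particle at $w$ returns at rate $c_{wz}(\alpha_z+\epsilon_N^{-1}(N-1))$. But another particle joins it at rate $c_{zw}(N-1)(\alpha_w+\epsilon_N^{-1})$, which is of the same order $\epsilon_N^{-1}N$. A detachment therefore starts a nearly symmetric excursion along $\mathcal{A}^{zw}_N$, reaching $\xi^w_N$ with probability about $1/N$. The excursion passes through split configurations $\xi^{zw}_{i,N-i}$, from which a particle leaks to a third site at rate $O(N)$ (from either pile), against line rates of order $\epsilon_N^{-1}i(N-i)$. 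Already at $\xi^{zw}_{N-1,1}$, a second particle leaves $z$ for another neighbour with probability of order $\epsilon_N$; your ``isolated particle'' picture misses this entirely. Weighting by the Green function of the excursion gives a leak probability of order $\epsilon_N\sum_{j<N}j^{-1}\approx\epsilon_N\log N$ per detachment, hence $O(TN\epsilon_N\log N)$ overall. So the logarithm in \eqref{eq:eN-cond} is genuinely needed, and ``should absorb'' is precisely the step that has to be proved. It must hold uniformly in the condensate position, with any leak off the two-site lines (not the particle's eventual distance) as the bad event.

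The paper proves exactly this with potential theory. It works with the thin set $\mathcal{K}_{N,m}$ of two-site lines inside the ball, with the condensates on the sphere removed, whose boundary is $\mathcal{X}_{N,m}\cup\mathcal{Y}_{N,m}$. Lemma~\ref{lem1} uses a renewal estimate to bound the probability of hitting the three-site configurations $\mathcal{Y}_{N,m}$ before the sphere condensates $\mathcal{X}_{N,m}$ by a capacity ratio. The Dirichlet principle gives ${\rm cap}_{N,m}(\xi^x_N,\mathcal{Y}_{N,m})\le C\epsilon_N^2\log N$, and the Thomson principle (a unit flow along a path of lines) gives ${\rm cap}_{N,m}(\xi^x_N,\mathcal{X}_{N,m})\ge\epsilon_N/(CN)$. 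The ratio is therefore $O(\epsilon_N N\log N)$. Lemma~\ref{lem2} then shows the trace on condensates jumps at rates $c_{yz}\alpha_z(1+o_N(1))$, and dominates $\mathcal{H}_{\mathcal{X}_{N,m}}$ by a sum of $m$ exponentials.

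You can repair your argument by inserting Lemma~\ref{lem1} with $m=R$ in place of the heuristic. On $\{\mathcal{H}_{\mathcal{X}_{N,R}}<\mathcal{H}_{\mathcal{Y}_{N,R}}\}$, the process stays supported in $B_R$ and agrees with your finite-graph process up to $\mathcal{H}_{\mathcal{X}_{N,R}}$. Your finite-graph convergence plus Poisson domination then makes $\mathcal{H}_{\mathcal{X}_{N,R}}\le T$ unlikely. This replaces the paper's explicit trace computation with heavier but valid machinery. Without Lemma~\ref{lem1} or an equivalent rigorous leakage estimate, the proposal does not establish \eqref{eq:K-inc}.
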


\begin{proof}[Proof of Theorem \ref{t:inc}]
 The theorem follows from Propositions \ref{p:inc1}, \ref{p:inc2},
and Theorem \ref{t:main2}.
\end{proof}

The rest of the section is devoted to proving the two propositions.

For $x,y\in\mathscr{V}$, denote by $\xi^{xy}_{i,N-i}\in\Omega_{N}$
the configuration that satisfies
\[
(\xi^{xy}_{i,N-i})_{x}=i\qquad\text{and}\qquad(\xi^{xy}_{i,N-i})_{y}=N-i.
\]
Note that $\xi^{xy}_{N,0}=\xi^{x}_{N}$ and $\xi^{xy}_{0,N}=\xi^{y}_{N}$.
In addition, define
\[
\mathcal{A}^{xy}_{N}:=\left\{ \eta\in\Omega_{N}:\eta_{x}+\eta_{y}=N\right\} =\left\{ \xi^{xy}_{i,N-i}:0\le i\le N\right\} .
\]

\subsection{\label{sec3.1}Resolvent condition}

Here, we prove Proposition \ref{p:inc1}. Fix $\lambda>0$, ${\bf g}\in C_{0}(\mathscr{V}_{\mathfrak{d}})$,
and $G_{N}\in C_{0}(\Omega_{N})$ as in Proposition \ref{p:inc1},
and pick $x\in\mathscr{V}$. Define a test measure $\mu_{N,x}$ on
$\Omega_{N}$ as follows (recall \eqref{eq:loc-fin}):
\begin{itemize}
\item $\mu_{N,x}(\xi^{xy}_{i,N-i}):=\frac{\Gamma(\alpha_{x}\epsilon_{N}+i)}{\Gamma(\alpha_{x}\epsilon_{N})i!}\frac{\Gamma(\alpha_{y}\epsilon_{N}+N-i)}{\Gamma(\alpha_{y}\epsilon_{N})(N-i)!}$
for $y\in\mathscr{N}_{x}$ and $0\le i\le N$;
\item $\mu_{N,x}(\eta):=0$ for all $\eta\notin\bigcup_{y\in\mathscr{N}_{x}}\mathcal{A}^{xy}_{N}$.
\end{itemize}
Above, $\Gamma(\cdot)$ denotes the usual gamma function. The definition
is consistent since $\mu_{N,x}(\xi^{x}_{N})=\mu_{N,x}(\xi^{xy}_{N,0})=\frac{\Gamma(\alpha_{x}\epsilon_{N}+N)}{\Gamma(\alpha_{x}\epsilon_{N})N!}$
for any $y\in\mathscr{N}_{x}$. The following estimate is useful:
for $i\ge1$ and $\alpha>0$,
\begin{equation}
\frac{\alpha}{i}\le\frac{\Gamma(\alpha+i)}{\Gamma(\alpha)i!}=\frac{\alpha}{i}\prod^{i-1}_{j=1}\left(1+\frac{\alpha}{j}\right)\le\frac{\alpha}{i}e^{\sum^{i-1}_{j=1}\frac{\alpha}{j}}\le\frac{\alpha}{i}e^{\alpha(1+\log i)}.\label{eq:G/G}
\end{equation}
In addition, define a test function $h_{N,x}:\Omega_{N}\to[0,1]$
as follows:
\begin{itemize}
\item $h_{N,x}(\xi^{xy}_{i,N-i}):=\frac{i}{N}$ for $y\in\mathscr{N}_{x}$
and $0\le i\le N$;
\item $h_{N,x}(\eta):=0$ for all $\eta\notin\bigcup_{y\in\mathscr{N}_{x}}\mathcal{A}^{xy}_{N}$.
\end{itemize}
It is again consistent since $h_{N,x}(\xi^{x}_{N})=h_{N,x}(\xi^{xy}_{N,0})=1$
for any $y\in\mathscr{N}_{x}$. Since $(\lambda-\mathscr{L}_{N})F_{N}=G_{N}$,
we obtain that
\begin{equation}
\sum_{\eta\in\Omega_{N}}\mu_{N,x}(\eta)h_{N,x}(\eta)\left(\lambda F_{N}(\eta)-\mathscr{L}_{N}F_{N}(\eta)\right)=\sum_{\eta\in\Omega_{N}}\mu_{N,x}(\eta)h_{N,x}(\eta)G_{N}(\eta).\label{eq:lFG}
\end{equation}
Let us abbreviate $\mu:=\mu_{N,x}$ and $h:=h_{N,x}$. We start with
the right-hand side of \eqref{eq:lFG}. Since $\mu$ vanishes outside
$\bigcup_{y\in\mathcal{N}_{x}}\mathcal{A}^{xy}_{N}$ and $h$ vanishes
at $\xi^{y}_{N}$, $y\in\mathscr{N}_{x}$, the right-hand side becomes
\begin{equation}
\mu(\xi^{x}_{N})h(\xi^{x}_{N})G_{N}(\xi^{x}_{N})+\sum_{y\in\mathscr{N}_{x}}\sum^{N-1}_{i=1}\mu(\xi^{xy}_{i,N-i})h(\xi^{xy}_{i,N-i})G_{N}(\xi^{xy}_{i,N-i}).\label{eq:inc1-1}
\end{equation}
The first term in \eqref{eq:inc1-1} equals $\frac{\Gamma(\alpha_{x}\epsilon_{N}+N)}{\Gamma(\alpha_{x}\epsilon_{N})N!}\,{\bf g}(x)$
by definition. Since $0\le h\le1$, by \eqref{eq:loc-fin}, the absolute
value of the second term in \eqref{eq:inc1-1} is bounded by
\begin{equation}
\mathfrak{n}_{\mathscr{G}}\sup_{N\ge1}\|G_{N}\|_{\infty}\sup_{y\in\mathscr{N}_{x}}\sum^{N-1}_{i=1}\frac{\Gamma(\alpha_{x}\epsilon_{N}+i)}{\Gamma(\alpha_{x}\epsilon_{N})i!}\,\frac{\Gamma(\alpha_{y}\epsilon_{N}+N-i)}{\Gamma(\alpha_{y}\epsilon_{N})(N-i)!}.\label{eq:inc1-2}
\end{equation}
By \eqref{eq:G/G}, the summand in \eqref{eq:inc1-2} is bounded as
\[
\frac{\Gamma(\alpha_{x}\epsilon_{N}+i)}{\Gamma(\alpha_{x}\epsilon_{N})i!}\,\frac{\Gamma(\alpha_{y}\epsilon_{N}+N-i)}{\Gamma(\alpha_{y}\epsilon_{N})(N-i)!}\le\frac{\alpha^{2}_{\sup}\epsilon^{2}_{N}}{i(N-i)}\,e^{2\alpha_{\sup}\epsilon_{N}(1+\log N)}.
\]
Thus, via \eqref{eq:eN-cond}, the absolute value of the second term
in \eqref{eq:inc1-1} is bounded by
\[
\mathfrak{n}_{\mathscr{G}}\sup_{N\ge1}\|G_{N}\|_{\infty}\,e^{2\alpha_{\sup}\epsilon_{N}(1+\log N)}\,\sum^{N-1}_{i=1}\frac{\alpha^{2}_{\sup}\epsilon^{2}_{N}}{i(N-i)}\le C_{\mathscr{G},G}\,\frac{\epsilon^{2}_{N}\log N}{N},
\]
where $C_{\mathscr{G},G}$ is a positive constant that depends on
the graph structure $\mathscr{G}$ and $\sup_{N\ge1}\|G_{N}\|_{\infty}$
but does not depend on $N\ge1$. Since $\frac{\Gamma(\alpha_{x}\epsilon_{N}+N)}{\Gamma(\alpha_{x}\epsilon_{N})N!}\ge\frac{\alpha_{x}\epsilon_{N}}{N}$
by \eqref{eq:G/G} and $\lim_{N\to\infty}\epsilon_{N}\log N=0$ by
\eqref{eq:eN-cond}, we may summarize the above discussions as
\begin{equation}
\sum_{\eta\in\Omega_{N}}\mu(\eta)h(\eta)G_{N}(\eta)=\frac{\Gamma(\alpha_{x}\epsilon_{N}+N)}{\Gamma(\alpha_{x}\epsilon_{N})N!}\left({\bf g}(x)+o_{N}(1)\right).\label{eq:lFG-2}
\end{equation}
For the first term in the left-hand side of \eqref{eq:lFG}, by the
same computations, via \eqref{eq:FG},
\begin{equation}
\lambda\sum_{\eta\in\Omega_{N}}\mu(\eta)h(\eta)F_{N}(\eta)=\frac{\Gamma(\alpha_{x}\epsilon_{N}+N)}{\Gamma(\alpha_{x}\epsilon_{N})N!}\left(\lambda F_{N}(\xi^{x}_{N})+o_{N}(1)\right).\label{eq:lFG-3}
\end{equation}

Finally, let us compute the second term in the left-hand side of \eqref{eq:lFG},
which is
\begin{equation}
\sum_{\eta\in\Omega_{N}}\mu(\eta)h(\eta)\sum_{z,w\in\mathscr{V}}c_{zw}\eta_{z}\left(\alpha_{w}+\epsilon^{-1}_{N}\eta_{w}\right)\left(F_{N}(\eta)-F_{N}(\eta-\delta_{z}+\delta_{w})\right).\label{eq:lFG-3.2}
\end{equation}
We may restrict the first summation for $\eta=\xi^{x}_{N}$ or $\eta=\xi^{xy}_{i,N-i}$
where $y\in\mathscr{N}_{x}$ and $1\le i\le N-1$, since otherwise
it vanishes by the definition of $h$. Thus, it becomes
\begin{equation}
\begin{aligned}\mu(\xi^{x}_{N})h(\xi^{x}_{N})\sum_{y\in\mathscr{N}_{x}} & c_{xy}N\alpha_{y}\left(F_{N}(\xi^{x}_{N})-F_{N}(\xi^{xy}_{N-1,1})\right)\\
+\sum_{y\in\mathscr{N}_{x}}\sum^{N-1}_{i=1}\mu(\xi^{xy}_{i,N-i}) & h(\xi^{xy}_{i,N-i})\,\Bigg(c_{xy}i\left(\alpha_{y}+\epsilon^{-1}_{N}(N-i)\right)\left(F_{N}(\xi^{xy}_{i,N-i})-F_{N}(\xi^{xy}_{i-1,N-i+1})\right)\\
 & +c_{xy}(N-i)\left(\alpha_{x}+\epsilon^{-1}_{N}i\right)\left(F_{N}(\xi^{xy}_{i,N-i})-F_{N}(\xi^{xy}_{i+1,N-i-1})\right)\\
 & +\sum_{z\in\mathscr{N}_{x}\setminus\{y\}}c_{xz}i\alpha_{z}\left(F_{N}(\xi^{xy}_{i,N-i})-F_{N}(\xi^{xy}_{i,N-i}-\delta_{x}+\delta_{z})\right)\\
 & +\sum_{z\in\mathscr{N}_{y}\setminus\{x\}}c_{yz}(N-i)\alpha_{z}\left(F_{N}(\xi^{xy}_{i,N-i})-F_{N}(\xi^{xy}_{i,N-i}-\delta_{y}+\delta_{z})\right)\Bigg).
\end{aligned}
\label{eq:lFG-3.5}
\end{equation}
First, we calculate the last two lines in \eqref{eq:lFG-3.5}. By
\eqref{eq:G/G} and \eqref{eq:FG}, we have
\[
\left|\mu(\xi^{xy}_{i,N-i})h(\xi^{xy}_{i,N-i})\sum_{z\in\mathscr{N}_{x}\setminus\{y\}}c_{xz}i\alpha_{z}\left(F_{N}(\xi^{xy}_{i,N-i})-F_{N}(\xi^{xy}_{i,N-i}-\delta_{x}+\delta_{z})\right)\right|\le\frac{C_{\mathscr{G},G}\,\epsilon^{2}_{N}}{N-i},
\]
and
\[
\left|\mu(\xi^{xy}_{i,N-i})h(\xi^{xy}_{i,N-i})\sum_{z\in\mathscr{N}_{y}\setminus\{x\}}c_{yz}(N-i)\alpha_{z}\left(F_{N}(\xi^{xy}_{i,N-i})-F_{N}(\xi^{xy}_{i,N-i}-\delta_{y}+\delta_{z})\right)\right|\le\frac{C_{\mathscr{G},G}\,\epsilon^{2}_{N}}{i}.
\]
Thus, by \eqref{eq:eN-cond}, \eqref{eq:lFG-3.2} is equal to
\begin{equation}
\begin{aligned}\mu(\xi^{x}_{N})h(\xi^{x}_{N})\sum_{y\in\mathscr{N}_{x}} & c_{xy}N\alpha_{y}\left(F_{N}(\xi^{x}_{N})-F_{N}(\xi^{xy}_{N-1,1})\right)\\
+\sum_{y\in\mathscr{N}_{x}}\sum^{N-1}_{i=1}\mu(\xi^{xy}_{i,N-i}) & h(\xi^{xy}_{i,N-i})\,\bigg(c_{xy}i\left(\alpha_{y}+\epsilon^{-1}_{N}(N-i)\right)\left(F_{N}(\xi^{xy}_{i,N-i})-F_{N}(\xi^{xy}_{i-1,N-i+1})\right)\\
+c_{xy} & (N-i)\left(\alpha_{x}+\epsilon^{-1}_{N}i\right)\left(F_{N}(\xi^{xy}_{i,N-i})-F_{N}(\xi^{xy}_{i+1,N-i-1})\right)\bigg)+o_{N}\left(\frac{\epsilon_{N}}{N}\right).
\end{aligned}
\label{eq:lFG-4}
\end{equation}
Let us calculate the coefficient of each $F_{N}(\xi^{xy}_{i,N-i})$
in \eqref{eq:lFG-4}. For $i=N$, the coefficient of $F_{N}(\xi^{x}_{N})$
becomes
\begin{equation}
\begin{aligned}\sum_{y\in\mathscr{N}_{x}}\bigg(\mu(\xi^{x}_{N})h(\xi^{x}_{N})c_{xy}N\alpha_{y} & -\mu(\xi^{xy}_{N-1,1})h(\xi^{xy}_{N-1,1})c_{xy}\left(\alpha_{x}+\epsilon^{-1}_{N}(N-1)\right)\bigg)\\
 & =\sum_{y\in\mathscr{N}_{x}}\frac{\Gamma(\alpha_{x}\epsilon_{N}+N)}{\Gamma(\alpha_{x}\epsilon_{N})N!}c_{xy}\alpha_{y}.
\end{aligned}
\label{eq:lFG-5}
\end{equation}
For $i=0$, the coefficient of $F_{N}(\xi^{y}_{N})$ becomes
\begin{equation}
\mu(\xi^{xy}_{1,N-1})h(\xi^{xy}_{1,N-1})\,c_{xy}\left(\alpha_{y}+\epsilon^{-1}_{N}(N-1)\right)-\mu(\xi^{x}_{N})h(\xi^{x}_{N})c_{xy}N\alpha_{y}=-\frac{\Gamma(\alpha_{x}\epsilon_{N}+N)}{\Gamma(\alpha_{x}\epsilon_{N})N!}c_{xy}\alpha_{y}.\label{eq:lFG-6}
\end{equation}
For $1\le i\le N-1$, the coefficient of $F_{N}(\xi^{xy}_{i,N-i})$
becomes
\begin{equation}
\begin{aligned}\mu(\xi^{xy}_{i,N-i}) & h(\xi^{xy}_{i,N-i})c_{xy}i\left(\alpha_{y}+\epsilon^{-1}_{N}(N-i)\right)+\mu(\xi^{xy}_{i,N-i})h(\xi^{xy}_{i,N-i})c_{xy}(N-i)\left(\alpha_{x}+\epsilon^{-1}_{N}i\right)\\
 & -\mu(\xi^{xy}_{i+1,N-i-1})h(\xi^{xy}_{i+1,N-i-1})c_{xy}(i+1)\left(\alpha_{y}+\epsilon^{-1}_{N}(N-i-1)\right)\\
 & -\mu(\xi^{xy}_{i-1,N-i+1})h(\xi^{xy}_{i-1,N-i+1})c_{xy}(N-i+1)\left(\alpha_{x}+\epsilon^{-1}_{N}(i-1)\right).
\end{aligned}
\label{eq:lFG-7}
\end{equation}
Merging the first/fourth terms, the second/third terms and simplifying,
the absolute value of \eqref{eq:lFG-7} equals
\begin{equation}
\left|\frac{c_{xy}\epsilon^{-1}_{N}}{N}\,\frac{\Gamma(\alpha_{x}\epsilon_{N}+i)}{\Gamma(\alpha_{x}\epsilon_{N})i!}\,\frac{\Gamma(\alpha_{y}\epsilon_{N}+N-i)}{\Gamma(\alpha_{y}\epsilon_{N})(N-i)!}\left(i(\alpha_{y}\epsilon_{N}+N-i)-(\alpha_{x}\epsilon_{N}+i)(N-i)\right)\right|\le\frac{C_{\mathscr{G}}\epsilon^{2}_{N}}{i(N-i)}.\label{eq:lFG-8}
\end{equation}
Gathering \eqref{eq:lFG-5}, \eqref{eq:lFG-6}, \eqref{eq:lFG-8}
and applying $\sum^{N-1}_{i=1}\frac{1}{i(N-i)}\le\frac{C\log N}{N}$,
\eqref{eq:lFG-4} can be written as\footnote{Here, $f_{N}=O_{N}(g_{N})$ indicates $|f_{N}|\le Cg_{N}$ where $C$
is independent of $N$.}
\begin{equation}
\begin{aligned}\sum_{y\in\mathscr{N}_{x}}\frac{\Gamma(\alpha_{x}\epsilon_{N}+N)}{\Gamma(\alpha_{x}\epsilon_{N})N!} & c_{xy}\alpha_{y}\left(F_{N}(\xi^{x}_{N})-F_{N}(\xi^{y}_{N})\right)+O_{N}\left(\frac{\epsilon^{2}_{N}\log N}{N}\right)+o_{N}\left(\frac{\epsilon_{N}}{N}\right)\\
 & =\frac{\Gamma(\alpha_{x}\epsilon_{N}+N)}{\Gamma(\alpha_{x}\epsilon_{N})N!}\,(-\mathfrak{L}f_{N}(x)+o_{N}(1)),
\end{aligned}
\label{eq:lFG-9}
\end{equation}
where $f_{N}(z):=F_{N}(\xi^{z}_{N})$. Gathering \eqref{eq:lFG},
\eqref{eq:lFG-2}, \eqref{eq:lFG-3}, and \eqref{eq:lFG-9}, we obtain
\[
\lambda f_{N}(x)-\mathfrak{L}f_{N}(x)={\bf g}(x)+o_{N}(1)\quad\text{for all}\quad x\in\mathscr{V},
\]
where the error $o_{N}(1)$ is uniform over all $x\in\mathscr{V}$.
Appealing to \eqref{eq:pr}, this implies that
\[
F_{N}(\xi^{x}_{N})=f_{N}(x)={\rm E}^{{\bf Q}_{x}}\left[\int^{\infty}_{0}e^{-\lambda t}\,({\bf g}({\bf y}(t))+o_{N}(1))\,{\rm d}t\right]={\bf f}(x)+o_{N}(1).
\]
This proves Proposition \ref{p:inc1}.

\subsection{\label{sec3.2}Compactness condition}

In this subsection, we prove Proposition \ref{p:inc2}. If $\mathscr{G}$
is finite, then $\mathcal{K}_{N}\equiv\Omega_{N}$ suffices and there
is nothing to prove. From now on, assume that $\mathscr{G}$ is an
infinite graph.

\begin{figure}
\begin{tikzpicture}
\foreach \i in {0,...,5} {
\fill[red!30!white,rotate around={60*\i:(0,0)}] (3/2,{3*sqrt(3)/2}) circle (0.15);
\fill[red!30!white,rotate around={60*\i:(0,0)}] (-3/2,{3*sqrt(3)/2}) circle (0.15);
\fill[red!30!white,rotate around={60*\i:(0,0)}] (3/2,{3*sqrt(3)/2+0.15}) rectangle (-3/2,{3*sqrt(3)/2-0.15});
\fill[blue!30!white,rotate around={60*\i:(0,0)}] (1,{sqrt(3)}) circle (0.15);
\fill[blue!30!white,rotate around={60*\i:(0,0)}] (-1,{sqrt(3)}) circle (0.15);
\fill[blue!30!white,rotate around={60*\i:(0,0)}] (1,{sqrt(3)+0.15}) rectangle (-1,{sqrt(3)-0.15});
\fill[teal!30!white,rotate around={60*\i:(0,0)}] (1/2,{sqrt(3)/2}) circle (0.15);
\fill[teal!30!white,rotate around={60*\i:(0,0)}] (-1/2,{sqrt(3)/2}) circle (0.15);
\fill[teal!30!white,rotate around={60*\i:(0,0)}] (1/2,{sqrt(3)/2+0.15}) rectangle (-1/2,{sqrt(3)/2-0.15});
}
\draw (3,0)--(3/2,{3*sqrt(3)/2})--(-3/2,{3*sqrt(3)/2})--(-3,0)--(-3/2,{-3*sqrt(3)/2})--(3/2,{-3*sqrt(3)/2})--(3,0);
\draw (3,0)--(-3,0); \draw (3/2,{3*sqrt(3)/2})--(-3/2,{-3*sqrt(3)/2}); \draw (-3/2,{3*sqrt(3)/2})--(3/2,{-3*sqrt(3)/2});
\draw (1/2,{3*sqrt(3)/2})--(-2,{-sqrt(3)})--(2,{-sqrt(3)})--(-1/2,{3*sqrt(3)/2})--(-5/2,{-sqrt(3)/2})--(5/2,{-sqrt(3)/2})--(1/2,{3*sqrt(3)/2});
\draw (2,{sqrt(3)})--(-2,{sqrt(3)})--(1/2,{-3*sqrt(3)/2})--(5/2,{sqrt(3)/2})--(-5/2,{sqrt(3)/2})--(-1/2,{-3*sqrt(3)/2})--(2,{sqrt(3)});
\fill (0,0) circle (0.07);
\draw (0,-0.1) node[below]{$x$};
\draw (1/3,{-sqrt(3)/2-0.1}) node[below]{\color{teal} $\mathscr{N}_{x,1}$};
\draw (5/6,{-sqrt(3)-0.1}) node[below]{\color{blue} $\mathscr{N}_{x,2}$};
\draw (4/3,{-3*sqrt(3)/2-0.1}) node[below]{\color{red} $\mathscr{N}_{x,3}$};

\begin{scope}[shift={(7,0)}]
\fill[blue!15!white] (3,0)--(3/2,{3*sqrt(3)/2})--(-3/2,{3*sqrt(3)/2})--(-3,0)--(-3/2,{-3*sqrt(3)/2})--(3/2,{-3*sqrt(3)/2})--(3,0);
\foreach \i in {0,...,5} {
\fill[red!15!white,rotate around={60*\i:(0,0)}] (3/2,{3*sqrt(3)/2}) circle (0.15);
\fill[red!15!white,rotate around={60*\i:(0,0)}] (-3/2,{3*sqrt(3)/2}) circle (0.15);
\fill[red!15!white,rotate around={60*\i:(0,0)}] (3/2,{3*sqrt(3)/2+0.15}) rectangle (-3/2,{3*sqrt(3)/2-0.15});
}
%\draw[densely dotted] (3,0)--(3/2,{3*sqrt(3)/2})--(-3/2,{3*sqrt(3)/2})--(-3,0)--(-3/2,{-3*sqrt(3)/2})--(3/2,{-3*sqrt(3)/2})--(3,0);
\draw[densely dotted] (3,0)--(-3,0); \draw[densely dotted] (3/2,{3*sqrt(3)/2})--(-3/2,{-3*sqrt(3)/2}); \draw[densely dotted] (-3/2,{3*sqrt(3)/2})--(3/2,{-3*sqrt(3)/2});
\draw[densely dotted] (1/2,{3*sqrt(3)/2})--(-2,{-sqrt(3)})--(2,{-sqrt(3)})--(-1/2,{3*sqrt(3)/2})--(-5/2,{-sqrt(3)/2})--(5/2,{-sqrt(3)/2})--(1/2,{3*sqrt(3)/2});
\draw[densely dotted] (2,{sqrt(3)})--(-2,{sqrt(3)})--(1/2,{-3*sqrt(3)/2})--(5/2,{sqrt(3)/2})--(-5/2,{sqrt(3)/2})--(-1/2,{-3*sqrt(3)/2})--(2,{sqrt(3)});
\foreach \i in {-3,...,3} { \fill (\i,0) circle (0.07); }
\foreach \i in {-2.5,...,2.5} { \fill (\i,{-sqrt(3)/2}) circle (0.07); \fill (\i,{sqrt(3)/2}) circle (0.07); }
\foreach \i in {-2,...,2} { \fill (\i,{-sqrt(3)}) circle (0.07); \fill (\i,{sqrt(3)}) circle (0.07); }
\foreach \i in {-1.5,...,1.5} { \fill (\i,{-3*sqrt(3)/2}) circle (0.07); \fill (\i,{3*sqrt(3)/2}) circle (0.07); }
\foreach \j in {0,...,2} {
\foreach \i in {-3,...,2} { \draw[blue,rotate around={60*\j:(0,0)}] (\i+1/3-0.05,0-0.05)--(\i+1/3+0.05,0+0.05); \draw[blue,rotate around={60*\j:(0,0)}] (\i+1/3-0.05,0+0.05)--(\i+1/3+0.05,0-0.05); }
\foreach \i in {-3,...,2} { \draw[blue,rotate around={60*\j:(0,0)}] (\i+2/3-0.05,0-0.05)--(\i+2/3+0.05,0+0.05); \draw[blue,rotate around={60*\j:(0,0)}] (\i+2/3-0.05,0+0.05)--(\i+2/3+0.05,0-0.05); }
\foreach \i in {-2.5,...,1.5} { \draw[blue,rotate around={60*\j:(0,0)}] (\i+1/3-0.05,{-sqrt(3)/2-0.05})--(\i+1/3+0.05,{-sqrt(3)/2+0.05}); \draw[blue,rotate around={60*\j:(0,0)}] (\i+1/3-0.05,{-sqrt(3)/2+0.05})--(\i+1/3+0.05,{-sqrt(3)/2-0.05}); }
\foreach \i in {-2.5,...,1.5} { \draw[blue,rotate around={60*\j:(0,0)}] (\i+2/3-0.05,{-sqrt(3)/2-0.05})--(\i+2/3+0.05,{-sqrt(3)/2+0.05}); \draw[blue,rotate around={60*\j:(0,0)}] (\i+2/3-0.05,{-sqrt(3)/2+0.05})--(\i+2/3+0.05,{-sqrt(3)/2-0.05}); }
\foreach \i in {-2.5,...,1.5} { \draw[blue,rotate around={60*\j:(0,0)}] (\i+1/3-0.05,{sqrt(3)/2-0.05})--(\i+1/3+0.05,{sqrt(3)/2+0.05}); \draw[blue,rotate around={60*\j:(0,0)}] (\i+1/3-0.05,{sqrt(3)/2+0.05})--(\i+1/3+0.05,{sqrt(3)/2-0.05}); }
\foreach \i in {-2.5,...,1.5} { \draw[blue,rotate around={60*\j:(0,0)}] (\i+2/3-0.05,{sqrt(3)/2-0.05})--(\i+2/3+0.05,{sqrt(3)/2+0.05}); \draw[blue,rotate around={60*\j:(0,0)}] (\i+2/3-0.05,{sqrt(3)/2+0.05})--(\i+2/3+0.05,{sqrt(3)/2-0.05}); }
\foreach \i in {-2,...,1} { \draw[blue,rotate around={60*\j:(0,0)}] (\i+1/3-0.05,{-sqrt(3)-0.05})--(\i+1/3+0.05,{-sqrt(3)+0.05}); \draw[blue,rotate around={60*\j:(0,0)}] (\i+1/3-0.05,{-sqrt(3)+0.05})--(\i+1/3+0.05,{-sqrt(3)-0.05}); }
\foreach \i in {-2,...,1} { \draw[blue,rotate around={60*\j:(0,0)}] (\i+2/3-0.05,{-sqrt(3)-0.05})--(\i+2/3+0.05,{-sqrt(3)+0.05}); \draw[blue,rotate around={60*\j:(0,0)}] (\i+2/3-0.05,{-sqrt(3)+0.05})--(\i+2/3+0.05,{-sqrt(3)-0.05}); }
\foreach \i in {-2,...,1} { \draw[blue,rotate around={60*\j:(0,0)}] (\i+1/3-0.05,{sqrt(3)-0.05})--(\i+1/3+0.05,{sqrt(3)+0.05}); \draw[blue,rotate around={60*\j:(0,0)}] (\i+1/3-0.05,{sqrt(3)+0.05})--(\i+1/3+0.05,{sqrt(3)-0.05}); }
\foreach \i in {-2,...,1} { \draw[blue,rotate around={60*\j:(0,0)}] (\i+2/3-0.05,{sqrt(3)-0.05})--(\i+2/3+0.05,{sqrt(3)+0.05}); \draw[blue,rotate around={60*\j:(0,0)}] (\i+2/3-0.05,{sqrt(3)+0.05})--(\i+2/3+0.05,{sqrt(3)-0.05}); }
}
\draw (0,-0.1) node[below]{$\xi_N^x$};
\draw (4/3,{-3*sqrt(3)/2-0.1}) node[below]{\color{red} $\mathcal{X}_{N,3}$};
\draw (11/6+0.3,{-sqrt(3)-0.1}) node[below]{\color{blue} $\mathcal{K}_{N,3}$};
\foreach \i in {0,...,5} {
\fill[red,rotate around={60*\i:(0,0)}] (3,0) circle (0.07);
\fill[red,rotate around={60*\i:(0,0)}] (5/2,{sqrt(3)/2}) circle (0.07);
\fill[red,rotate around={60*\i:(0,0)}] (2,{sqrt(3)}) circle (0.07);
}
\end{scope}
\end{tikzpicture}\caption{\label{fig3.1}(Left) Collection $\widehat{\mathscr{N}}_{x,3}=\{x\}\cup\mathscr{N}_{x,1}\cup\mathscr{N}_{x,2}\cup\mathscr{N}_{x,3}$
in the triangular lattice $\mathscr{G}$. Teal, blue and red regions
denote $\mathscr{N}_{x,1}$, $\mathscr{N}_{x,2}$, and $\mathscr{N}_{x,3}$,
respectively. (Right) Corresponding local configuration space near
$\xi^{x}_{N}$. The circles denote the configurations $\xi^{y}_{N}$
for $y\in\widehat{\mathscr{N}}_{x,3}$. The crosses denote the elements
$\xi^{yz}_{i,N-i}$ for $1\le i\le N-1$ and $y,z\in\widehat{\mathscr{N}}_{x,3}$
with $\{y,z\}\in\mathscr{E}$. The blue region represents $\mathcal{K}_{N,3}$
and the red region represents one boundary $\mathcal{X}_{N,3}$. The
other boundary $\mathcal{Y}_{N,3}$, which is not illustrated, can
be reached from the crosses.}
\end{figure}

Fix $x\in\mathscr{V}$ and $T,\epsilon>0$. For each integer $m\ge0$,
define (see Figure \ref{fig3.1}-left)
\[
\mathscr{N}_{x,m}:=\left\{ y\in\mathscr{V}:d_{\mathscr{G}}(x,y)=m\right\} ,
\]
where $d_{\mathscr{G}}$ denotes the graph distance in $\mathscr{G}$.
In particular, $\mathscr{N}_{x,0}=\{x\}$ and $\mathscr{N}_{x,1}=\mathscr{N}_{x}$
(cf. \eqref{eq:loc-fin}). In addition, define
\[
\widehat{\mathscr{N}}_{x,m}:=\bigcup^{m}_{j=0}\mathscr{N}_{x,j}.
\]
Then, define (see Figure \ref{fig3.1}-right)
\[
\mathcal{K}_{N,m}:=\bigcup_{\substack{y,z\in\widehat{\mathscr{N}}_{x,m}\\
\{y,z\}\in\mathscr{E}
}
}\mathcal{A}^{yz}_{N}\,\Big\backslash\,\bigcup_{y\in\mathscr{N}_{x,m}}\mathcal{E}^{y}_{N}.
\]
Write
\[
\mathcal{X}_{N,m}:=\bigcup_{y\in\mathscr{N}_{x,m}}\mathcal{E}^{y}_{N}
\]
and
\[
\mathcal{Y}_{N,m}:=\bigcup_{\substack{y,z\in\widehat{\mathscr{N}}_{x,m}\\
\{y,z\}\in\mathscr{E}
}
}\bigcup^{N-1}_{i=1}\left\{ \xi^{yz}_{i,N-i}-\delta_{w}+\delta_{u}:w\in\{y,z\},\enskip u\notin\{y,z\},\enskip\{w,u\}\in\mathscr{E}\right\} .
\]
It is clear that $\partial\mathcal{K}_{N,m}=\mathcal{X}_{N,m}\cup\mathcal{Y}_{N,m}$,
where $\partial\mathcal{A}:=\{\eta\notin\mathcal{A}:r_{N}(\eta,\xi)>0\enskip\text{for some}\enskip\xi\in\mathcal{A}\}$.
First, we prove that the process escapes $\mathcal{K}_{N,m}$ via
$\mathcal{X}_{N,m}$ with probability asymptotically one.

We need some potential theory notation. For $\eta\in\Omega_{N}$,
define
\[
\mu_{N}(\eta):=\prod_{x\in\mathscr{V}}\frac{\Gamma(\alpha_{x}\epsilon_{N}+\eta_{x})}{\Gamma(\alpha_{x}\epsilon_{N})\eta_{x}!}.
\]
Then, $\mu_{N}$ is a stationary measure of $\{\eta_{N}(t)\}_{t\ge0}$
with detailed balance conditions:
\[
\mu_{N}(\eta)c_{xy}\eta_{x}\left(\alpha_{y}+\epsilon^{-1}_{N}\eta_{y}\right)=\mu_{N}(\eta-\delta_{x}+\delta_{y})c_{yx}(\eta_{y}+1)\left(\alpha_{x}+\epsilon^{-1}_{N}(\eta_{x}-1)\right).
\]
Define $\Omega_{N,m}:=\mathcal{K}_{N,m}\cup\mathcal{X}_{N,m}\cup\mathcal{Y}_{N,m}$.
Denote by
\[
\mathcal{D}_{N,m}(f):=\frac{1}{2}\sum_{\eta,\xi\in\Omega_{N,m}}\mu_{N}(\eta)r_{N}(\eta,\xi)\left(f(\xi)-f(\eta)\right)^{2}
\]
the \emph{Dirichlet form} of the process restricted to $\Omega_{N,m}$.
For disjoint $\mathcal{A},\mathcal{B}\subset\Omega_{N,m}$, the \emph{equilibrium
potential} between $\mathcal{A}$ and $\mathcal{B}$ is given as
\[
h_{\mathcal{A},\mathcal{B}}(\eta):=\mathbb{Q}^{N,m}_{\eta}\left[\mathcal{H}_{\mathcal{A}}<\mathcal{H}_{\mathcal{B}}\right],
\]
where $\mathbb{Q}^{N,m}_{\eta}$ denotes the law of the restricted
process in $\Omega_{N,m}$ starting from $\eta$. Then, the \emph{capacity}
between $\mathcal{A}$ and $\mathcal{B}$ is defined as
\[
{\rm cap}_{N,m}(\mathcal{A},\mathcal{B}):=\mathcal{D}_{N,m}(h_{\mathcal{A},\mathcal{B}}).
\]
A function $\phi:\Omega_{N,m}\times\Omega_{N,m}\to\mathbb{R}$ is
a \emph{flow} if $\phi(\eta,\xi)=-\phi(\xi,\eta)$ and $\phi(\eta,\xi)=0$
if $r_{N}(\eta,\xi)=0$. Given a flow $\phi$, its flow norm is defined
as
\[
\|\phi\|^{2}_{N,m}:=\frac{1}{2}\sum_{\substack{\eta,\xi\in\Omega_{N,m}\\
r_{N}(\eta,\xi)>0
}
}\frac{\phi(\eta,\xi)^{2}}{\mu_{N}(\eta)r_{N}(\eta,\xi)}.
\]
A flow $\phi$ is a \emph{unit flow} from $\mathcal{A}$ to $\mathcal{B}$
if $\sum_{\eta\in\mathcal{A}}\sum_{\xi\in\Omega_{N,m}}\phi(\eta,\xi)=1$,
$\sum_{\eta\in\mathcal{B}}\sum_{\xi\in\Omega_{N,m}}\phi(\eta,\xi)=-1$,
and
\[
\sum_{\xi\in\Omega_{N,m}}\phi(\eta,\xi)=0\quad\text{for all}\quad\eta\notin\mathcal{A}\cup\mathcal{B}.
\]

\begin{lem}
\label{lem1}We have
\[
\lim_{N\to\infty}\mathbb{Q}^{N}_{\xi^{x}_{N}}\left[\mathcal{H}_{\mathcal{X}_{N,m}}>\mathcal{H}_{\mathcal{Y}_{N,m}}\right]=0.
\]
\end{lem}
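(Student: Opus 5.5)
The plan is to reduce the statement to a capacity comparison on the finite set $\Omega_{N,m}$, and then bound the two capacities by the Dirichlet and Thomson principles.

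\emph{Reduction.} Starting from $\xi^{x}_{N}\in\mathcal{K}_{N,m}$, the process can only leave $\mathcal{K}_{N,m}$ through $\partial\mathcal{K}_{N,m}=\mathcal{X}_{N,m}\cup\mathcal{Y}_{N,m}$. Hence the event $\{\mathcal{H}_{\mathcal{X}_{N,m}}>\mathcal{H}_{\mathcal{Y}_{N,m}}\}$ depends only on the path up to the exit time of $\mathcal{K}_{N,m}$. So I may replace $\mathbb{Q}^{N}_{\xi^{x}_{N}}$ by the restricted law $\mathbb{Q}^{N,m}_{\xi^{x}_{N}}$ and work with the finite reversible chain on $\Omega_{N,m}$ with reversible measure $\mu_{N}$. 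There I will use the standard potential-theoretic bound
\[
\mathbb{Q}^{N,m}_{\eta}\left[\mathcal{H}_{\mathcal{A}}<\mathcal{H}_{\mathcal{B}}\right]\le\frac{{\rm cap}_{N,m}(\{\eta\},\mathcal{A})}{{\rm cap}_{N,m}(\{\eta\},\mathcal{B})},
\]
valid for disjoint $\mathcal{A},\mathcal{B}$ not containing $\eta$. I apply it with $\eta=\xi^{x}_{N}$, $\mathcal{A}=\mathcal{Y}_{N,m}$ and $\mathcal{B}=\mathcal{X}_{N,m}$; when $m=0$ the set $\mathcal{X}_{N,m}$ contains $\xi^{x}_{N}$ and the case is trivial, so take $m\ge1$.

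\emph{Upper bound on ${\rm cap}_{N,m}(\xi^{x}_{N},\mathcal{Y}_{N,m})$.} By the Dirichlet principle it suffices to exhibit one admissible test function. I take $f={\bf 1}_{\mathcal{Y}_{N,m}}$, which vanishes on $\mathcal{K}_{N,m}\cup\mathcal{X}_{N,m}$. Then $\mathcal{D}_{N,m}(f)$ is the sum of $\mu_{N}(\eta)r_{N}(\eta,\xi)$ over edges from $\eta=\xi^{yz}_{i,N-i}\in\mathcal{K}_{N,m}$ to $\xi\in\mathcal{Y}_{N,m}$. Such a move sends one particle from $w\in\{y,z\}$ to an empty site $u$, so its rate is $c_{wu}\eta_{w}\alpha_{u}\le C\max(i,N-i)$. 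By \eqref{eq:G/G}, $\mu_{N}(\xi^{yz}_{i,N-i})\le C\epsilon_{N}^{2}/(i(N-i))$, so each edge contributes at most $C\epsilon_{N}^{2}(1/i+1/(N-i))$. Summing over $1\le i\le N-1$ and over the finitely many edges $\{y,z\}$ in $\widehat{\mathscr{N}}_{x,m}$ (finitely many by \eqref{eq:loc-fin}) gives
\[
{\rm cap}_{N,m}(\xi^{x}_{N},\mathcal{Y}_{N,m})\le C_{m}\,\epsilon_{N}^{2}\log N.
\]

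\emph{Lower bound on ${\rm cap}_{N,m}(\xi^{x}_{N},\mathcal{X}_{N,m})$.} By Thomson's principle it suffices to exhibit one unit flow. Fix a geodesic $x=x_{0},x_{1},\dots,x_{m}\in\mathscr{N}_{x,m}$. The flow $\phi$ sends unit mass along the chain
\[
\xi^{x_{k}}_{N}=\xi^{x_{k}x_{k+1}}_{N,0}\to\xi^{x_{k}x_{k+1}}_{N-1,1}\to\cdots\to\xi^{x_{k}x_{k+1}}_{0,N}=\xi^{x_{k+1}}_{N}
\]
for $k=0,\dots,m-1$. All intermediate configurations lie in $\mathcal{K}_{N,m}$, and $\phi$ is a unit flow from $\xi^{x}_{N}$ to $\xi^{x_{m}}_{N}\in\mathcal{X}_{N,m}$. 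Each edge has conductance $\mu_{N}(\xi_{i,N-i})\,c\,i(\alpha+\epsilon_{N}^{-1}(N-i))$, with the appropriate $i\to i-1$ rate. By \eqref{eq:G/G} this conductance is bounded below by $c\,\epsilon_{N}$ uniformly in $i$: in the interior $\frac{\epsilon^{2}}{i(N-i)}\cdot\frac{i(N-i)}{\epsilon}\asymp\epsilon$, and at the endpoints $\frac{\epsilon}{N}\cdot N\alpha\asymp\epsilon$. The error factor $e^{2\alpha_{\sup}\epsilon_{N}(1+\log N)}$ is bounded since $\epsilon_{N}\log N\to0$. Hence $\|\phi\|_{N,m}^{2}\le C\,mN/\epsilon_{N}$, and
\[
{\rm cap}_{N,m}(\xi^{x}_{N},\mathcal{X}_{N,m})\ge c\,\epsilon_{N}/(mN).
\]

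\emph{Conclusion and main obstacle.} The ratio of the two bounds is at most $C_{m}\,\epsilon_{N}N\log N$, which tends to $0$ by \eqref{eq:eN-cond}. The step I expect to need most care is the uniform lower bound on the edge conductances along the chain, especially at the endpoints $i=0,1,N-1,N$, where the two rate terms $\alpha$ and $\epsilon_{N}^{-1}\eta$ exchange dominance. This should follow from the detailed-balance identity for $\mu_{N}$ together with \eqref{eq:G/G}. The remaining point to check is that the hitting-probability/capacity inequality applies verbatim to the restricted chain, with $\mathcal{X}_{N,m}\cup\mathcal{Y}_{N,m}$ treated as absorbing boundary.
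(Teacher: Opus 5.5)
Your proposal is correct and follows essentially the same route as the paper. The paper also restricts to $\Omega_{N,m}$ and uses the renewal bound ${\rm cap}_{N,m}(\xi^{x}_{N},\mathcal{Y}_{N,m})/{\rm cap}_{N,m}(\xi^{x}_{N},\mathcal{X}_{N,m})$. It applies the Dirichlet principle with $f\equiv1$ off $\mathcal{Y}_{N,m}$, which has the same Dirichlet form as your ${\bf 1}_{\mathcal{Y}_{N,m}}$, and the Thomson principle with the unit flow along a path of length $m$ from $x$ to $\mathscr{N}_{x,m}$. This gives the same bound $C\epsilon_{N}N\log N\to0$.
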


\begin{proof}
To calculate the left-hand side, we may restrict the original inclusion
process to $\Omega_{N,m}:=\mathcal{K}_{N,m}\cup\mathcal{X}_{N,m}\cup\mathcal{Y}_{N,m}$.
Then, a renewal estimate (cf. \cite[Lemma 8.4]{BdH15}) gives
\begin{equation}
\mathbb{Q}^{N}_{\xi^{x}_{N}}\left[\mathcal{H}_{\mathcal{X}_{N,m}}>\mathcal{H}_{\mathcal{Y}_{N,m}}\right]\le\frac{{\rm cap}_{N,m}(\xi^{x}_{N},\mathcal{Y}_{N,m})}{{\rm cap}_{N,m}(\xi^{x}_{N},\mathcal{X}_{N,m})}.\label{eq:lem1-1}
\end{equation}
For the numerator in \eqref{eq:lem1-1}, we define a test object $f:\Omega_{N,m}\to\mathbb{R}$
as $f\equiv1$ on $\mathcal{K}_{N,m}\cup\mathcal{X}_{N,m}$ and $f\equiv0$
on $\mathcal{Y}_{N,m}$. Via \eqref{eq:G/G} and \eqref{eq:eN-cond},
the Dirichlet form is estimated as
\begin{align*}
 & \mathcal{D}_{N,m}(f)=\sum_{\eta\in\mathcal{K}_{N,m}}\sum_{\xi\in\mathcal{Y}_{N,m}}\mu_{N}(\eta)r_{N}(\eta,\xi)\\
 & =\frac{1}{2}\sum_{\substack{y,z\in\widehat{\mathscr{N}}_{x,m}\\
\{y,z\}\in\mathscr{E}
}
}\sum^{N-1}_{i=1}\frac{\Gamma(\alpha_{y}\epsilon_{N}+i)}{\Gamma(\alpha_{y}\epsilon_{N})i!}\frac{\Gamma(\alpha_{z}\epsilon_{N}+N-i)}{\Gamma(\alpha_{z}\epsilon_{N})(N-i)!}\left(\sum_{\substack{u\in\mathscr{V}\setminus\{z\}\\
\{y,u\}\in\mathscr{E}
}
}c_{yu}i\alpha_{u}+\sum_{\substack{u\in\mathscr{V}\setminus\{y\}\\
\{z,u\}\in\mathscr{E}
}
}c_{zu}(N-i)\alpha_{u}\right)\\
 & \le\alpha^{3}_{\sup}\mathfrak{n}_{\mathscr{G}}c_{\sup}\epsilon^{2}_{N}\sum_{\substack{y,z\in\widehat{\mathscr{N}}_{x,m}\\
\{y,z\}\in\mathscr{E}
}
}\sum^{N-1}_{i=1}\frac{N}{i(N-i)}e^{2\alpha_{\sup}\epsilon_{N}(1+\log N)}\le C_{\mathscr{G}}\epsilon^{2}_{N}\log N.
\end{align*}
Thus, the Dirichlet principle (cf. \cite[Theorem 7.33]{BdH15}) implies
that
\begin{equation}
{\rm cap}_{N,m}(\xi^{x}_{N},\mathcal{Y}_{N,m})\le\mathcal{D}_{N,m}(f)\le C_{\mathscr{G}}\epsilon^{2}_{N}\log N.\label{eq:lem1-2}
\end{equation}
For the denominator in \eqref{eq:lem1-1}, fix $y\in\mathscr{N}_{x,m}$
and a path $x=x_{0},x_{1},\dots,x_{m}=y$ in $\mathscr{G}$. Define
a flow $\phi:\Omega_{N,m}\times\Omega_{N,m}\to\mathbb{R}$ as
\[
\phi(\xi^{x_{j-1}x_{j}}_{i,N-i},\xi^{x_{j-1}x_{j}}_{i-1,N-i+1}):=1,\quad\phi(\xi^{x_{j-1}x_{j}}_{i-1,N-i+1},\xi^{x_{j-1}x_{j}}_{i,N-i}):=-1
\]
for all $1\le j\le m$ and $1\le i\le N$, and $\phi:=0$ otherwise.
Then, it is clear that $\phi$ is a unit flow from $\xi^{x}_{N}$
to $\mathcal{X}_{N,m}\ni\xi^{y}_{N}$. In addition, by \eqref{eq:G/G},
\begin{align*}
\|\phi\|^{2}_{N,m} & =\sum^{m}_{j=1}\sum^{N}_{i=1}\frac{1}{\frac{\Gamma(\alpha_{x_{j-1}}\epsilon_{N}+i)}{\Gamma(\alpha_{x_{j-1}}\epsilon_{N})i!}\frac{\Gamma(\alpha_{x_{j}}\epsilon_{N}+N-i)}{\Gamma(\alpha_{x_{j}}\epsilon_{N})(N-i)!}c_{x_{j-1}x_{j}}(N-i)(\alpha_{x_{j}}+\epsilon^{-1}_{N}i)}\\
 & \le\sum^{m}_{j=1}\sum^{N}_{i=1}\frac{1}{\alpha_{x_{j-1}}\epsilon_{N}\alpha_{x_{j}}c_{x_{j-1}x_{j}}}\le C_{\mathscr{G}}N\epsilon^{-1}_{N}.
\end{align*}
The Thomson principle (\cite[Theorem 7.37]{BdH15}) then implies that
\begin{equation}
{\rm cap}_{N,m}(\xi^{x}_{N},\mathcal{X}_{N,m})\ge\frac{1}{\|\phi\|^{2}_{N,m}}\ge\frac{\epsilon_{N}}{NC_{\mathscr{G}}}.\label{eq:lem1-3}
\end{equation}
Combining \eqref{eq:lem1-1}, \eqref{eq:lem1-2}, and \eqref{eq:lem1-3},
we conclude that
\[
\mathbb{Q}^{N}_{\xi^{x}_{N}}\left[\mathcal{H}_{\mathcal{X}_{N,m}}>\mathcal{H}_{\mathcal{Y}_{N,m}}\right]\le C^{2}_{\mathscr{G}}\epsilon_{N}N\log N\xrightarrow{N\to\infty}0,
\]
via \eqref{eq:eN-cond}, which was what we wanted.
\end{proof}

Now, to study the restricted dynamics in $\Omega_{N,m}$ conditioned
on the event that $\mathcal{H}_{\mathcal{X}_{N,m}}<\mathcal{H}_{\mathcal{Y}_{N,m}}$,
it suffices to consider the (further) restricted dynamics $\{\widetilde{\eta}_{N}(t)\}_{t\ge0}$
in $\widetilde{\Omega}_{N,m}:=\mathcal{K}_{N,m}\cup\mathcal{X}_{N,m}$.
Let us denote the corresponding law on the path space starting from
$\eta$ as $\widetilde{\mathbb{Q}}^{N,m}_{\eta}$.
\begin{lem}
\label{lem2}For each fixed $T>0$,
\[
\lim_{m\to\infty}\limsup_{N\to\infty}\widetilde{\mathbb{Q}}^{N,m}_{\xi^{x}_{N}}\left[\mathcal{H}_{\mathcal{X}_{N,m}}\le T\right]=0.
\]
\end{lem}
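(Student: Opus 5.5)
Our plan is to compare the restricted process $\widetilde{\eta}_{N}(t)$ on $\widetilde{\Omega}_{N,m}=\mathcal{K}_{N,m}\cup\mathcal{X}_{N,m}$ with the limiting random walk ${\bf y}(t)$ on $\mathscr{G}$. Reaching $\mathcal{X}_{N,m}$ should mean that the condensate has travelled graph distance $m$ from $x$. In time $T$ this has vanishing probability as $m\to\infty$, because the rates $c_{yz}\alpha_{z}$ are uniformly bounded: the total jump rate of ${\bf y}$ is at most $\mathfrak{n}_{\mathscr{G}}c_{\sup}\alpha_{\sup}$. Rather than prove a process-level convergence, we will argue through the resolvent, which avoids tightness issues on $\widetilde{\Omega}_{N,m}$.

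\textbf{Step 1: reduction to a Laplace transform.} By Markov's inequality,
\[
\widetilde{\mathbb{Q}}^{N,m}_{\xi^{x}_{N}}[\mathcal{H}_{\mathcal{X}_{N,m}}\le T]\le e^{\lambda T}\,{\rm E}[e^{-\lambda\mathcal{H}_{\mathcal{X}_{N,m}}}]
\]
for a fixed $\lambda>0$. It therefore suffices to show
\[
\limsup_{m}\limsup_{N}u_{N,m}(\xi^{x}_{N})=0,\qquad u_{N,m}(\eta):={\rm E}^{\widetilde{\mathbb{Q}}^{N,m}_{\eta}}[e^{-\lambda\mathcal{H}_{\mathcal{X}_{N,m}}}].
\]
The function $u_{N,m}$ solves $(\lambda-\widetilde{\mathscr{L}}_{N,m})u=0$ on $\mathcal{K}_{N,m}$, with $u=1$ on $\mathcal{X}_{N,m}$. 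It takes values in $[0,1]$, and $\mathcal{K}_{N,m}$ is finite.

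\textbf{Step 2: the discrete equation on condensed states.} We repeat the test-measure computation of Section \ref{sec3.1} verbatim, with $\mu_{N,y}$ and $h_{N,y}$ for each $y\in\widehat{\mathscr{N}}_{x,m-1}$, applied to $u$ in place of $F_{N}$ and with $G_{N}\equiv0$. The support of $\mu_{N,y}h_{N,y}$ lies in $\bigcup_{z\in\mathscr{N}_{y}}\mathcal{A}^{yz}_{N}$, which is contained in $\widetilde{\Omega}_{N,m}$ except for transitions into $\mathcal{Y}_{N,m}$. These transitions are suppressed in the restricted dynamics, and they are exactly the terms already shown to be $O(\epsilon_{N}^{2}/i)$ and $O(\epsilon_{N}^{2}/(N-i))$ there. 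The computation then gives, with $v_{N}(y):=u_{N,m}(\xi^{y}_{N})$,
\[
\lambda v_{N}(y)-\mathfrak{L}^{(m)}v_{N}(y)=o_{N}(1)\quad\text{for }y\in\widehat{\mathscr{N}}_{x,m-1},\qquad v_{N}=1\text{ on }\mathscr{N}_{x,m}.
\]
Here $\mathfrak{L}^{(m)}$ is the generator $\mathfrak{L}$ restricted to $\widehat{\mathscr{N}}_{x,m}$, and the error is uniform because the constants depend only on $\mathfrak{n}_{\mathscr{G}},\alpha_{\sup},\alpha_{\inf},c_{\sup}$.

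\textbf{Step 3: passage to the limit in $N$.} The system in Step 2 is a finite linear system, and its Dirichlet problem has a unique solution by the maximum principle since $\lambda>0$. Taking $N\to\infty$ along subsequences, $v_{N}\to v^{(m)}$, where
\[
v^{(m)}(y)={\rm E}^{{\bf Q}_{y}}\big[e^{-\lambda H_{\mathscr{N}_{x,m}}}\big].
\]
The maximum principle bound $|v_{N}-v^{(m)}|\le\lambda^{-1}o_{N}(1)$ makes this quantitative.

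\textbf{Step 4: letting $m\to\infty$.} The walk ${\bf y}$ must make at least $m$ jumps to reach $\mathscr{N}_{x,m}$. Its jump times are dominated by a Poisson process of rate $R:=\mathfrak{n}_{\mathscr{G}}c_{\sup}\alpha_{\sup}$, so
\[
v^{(m)}(x)\le\Big(\frac{R}{R+\lambda}\Big)^{m}\to0.
\]

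The main obstacle is Step 2. The bulk states $\xi^{yz}_{i,N-i}$ with $y,z$ adjacent both inside $\widehat{\mathscr{N}}_{x,m}$ are shared by two test functions, and we must check that the Section \ref{sec3.1} cancellations (\eqref{eq:lFG-5}–\eqref{eq:lFG-8}) survive the restriction. In particular, removing jumps to $\mathcal{Y}_{N,m}$ changes only the diagonal terms that were already negligible.
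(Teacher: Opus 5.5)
Your proof is correct, and it takes a different route from the paper's. The paper passes to the trace of $\widetilde{\eta}_{N}$ on the condensed states $\mathcal{E}_{N}(\widehat{\mathscr{N}}_{x,m})$; freezing the clock can only make $\mathcal{H}_{\mathcal{X}_{N,m}}$ earlier. It computes the trace rates with the formula of \cite{BL10} and the one-dimensional escape estimate $\widetilde{\mathbb{Q}}^{N,m}_{\xi^{yz}_{N-1,1}}[\mathcal{H}_{\xi^{z}_{N}}<\mathcal{H}_{\xi^{y}_{N}}]\approx1/N$ along each edge line (cf.\ \cite{KS21}). This gives $\widehat{r}_{N}(\xi^{y}_{N},\xi^{z}_{N})=c_{yz}\alpha_{z}(1+o_{N}(1))$, and the paper then dominates $\mathcal{H}_{\mathcal{X}_{N,m}}$ from below by a sum of $m$ i.i.d.\ exponentials.

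You never look at the trace. You take the Laplace transform $u_{N,m}$ of the hitting time and plug it into the test-measure identity of Section~\ref{sec3.1}, once for each $y\in\widehat{\mathscr{N}}_{x,m-1}$; the test support lies in $\mathcal{K}_{N,m}$, where the equation holds. You then close with a discrete maximum principle and a Poisson bound for ${\bf y}$.

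The step you flag as the main obstacle is in fact harmless. Moves along the lines $\mathcal{A}^{yz}_{N}$ are never suppressed, so \eqref{eq:lFG-5}--\eqref{eq:lFG-8} are unchanged. Every off-line move, suppressed or not, is controlled by the same $C\epsilon_{N}^{2}/(N-i)$ and $C\epsilon_{N}^{2}/i$ bounds, using only $0\le u_{N,m}\le1$. Since each $y$ yields its own scalar identity, it also does not matter that bulk states are shared by two test measures.

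What each route buys:
\begin{itemize}
\item Yours is self-contained given Section~\ref{sec3.1} and gives the explicit bound $e^{\lambda T}\big(R/(R+\lambda)\big)^{m}$ for the $\limsup_{N}$.
\item Yours never has to control excursions of $\widetilde{\eta}_{N}$ from one edge line to another inside $\mathcal{K}_{N,m}$. These are possible through triangles of $\mathscr{G}$, and the paper's trace-rate formula tacitly neglects them.
\item Your $R=\mathfrak{n}_{\mathscr{G}}c_{\sup}\alpha_{\sup}$ is also the correct bound on the total jump rate.
\item The paper's route gives the more structural fact that the trace of the restricted dynamics on condensed states is asymptotically the random walk itself. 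It also gives a pathwise comparison rather than a bound on a Laplace transform.
\end{itemize}
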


\begin{proof}
Denote by $\widehat{\mathbb{Q}}^{N,m}_{\xi^{x}_{N}}$ the law of the
trace process $\{\widehat{\eta}_{N}(t)\}_{t\ge0}$ in $\mathcal{E}_{N}(\widehat{\mathscr{N}}_{x,m})$
obtained from $\{\widetilde{\eta}_{N}(t)\}_{t\ge0}$. Then, since
the trace process is obtained by freezing the clock if necessary,
\begin{equation}
\widetilde{\mathbb{Q}}^{N,m}_{\xi^{x}_{N}}\left[\mathcal{H}_{\mathcal{X}_{N,m}}\le T\right]\le\widehat{\mathbb{Q}}^{N,m}_{\xi^{x}_{N}}\left[\mathcal{H}_{\mathcal{X}_{N,m}}\le T\right].\label{eq:trace-change}
\end{equation}
For this new process, the jump rate function $\widehat{r}_{N}(\cdot,\cdot)$
can be calculated via \cite[Corollary 6.2]{BL10}:
\begin{align*}
\widehat{r}_{N}(\xi^{y}_{N},\xi^{z}_{N}) & =\sum_{\eta\in\widetilde{\Omega}_{N,m}}r_{N}(\xi^{y}_{N},\eta)\widetilde{\mathbb{Q}}^{N,m}_{\eta}\left[\mathcal{H}_{\xi^{z}_{N}}=\mathcal{H}_{\mathcal{E}_{N}(\widehat{\mathscr{N}}_{x,m})}\right]\\
 & =\begin{cases}
c_{yz}N\alpha_{z}\widetilde{\mathbb{Q}}^{N,m}_{\xi^{yz}_{N-1,1}}\left[\mathcal{H}_{\xi^{z}_{N}}<\mathcal{H}_{\xi^{y}_{N}}\right] & \text{if}\quad\{y,z\}\in\mathscr{E},\\
0 & \text{otherwise}.
\end{cases}
\end{align*}
Since $\mathcal{A}^{yz}_{N}$ is a one-dimensional subspace, the final
probability can be easily calculated as (cf. \cite[Lemma 4.10]{KS21})
\[
\widetilde{\mathbb{Q}}^{N,m}_{\xi^{yz}_{N-1,1}}\left[\mathcal{H}_{\xi^{z}_{N}}<\mathcal{H}_{\xi^{y}_{N}}\right]=\frac{1}{N}+O_{N}\left(d_{N}\log N\right).
\]
Therefore, by \eqref{eq:eN-cond},
\[
\widehat{r}_{N}(\xi^{y}_{N},\xi^{z}_{N})=\begin{cases}
c_{yz}\alpha_{z}(1+o_{N}(1)) & \text{if}\quad\{y,z\}\in\mathscr{E},\\
0 & \text{otherwise}.
\end{cases}
\]
This implies that, for sufficiently large $N$, $\mathcal{H}_{\mathcal{X}_{N,m}}$
is stochastically dominated from below by a random variable $\mathcal{H}_{1}+\cdots+\mathcal{H}_{m}$,
where $\mathcal{H}_{i}$, $i\ge1$, are i.i.d. exponential random
variables with rate $2c_{\sup}\alpha_{\sup}\in(0,\infty)$. Thus,
\[
\limsup_{N\to\infty}\widehat{\mathbb{Q}}^{N,m}_{\xi^{x}_{N}}\left[\mathcal{H}_{\mathcal{X}_{N,m}}\le T\right]\le\mathbb{Q}\left[\mathcal{H}_{1}+\cdots+\mathcal{H}_{m}\le T\right]\xrightarrow{m\to\infty}0.
\]
Via \eqref{eq:trace-change}, this proves the lemma.
\end{proof}

We are ready to prove Proposition \ref{p:inc2}. Via Lemma \ref{lem2},
take $m=m(T,\epsilon)\ge1$ such that
\[
\limsup_{N\to\infty}\widetilde{\mathbb{Q}}^{N,m}_{\xi^{x}_{N}}\left[\mathcal{H}_{\mathcal{X}_{N,m}}\le T\right]<\epsilon,
\]
and let $\mathcal{K}_{N}:=\mathcal{K}_{N,m}$. Then, $\mathcal{K}_{N}$
satisfies Condition $\mathfrak{K}$. Indeed, since
\[
\left\{ y\in\mathscr{V}:\xi^{y}_{N}\in\mathcal{K}_{N}\enspace\text{for some}\enspace N\ge1\right\} =\widehat{\mathscr{N}}_{x,m}\setminus\mathscr{N}_{x,m}=\widehat{\mathscr{N}}_{x,m-1},
\]
\eqref{eq:K1-1-inc} is obvious. Moreover, by Lemma \ref{lem1},
\begin{align*}
 & \limsup_{N\to\infty}\mathbb{Q}^{N}_{\xi^{x}_{N}}\left[\mathcal{H}_{\Omega_{N}\setminus\mathcal{K}_{N}}\le T\right]\\
 & \le\limsup_{N\to\infty}\mathbb{Q}^{N}_{\xi^{x}_{N}}\left[\mathcal{H}_{\Omega_{N}\setminus\mathcal{K}_{N}}\le T,\enskip\mathcal{H}_{\mathcal{X}_{N,m}}<\mathcal{H}_{\mathcal{Y}_{N,m}}\right]+\limsup_{N\to\infty}\mathbb{Q}^{N}_{\xi^{x}_{N}}\left[\mathcal{H}_{\mathcal{Y}_{N,m}}<\mathcal{H}_{\mathcal{X}_{N,m}}\right]\\
 & \le\limsup_{N\to\infty}\widetilde{\mathbb{Q}}^{N,m}_{\xi^{x}_{N}}\left[\mathcal{H}_{\mathcal{X}_{N,m}}\le T\right]<\epsilon.
\end{align*}
This proves \eqref{eq:K-inc}.

\section{\label{sec4}Metastability of Langevin dynamics}

In this section, we prove Theorem \ref{t:diff}. Throughout the section,
we assume that Assumptions \ref{assu:U} and \ref{assu:M} hold. Let
$\mathscr{L}_{\epsilon}:D(\mathscr{L}_{\epsilon})\subset C_{0}(\mathbb{R}^{d})\to C_{0}(\mathbb{R}^{d})$
denote the infinitesimal generator corresponding to $\{\bm{x}_{\epsilon}(t)\}_{t\ge0}$.
It is easy to obtain that for $u\in C^{2}(\mathbb{R}^{d})\cap C_{0}(\mathbb{R}^{d})\subset D(\mathscr{L}_{\epsilon})$,
\begin{equation}
\mathscr{L}_{\epsilon}u=-U'u'+\epsilon u''=\epsilon e^{U/\epsilon}\left[e^{-U/\epsilon}u'\right]',\label{eq:gen}
\end{equation}
where $U$ is given in \eqref{eq:U-def}. We also denote by $\mathfrak{L}$
the infinitesimal generator of the Markov chain $\{{\bf y}(t)\}_{t\ge0}$
defined by the jump rates \eqref{eq:rY}.

The following statement establishes condition $\widehat{\mathfrak{R}}$.
\begin{prop}
\label{p:R}For every $\lambda>0$, ${\bf g}\in C_{0}(\mathbb{Z})$,
and every lift $G\in C_{0}(\mathbb{R})$ satisfying
\[
G|_{\widehat{\mathcal{E}}^{n}}={\bf g}(n);\qquad n\in\mathbb{Z},
\]
the unique solution $F_{\epsilon}\in C_{0}(\mathbb{R})$ of the resolvent
equation 
\begin{equation}
(\lambda-\theta_{\epsilon}\mathscr{L}_{\epsilon})F_{\epsilon}=G,\label{eq:res-Lan}
\end{equation}
satisfies
\[
\lim_{\epsilon\to0}\sup_{x\in\mathcal{E}^{n}}\left|F_{\epsilon}(x)-{\bf f}(n)\right|=0,
\]
where ${\bf f}\in C_{0}(\mathbb{Z})$ solves $(\lambda-\mathfrak{L}){\bf f}={\bf g}$.
\end{prop}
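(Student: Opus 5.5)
We would prove Proposition \ref{p:R} by exploiting that in one dimension the resolvent equation is an ODE which can be analysed well by well. Write $F=F_\epsilon$. By \eqref{eq:gen}, \eqref{eq:res-Lan} reads $\lambda F-\theta_\epsilon\epsilon\, e^{U/\epsilon}(e^{-U/\epsilon}F')'=G$, and $\|F\|_\infty\le\lambda^{-1}\|G\|_\infty$ by the probabilistic representation \eqref{eq:pr}.

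The first step is flatness inside each well. On the interval $[\mathfrak{s}_{n-1}+r,\mathfrak{s}_n-r]$ for small fixed $r$, we would integrate the equation against $e^{-U/\epsilon}$. This gives $\epsilon e^{-U(x)/\epsilon}F'(x)=\epsilon e^{-U(y)/\epsilon}F'(y)+\theta_\epsilon^{-1}\int_y^x e^{-U/\epsilon}(\lambda F-G)$. Combined with the bound on $\|F\|_\infty$, this should show that $\sup|F'|$ on compact pieces of the well away from the saddles is $O(e^{-c/\epsilon})$, so that $\mathrm{osc}_{\overline{\mathcal E^n}}F\to0$ uniformly in $n$. For the uniformity we need Assumption \ref{assu:U}-(2),(3). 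Concretely, $F'$ at a point $x$ inside the well is controlled by $\theta_\epsilon^{-1}e^{(U(x)-U(\mathfrak m_n))/\epsilon}$ times a Laplace integral, plus a boundary flux. The boundary flux at the saddles is in turn bounded via the oscillation of $F$ and the capacity of the saddle.

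The second step is to derive the discrete equation. Set $f_\epsilon(n):=F(\mathfrak m_n)$ and integrate the equation over $(\mathfrak{s}_{n-1},\mathfrak{s}_n)$ against the weight $e^{-U/\epsilon}$. By the Laplace method, the left side equals $\sqrt{2\pi\epsilon}\,\nu_n e^{-U(\mathfrak m_n)/\epsilon}(\lambda f_\epsilon(n)-\mathbf g(n)+o(1))$, using the flatness from the first step and $G={\bf g}(n)$ near $\mathfrak m_n$. The right side equals $\theta_\epsilon\epsilon[e^{-U/\epsilon}F']_{\mathfrak s_{n-1}}^{\mathfrak s_n}$. The flux $\epsilon e^{-U(\mathfrak s_n)/\epsilon}F'(\mathfrak s_n)$ should be computed by the standard capacity argument on $[\mathfrak m_n,\mathfrak m_{n+1}]$. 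The quantity $e^{-U/\epsilon}F'$ is nearly constant across the saddle region, because the correction term is of order $\theta_\epsilon^{-1}\int e^{-U/\epsilon}$, which is negligible relative to the flux scale. Integrating $F'=Ce^{U/\epsilon}$ between the two minima then gives
\[
C\approx\frac{f_\epsilon(n+1)-f_\epsilon(n)}{\int_{\mathfrak m_n}^{\mathfrak m_{n+1}}e^{U/\epsilon}}\approx(f_\epsilon(n+1)-f_\epsilon(n))\,\omega_n\sqrt{2\pi/\epsilon}\;e^{-U(\mathfrak s_n)/\epsilon}.
\]
Dividing by the mass and using $\theta_\epsilon=e^{D/\epsilon}$ yields
\[
\lambda f_\epsilon(n)-\mathfrak L f_\epsilon(n)={\bf g}(n)+o_\epsilon(1).
\]
The rates are $\frac{\omega_n}{\nu_n}e^{-(\mathfrak h^+_n-D)/\epsilon}$, which tend to $r_{\bf y}$ by Assumption \ref{assu:M}-(2) uniformly in $n$; barriers exceeding $D+c$ contribute $O(e^{-c/\epsilon})$.

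The last step is to conclude as in the inclusion-process proof. The chain generated by $\mathfrak L$ has uniformly bounded rates, since by Assumption \ref{assu:U}-(3) the constants $\nu_n,\omega_n$ are bounded away from zero and infinity. The resolvent $(\lambda-\mathfrak L)^{-1}$ is therefore a contraction with norm $\lambda^{-1}$ in sup norm, so a uniform $o(1)$ error gives $\sup_n|f_\epsilon(n)-{\bf f}(n)|\to0$. Adding the oscillation bound from the first step then gives the claim on each $\mathcal E^n$.

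The main obstacle is to make all error terms \emph{uniform in $n\in\mathbb Z$}, which is needed because the discrete equation must hold with a sup-norm error. It is also delicate to justify that $e^{-U/\epsilon}F'$ is nearly constant across each saddle without circularity: the correction involves $F$ itself, but it is only controlled through $\|F\|_\infty$. I would first try crude a priori bounds via $\|F\|_\infty\le\lambda^{-1}\|G\|_\infty$ together with the explicit Laplace asymptotics, which are uniform thanks to the $C^3$ regularity and the uniform second-derivative bounds.
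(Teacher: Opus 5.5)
Your route is correct in its analytic core, but it is genuinely different from the paper's. The paper proves flatness probabilistically. It uses a maximum-principle bound on expected hitting times of minima (Lemma~\ref{l:hit1}) and the explicit equilibrium potential, which shows that $\mathfrak m_n$ is the first minimum reached (Corollary~\ref{c:hit2}). These combine in Lemma~\ref{l:flat_ext} to give flatness on $\mathcal W^n_\epsilon$, which reaches to within $O(\delta)$ of the saddles. The paper then tests the equation against $\phi^n_\epsilon$, built from the approximate equilibrium potential $p^n_\epsilon$ of Lemma~\ref{lem:test0}. It integrates by parts on a $C^\infty_c$ core approximant $\hat F_\epsilon$ and reads off the fluxes at $\mathfrak s_n\pm2\delta/\sqrt{-U''(\mathfrak s_n)}$ from flatness on both sides of each saddle.

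You instead use the exact one-dimensional flux $\mathcal J:=\theta_\epsilon\epsilon\, e^{-U/\epsilon}F_\epsilon'$, which satisfies $\mathcal J'=e^{-U/\epsilon}(\lambda F_\epsilon-G)$. Integrate $F_\epsilon'=\mathcal J e^{U/\epsilon}/(\theta_\epsilon\epsilon)$ from $\mathfrak m_n$ to $\mathfrak m_{n+1}$ and use that $U$ is monotone on each slope. Then the deviation $\mathcal J-\mathcal J(\mathfrak s_n)$ changes $F_\epsilon(\mathfrak m_{n+1})-F_\epsilon(\mathfrak m_n)$ by at most $C\|G\|_\infty(\mathfrak m_{n+1}-\mathfrak m_n)^2/(\theta_\epsilon\epsilon)$. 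So the a priori bound $\|F_\epsilon\|_\infty\le\lambda^{-1}\|G\|_\infty$ really does suffice, and there is no circularity. Since you use the exact values $F_\epsilon(\mathfrak m_n)$, you need flatness only near the minima, never near the saddles. Your argument is more elementary and self-contained. The paper's test-function and integration-by-parts scheme is the part meant to extend beyond one dimension.

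Two details need fixing:
\begin{enumerate}
\item Justify that $F_\epsilon$ is $C^2$. This holds for a regular one-dimensional diffusion; otherwise use the paper's core approximation.
\item For flatness on all of $\overline{\mathcal E}^n$, do not bound $\mathcal J$ through the whole-well Laplace integral, which only helps where $U(x)<U(\mathfrak m_n)+D$. Integrate $\mathcal J'$ from the nearer saddle instead. This gives $|F_\epsilon'(x)|\le C_n\bigl(\epsilon^{-1/2}e^{(U(x)-U(\mathfrak s_n))/\epsilon}+(\epsilon\theta_\epsilon)^{-1}\bigr)$ for $x\in[\mathfrak m_n,\mathfrak s_n)$.
\end{enumerate}

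The step that does not go through as written is the conclusion. You require all errors to be uniform in $n\in\mathbb Z$, and you claim this follows from the $C^3$ regularity and the bounds on $U''$ at critical points. It does not:
\begin{enumerate}
\item $b\in C^3$ gives no global control of $U'''$.
\item Assumption~\ref{assu:U}-(2) bounds the well widths $\mathfrak m_{n+1}-\mathfrak m_n$ only from below.
\item Nothing prevents, for example, $U(\mathfrak s_n)-\max_{\overline{\mathcal E}^n}U\to0$ along $n$.
\end{enumerate}
So your Laplace, flux and flatness errors are only $o_\epsilon(1)$ for each fixed $n$; note the $n$-dependent constants above.

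Uniformity is not needed, however, because the statement is for each fixed $n$. Put $\mathbf f_\epsilon(n):=F_\epsilon(\mathfrak m_n)$ and $e_\epsilon:=(\lambda-\mathfrak L)\mathbf f_\epsilon-\mathbf g$. Then $\sup_{\epsilon,n}|e_\epsilon(n)|<\infty$, since $F_\epsilon$ and the rates are bounded, and $e_\epsilon(n)\to0$ for every $n$. Bounded solutions of $(\lambda-\mathfrak L)h=e$ are unique by the maximum principle. Hence $\mathbf f_\epsilon(n)-\mathbf f(n)={\rm E}^{\mathbf Q_n}\bigl[\int_0^\infty e^{-\lambda t}e_\epsilon(\mathbf y(t))\,{\rm d}t\bigr]$, which tends to $0$ by dominated convergence. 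The paper's own final step, invoking boundedness of $(\lambda-\mathfrak L)^{-1}$, also starts from $n$-wise convergence only, and this bounded-pointwise argument is what actually closes it there too.
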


The next proposition is condition $\mathfrak{K}$ for the accelerated
process $X_{\epsilon}(t):=\bm{x}_{\epsilon}(\theta_{\epsilon}t)$.
\begin{prop}
\label{p:K} For every $n\in\mathbb{Z}$ and $T,\eta>0$, there exists
a compact set $\mathcal{K}=\mathcal{K}(n,T,\eta)\subset\mathbb{R}$
such that 
\begin{equation}
\left\{ k\in\mathbb{Z}:\mathcal{E}^{k}\cap\mathcal{K}\ne\emptyset\right\} \enspace\text{is a finite set},\label{eq:K1-1-Lan}
\end{equation}
and
\[
\limsup_{\epsilon\to0}\sup_{x\in\mathcal{E}^{n}}\mathbb{Q}^{\epsilon}_{x}\left[\mathcal{H}_{\mathcal{K}^{c}}\le T\right]<\eta.
\]
\end{prop}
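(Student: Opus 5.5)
The plan is to take $\mathcal{K}=[\mathfrak{s}_{n-M},\mathfrak{s}_{n+M}]$ for a large integer $M=M(T,\eta)$. This set is compact and meets only the finitely many wells $\mathcal{E}^{k}$ with $n-M<k\le n+M$, so \eqref{eq:K1-1-Lan} is immediate. It remains to show that, starting from $\mathcal{E}^{n}$, the accelerated process reaches $\mathfrak{s}_{n\pm M}$ before time $T$ with probability less than $\eta$ once $M$ is large. Since we are in one dimension, exiting $\mathcal{K}$ requires crossing in order the saddles $\mathfrak{s}_{n},\mathfrak{s}_{n+1},\dots,\mathfrak{s}_{n+M-1}$ (or the analogous saddles on the left). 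So the exit time dominates a sum of $M$ successive saddle-crossing times. The first step is therefore a uniform lower bound on a single crossing: we want
\[
\sup_{k\in\mathbb{Z}}\limsup_{\epsilon\to0}\sup_{x\in[\mathfrak{s}_{k-1},\mathfrak{s}_{k}]}\mathbb{Q}^{\epsilon}_{x}\left[\mathcal{H}_{\{\mathfrak{s}_{k-2},\mathfrak{s}_{k+1}\}}\le t\right]\le Ct
\]
for small $t$, uniformly in $k$. The essential point is uniformity in $k$, and here Assumption \ref{assu:U}-(2),(3) will be used.

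To get this bound I would use the explicit one-dimensional potential theory. The scale function is $s'(x)=e^{U(x)/\epsilon}$, and we have Kramers-type formulas for the mean exit time and for capacities. To pass from $[\mathfrak{s}_{k-1},\mathfrak{s}_{k}]$ to a neighbouring saddle-pair exit, the process must first travel from a point near $\mathfrak{s}_{k}$ (at worst) to $\mathfrak{s}_{k+1}$. Doing so means climbing from $\mathfrak{m}_{k+1}$ over a barrier of height at least $D$, and the process visits $\mathfrak{m}_{k+1}$ first with probability close to one. More robustly, I would bound $\mathbb{Q}^{\epsilon}_{y}[\mathcal{H}_{\mathfrak{s}_{k+1}}\le t]$ by $e^{\lambda t}\,\mathbb{E}_{y}[e^{-\lambda\mathcal{H}_{\mathfrak{s}_{k+1}}}]$. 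The Laplace transform solves $(\lambda-\theta_{\epsilon}\mathscr{L}_{\epsilon})u=0$ on $(\mathfrak{s}_{k-1},\mathfrak{s}_{k+1})$ with boundary value $1$ at $\mathfrak{s}_{k+1}$. The resolvent analysis of Proposition \ref{p:R}, applied locally on the pair of wells $\{k,k+1\}$, identifies its limit as the corresponding hitting Laplace transform of $\mathbf{y}$. Its constants depend only on $\nu_{k},\omega_{k}$, the local $C^{3}$ bounds, $\delta$ and $c$, so they are uniform in $k$. Uniform jump rates $\le R:=\sup r_{\mathbf{y}}<\infty$ then yield the desired estimate, with the crossing time stochastically dominating an $\mathrm{Exp}(2R)$ variable up to $o_{\epsilon}(1)$.

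Next, I would chain the crossings using the strong Markov property at the successive hitting times of the saddles $\mathfrak{s}_{n+j}$ (respectively $\mathfrak{s}_{n-j}$). Each such time starts the process at a point where the uniform bound applies. The hitting time of $\mathcal{K}^{c}$ then stochastically dominates, in the limit, a sum of $\lfloor M/2\rfloor$ i.i.d.\ $\mathrm{Exp}(2R)$ variables, roughly one for every two saddles. Arguing as in Lemma \ref{lem2}, this gives
\[
\limsup_{\epsilon\to0}\sup_{x\in\mathcal{E}^{n}}\mathbb{Q}^{\epsilon}_{x}[\mathcal{H}_{\mathcal{K}^{c}}\le T]\le\mathbb{P}\Big[\textstyle\sum_{i\le M/2}\mathcal{H}_{i}\le T\Big]\xrightarrow{M\to\infty}0,
\]
and we choose $M$ so that the right-hand side is below $\eta$.

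The main obstacle is the uniformity in $k$ of the single-crossing bound. There is also a risk of exchanging $\limsup_{\epsilon}$ with a supremum over infinitely many wells, but in fact only the finitely many ($2M$) wells inside $\mathcal{K}$ are ever used. The per-well error terms therefore only need to vanish for each fixed $k$, and only the exponential rate bound $2R$ must be uniform. That bound follows from the explicit rates \eqref{eq:rY} together with Assumption \ref{assu:U}-(3). If the local resolvent argument proves awkward, I would instead bound the crossing directly: use the capacity $\mathrm{cap}(\mathfrak{m}_{k+1},\mathfrak{s}_{k+1})\approx\omega e^{-D/\epsilon}$ and the Markov inequality on the occupation time near $\mathfrak{m}_{k+1}$.
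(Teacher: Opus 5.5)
Your overall architecture is sound and genuinely different from the paper's. The paper never estimates a single-well exit time. It picks $\mathfrak{k}$ from the limiting chain so that ${\bf Q}_n[H_{\{-\mathfrak{k},\mathfrak{k}\}}<T]<\eta$, takes $\mathcal{K}=[\mathfrak{m}_{-\mathfrak{k}}-2r_0,\mathfrak{m}_{\mathfrak{k}}+2r_0]$, and notes that leaving $\mathcal{K}$ forces a visit to $\mathcal{E}^{\pm\mathfrak{k}}$. It controls that hitting time by coupling with a diffusion whose drift is altered outside $\mathcal{K}$, so only the wells $-\mathfrak{k},\dots,\mathfrak{k}$ remain. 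It then re-applies Theorem \ref{t:main2} (via Proposition \ref{p:R-ref}) to this auxiliary process and transfers the escape probability from the reflected chain using the closedness of $\{H_j\le T\}$ in Skorokhod space. That route is soft and gives the sharp bound, namely the limit chain's own escape probability. The cost is running the whole machinery on an auxiliary process on the full line. Your route needs neither the truncation nor the main theorem: you take $\mathcal{K}=[\mathfrak{s}_{n-M},\mathfrak{s}_{n+M}]$ and chain at least $\lfloor M/2\rfloor$ steps by the strong Markov property, each asymptotically no faster than an exponential with a $k$-independent rate. It yields only a worst-case-rate bound. More importantly, it rests entirely on a quantitative lower-tail estimate for one well at the sharp scale $\theta_\epsilon=e^{D/\epsilon}$, which nothing in the paper provides.

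That estimate is where your sketch is not yet a proof. Proposition \ref{p:R} does not yield it. It concerns the whole-line resolvent with lifts constant on the wells, whereas $x\mapsto{\rm E}^{\mathbb{Q}^\epsilon_x}[e^{-\lambda\mathcal{H}}]$ solves a Dirichlet problem. Also, with the target at the saddle $\mathfrak{s}_{k+1}$ the limit is not a hitting transform of $\mathbf{y}$. From the bottom of a well the saddle point is hit at twice the crossing rate $\omega/\nu$, because the process at the saddle falls back with probability $1/2$. So your $2R$ should be $4R$, which is harmless.

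A direct way to close the gap:
\begin{enumerate}
\item By continuity of paths (surely, not merely with probability close to one), an exit from $(\mathfrak{s}_{k-2},\mathfrak{s}_{k+1})$ started in $[\mathfrak{s}_{k-1},\mathfrak{s}_k]$ is preceded by a visit to some $\mathfrak{m}_j$, $j\in\{k-1,k,k+1\}$. After that visit the whole well $(\mathfrak{s}_{j-1},\mathfrak{s}_j)$ must be exited.
\item For that exit time $\tau_j$, set $q(t):=\mathbb{Q}^\epsilon_{\mathfrak{m}_j}[\tau_j\le t]$. The maximum-principle argument of Lemma \ref{l:hit1} shows that from any point of the well the process hits $\{\mathfrak{s}_{j-1},\mathfrak{m}_j,\mathfrak{s}_j\}$ within time $s$ with probability $1-o_\epsilon(1)$.
\item Iterating the Markov property over windows of length $t+s$ gives ${\rm E}^{\mathbb{Q}^\epsilon_{\mathfrak{m}_j}}[\tau_j]\le (t+s)/(q(t)-o_\epsilon(1))$. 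Hence $\limsup_{\epsilon\to0}q(t)\le t/\liminf_{\epsilon\to0}{\rm E}^{\mathbb{Q}^\epsilon_{\mathfrak{m}_j}}[\tau_j]$.
\item In one dimension the mean exit time is an explicit double integral. The Laplace method bounds its $\liminf$ below by $\nu_j/(2\omega_{j-1}+2\omega_j)$, uniformly in $j$ by Assumption \ref{assu:U}-(3).
\end{enumerate}
With this single-well bound your chaining goes through; a binomial count of steps shorter than $t$ suffices. As you observe, only the finitely many wells in $\mathcal{K}$ enter, so the $o_\epsilon(1)$ terms need not be uniform in $k$.
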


The next proposition is the mixing condition $\mathfrak{M}$.
\begin{prop}
\label{p:M}For every $n\in\mathbb{Z}$ and any $(x_{\epsilon})_{\epsilon>0}$
in $\mathcal{E}^{n}$,
\[
\lim_{\epsilon\to0}\mathbb{Q}^{\epsilon}_{x_{\epsilon}}\left[\mathcal{H}_{\mathfrak{m}_{n}}>\mathcal{H}_{\mathcal{E}\setminus\mathcal{E}^{n}}\right]=0.
\]
\end{prop}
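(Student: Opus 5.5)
The statement to prove is Proposition \ref{p:M}: starting inside $\mathcal{E}^{n}$, the process reaches $\mathfrak{m}_{n}$ before reaching any other well. The plan is to reduce this to an explicit one-dimensional exit problem. By continuity of paths, to reach $\mathcal{E}\setminus\mathcal{E}^{n}$ from $x_{\epsilon}\in\mathcal{E}^{n}$ the process must first leave the interval $I_{n}:=(\mathfrak{s}_{n-1},\mathfrak{s}_{n})$, or at least reach one of its endpoints. Indeed $\mathcal{E}^{n-1}\subset(-\infty,\mathfrak{s}_{n-1})$ and $\mathcal{E}^{n+1}\subset(\mathfrak{s}_{n},\infty)$. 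It therefore suffices to show
\[
\lim_{\epsilon\to0}\sup_{x\in\mathcal{E}^{n}}\mathbb{Q}^{\epsilon}_{x}\left[\mathcal{H}_{\{\mathfrak{s}_{n-1},\mathfrak{s}_{n}\}}<\mathcal{H}_{\mathfrak{m}_{n}}\right]=0 .
\]
The time acceleration by $\theta_{\epsilon}$ does not affect hitting order, so we may work with $\bm{x}_{\epsilon}$ directly.

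By symmetry, take $x\in[\mathfrak{m}_{n},\mathfrak{m}_{n}+r_{0})$; the case $x<\mathfrak{m}_{n}$ is identical. Before hitting $\mathfrak{m}_{n}$ the process stays to the right of $\mathfrak{m}_{n}$. The relevant probability is then $\mathbb{Q}^{\epsilon}_{x}[\mathcal{H}_{\mathfrak{s}_{n}}<\mathcal{H}_{\mathfrak{m}_{n}}]$, a two-sided exit probability for a one-dimensional diffusion. By \eqref{eq:gen} it is given exactly by the scale function:
\[
\mathbb{Q}^{\epsilon}_{x}\left[\mathcal{H}_{\mathfrak{s}_{n}}<\mathcal{H}_{\mathfrak{m}_{n}}\right]=\frac{\int_{\mathfrak{m}_{n}}^{x}e^{U(y)/\epsilon}\,{\rm d}y}{\int_{\mathfrak{m}_{n}}^{\mathfrak{s}_{n}}e^{U(y)/\epsilon}\,{\rm d}y}.
\]
To justify this formula, note that $u(y)=\int_{\mathfrak{m}_{n}}^{y}e^{U/\epsilon}$ satisfies $\mathscr{L}_{\epsilon}u=0$. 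Apply optional stopping to $u(\bm{x}_{\epsilon}(t\wedge\tau))$, where $\tau$ is the exit time of the bounded interval $(\mathfrak{m}_{n},\mathfrak{s}_{n})$. The stopping time $\tau$ is a.s. finite because the diffusion coefficient is nondegenerate on a bounded interval.

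Next, estimate the ratio. Since $U$ is increasing on $(\mathfrak{m}_{n},\mathfrak{s}_{n})$, the numerator is at most $r_{0}\,e^{U(\mathfrak{m}_{n}+r_{0})/\epsilon}$. The denominator is at least $\int_{\mathfrak{s}_{n}-r_{0}}^{\mathfrak{s}_{n}}e^{U/\epsilon}\ge r_{0}\,e^{U(\mathfrak{s}_{n}-r_{0})/\epsilon}$. Our choice $\mathfrak{m}_{n}+3r_{0}<\mathfrak{s}_{n}$ and strict monotonicity give $U(\mathfrak{s}_{n}-r_{0})-U(\mathfrak{m}_{n}+r_{0})=:a_{n}>0$. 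The ratio is thus bounded by $e^{-a_{n}/\epsilon}\to0$, uniformly in $x\in\mathcal{E}^{n}$ and hence along any sequence $x_{\epsilon}$. The left case produces an analogous constant $a_{n}'>0$.

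The only step needing care is the reduction in the first paragraph. If $\mathcal{H}_{\mathcal{E}\setminus\mathcal{E}^{n}}<\mathcal{H}_{\mathfrak{m}_{n}}$, then by path continuity the process hits $\mathfrak{s}_{n-1}$ or $\mathfrak{s}_{n}$ before $\mathfrak{m}_{n}$. Starting on a given side of $\mathfrak{m}_{n}$, it can only reach the saddle on that same side without crossing $\mathfrak{m}_{n}$. This reduces the event to the one-sided exit probability above. No uniform-in-$n$ estimate is needed, since $n$ is fixed. I expect no real obstacle here; the explicit scale-function formula makes the bound elementary.
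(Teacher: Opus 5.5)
Your proof is correct, and it takes a genuinely different and much more elementary route than the paper. The paper splits the event at an arbitrary fixed time $t>0$:
\[
\mathbb{Q}^{\epsilon}_{x_{\epsilon}}\bigl[\mathcal{H}_{\mathfrak{m}_{n}}>\mathcal{H}_{\breve{\mathcal{E}}^{n}}\bigr]\le\mathbb{Q}^{\epsilon}_{x_{\epsilon}}\bigl[\mathcal{H}_{\breve{\mathcal{E}}^{n}}<t\bigr]+\mathbb{Q}^{\epsilon}_{x_{\epsilon}}\Bigl[\mathcal{H}_{\mathfrak{m}_{n}}>\frac{1}{\epsilon^{2}\theta_{\epsilon}}\Bigr].
\]
The second term is controlled by Proposition~\ref{p:hit}, which combines a maximum-principle bound on expected hitting times of $\mathcal{M}_{0}$ with the equilibrium-potential estimate of Lemma~\ref{l:hit2}. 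The first term is controlled by the non-instantaneity condition $\mathfrak{N}$, obtained from the abstract machinery:
\begin{itemize}
\item Propositions~\ref{p:R} and \ref{p:K} give $\widehat{\mathfrak{R}}$, hence $\mathfrak{R}$ by Lemma~\ref{lem:2.13}, and $\mathfrak{K}$.
\item These yield $\mathfrak{D}$ and $\mathfrak{K2}$ via Lemmas~\ref{lem2.10}, \ref{lem2.2}, and \ref{lem2.11}.
\item Lemma~\ref{lem2.3} then gives $\mathfrak{N}$.
\end{itemize}

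You instead use path continuity to reduce the event to reaching the saddle on the starting side before $\mathfrak{m}_{n}$. You then bound that probability with the explicit scale function. This is essentially the computation of Lemma~\ref{l:hit2}, but with the far endpoint at $\mathfrak{s}_{n}$ (resp.\ $\mathfrak{s}_{n-1}$) rather than at the next minimum. That is the right choice, since entering $\mathcal{E}^{n\pm1}$ does not require visiting $\mathfrak{m}_{n\pm1}$.

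Your route buys a self-contained argument with an explicit rate $e^{-\min\{a_{n},a_{n}'\}/\epsilon}$. It does not depend on Propositions~\ref{p:R} and \ref{p:K}; the latter's proof passes through the truncated diffusion and Theorem~\ref{t:main2}.

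The paper's route never uses a closed-form hitting probability. Its decomposition into ``no fast inter-valley transition'' plus ``fast relaxation to the minimum'' is the form of the argument one would need beyond one dimension, where no explicit scale function is available. It also packages the stronger, time-quantitative attractor statement of Proposition~\ref{p:hit}.
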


\begin{proof}[Proof of Theorem \ref{t:diff}]
 Proposition \ref{p:M} is the first assertion of the theorem. Proposition
\ref{p:R} yields condition $\widehat{\mathfrak{R}}$ for the generators
$\theta_{\epsilon}\mathscr{L}_{\epsilon}$ and $\mathfrak{L}$. Moreover,
Proposition \ref{p:K} establishes condition $\mathfrak{K}$. It therefore
follows from Theorem \ref{t:main2} that condition $\mathfrak{C}$
and \eqref{eq:D-Delta} hold, and they are the second and third assertions
of the theorem.
\end{proof}

\subsection{Hitting time}

For $a<b$, define 
\[
\varphi^{a,b}_{\epsilon}(x):={\rm E}^{\mathbb{Q}^{\epsilon}_{x}}\left[\mathcal{H}_{\{a,b\}}\right];\qquad x\in[a,b].
\]
It is well known that $\varphi^{a,b}_{\epsilon}$ solves the boundary-value
problem
\[
\begin{cases}
\theta_{\epsilon}\mathscr{L}_{\epsilon}\varphi^{a,b}_{\epsilon}(x)=-1 & \text{if}\quad a<x<b,\\
\varphi^{a,b}_{\epsilon}(a)=\varphi^{a,b}_{\epsilon}(b)=0.
\end{cases}
\]

\begin{lem}
\label{l:hit1}For every $n\in\mathbb{Z}$,
\[
\sup_{\mathfrak{m}_{n}\le x\le\mathfrak{m}_{n+1}}{\rm E}^{\mathbb{Q}^{\epsilon}_{x}}\left[\mathcal{H}_{\{\mathfrak{m}_{n},\mathfrak{m}_{n+1}\}}\right]\le\frac{(\mathfrak{m}_{n+1}-\mathfrak{m}_{n})^{2}}{2\epsilon\theta_{\epsilon}}.
\]
\end{lem}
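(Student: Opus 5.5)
We compare the expected hitting time with an explicit quadratic supersolution centered at the saddle point $\mathfrak{s}_{n}$, and conclude with Dynkin's formula.

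\textbf{Setup and sign of the drift.} Write $a:=\mathfrak{m}_{n}$, $b:=\mathfrak{m}_{n+1}$, $s:=\mathfrak{s}_{n}$, so that $a<s<b$ and these are consecutive critical points of $U$. There is no critical point in $(a,s)$ or in $(s,b)$, and $a$ is a local minimum while $s$ is a local maximum. Hence $U'>0$ on $(a,s)$ and $U'<0$ on $(s,b)$. Consequently
\[
U'(x)\,(x-s)\le0\qquad\text{for all}\quad x\in[a,b].
\]
In words, the drift $b=-U'$ pushes the process away from $s$, toward the endpoints $a$ and $b$.

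\textbf{The supersolution.} Define
\[
\psi(x):=\frac{(b-a)^{2}-(x-s)^{2}}{2\epsilon\theta_{\epsilon}},\qquad x\in[a,b].
\]
Then $\psi'(x)=-(x-s)/(\epsilon\theta_{\epsilon})$ and $\psi''=-1/(\epsilon\theta_{\epsilon})$. By \eqref{eq:gen},
\[
\theta_{\epsilon}\mathscr{L}_{\epsilon}\psi(x)=\frac{U'(x)(x-s)}{\epsilon}-1\le-1\qquad\text{on}\quad(a,b).
\]
Moreover $\psi\ge0$ on $[a,b]$, since $|x-s|\le b-a$ there. We also have $\psi\le(b-a)^{2}/(2\epsilon\theta_{\epsilon})$ everywhere on $[a,b]$.

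\textbf{Comparison via Dynkin's formula.} Write $\mathcal{H}:=\mathcal{H}_{\{a,b\}}$ and fix $x\in[a,b]$. Extend $\psi$ to a $C^{2}$ compactly supported function that agrees with it on $[a,b]$. Since the process stays in $[a,b]$ up to time $\mathcal{H}$, Dynkin's formula for the accelerated process applies up to $t\wedge\mathcal{H}$ and gives
\[
{\rm E}^{\mathbb{Q}^{\epsilon}_{x}}\big[\psi(X_{\epsilon}(t\wedge\mathcal{H}))\big]=\psi(x)+{\rm E}^{\mathbb{Q}^{\epsilon}_{x}}\Big[\int_{0}^{t\wedge\mathcal{H}}\theta_{\epsilon}\mathscr{L}_{\epsilon}\psi(X_{\epsilon}(u))\,{\rm d}u\Big]\le\psi(x)-{\rm E}^{\mathbb{Q}^{\epsilon}_{x}}[t\wedge\mathcal{H}].
\]
The left-hand side is nonnegative because $\psi\ge0$ on $[a,b]$. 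Therefore
\[
{\rm E}^{\mathbb{Q}^{\epsilon}_{x}}[t\wedge\mathcal{H}]\le\psi(x)\le\frac{(b-a)^{2}}{2\epsilon\theta_{\epsilon}}.
\]
Letting $t\to\infty$ by monotone convergence yields the claim, uniformly in $x\in[a,b]$.

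\textbf{Main obstacle.} The only real difficulty is choosing the right test function. The naive drift-free candidate $(x-a)(b-x)/(2\epsilon\theta_{\epsilon})$ fails, because its gradient changes sign at the midpoint $(a+b)/2$ rather than at the saddle $s$. Centering the parabola at $s$ aligns the sign of $\psi'$ with the drift, so the drift term can only help. One small technical point is justifying Dynkin's formula for the non-$C_{0}$ function $\psi$. The localization to the stopped process on the compact interval $[a,b]$ handles this routinely.
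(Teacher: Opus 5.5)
Your proof is correct and follows the paper's approach: the key step is the same quadratic barrier centred at the saddle $\mathfrak{s}_{n}$, for which the sign condition $(x-\mathfrak{s}_{n})U'(x)\le0$ gives $\theta_{\epsilon}\mathscr{L}_{\epsilon}\psi\le-1$ (the paper uses $R^{2}$ with $R=\max\{\mathfrak{s}_{n}-\mathfrak{m}_{n},\mathfrak{m}_{n+1}-\mathfrak{s}_{n}\}$ instead of your $(\mathfrak{m}_{n+1}-\mathfrak{m}_{n})^{2}$, which changes nothing). The only difference is the final comparison: the paper uses the boundary-value problem for $\varphi^{\mathfrak{m}_{n},\mathfrak{m}_{n+1}}_{\epsilon}$ and the maximum principle, whereas you use Dynkin's formula up to $t\wedge\mathcal{H}$ plus monotone convergence, which is slightly more self-contained because it does not assume regularity or finiteness of the expected hitting time in advance.
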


\begin{proof}
Since $\mathfrak{s}_{n}$ is the unique local maximum in $[\mathfrak{m}_{n},\mathfrak{m}_{n+1}]$,
we have
\[
U'(x)>0\qquad\text{for}\quad\mathfrak{m}_{n}<x<\mathfrak{s}_{n},\qquad U'(x)<0\qquad\text{for}\quad\mathfrak{s}_{n}<x<\mathfrak{m}_{n+1}.
\]
Consequently,
\[
(x-\mathfrak{s}_{n})U'(x)\le0\qquad\text{for}\quad\mathfrak{m}_{n}\le x\le\mathfrak{m}_{n+1}.
\]
Define
\[
w_{\epsilon}(x):=\frac{R^{2}-(x-\mathfrak{s}_{n})^{2}}{2\epsilon\theta_{\epsilon}};\qquad x\in[\mathfrak{m}_{n},\mathfrak{m}_{n+1}],
\]
where $R=\max\{\mathfrak{s}_{n}-\mathfrak{m}_{n},\mathfrak{m}_{n+1}-\mathfrak{s}_{n}\}$.
Then,
\[
\varphi^{\mathfrak{m}_{n},\mathfrak{m}_{n+1}}_{\epsilon}(\mathfrak{m}_{n})=0\le w_{\epsilon}(\mathfrak{m}_{n})\qquad\text{and}\qquad\varphi^{\mathfrak{m}_{n},\mathfrak{m}_{n+1}}_{\epsilon}(\mathfrak{m}_{n+1})=0\le w_{\epsilon}(\mathfrak{m}_{n+1}).
\]
Moreover, according to \eqref{eq:gen},
\[
\theta_{\epsilon}\mathscr{L}_{\epsilon}w_{\epsilon}(x)=\frac{(x-\mathfrak{s}_{n})U'(x)}{\epsilon}-1\le-1,
\]
and hence
\[
\theta_{\epsilon}\mathscr{L}_{\epsilon}(\varphi^{\mathfrak{m}_{n},\mathfrak{m}_{n+1}}_{\epsilon}-w_{\epsilon})\ge0.
\]
The maximum principle therefore yields
\[
\varphi^{\mathfrak{m}_{n},\mathfrak{m}_{n+1}}_{\epsilon}(x)\le w_{\epsilon}(x)\qquad\text{for}\quad x\in[\mathfrak{m}_{n},\mathfrak{m}_{n+1}].
\]
Finally,
\[
\sup_{\mathfrak{m}_{n}\le x\le\mathfrak{m}_{n+1}}w_{\epsilon}(x)=\sup_{\mathfrak{m}_{n}\le x\le\mathfrak{m}_{n+1}}\frac{R^{2}-(x-\mathfrak{s}_{n})^{2}}{2\epsilon}\le\frac{(\mathfrak{m}_{n+1}-\mathfrak{m}_{n})^{2}}{2\epsilon},
\]
thus the last two displays complete the proof.
\end{proof}

\begin{cor}
\label{c:hit1}For every $a<b$ and $C>0$,
\[
\limsup_{\epsilon\to0}\sup_{a\le x\le b}\mathbb{Q}^{\epsilon}_{x}\left[\mathcal{H}_{\mathcal{M}_{0}}>\frac{C}{\epsilon^{2}\theta_{\epsilon}}\right]=0.
\]
\end{cor}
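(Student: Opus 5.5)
Given $a<b$, I will cover $[a,b]$ by finitely many intervals $[\mathfrak{m}_n,\mathfrak{m}_{n+1}]$ and then combine Lemma \ref{l:hit1} with the Markov inequality. Assumption \ref{assu:U} gives $\mathfrak{m}_n\to\pm\infty$ as $n\to\pm\infty$. Hence there are integers $n_-<n_+$ with $\mathfrak{m}_{n_-}\le a$ and $\mathfrak{m}_{n_+}\ge b$, so
\[
[a,b]\subset\bigcup_{n=n_-}^{n_+-1}[\mathfrak{m}_n,\mathfrak{m}_{n+1}].
\]
This is a finite union, and its number of pieces depends only on $a,b$, not on $\epsilon$.

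Fix $x\in[a,b]$ and choose $n$ with $x\in[\mathfrak{m}_n,\mathfrak{m}_{n+1}]$. Both endpoints lie in $\mathcal{M}_0$, so $\mathcal{H}_{\mathcal{M}_0}\le\mathcal{H}_{\{\mathfrak{m}_n,\mathfrak{m}_{n+1}\}}$ pathwise. The Markov inequality and Lemma \ref{l:hit1} then give
\[
\mathbb{Q}^{\epsilon}_{x}\Big[\mathcal{H}_{\mathcal{M}_{0}}>\frac{C}{\epsilon^{2}\theta_{\epsilon}}\Big]\le\frac{\epsilon^{2}\theta_{\epsilon}}{C}\,{\rm E}^{\mathbb{Q}^{\epsilon}_{x}}\big[\mathcal{H}_{\{\mathfrak{m}_{n},\mathfrak{m}_{n+1}\}}\big]\le\frac{\epsilon\,(\mathfrak{m}_{n+1}-\mathfrak{m}_{n})^{2}}{2C}.
\]
The factor $\theta_\epsilon$ cancels exactly. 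Taking the supremum over $x\in[a,b]$ bounds the left-hand side by $\epsilon L^2/(2C)$, where $L:=\max_{n_-\le n<n_+}(\mathfrak{m}_{n+1}-\mathfrak{m}_n)$ is finite because only finitely many $n$ occur. Letting $\epsilon\to0$ proves the corollary.

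The only care needed is the interpretation of Lemma \ref{l:hit1}. Its expectation is taken under the law of the accelerated process $X_\epsilon$, which is why the bound carries the factor $1/\theta_\epsilon$, and the threshold $C/(\epsilon^2\theta_\epsilon)$ is measured on that same accelerated clock, so the scalings match. Finiteness of the cover is also necessary. Under Assumption \ref{assu:U} alone, the gaps $\mathfrak{m}_{n+1}-\mathfrak{m}_n$ need not be uniformly bounded over all $n\in\mathbb{Z}$, but on the compact set $[a,b]$ only finitely many gaps matter. Apart from these two points the argument is routine; no step is a real obstacle.
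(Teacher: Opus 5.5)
Your proposal is correct and follows essentially the same route as the paper: cover $[a,b]$ by finitely many intervals $[\mathfrak{m}_n,\mathfrak{m}_{n+1}]$, use $\mathcal{H}_{\mathcal{M}_0}\le\mathcal{H}_{\{\mathfrak{m}_n,\mathfrak{m}_{n+1}\}}$ together with Lemma \ref{l:hit1}, and apply the Markov inequality to obtain a bound $R^2\epsilon/(2C)$ with $R$ the largest of the finitely many relevant gaps. Your remarks on the accelerated clock and on the finiteness of the cover are accurate. They make explicit what the paper's short proof leaves implicit.
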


\begin{proof}
Choose $n_{1}<n_{2}$ such that $[a,b]\subset(\mathfrak{m}_{n_{1}},\mathfrak{m}_{n_{2}})$.
By Lemma \ref{l:hit1}, for all $a\le x\le b$,
\[
{\rm E}^{\mathbb{Q}^{\epsilon}_{x}}\left[\mathcal{H}_{\mathcal{M}_{0}}\right]\le\frac{R^{2}}{2\epsilon\theta_{\epsilon}},
\]
where
\[
R:=\max\left\{ \mathfrak{m}_{n_{1}+1}-\mathfrak{m}_{n_{1}},\dots,\mathfrak{m}_{n_{2}}-\mathfrak{m}_{n_{2}-1}\right\} .
\]
Therefore, by the Markov inequality,
\[
\sup_{a\le x\le b}\mathbb{Q}^{\epsilon}_{x}\left[\mathcal{H}_{\mathcal{M}_{0}}>\frac{C}{\epsilon^{2}\theta_{\epsilon}}\right]\le\frac{\epsilon^{2}\theta_{\epsilon}}{C}{\rm E}^{\mathbb{Q}^{\epsilon}_{x}}\left[\mathcal{H}_{\mathcal{M}_{0}}\right]\le\frac{R^{2}}{2C}\epsilon,
\]
which completes the proof.
\end{proof}

For $a<b$, let $h^{a,b}_{\epsilon}:[a,b]\to\mathbb{R}$ be the equilibrium
potential defined by
\begin{equation}
h^{a,b}_{\epsilon}(x):=\mathbb{Q}^{\epsilon}_{x}\left[\mathcal{H}_{b}<\mathcal{H}_{a}\right].\label{eq:eq-pot}
\end{equation}
It solves the boundary-value problem
\[
\begin{cases}
\theta_{\epsilon}\mathscr{L}_{\epsilon}h^{a,b}_{\epsilon}(x)=0 & \text{if}\quad a<x<b,\\
h^{a,b}_{\epsilon}(a)=0,\\
h^{a,b}_{\epsilon}(b)=1.
\end{cases}
\]
Solving this equation explicitly, we obtain
\[
h^{a,b}_{\epsilon}(x)=\frac{\int^{x}_{a}e^{U(y)/\epsilon}\,{\rm d}y}{\int^{b}_{a}e^{U(y)/\epsilon}\,{\rm d}y}.
\]

\begin{lem}
\label{l:hit2}Fix $n\in\mathbb{Z}$. If $\mathfrak{m}_{n}\le x\le\mathfrak{s}_{n}$,\footnote{In this section, we write $a_{\epsilon}=o_{\epsilon}(1)$ if the collection
$(a_{\epsilon})_{\epsilon>0}$ satisfies $\lim_{\epsilon\to0}|a_{\epsilon}|=0$
uniformly over the corresponding range of $x$.}
\[
\mathbb{Q}^{\epsilon}_{x}\left[\mathcal{H}_{\mathfrak{m}_{n+1}}<\mathcal{H}_{\mathfrak{m}_{n}}\right]\le[1+o_{\epsilon}(1)]\sqrt{\frac{-U''(\mathfrak{s}_{n})}{2\pi\epsilon}}(x-\mathfrak{m}_{n})e^{\frac{U(x)-U(\mathfrak{s}_{n})}{\epsilon}},
\]
and if $\mathfrak{s}_{n-1}\le x\le\mathfrak{m}_{n}$,
\[
\mathbb{Q}^{\epsilon}_{x}\left[\mathcal{H}_{\mathfrak{m}_{n-1}}<\mathcal{H}_{\mathfrak{m}_{n}}\right]\le[1+o_{\epsilon}(1)]\sqrt{\frac{-U''(\mathfrak{s}_{n-1})}{2\pi\epsilon}}(\mathfrak{m}_{n}-x)e^{\frac{U(x)-U(\mathfrak{s}_{n-1})}{\epsilon}},
\]
\end{lem}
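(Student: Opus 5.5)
The plan is to work directly from the explicit formula for the equilibrium potential. For $\mathfrak{m}_{n}\le x\le\mathfrak{s}_{n}$ the process started at $x$ hits $\mathfrak{m}_{n}$ or $\mathfrak{m}_{n+1}$ first, so
\[
\mathbb{Q}^{\epsilon}_{x}\left[\mathcal{H}_{\mathfrak{m}_{n+1}}<\mathcal{H}_{\mathfrak{m}_{n}}\right]=h^{\mathfrak{m}_{n},\mathfrak{m}_{n+1}}_{\epsilon}(x)=\frac{\int^{x}_{\mathfrak{m}_{n}}e^{U(y)/\epsilon}\,{\rm d}y}{\int^{\mathfrak{m}_{n+1}}_{\mathfrak{m}_{n}}e^{U(y)/\epsilon}\,{\rm d}y}.
\]
This formula was recorded just before the statement. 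I would bound the numerator and the denominator separately.

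\emph{Numerator.} On $[\mathfrak{m}_{n},\mathfrak{s}_{n}]$ we have $U'\ge0$, so $U$ is nondecreasing there. Hence $U(y)\le U(x)$ for $y\in[\mathfrak{m}_{n},x]$, which gives
\[
\int^{x}_{\mathfrak{m}_{n}}e^{U(y)/\epsilon}\,{\rm d}y\le(x-\mathfrak{m}_{n})e^{U(x)/\epsilon}.
\]
This bound is exact up to the stated factor, so nothing more is needed here.

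\emph{Denominator.} I only need a lower bound, and Laplace's method at the saddle suffices. I would restrict the integral to a small window $[\mathfrak{s}_{n}-\delta_{\epsilon},\mathfrak{s}_{n}+\delta_{\epsilon}]\subset(\mathfrak{m}_{n},\mathfrak{m}_{n+1})$ with $\epsilon^{1/2}\ll\delta_{\epsilon}\ll\epsilon^{1/3}$, for instance $\delta_{\epsilon}=\epsilon^{2/5}$. Since $U\in C^{3}$, Taylor expansion on this window gives
\[
U(y)\ge U(\mathfrak{s}_{n})+\tfrac{1}{2}U''(\mathfrak{s}_{n})(y-\mathfrak{s}_{n})^{2}-C|y-\mathfrak{s}_{n}|^{3}.
\]
The cubic term is $O(\delta_{\epsilon}^{3}/\epsilon)=o(1)$ after dividing by $\epsilon$. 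The remaining Gaussian integral over $[-\delta_{\epsilon},\delta_{\epsilon}]$ then equals $(1+o_{\epsilon}(1))\sqrt{2\pi\epsilon/(-U''(\mathfrak{s}_{n}))}$, because $\delta_{\epsilon}/\sqrt{\epsilon}\to\infty$. This yields
\[
\int^{\mathfrak{m}_{n+1}}_{\mathfrak{m}_{n}}e^{U(y)/\epsilon}\,{\rm d}y\ge[1-o_{\epsilon}(1)]\sqrt{\frac{2\pi\epsilon}{-U''(\mathfrak{s}_{n})}}\,e^{U(\mathfrak{s}_{n})/\epsilon}.
\]
The $o_{\epsilon}(1)$ does not depend on $x$, since the denominator itself does not.

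Dividing the two bounds gives the first claim, with the factor $[1+o_{\epsilon}(1)]$ uniform in $x$.

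The second claim is the mirror image. I would use $h^{\mathfrak{m}_{n-1},\mathfrak{m}_{n}}_{\epsilon}$, where the numerator becomes $\int^{\mathfrak{m}_{n}}_{x}e^{U/\epsilon}$. On $[\mathfrak{s}_{n-1},\mathfrak{m}_{n}]$ the potential $U$ is nonincreasing, so this is at most $(\mathfrak{m}_{n}-x)e^{U(x)/\epsilon}$. The denominator is handled by the same Laplace lower bound at $\mathfrak{s}_{n-1}$. Equivalently, one can apply the first claim to the reflected potential $U(-\cdot)$.

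The only mildly delicate step is making the Laplace lower bound rigorous with an explicit window. It is routine given $U\in C^{3}$ and $U''(\mathfrak{s}_{n})<0$. Uniformity in $n$ is not required, because $n$ is fixed in the statement.
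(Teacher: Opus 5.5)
Your proposal is correct and follows essentially the same route as the paper. The paper also writes the probability via the explicit formula for $h^{\mathfrak{m}_{n},\mathfrak{m}_{n+1}}_{\epsilon}$, bounds the numerator by monotonicity of $U$ on $[\mathfrak{m}_{n},x]$, and controls the denominator by the Laplace method at $\mathfrak{s}_{n}$; the second claim is handled by symmetry in both. The only difference is that you make the Laplace lower bound explicit with the window $\delta_{\epsilon}=\epsilon^{2/5}$, a step the paper simply cites.
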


\begin{proof}
By the Laplace method,
\[
\int^{\mathfrak{m}_{n+1}}_{\mathfrak{m}_{n}}e^{U(y)/\epsilon}\,{\rm d}y=[1+o_{\epsilon}(1)]\sqrt{\frac{2\pi\epsilon}{-U''(\mathfrak{s}_{n})}}e^{U(\mathfrak{s}_{n})/\epsilon}.
\]
Since $U$ is increasing on $[\mathfrak{m}_{n},x]$, we have
\[
\int^{x}_{\mathfrak{m}_{n}}e^{U(y)/\epsilon}dy\le(x-\mathfrak{m}_{n})e^{U(x)/\epsilon}.
\]
Therefore, for $\mathfrak{m}_{n}\le x\le\mathfrak{s}_{n}$,
\[
h^{\mathfrak{m}_{n},\mathfrak{m}_{n+1}}_{\epsilon}(x)\le[1+o_{\epsilon}(1)]\sqrt{\frac{-U''(\mathfrak{s}_{n})}{2\pi\epsilon}}(x-\mathfrak{m}_{n})e^{\frac{U(x)-U(\mathfrak{s}_{n})}{\epsilon}}.
\]
This proves the first assertion. The second one follows similarly.
\end{proof}

Set
\[
\delta:=\sqrt{\epsilon\log\frac{1}{\epsilon}}.
\]

\begin{cor}
\label{c:hit2}Fix $n\in\mathbb{Z}$. For every $a>\sqrt{-1/U''(\mathfrak{s}_{n-1})}$
and $b>\sqrt{-1/U''(\mathfrak{s}_{n})}$,
\[
\liminf_{\epsilon\to0}\inf_{\mathfrak{s}_{n-1}+a\delta\le x\le\mathfrak{s}_{n}-b\delta}\mathbb{Q}^{\epsilon}_{x}\left[\mathcal{H}_{\mathcal{M}_{0}}=\mathcal{H}_{\mathfrak{m}_{n}}\right]=1.
\]
\end{cor}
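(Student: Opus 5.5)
For $x\in[\mathfrak{s}_{n-1}+a\delta,\mathfrak{s}_{n}-b\delta]$, the process started at $x$ can hit $\mathcal{M}_0$ at a point other than $\mathfrak{m}_n$ only by first reaching $\mathfrak{m}_{n+1}$ or $\mathfrak{m}_{n-1}$. In one dimension the path is continuous, so if $x\ge\mathfrak{m}_n$ the process must pass through either $\mathfrak{m}_n$ or $\mathfrak{m}_{n+1}$ first. Hence
\[
\mathbb{Q}^{\epsilon}_{x}\left[\mathcal{H}_{\mathcal{M}_{0}}\ne\mathcal{H}_{\mathfrak{m}_{n}}\right]=\mathbb{Q}^{\epsilon}_{x}\left[\mathcal{H}_{\mathfrak{m}_{n+1}}<\mathcal{H}_{\mathfrak{m}_{n}}\right]\qquad\text{for}\quad\mathfrak{m}_{n}\le x\le\mathfrak{s}_{n}-b\delta.
\]
The symmetric identity with $\mathfrak{m}_{n-1}$ holds for $\mathfrak{s}_{n-1}+a\delta\le x\le\mathfrak{m}_{n}$. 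For $x=\mathfrak{m}_n$ the probability is trivially $1$. It therefore suffices to show that the right-hand sides vanish uniformly, and for this I will apply Lemma \ref{l:hit2}. I treat the right half; the left half is identical with $\mathfrak{s}_{n-1}$ and $a$.

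By Lemma \ref{l:hit2}, for $\mathfrak{m}_n\le x\le \mathfrak{s}_n-b\delta$,
\[
\mathbb{Q}^{\epsilon}_{x}\left[\mathcal{H}_{\mathfrak{m}_{n+1}}<\mathcal{H}_{\mathfrak{m}_{n}}\right]\le C\epsilon^{-1/2}\,e^{(U(x)-U(\mathfrak{s}_{n}))/\epsilon},
\]
where $C$ absorbs $(x-\mathfrak{m}_n)\le\mathfrak{s}_n-\mathfrak{m}_n$. Since $U$ is increasing on $[\mathfrak{m}_n,\mathfrak{s}_n]$, the exponent is maximized at $x=\mathfrak{s}_n-b\delta$. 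A Taylor expansion at the nondegenerate maximum (with $U\in C^3$) gives
\[
U(\mathfrak{s}_n-b\delta)-U(\mathfrak{s}_n)=\tfrac12 U''(\mathfrak{s}_n)b^2\delta^2+O(\delta^3).
\]
With $\delta^2=\epsilon\log(1/\epsilon)$, the bound becomes
\[
C\epsilon^{-1/2}\exp\!\Big(-\tfrac12|U''(\mathfrak{s}_n)|b^2\log\tfrac1\epsilon+O(\delta^3/\epsilon)\Big)=C\,\epsilon^{\frac12|U''(\mathfrak{s}_n)|b^2-\frac12}\,e^{o(1)}.
\]
Here $\delta^3/\epsilon=\sqrt{\epsilon}(\log 1/\epsilon)^{3/2}\to0$, which justifies the $o(1)$ in the exponent. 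The hypothesis $b>\sqrt{-1/U''(\mathfrak{s}_n)}$ is exactly $|U''(\mathfrak{s}_n)|b^2>1$, so the exponent of $\epsilon$ is positive and the bound tends to $0$ uniformly in $x$.

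The only delicate point is the bookkeeping of the prefactor $\epsilon^{-1/2}$ against the Gaussian decay $\epsilon^{|U''|b^2/2}$, which is where the threshold on $b$ comes from. I also need the $o_\epsilon(1)$ in Lemma \ref{l:hit2} to be uniform in $x$, which it is: it comes only from the Laplace asymptotic of the denominator, which does not depend on $x$. Combining both halves yields the claimed $\liminf=1$.
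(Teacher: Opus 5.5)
Your proposal is correct and follows essentially the same route as the paper: on each half-interval you reduce to $\mathbb{Q}^{\epsilon}_{x}[\mathcal{H}_{\mathfrak{m}_{n\pm1}}<\mathcal{H}_{\mathfrak{m}_{n}}]$, bound this by Lemma \ref{l:hit2} at the worst endpoint using monotonicity of $U$, and Taylor-expand at the saddle with $\delta^{3}/\epsilon\to0$ to obtain a bound of order $\epsilon^{(|U''|b^{2}-1)/2}\to0$. Your remark that the $o_{\epsilon}(1)$ in Lemma \ref{l:hit2} is uniform in $x$, because it comes only from the $x$-independent Laplace asymptotics of the denominator, makes explicit a point the paper leaves implicit.
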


\begin{proof}
We first consider $\mathfrak{s}_{n-1}+a\delta\le x\le\mathfrak{m}_{n}$.
Starting from $\mathfrak{s}_{n-1}+a\delta\le x\le\mathfrak{m}_{n}$,
the first local minimum visited by the process is either $\mathfrak{m}_{n}$
or $\mathfrak{m}_{n-1}$. Thus,
\[
\mathbb{Q}^{\epsilon}_{x}\left[\mathcal{H}_{\mathcal{M}_{0}}=\mathcal{H}_{\mathfrak{m}_{n}}\right]=\mathbb{Q}^{\epsilon}_{x}\left[\mathcal{H}_{\mathfrak{m}_{n}}<\mathcal{H}_{\mathfrak{m}_{n-1}}\right].
\]
By Lemma \ref{l:hit2}, there exists a constant $C>0$ such that for
all $\mathfrak{s}_{n-1}+a\delta\le x\le\mathfrak{m}_{n}$,
\[
\mathbb{Q}^{\epsilon}_{x}\left[\mathcal{H}_{\mathfrak{m}_{n}}<\mathcal{H}_{\mathfrak{m}_{n-1}}\right]\ge1-\frac{C}{\sqrt{\epsilon}}e^{\frac{U(x)-U(\mathfrak{s}_{n-1})}{\epsilon}}\ge1-\frac{C}{\sqrt{\epsilon}}e^{\frac{U(\mathfrak{s}_{n-1}+a\delta)-U(\mathfrak{s}_{n-1})}{\epsilon}}.
\]
Taylor expansion at $\mathfrak{s}_{n-1}$ yields
\[
U(\mathfrak{s}_{n-1}+a\delta)-U(\mathfrak{s}_{n-1})=\frac{U''(\mathfrak{s}_{n-1})}{2}a^{2}\delta^{2}+O(\delta^{3}).
\]
Since $\frac{\delta^{3}}{\epsilon}\to0$, we obtain that
\[
\epsilon^{-1/2}e^{\frac{U(\mathfrak{s}_{n-1}+a\delta)-U(\mathfrak{s}_{n-1})}{\epsilon}}=[1+o_{\epsilon}(1)]\epsilon^{-1/2}e^{\frac{U''(\mathfrak{s}_{n-1})}{2}a^{2}\log\frac{1}{\epsilon}}=[1+o_{\epsilon}(1)]\epsilon^{\frac{-a^{2}U''(\mathfrak{s}_{n-1})-1}{2}}.
\]
The exponent is strictly positive since $a>\sqrt{-1/U''(\mathfrak{s}_{n-1})}$.
Therefore, 
\[
\liminf_{\epsilon\to0}\inf_{\mathfrak{s}_{n-1}+a\delta\le x\le\mathfrak{m}_{n}}\mathbb{Q}^{\epsilon}_{x}\left[\mathcal{H}_{\mathcal{M}_{0}}=\mathcal{H}_{\mathfrak{m}_{n}}\right]\ge1-C\limsup_{\epsilon\to0}\epsilon^{\frac{-a^{2}U''(\mathfrak{s}_{n-1})-1}{2}}=1.
\]
The same argument applied to the interval $\mathfrak{m}_{n}\le x\le\mathfrak{s}_{n}-b\delta$
gives
\[
\liminf_{\epsilon\to0}\inf_{\mathfrak{m}_{n}\le x\le\mathfrak{s}_{n}-b\delta}\mathbb{Q}^{\epsilon}_{x}\left[\mathcal{H}_{\mathcal{M}_{0}}=\mathcal{H}_{\mathfrak{m}_{n}}\right]\ge1-C'\lim_{\epsilon\to0}\epsilon^{\frac{-b^{2}U''(\mathfrak{s}_{n})-1}{2}}=1,
\]
for some another constant $C'>0$. Combining the two estimates proves
the claim.
\end{proof}

\begin{prop}
\label{p:hit}Fix $n\in\mathbb{Z}$. For every $a>\sqrt{-1/U''(\mathfrak{s}_{n-1})}$,
$b>\sqrt{-1/U''(\mathfrak{s}_{n})}$, and $C>0$,
\[
\limsup_{\epsilon\to0}\sup_{\mathfrak{s}_{n-1}+a\delta\le x\le\mathfrak{s}_{n}-b\delta}\mathbb{Q}^{\epsilon}_{x}\left[\mathcal{H}_{\mathfrak{m}_{n}}>\frac{C}{\epsilon^{2}\theta_{\epsilon}}\right]=0.
\]
\end{prop}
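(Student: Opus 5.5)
Combine Corollaries \ref{c:hit1} and \ref{c:hit2}: for $x$ in the range $[\mathfrak{s}_{n-1}+a\delta,\mathfrak{s}_{n}-b\delta]$, the process quickly hits $\mathcal{M}_{0}$, and when it does, the first minimum reached is $\mathfrak{m}_{n}$ with high probability. On that intersection, $\mathcal{H}_{\mathfrak{m}_{n}}=\mathcal{H}_{\mathcal{M}_{0}}\le C/(\epsilon^{2}\theta_{\epsilon})$.

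Fix $n$, $a$, $b$ and $C$ as in the statement. I would use the inclusion of events
\[
\left\{ \mathcal{H}_{\mathfrak{m}_{n}}>\frac{C}{\epsilon^{2}\theta_{\epsilon}}\right\} \subset\left\{ \mathcal{H}_{\mathcal{M}_{0}}>\frac{C}{\epsilon^{2}\theta_{\epsilon}}\right\} \cup\left\{ \mathcal{H}_{\mathcal{M}_{0}}\ne\mathcal{H}_{\mathfrak{m}_{n}}\right\} .
\]
This holds because if $\mathcal{H}_{\mathcal{M}_{0}}=\mathcal{H}_{\mathfrak{m}_{n}}$ and $\mathcal{H}_{\mathcal{M}_{0}}\le C/(\epsilon^{2}\theta_{\epsilon})$, the event on the left fails. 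A union bound then reduces the claim to the uniform smallness of two probabilities over $x\in[\mathfrak{s}_{n-1}+a\delta,\mathfrak{s}_{n}-b\delta]$.

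For the first probability, note that $\delta\to0$. Hence for small $\epsilon$ the interval $[\mathfrak{s}_{n-1}+a\delta,\mathfrak{s}_{n}-b\delta]$ lies inside the fixed compact interval $[\mathfrak{s}_{n-1},\mathfrak{s}_{n}]$. Applying Corollary \ref{c:hit1} with the endpoints $\mathfrak{s}_{n-1}$ and $\mathfrak{s}_{n}$ gives
\[
\limsup_{\epsilon\to0}\sup_{x}\mathbb{Q}^{\epsilon}_{x}\left[\mathcal{H}_{\mathcal{M}_{0}}>C/(\epsilon^{2}\theta_{\epsilon})\right]=0.
\]
For the second probability, Corollary \ref{c:hit2} gives exactly $\limsup_{\epsilon\to0}\sup_{x}\mathbb{Q}^{\epsilon}_{x}[\mathcal{H}_{\mathcal{M}_{0}}\ne\mathcal{H}_{\mathfrak{m}_{n}}]=0$, since it is the complement of the event controlled there.

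There is essentially no obstacle. The only point needing care is that the supremum is over an $\epsilon$-dependent range, so the fixed-interval statement of Corollary \ref{c:hit1} must be applied to a fixed enclosing interval, as done above.
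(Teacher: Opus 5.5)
Your proposal is correct and is essentially the paper's proof: the paper splits the event according to whether $\mathcal{H}_{\mathcal{M}_{0}}=\mathcal{H}_{\mathfrak{m}_{n}}$ and bounds the two pieces by Corollaries \ref{c:hit1} and \ref{c:hit2}, which is exactly your union bound. Your remark about applying Corollary \ref{c:hit1} on the fixed enclosing interval $[\mathfrak{s}_{n-1},\mathfrak{s}_{n}]$ spells out a step the paper leaves implicit.
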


\begin{proof}
Decompose the probability as 
\[
\begin{aligned}\mathbb{Q}^{\epsilon}_{x}\left[\mathcal{H}_{\mathfrak{m}_{n}}>\frac{C}{\epsilon^{2}\theta_{\epsilon}}\right] & =\mathbb{Q}^{\epsilon}_{x}\left[\mathcal{H}_{\mathfrak{m}_{n}}>\frac{C}{\epsilon^{2}\theta_{\epsilon}},\enspace\mathcal{H}_{\mathcal{M}_{0}}=\mathcal{H}_{\mathfrak{m}_{n}}\right]+\mathbb{Q}^{\epsilon}_{x}\left[\mathcal{H}_{\mathfrak{m}_{n}}>\frac{C}{\epsilon^{2}\theta_{\epsilon}},\enspace\mathcal{H}_{\mathcal{M}_{0}}\ne\mathcal{H}_{\mathfrak{m}_{n}}\right]\end{aligned}
.
\]
The last two probabilities vanish uniformly for $\mathfrak{s}_{n-1}+a\delta\le x\le\mathfrak{s}_{n}-b\delta$
by Corollaries \ref{c:hit1} and \ref{c:hit2}, respectively.
\end{proof}

\subsection{\label{sec4.1}Resolvent condition}

\subsubsection{\label{sec4.1.2}Test functions}

For $n\in\mathbb{Z}$, set
\begin{equation}
\begin{aligned}\mathcal{W}^{n}_{\epsilon} & :=\left[\mathfrak{s}_{n-1}+\frac{2\delta}{\sqrt{-U''(\mathfrak{s}_{n-1})}},\mathfrak{s}_{n}-\frac{2\delta}{\sqrt{-U''(\mathfrak{s}_{n})}}\right],\\
\mathcal{C}^{n}_{\epsilon} & :=\left[\mathfrak{s}_{n}-\frac{2\delta}{\sqrt{-U''(\mathfrak{s}_{n})}},\mathfrak{s}_{n}+\frac{2\delta}{\sqrt{-U''(\mathfrak{s}_{n})}}\right],
\end{aligned}
\label{eq:WeN-CeN}
\end{equation}
and define $p^{n}_{\epsilon}:\mathcal{C}^{n}_{\epsilon}\to\mathbb{R}$
as
\begin{equation}
p^{n}_{\epsilon}(x):=\frac{1}{M^{n}_{\epsilon}}\int^{x-\mathfrak{s}_{n}}_{-2\delta/\sqrt{-U''(\mathfrak{s}_{n})}}e^{\frac{U''(\mathfrak{s}_{n})}{2\epsilon}t^{2}}\,{\rm d}t,\label{eq:def_p}
\end{equation}
where the normalizing constant is given by
\[
M^{n}_{\epsilon}:=\int^{2\delta/\sqrt{-U''(\mathfrak{s}_{n})}}_{-2\delta/\sqrt{-U''(\mathfrak{s}_{n})}}e^{\frac{U''(\mathfrak{s}_{n})}{2\epsilon}t^{2}}\,{\rm d}t=\sqrt{\frac{2\pi\epsilon}{-U''(\mathfrak{s}_{n})}}(1+o_{\epsilon}(1)).
\]
By the following lemma, which is proved in \cite[Proposition 8.5]{LS22a}
more generally for all dimensions $d$, the function $p^{n}_{\epsilon}$
constructed above approximates $h^{\mathfrak{m}_{n},\mathfrak{m}_{n+1}}_{\epsilon}$
(cf. \eqref{eq:eq-pot}).
\begin{lem}
\label{lem:test0}For $n\in\mathbb{Z}$,
\[
\int_{\mathcal{C}^{n}_{\epsilon}}\left|\mathscr{L}_{\epsilon}p^{n}_{\epsilon}(x)\right|e^{-U(x)/\epsilon}\,{\rm d}x=o_{\epsilon}(1)\sqrt{\epsilon}e^{-U(\mathfrak{s}_{n})/\epsilon}.
\]
\end{lem}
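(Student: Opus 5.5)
We compute $\mathscr{L}_{\epsilon}p^{n}_{\epsilon}$ explicitly and exploit the fact that $p^{n}_{\epsilon}$ is built from the quadratic approximation of $U$ at $\mathfrak{s}_{n}$. Write $\sigma:=-U''(\mathfrak{s}_{n})>0$ and $\rho:=2\delta/\sqrt{\sigma}$, so that $\mathcal{C}^{n}_{\epsilon}=[\mathfrak{s}_{n}-\rho,\mathfrak{s}_{n}+\rho]$. Then
\[
(p^{n}_{\epsilon})'(x)=\frac{1}{M^{n}_{\epsilon}}e^{-\frac{\sigma}{2\epsilon}(x-\mathfrak{s}_{n})^{2}},\qquad (p^{n}_{\epsilon})''(x)=-\frac{\sigma(x-\mathfrak{s}_{n})}{\epsilon}(p^{n}_{\epsilon})'(x).
\]
By \eqref{eq:gen}, the two terms combine into
\[
\mathscr{L}_{\epsilon}p^{n}_{\epsilon}(x)=-\big(U'(x)+\sigma(x-\mathfrak{s}_{n})\big)(p^{n}_{\epsilon})'(x).
\]
Since $U'(\mathfrak{s}_{n})=0$ and $U''(\mathfrak{s}_{n})=-\sigma$, Taylor's theorem with $U\in C^{3}$ gives
\[
|U'(x)+\sigma(x-\mathfrak{s}_{n})|\le C(x-\mathfrak{s}_{n})^{2}\qquad\text{on}\quad\mathcal{C}^{n}_{\epsilon}.
\]
Here $C$ depends only on $\sup|U'''|$ on a fixed neighborhood of $\mathfrak{s}_{n}$.

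Next we control the weight. Expanding $U(x)=U(\mathfrak{s}_{n})-\frac{\sigma}{2}(x-\mathfrak{s}_{n})^{2}+O(|x-\mathfrak{s}_{n}|^{3})$, and using $|x-\mathfrak{s}_{n}|^{3}/\epsilon\le C\delta^{3}/\epsilon\to0$ on $\mathcal{C}^{n}_{\epsilon}$, we get uniformly on $\mathcal{C}^{n}_{\epsilon}$:
\[
e^{-U(x)/\epsilon}=(1+o_{\epsilon}(1))\,e^{-U(\mathfrak{s}_{n})/\epsilon}\,e^{\frac{\sigma}{2\epsilon}(x-\mathfrak{s}_{n})^{2}}.
\]
This is the key cancellation: the growing factor $e^{\frac{\sigma}{2\epsilon}t^{2}}$ exactly cancels the Gaussian factor in $(p^{n}_{\epsilon})'$. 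Only this leading-order cancellation is needed, and it is the step I expect to need the most care, since the cubic Taylor error enters an exponent divided by $\epsilon$. That is precisely why the window has width $O(\delta)$ with $\delta=\sqrt{\epsilon\log(1/\epsilon)}$, so that $\delta^{3}/\epsilon\to0$.

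Combining the three estimates,
\[
\int_{\mathcal{C}^{n}_{\epsilon}}|\mathscr{L}_{\epsilon}p^{n}_{\epsilon}|e^{-U/\epsilon}\,{\rm d}x\le\frac{C(1+o_{\epsilon}(1))}{M^{n}_{\epsilon}}e^{-U(\mathfrak{s}_{n})/\epsilon}\int_{-\rho}^{\rho}t^{2}\,{\rm d}t=\frac{C'\delta^{3}}{M^{n}_{\epsilon}}e^{-U(\mathfrak{s}_{n})/\epsilon}.
\]
Since $M^{n}_{\epsilon}=(1+o_{\epsilon}(1))\sqrt{2\pi\epsilon/\sigma}$, the right-hand side is
\[
O\!\left(\frac{\delta^{3}}{\sqrt{\epsilon}}\right)e^{-U(\mathfrak{s}_{n})/\epsilon}=O\!\left(\epsilon\,(\log\tfrac{1}{\epsilon})^{3/2}\right)e^{-U(\mathfrak{s}_{n})/\epsilon}=o_{\epsilon}(1)\sqrt{\epsilon}\,e^{-U(\mathfrak{s}_{n})/\epsilon},
\]
because $\sqrt{\epsilon}(\log\frac{1}{\epsilon})^{3/2}\to0$. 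This proves the lemma. Alternatively, one may simply invoke \cite[Proposition 8.5]{LS22a} specialized to $d=1$.
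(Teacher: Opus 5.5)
Your argument is correct, and it is more direct than the paper's treatment. The paper gives no argument of its own for this lemma: it simply defers to \cite[Proposition 8.5]{LS22a}. That result is proved in arbitrary dimension and in the non-reversible setting of that reference, so it must handle the multidimensional saddle geometry and the non-gradient part of the drift.

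In one dimension you avoid all of this, and your route works for the following reasons:
\begin{enumerate}
\item $p^{n}_{\epsilon}$ exactly solves the equation linearized at $\mathfrak{s}_{n}$. Hence $\mathscr{L}_{\epsilon}p^{n}_{\epsilon}=-(U'(x)+\sigma(x-\mathfrak{s}_{n}))(p^{n}_{\epsilon})'(x)$ is controlled purely by the quadratic Taylor remainder of $U'$.
\item The Gaussian factor in $(p^{n}_{\epsilon})'$ cancels against $e^{-U/\epsilon}$, up to a factor $e^{O(\delta^{3}/\epsilon)}=1+o_{\epsilon}(1)$. This is exactly where the window width $\delta=\sqrt{\epsilon\log\frac{1}{\epsilon}}$ matters, as you note.
\end{enumerate}

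What each approach buys: your computation is self-contained and gives the explicit rate $O(\sqrt{\epsilon}(\log\frac{1}{\epsilon})^{3/2})$ for the $o_{\epsilon}(1)$ term. The citation buys generality, which is what the higher-dimensional extension mentioned in the paper's remark would require. The price is importing a much heavier argument for a statement that, in $d=1$, reduces to your short calculation. Your closing suggestion to invoke \cite[Proposition 8.5]{LS22a} with $d=1$ is precisely what the paper does.
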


For $n\in\mathbb{Z}$, define $\phi^{n}_{\epsilon}:\mathbb{R}\to\mathbb{R}$
as
\begin{equation}
\phi^{n}_{\epsilon}(x):=\begin{cases}
1 & \text{if}\quad x\in\mathcal{W}^{n}_{\epsilon},\\
1-p^{n}_{\epsilon}(x) & \text{if}\quad x\in\mathcal{C}^{n}_{\epsilon},\\
p^{n-1}_{\epsilon}(x) & \text{if}\quad x\in\mathcal{C}^{n-1}_{\epsilon},\\
0 & \text{otherwise}.
\end{cases}\label{eq:def_phi}
\end{equation}
Then, $\phi^{n}_{\epsilon}$ is continuous, supported on $\mathcal{C}^{n-1}_{\epsilon}\cup\mathcal{W}^{n}_{\epsilon}\cup\mathcal{C}^{n}_{\epsilon}$,
$0\le\phi^{n}_{\epsilon}\le1$, $\|\phi^{n}_{\epsilon}\|_{L^{\infty}(\mathbb{R})}=1$,
and there exists $C>0$ such that
\[
\|(\phi^{n}_{\epsilon})'\|_{L^{\infty}(\mathbb{R})}\le C\epsilon^{-1/2},\qquad\|(\phi^{n}_{\epsilon})''\|_{L^{\infty}(\mathbb{R})}\le C\epsilon^{-3/2}.
\]

\begin{lem}
\label{lem:test1}Recall $\nu_{N}$ from \eqref{eq:mass}. For each
$n\in\mathbb{Z}$, 
\begin{equation}
\int_{\mathcal{E}^{n}}\phi^{n}_{\epsilon}(x)e^{-U(x)/\epsilon}\,{\rm d}x=\nu_{n}\sqrt{2\pi\epsilon}e^{-U(\mathfrak{m}_{n})/\epsilon}+o_{\epsilon}(1)\sqrt{\epsilon}e^{-U(\mathfrak{m}_{n})/\epsilon}\label{eq:l_test1-1}
\end{equation}
and
\begin{equation}
\int_{\mathbb{R}\setminus\mathcal{E}^{n}}\phi^{n}_{\epsilon}(x)e^{-U(x)/\epsilon}\,{\rm d}x=o_{\epsilon}(1)\sqrt{\epsilon}e^{-U(\mathfrak{m}_{n})/\epsilon}.\label{eq:l_test1-2}
\end{equation}
\end{lem}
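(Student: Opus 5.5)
This is a Laplace-method computation. I use that $\phi^{n}_{\epsilon}$ equals $1$ on $\mathcal{W}^{n}_{\epsilon}$, lies in $[0,1]$ everywhere, and is supported in $\mathcal{C}^{n-1}_{\epsilon}\cup\mathcal{W}^{n}_{\epsilon}\cup\mathcal{C}^{n}_{\epsilon}\subset[\mathfrak{s}_{n-1}-c\delta,\mathfrak{s}_{n}+c\delta]$, where $c:=2/\sqrt{\inf_{\mathfrak{c}}|U''(\mathfrak{c})|}$. Since $\delta\to0$ while $r_{0}$ is fixed with $\mathfrak{m}_{n}\pm3r_{0}$ strictly inside $(\mathfrak{s}_{n-1},\mathfrak{s}_{n})$, we have $\mathcal{E}^{n}\subset\mathcal{W}^{n}_{\epsilon}$ for all small $\epsilon$. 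Hence $\phi^{n}_{\epsilon}\equiv1$ on $\mathcal{E}^{n}$, and \eqref{eq:l_test1-1} reduces to
\[
\int_{\mathfrak{m}_{n}-r_{0}}^{\mathfrak{m}_{n}+r_{0}}e^{-U(x)/\epsilon}\,{\rm d}x.
\]

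For \eqref{eq:l_test1-1}, I apply the Laplace method at the nondegenerate minimum $\mathfrak{m}_{n}$. This point is the unique minimizer of $U$ on $\overline{\mathcal{E}^{n}}$, because $U$ is strictly monotone on $[\mathfrak{s}_{n-1},\mathfrak{m}_{n}]$ and on $[\mathfrak{m}_{n},\mathfrak{s}_{n}]$, and $U''(\mathfrak{m}_{n})>0$. Expanding $U(x)=U(\mathfrak{m}_{n})+\tfrac12U''(\mathfrak{m}_{n})(x-\mathfrak{m}_{n})^{2}+O(|x-\mathfrak{m}_{n}|^{3})$ on $|x-\mathfrak{m}_{n}|\le\delta$ gives the Gaussian integral
\[
\sqrt{2\pi\epsilon/U''(\mathfrak{m}_{n})}\,e^{-U(\mathfrak{m}_{n})/\epsilon}\,(1+o_{\epsilon}(1)),
\]
since $\delta^{3}/\epsilon\to0$ and the Gaussian tail beyond $\delta$ is of order $\epsilon^{1/2+U''/2}$. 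On $\delta\le|x-\mathfrak{m}_{n}|\le r_{0}$ I use the monotonicity of $U$ together with $U(\mathfrak{m}_{n}\pm\delta)-U(\mathfrak{m}_{n})\ge c'\delta^{2}=c'\epsilon\log(1/\epsilon)$. This bounds that part by $2r_{0}\epsilon^{c'}e^{-U(\mathfrak{m}_{n})/\epsilon}$. Choosing the cutoff as $K\delta$ with $K$ large makes this $o(\sqrt{\epsilon})$. Recalling $\nu_{n}=U''(\mathfrak{m}_{n})^{-1/2}$ yields \eqref{eq:l_test1-1}.

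For \eqref{eq:l_test1-2}, I bound $\phi^{n}_{\epsilon}\le1$ and integrate $e^{-U/\epsilon}$ over $[\mathfrak{s}_{n-1}-c\delta,\mathfrak{s}_{n}+c\delta]\setminus\mathcal{E}^{n}$. On $[\mathfrak{m}_{n}+r_{0},\mathfrak{s}_{n}]$, the function $U$ is increasing, so $U\ge U(\mathfrak{m}_{n}+r_{0})=U(\mathfrak{m}_{n})+\kappa$ with $\kappa>0$; the same holds on $[\mathfrak{s}_{n-1},\mathfrak{m}_{n}-r_{0}]$. On the short overshoot intervals $[\mathfrak{s}_{n},\mathfrak{s}_{n}+c\delta]$, continuity gives $U\ge U(\mathfrak{s}_{n})-C\delta^{2}\ge U(\mathfrak{m}_{n})+D-o(1)$. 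Therefore the integral is at most
\[
C e^{-(U(\mathfrak{m}_{n})+\kappa')/\epsilon}=o(\sqrt{\epsilon})\,e^{-U(\mathfrak{m}_{n})/\epsilon}.
\]

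The only delicate point is making the Laplace error explicitly $o(\sqrt\epsilon)$, which requires splitting at a mesoscopic scale such as $\epsilon^{1/3}$, where the cubic error vanishes and the Gaussian tail is superpolynomially small. No other obstacle is expected.
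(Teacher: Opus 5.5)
Your proof is correct and follows essentially the same route as the paper. Since $\phi^{n}_{\epsilon}\equiv1$ on $\mathcal{E}^{n}\subset\mathcal{W}^{n}_{\epsilon}$, the first identity is Laplace's method at $\mathfrak{m}_{n}$, and the second follows from a uniform gap $U\ge U(\mathfrak{m}_{n})+\kappa'$ on the rest of the bounded support; you supply details the paper leaves implicit.

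One small slip is in your closing remark: at the scale $\epsilon^{1/3}$ the cubic error $|x-\mathfrak{m}_{n}|^{3}/\epsilon$ is only $O(1)$, not $o(1)$. You should instead take $\epsilon^{a}$ with $a\in(1/3,1/2)$, or the cutoff $K\delta$ you actually use in the main argument, which works.
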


\begin{proof}
Fix $n\in\mathbb{Z}$. Since $\phi^{n}_{\epsilon}(x)=1$ for $x\in\mathcal{E}^{n}$,
the Laplace method proves \eqref{eq:l_test1-1}. Note that $0\le\phi^{n}_{\epsilon}\le1$
and $\phi^{n}_{\epsilon}$ is supported on $\mathcal{C}^{n-1}_{\epsilon}\cup\mathcal{W}^{n}_{\epsilon}\cup\mathcal{C}^{n}_{\epsilon}$.
Since $\bigcap_{\epsilon>0}\mathcal{C}^{k}_{\epsilon}=\{\mathfrak{s}_{k}\}$
and $U(\mathfrak{s}_{n-1}),U(\mathfrak{s}_{n})\ge U(\mathfrak{m}_{n})+D>U(\mathfrak{m}_{n})$,
\[
\int_{\mathcal{C}^{n-1}_{\epsilon}\cup\mathcal{C}^{n}_{\epsilon}}\phi^{n}_{\epsilon}(x)e^{-U(x)/\epsilon}\,{\rm d}x=o_{\epsilon}(1)\sqrt{\epsilon}e^{-U(\mathfrak{m}_{n})/\epsilon}.
\]
Also, since, for some $c>0$, $U(x)>U(\mathfrak{m}_{n})+c$ for all
$x\in\mathcal{W}^{n}_{\epsilon}\setminus\mathcal{E}^{n}$,
\[
\int_{\mathcal{W}^{n}_{\epsilon}\setminus\mathcal{E}^{n}}\phi^{n}_{\epsilon}(x)e^{-U(x)/\epsilon}\,{\rm d}x=o_{\epsilon}(1)\sqrt{\epsilon}e^{-U(\mathfrak{m}_{n})/\epsilon}.
\]
The two last displayed equations yield \eqref{eq:l_test1-2}.
\end{proof}

\subsubsection{Proof of Proposition \ref{p:R}}

Fix $\lambda>0$, ${\bf g}\in C_{0}(\mathbb{Z})$, and let $G\in C_{0}(\mathbb{R})$
satisfy
\begin{equation}
G|_{\widehat{\mathcal{E}}^{n}}={\bf g}(n);\qquad n\in\mathbb{Z}.\label{eq:G}
\end{equation}
Let $F_{\epsilon}\in C_{0}(\mathbb{R})$ be the unique solution to
\eqref{eq:res-Lan}. By the next result, $F_{\epsilon}$ is flat on
$\mathcal{W}^{n}_{\epsilon}$ (cf. \eqref{eq:WeN-CeN}).
\begin{lem}
\label{l:flat_ext}For all $n\in\mathbb{Z}$,
\[
\lim_{\epsilon\to0}\sup_{x\in\mathcal{W}^{n}_{\epsilon}}\left|F_{\epsilon}(x)-F_{\epsilon}(\mathfrak{m}_{n})\right|=0.
\]
\end{lem}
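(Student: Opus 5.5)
We plan to use the stochastic representation of the resolvent solution together with the strong Markov property at the hitting time of $\mathfrak{m}_{n}$, which Proposition \ref{p:hit} shows is very short. By \eqref{eq:pr} (applied to the accelerated generator $\theta_{\epsilon}\mathscr{L}_{\epsilon}$),
\[
F_{\epsilon}(x)={\rm E}^{\mathbb{Q}^{\epsilon}_{x}}\left[\int^{\infty}_{0}e^{-\lambda t}\,G(X_{\epsilon}(t))\,{\rm d}t\right],\qquad\|F_{\epsilon}\|_{\infty}\le\lambda^{-1}\|G\|_{\infty}.
\]
Fix $x\in\mathcal{W}^{n}_{\epsilon}$. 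Let $\tau:=\mathcal{H}_{\mathfrak{m}_{n}}$, the hitting time for the accelerated process $X_{\epsilon}$, and set $t_{\epsilon}:=C/(\epsilon^{2}\theta_{\epsilon})$, which tends to $0$ because $\theta_{\epsilon}=e^{D/\epsilon}$. The strong Markov property at $\tau$ gives
\[
F_{\epsilon}(x)={\rm E}^{\mathbb{Q}^{\epsilon}_{x}}\left[\int^{\tau}_{0}e^{-\lambda t}G(X_{\epsilon}(t))\,{\rm d}t\right]+{\rm E}^{\mathbb{Q}^{\epsilon}_{x}}\left[e^{-\lambda\tau}\right]F_{\epsilon}(\mathfrak{m}_{n}),
\]
with the convention $e^{-\lambda\tau}=0$ on $\{\tau=\infty\}$. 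Consequently
\[
|F_{\epsilon}(x)-F_{\epsilon}(\mathfrak{m}_{n})|\le\|G\|_{\infty}\,{\rm E}^{\mathbb{Q}^{\epsilon}_{x}}\left[\tau\wedge\lambda^{-1}\right]+\lambda^{-1}\|G\|_{\infty}\,{\rm E}^{\mathbb{Q}^{\epsilon}_{x}}\left[1-e^{-\lambda\tau}\right].
\]
The first term uses $\int_{0}^{\tau}e^{-\lambda t}\,{\rm d}t\le\min(\tau,\lambda^{-1})$.

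Each term is bounded by splitting on the event $\{\tau\le t_{\epsilon}\}$. On this event, ${\rm E}[\tau\wedge\lambda^{-1}]$ and ${\rm E}[1-e^{-\lambda\tau}]$ contribute at most $t_{\epsilon}$ and $\lambda t_{\epsilon}$, respectively. On the complement, both contribute at most a constant times $\mathbb{Q}^{\epsilon}_{x}[\tau>t_{\epsilon}]$. So it suffices to show
\[
\lim_{\epsilon\to0}\sup_{x\in\mathcal{W}^{n}_{\epsilon}}\mathbb{Q}^{\epsilon}_{x}\left[\mathcal{H}_{\mathfrak{m}_{n}}>t_{\epsilon}\right]=0.
\]

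This is exactly Proposition \ref{p:hit} with $a=2/\sqrt{-U''(\mathfrak{s}_{n-1})}$ and $b=2/\sqrt{-U''(\mathfrak{s}_{n})}$. These are admissible, since $a=2\sqrt{-1/U''(\mathfrak{s}_{n-1})}>\sqrt{-1/U''(\mathfrak{s}_{n-1})}$ and likewise for $b$. With these choices the interval $[\mathfrak{s}_{n-1}+a\delta,\mathfrak{s}_{n}-b\delta]$ in Proposition \ref{p:hit} coincides with $\mathcal{W}^{n}_{\epsilon}$ from \eqref{eq:WeN-CeN}. Combining the pieces, $\sup_{x\in\mathcal{W}^{n}_{\epsilon}}|F_{\epsilon}(x)-F_{\epsilon}(\mathfrak{m}_{n})|\le C'(t_{\epsilon}+o_{\epsilon}(1))\to0$.

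The only point needing care is the time scaling: Proposition \ref{p:hit} is stated under $\mathbb{Q}^{\epsilon}_{x}$, the law of the accelerated process, and the threshold $C/(\epsilon^{2}\theta_{\epsilon})$ is in accelerated time. Hence $t_{\epsilon}\to0$ is used directly, and no further estimate is needed. The main input is therefore Proposition \ref{p:hit}, built from Corollaries \ref{c:hit1} and \ref{c:hit2}, and the present lemma follows from it by the strong Markov property as described.
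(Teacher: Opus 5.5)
Your proof is correct and follows essentially the same route as the paper: the stochastic representation of $F_{\epsilon}$, the strong Markov property at $\mathcal{H}_{\mathfrak{m}_{n}}$, and Proposition \ref{p:hit} with the accelerated-time threshold $C/(\epsilon^{2}\theta_{\epsilon})\to0$. Two points the paper passes over silently are made explicit in your version: you keep the pre-hitting integral $\int_{0}^{\tau}e^{-\lambda t}G(X_{\epsilon}(t))\,{\rm d}t$ and bound it by $\|G\|_{\infty}\,{\rm E}[\tau\wedge\lambda^{-1}]$, and you check that $a=2/\sqrt{-U''(\mathfrak{s}_{n-1})}$, $b=2/\sqrt{-U''(\mathfrak{s}_{n})}$ are admissible and reproduce $\mathcal{W}^{n}_{\epsilon}$.
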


\begin{proof}
By \eqref{eq:pr},
\[
\begin{aligned}F_{\epsilon}(x) & ={\rm E}^{\mathbb{Q}^{\epsilon}_{x}}\left[\int^{\infty}_{0}e^{-\lambda t}\,G(X_{\epsilon}(t))\,{\rm d}t\right]\\
 & ={\rm E}^{\mathbb{Q}^{\epsilon}_{x}}\left[\int^{\infty}_{0}e^{-\lambda t}\,G(X_{\epsilon}(t))\,{\rm d}t\,{\bf 1}\left\{ \mathcal{H}_{\mathfrak{m}_{n}}\le\frac{1}{\epsilon^{2}\theta_{\epsilon}}\right\} \right]\\
 & \quad+{\rm E}^{\mathbb{Q}^{\epsilon}_{x}}\left[\int^{\infty}_{0}e^{-\lambda t}\,G(X_{\epsilon}(t))\,{\rm d}t\,{\bf 1}\left\{ \mathcal{H}_{\mathfrak{m}_{n}}>\frac{1}{\epsilon^{2}\theta_{\epsilon}}\right\} \right].
\end{aligned}
\]
Since $G$ is bounded, by Proposition \ref{p:hit}, the last expectation
vanishes uniformly on $x\in\mathcal{W}^{n}_{\epsilon}$. Hence,
\[
F_{\epsilon}(x)={\rm E}^{\mathbb{Q}^{\epsilon}_{x}}\left[\int^{\infty}_{0}e^{-\lambda t}\,G(X_{\epsilon}(t))\,{\rm d}t\,{\bf 1}\left\{ \mathcal{H}_{\mathfrak{m}_{n}}\le\frac{1}{\epsilon^{2}\theta_{\epsilon}}\right\} \right]+o_{\epsilon}(1).
\]
By the strong Markov property, the last expectation is equal to
\[
\begin{aligned} & {\rm E}^{\mathbb{Q}^{\epsilon}_{x}}\left[{\rm E}^{\mathbb{Q}^{\epsilon}_{X_{\epsilon}(\mathcal{H}_{\mathfrak{m}_{n}})}}\left[\int^{\infty}_{0}e^{-\lambda t}\,G(X_{\epsilon}(t))\,{\rm d}t\right]e^{-\lambda\mathcal{H}_{\mathfrak{m}_{n}}}\,{\bf 1}\left\{ \mathcal{H}_{\mathfrak{m}_{n}}\le\frac{1}{\epsilon^{2}\theta_{\epsilon}}\right\} \right]\\
 & ={\rm E}^{\mathbb{Q}^{\epsilon}_{x}}\left[{\rm E}^{\mathbb{Q}^{\epsilon}_{\mathfrak{m}_{n}}}\left[\int^{\infty}_{0}e^{-\lambda t}\,G(X_{\epsilon}(t))\,{\rm d}t\right]e^{-\lambda\mathcal{H}_{\mathfrak{m}_{n}}}\,{\bf 1}\left\{ \mathcal{H}_{\mathfrak{m}_{n}}\le\frac{1}{\epsilon^{2}\theta_{\epsilon}}\right\} \right]\\
 & ={\rm E}^{\mathbb{Q}^{\epsilon}_{x}}\left[F_{\epsilon}(\mathfrak{m}_{n})\,e^{-\lambda\mathcal{H}_{\mathfrak{m}_{n}}}\,{\bf 1}\left\{ \mathcal{H}_{\mathfrak{m}_{n}}\le\frac{1}{\epsilon^{2}\theta_{\epsilon}}\right\} \right].
\end{aligned}
\]
Since $G$ is bounded, the last expectation is equal to
\[
F_{\epsilon}(\mathfrak{m}_{n})\,\mathbb{Q}^{\epsilon}_{x}\left[\mathcal{H}_{\mathfrak{m}_{n}}\le\frac{1}{\epsilon^{2}\theta_{\epsilon}}\right]+o_{\epsilon}(1).
\]
Hence, the proof is completed by applying Proposition \ref{p:hit}
again.
\end{proof}

\begin{lem}
\label{l:p_R-1}We have
\[
\begin{aligned}\int_{\mathbb{R}}\phi^{n}_{\epsilon}(x)F_{\epsilon}(x)e^{-U(x)/\epsilon}\,{\rm d}x & =\nu_{n}F_{\epsilon}(\mathfrak{m}_{n})\sqrt{2\pi\epsilon}e^{-U(\mathfrak{m}_{n})/\epsilon}+o_{\epsilon}(1)\sqrt{\epsilon}e^{-U(\mathfrak{m}_{n})/\epsilon},\\
\int_{\mathbb{R}}\phi^{n}_{\epsilon}(x)G(x)e^{-U(x)/\epsilon}\,{\rm d}x & =\nu_{n}{\bf g}(n)\sqrt{2\pi\epsilon}e^{-U(\mathfrak{m}_{n})/\epsilon}+o_{\epsilon}(1)\sqrt{\epsilon}e^{-U(\mathfrak{m}_{n})/\epsilon}.
\end{aligned}
\]
\end{lem}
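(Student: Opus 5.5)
The plan is to split each integral according to the support of $\phi^{n}_{\epsilon}$, which is contained in $\mathcal{C}^{n-1}_{\epsilon}\cup\mathcal{W}^{n}_{\epsilon}\cup\mathcal{C}^{n}_{\epsilon}$. On this set $0\le\phi^{n}_{\epsilon}\le1$, and both $F_{\epsilon}$ and $G$ are bounded uniformly in $\epsilon$: we have $\|F_{\epsilon}\|_{\infty}\le\lambda^{-1}\|G\|_{\infty}$ by \eqref{eq:pr}. Lemma \ref{lem:test1} then lets us discard the piece of each integral outside $\mathcal{E}^{n}$. More precisely,
\[
\left|\int_{\mathbb{R}\setminus\mathcal{E}^{n}}\phi^{n}_{\epsilon}(x)F_{\epsilon}(x)e^{-U(x)/\epsilon}\,{\rm d}x\right|\le\|F_{\epsilon}\|_{\infty}\int_{\mathbb{R}\setminus\mathcal{E}^{n}}\phi^{n}_{\epsilon}(x)e^{-U(x)/\epsilon}\,{\rm d}x=o_{\epsilon}(1)\sqrt{\epsilon}e^{-U(\mathfrak{m}_{n})/\epsilon}
\]
by \eqref{eq:l_test1-2}, and the same bound holds with $G$ in place of $F_{\epsilon}$. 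It therefore suffices to treat the integrals over $\mathcal{E}^{n}$, where $\phi^{n}_{\epsilon}\equiv1$ for small $\epsilon$, since $\mathcal{E}^{n}\subset\mathcal{W}^{n}_{\epsilon}$ once $\delta\to0$.

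For the $F_{\epsilon}$ integral, note that $\mathcal{E}^{n}\subset\mathcal{W}^{n}_{\epsilon}$. Lemma \ref{l:flat_ext} gives $\sup_{x\in\mathcal{E}^{n}}|F_{\epsilon}(x)-F_{\epsilon}(\mathfrak{m}_{n})|=o_{\epsilon}(1)$. Hence
\[
\int_{\mathcal{E}^{n}}F_{\epsilon}(x)e^{-U(x)/\epsilon}\,{\rm d}x=F_{\epsilon}(\mathfrak{m}_{n})\int_{\mathcal{E}^{n}}e^{-U(x)/\epsilon}\,{\rm d}x+o_{\epsilon}(1)\int_{\mathcal{E}^{n}}e^{-U(x)/\epsilon}\,{\rm d}x.
\]
By \eqref{eq:l_test1-1}, or directly by the Laplace method, $\int_{\mathcal{E}^{n}}e^{-U/\epsilon}=\nu_{n}\sqrt{2\pi\epsilon}e^{-U(\mathfrak{m}_{n})/\epsilon}(1+o_{\epsilon}(1))$. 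Combining this with the boundedness of $F_{\epsilon}(\mathfrak{m}_{n})$ gives the first identity.

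For the $G$ integral, split $\mathcal{E}^{n}$ into $\widehat{\mathcal{E}}^{n}$ and $\mathcal{E}^{n}\setminus\widehat{\mathcal{E}}^{n}$. On $\widehat{\mathcal{E}}^{n}$ we have $G={\bf g}(n)$ by \eqref{eq:G}, and the Laplace method gives $\int_{\widehat{\mathcal{E}}^{n}}e^{-U/\epsilon}=\nu_{n}\sqrt{2\pi\epsilon}e^{-U(\mathfrak{m}_{n})/\epsilon}(1+o_{\epsilon}(1))$, because the minimizer $\mathfrak{m}_{n}$ lies in the interior of $\widehat{\mathcal{E}}^{n}$. On $\mathcal{E}^{n}\setminus\widehat{\mathcal{E}}^{n}$ we have $|x-\mathfrak{m}_{n}|\ge r_{0}/2$. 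Since $\mathfrak{m}_{n}$ is the unique minimizer of $U$ on $[\mathfrak{m}_{n}-r_{0},\mathfrak{m}_{n}+r_{0}]$, which lies strictly inside $(\mathfrak{s}_{n-1},\mathfrak{s}_{n})$, there is $c>0$ with $U\ge U(\mathfrak{m}_{n})+c$ there. This piece is therefore $O(e^{-c/\epsilon})e^{-U(\mathfrak{m}_{n})/\epsilon}=o_{\epsilon}(1)\sqrt{\epsilon}e^{-U(\mathfrak{m}_{n})/\epsilon}$, which gives the second identity.

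None of the steps is a real obstacle once Lemma \ref{l:flat_ext} is available; the main input is that flatness lemma, which in turn rests on Proposition \ref{p:hit}. The only point needing care is the uniformity of the Laplace asymptotics, which is immediate here because $n$ is fixed and $\mathcal{E}^{n}$ is a fixed compact interval.
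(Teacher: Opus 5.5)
Your proposal is correct and follows the same route as the paper, which proves the lemma in one line from the uniform boundedness of $F_{\epsilon}$ and $G$, Lemma \ref{lem:test1}, Lemma \ref{l:flat_ext}, and \eqref{eq:G}. You have written out the details the paper leaves implicit, in particular the Laplace-method check that $\mathcal{E}^{n}\setminus\widehat{\mathcal{E}}^{n}$ contributes only $o_{\epsilon}(1)\sqrt{\epsilon}e^{-U(\mathfrak{m}_{n})/\epsilon}$.
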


\begin{proof}
The boundedness of $F_{\epsilon}$ and $G$, Lemmas \ref{lem:test1}
and \ref{l:flat_ext}, and \eqref{eq:G} prove the lemma.
\end{proof}

Define ${\bf f}_{\epsilon}:\mathbb{Z}\to\mathbb{R}$ as ${\bf f}_{\epsilon}(n):=F_{\epsilon}(\mathfrak{m}_{n})$.
\begin{lem}
\label{l:p_R-2}For $n\in\mathbb{Z}$,
\[
\theta_{\epsilon}\int_{\mathbb{R}}\phi^{n}_{\epsilon}(x)(-\mathscr{L}_{\epsilon}F_{\epsilon})(x)e^{-U(x)/\epsilon}\,{\rm d}x=-\sqrt{2\pi\epsilon}e^{-U(\mathfrak{m}_{n})/\epsilon}\nu_{n}\mathfrak{L}{\bf f}_{\epsilon}(n)+o_{\epsilon}(1)\sqrt{\epsilon}e^{-U(\mathfrak{m}_{n})/\epsilon}.
\]
\end{lem}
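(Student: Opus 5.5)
Since $\mathscr{L}_{\epsilon}$ is symmetric with respect to $e^{-U/\epsilon}{\rm d}x$, I will integrate by parts twice to move the generator onto the test function $\phi^{n}_{\epsilon}$. By \eqref{eq:gen},
\[
\int_{\mathbb{R}}\phi^{n}_{\epsilon}(-\mathscr{L}_{\epsilon}F_{\epsilon})e^{-U/\epsilon}\,{\rm d}x=\int_{\mathbb{R}}F_{\epsilon}(-\mathscr{L}_{\epsilon}\phi^{n}_{\epsilon})e^{-U/\epsilon}\,{\rm d}x.
\]
This is legitimate because $\phi^{n}_{\epsilon}$ is compactly supported, continuous, and piecewise $C^{2}$. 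The pieces match in value at the junctions $\mathfrak{s}_{k}\pm2\delta/\sqrt{-U''(\mathfrak{s}_{k})}$. There the derivative of $p^{k}_{\epsilon}$ equals $e^{-2\log(1/\epsilon)}/M^{k}_{\epsilon}=\epsilon^{2}/M^{k}_{\epsilon}=O(\epsilon^{3/2})$, so the jump of $(\phi^{n}_{\epsilon})'$ produces boundary terms of size $\epsilon\cdot\epsilon^{3/2}e^{-U/\epsilon}$ evaluated near the saddle. Since $U\approx U(\mathfrak{s}_{k})-2\epsilon\log(1/\epsilon)$ there, after multiplying by $\theta_{\epsilon}=e^{D/\epsilon}$ and using $U(\mathfrak{s}_{k})\ge U(\mathfrak{m}_{n})+D$, these terms are $O(\epsilon^{1/2})\sqrt{\epsilon}e^{-U(\mathfrak{m}_{n})/\epsilon}$, hence negligible. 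On $\mathcal{W}^{n}_{\epsilon}$ and outside the support, $\mathscr{L}_{\epsilon}\phi^{n}_{\epsilon}=0$, so only the two saddle boxes $\mathcal{C}^{n-1}_{\epsilon}$ and $\mathcal{C}^{n}_{\epsilon}$ contribute.

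Next I handle the contribution of $\mathcal{C}^{n}_{\epsilon}$, where $\phi^{n}_{\epsilon}=1-p^{n}_{\epsilon}$. I write $-\mathscr{L}_{\epsilon}\phi^{n}_{\epsilon}=\mathscr{L}_{\epsilon}p^{n}_{\epsilon}$ and split $\mathscr{L}_{\epsilon}p^{n}_{\epsilon}=\epsilon e^{U/\epsilon}[e^{-U/\epsilon}(p^{n}_{\epsilon})']'$. Integrating this exact derivative against $F_{\epsilon}$ is the subtle point. Instead of integrating by parts directly, I will use Lemma \ref{lem:test0}, which gives that $\int_{\mathcal{C}^{n}_{\epsilon}}|\mathscr{L}_{\epsilon}p^{n}_{\epsilon}|e^{-U/\epsilon}=o(1)\sqrt{\epsilon}e^{-U(\mathfrak{s}_{n})/\epsilon}$.

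This bound alone does not suffice, because $F_{\epsilon}$ is not flat across the saddle: it moves from ${\bf f}_{\epsilon}(n)$ to ${\bf f}_{\epsilon}(n+1)$. So I will instead integrate by parts once more, in reverse, on the box alone:
\[
\int_{\mathcal{C}^{n}_{\epsilon}}F_{\epsilon}\,\epsilon[e^{-U/\epsilon}(p^{n}_{\epsilon})']'\,{\rm d}x=\Big[\epsilon F_{\epsilon}e^{-U/\epsilon}(p^{n}_{\epsilon})'\Big]_{\partial\mathcal{C}^{n}_{\epsilon}}-\epsilon\int_{\mathcal{C}^{n}_{\epsilon}}F_{\epsilon}'(p^{n}_{\epsilon})'e^{-U/\epsilon}\,{\rm d}x.
\]
Cleanest is to reverse the order of the plan altogether. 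I perform a single integration by parts, $\int\phi(-\mathscr{L}F)e^{-U/\epsilon}=\epsilon\int\phi'F'e^{-U/\epsilon}$. Then on $\mathcal{C}^{n}_{\epsilon}$, using $(p^{n}_{\epsilon})'e^{-U/\epsilon}=e^{-U/\epsilon}e^{U''(\mathfrak{s}_{n})(x-\mathfrak{s}_{n})^{2}/2\epsilon}/M^{n}_{\epsilon}$, I Taylor expand $U$ to third order, which gives $e^{-U(\mathfrak{s}_{n})/\epsilon}(1+O(\delta^{3}/\epsilon))/M^{n}_{\epsilon}$. The integral $\epsilon\int_{\mathcal{C}^{n}_{\epsilon}}F'_{\epsilon}$ then telescopes to $F_{\epsilon}(\mathfrak{s}_{n}+)-F_{\epsilon}(\mathfrak{s}_{n}-)$ at the box endpoints. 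The error term is bounded by $(\delta^{3}/\epsilon)\,\epsilon\int|F'_{\epsilon}|$, which needs a crude a priori bound on $\int_{\mathcal{C}}|F'_{\epsilon}|$; I expect this to be the main obstacle. I will try to control it via the explicit one-dimensional solution, $e^{-U/\epsilon}F'_{\epsilon}$ being an integral of $(\lambda F_{\epsilon}-G)e^{-U/\epsilon}/(\epsilon\theta_{\epsilon})$. Both box endpoints lie in $\mathcal{W}^{n}_{\epsilon}$ and $\mathcal{W}^{n+1}_{\epsilon}$, so Lemma \ref{l:flat_ext} replaces the endpoint values by ${\bf f}_{\epsilon}(n)$ and ${\bf f}_{\epsilon}(n+1)$.

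Collecting terms, the $\mathcal{C}^{n}_{\epsilon}$ piece becomes $\theta_{\epsilon}\epsilon e^{-U(\mathfrak{s}_{n})/\epsilon}({\bf f}_{\epsilon}(n)-{\bf f}_{\epsilon}(n+1))/M^{n}_{\epsilon}$ up to errors. Using $M^{n}_{\epsilon}\simeq\sqrt{2\pi\epsilon}/\sqrt{-U''(\mathfrak{s}_{n})}$ and $U(\mathfrak{s}_{n})=U(\mathfrak{m}_{n})+\mathfrak{h}^{+}_{n}$, this equals $\sqrt{2\pi\epsilon}e^{-U(\mathfrak{m}_{n})/\epsilon}\omega_{n}e^{(D-\mathfrak{h}^{+}_{n})/\epsilon}({\bf f}_{\epsilon}(n)-{\bf f}_{\epsilon}(n+1))$. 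By Assumption \ref{assu:M}-(2) this is either $\nu_{n}r_{{\bf y}}(n,n+1)({\bf f}_{\epsilon}(n)-{\bf f}_{\epsilon}(n+1))$ or exponentially small, since $\mathfrak{h}^{+}_{n}\ge D+c$ in the latter case and ${\bf f}_{\epsilon}$ is bounded. The $\mathcal{C}^{n-1}_{\epsilon}$ piece is symmetric and yields the $r_{{\bf y}}(n,n-1)$ term. Summing the two pieces gives $-\nu_{n}\mathfrak{L}{\bf f}_{\epsilon}(n)$ times the prefactor, as claimed.
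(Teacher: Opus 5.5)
Your first plan goes wrong because of a miscount at the junctions, not because of any weakness of Lemma \ref{lem:test0}. At $\mathfrak{s}_{k}\pm2\delta/\sqrt{-U''(\mathfrak{s}_{k})}$ one has $e^{-U/\epsilon}=\epsilon^{-2}e^{-U(\mathfrak{s}_{k})/\epsilon}(1+o_{\epsilon}(1))$. Hence $\epsilon\,(p^{k}_{\epsilon})'e^{-U/\epsilon}=\epsilon\cdot\epsilon^{2}(M^{k}_{\epsilon})^{-1}\cdot\epsilon^{-2}e^{-U(\mathfrak{s}_{k})/\epsilon}(1+o_{\epsilon}(1))=\sqrt{2\pi\epsilon}\,\omega_{k}\,e^{-U(\mathfrak{s}_{k})/\epsilon}(1+o_{\epsilon}(1))$. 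After multiplying by $\theta_{\epsilon}$, these junction terms are of order $\sqrt{\epsilon}e^{-U(\mathfrak{m}_{n})/\epsilon}$, not $\epsilon e^{-U(\mathfrak{m}_{n})/\epsilon}$: they \emph{are} the leading term of the lemma. Conversely, the box integrals $\int_{\mathcal{C}^{k}_{\epsilon}}F_{\epsilon}\,\mathscr{L}_{\epsilon}p^{k}_{\epsilon}\,e^{-U/\epsilon}\,{\rm d}x$ are negligible by Lemma \ref{lem:test0}, using only $\|F_{\epsilon}\|_{\infty}\le\|G\|_{\infty}/\lambda$. Whether $F_{\epsilon}$ is flat across the saddle plays no role there, so your stated reason for abandoning the plan is a misdiagnosis.

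Had you kept the junction terms, your first plan would have been exactly the paper's proof. That proof does one integration by parts to reach $\epsilon\int(\phi^{n}_{\epsilon})'\hat{F}_{\epsilon}'e^{-U/\epsilon}$. A second one on each $\mathcal{C}^{k}_{\epsilon}$ moves the derivative back onto $p^{k}_{\epsilon}$; its boundary terms give $\sqrt{2\pi\epsilon}\,\omega_{k}e^{-U(\mathfrak{s}_{k})/\epsilon}$ times the jump of $\hat{F}_{\epsilon}$ across the box. Lemma \ref{lem:test0} handles the bulk term and Lemma \ref{l:flat_ext} the endpoint values. The paper works with a $C^{\infty}_{c}$ core approximation $\hat{F}_{\epsilon}$ to avoid any regularity question.

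The route you finally adopt is genuinely different, and it does go through. After the single integration by parts you freeze the weight, $(p^{n}_{\epsilon})'e^{-U/\epsilon}=(M^{n}_{\epsilon})^{-1}e^{-U(\mathfrak{s}_{n})/\epsilon}(1+O(\delta^{3}/\epsilon))$ on $\mathcal{C}^{n}_{\epsilon}$, and telescope $\int F_{\epsilon}'$. The approximation error therefore falls on $F_{\epsilon}'$ instead of on $F_{\epsilon}$. That costs two things the paper's route does not need:
\begin{enumerate}
\item differentiability of $F_{\epsilon}$, which is true in one dimension but should be stated (alternatively use $\hat{F}_{\epsilon}$, for which $\theta_{\epsilon}\mathscr{L}_{\epsilon}\hat{F}_{\epsilon}$ is still uniformly bounded);
\item an $L^{1}$ bound on $F_{\epsilon}'$ over the box.
\end{enumerate}
The latter closes along the line you indicate. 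With $\psi:=e^{-U/\epsilon}F_{\epsilon}'$ one has $\psi'=e^{-U/\epsilon}(\lambda F_{\epsilon}-G)/(\epsilon\theta_{\epsilon})$, so on $\mathcal{C}^{n}_{\epsilon}=[a,b]$ the oscillation of $\psi$ is at most $C\delta\epsilon^{-3}\theta_{\epsilon}^{-1}e^{-U(\mathfrak{s}_{n})/\epsilon}$. Since $\max_{[a,b]}U=U(\mathfrak{s}_{n})$, this yields $\int_{a}^{b}|F_{\epsilon}'|\le|F_{\epsilon}(b)-F_{\epsilon}(a)|+C\delta^{2}\epsilon^{-3}e^{-D/\epsilon}\le2\|G\|_{\infty}/\lambda+o_{\epsilon}(1)$. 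Your error term then becomes $O(\delta^{3}/\epsilon)\sqrt{\epsilon}e^{-U(\mathfrak{m}_{n})/\epsilon}$.

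So your argument is sound once this estimate is written out. It does lean on one-dimensional features (telescoping and the explicit formula for $e^{-U/\epsilon}F_{\epsilon}'$). The paper's second integration by parts uses only the sup-norm of $F_{\epsilon}$ and Lemma \ref{lem:test0}, which is available in any dimension. Finally, Assumption \ref{assu:M}-(2) is not needed here: for fixed $n$, $\mathfrak{h}^{+}_{n}>D$ already makes that term exponentially small.
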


\begin{proof}
Since $F_{\epsilon}$ may not be differentiable, we cannot use the
integration by parts directly. Nevertheless, since $C^{\infty}_{c}(\mathbb{R})$
is a core of $\mathscr{L}_{\epsilon}$, for each $\epsilon>0$, there
exists $\hat{F}_{\epsilon}\in C^{\infty}_{c}(\mathbb{R})$ such that
\begin{equation}
\|\hat{F}_{\epsilon}-F_{\epsilon}\|_{L^{\infty}(\mathbb{R})}<\epsilon\theta^{-1}_{\epsilon}\qquad\text{and}\qquad\|\mathscr{L}_{\epsilon}\hat{F}_{\epsilon}-\mathscr{L}_{\epsilon}F_{\epsilon}\|_{L^{\infty}(\mathbb{R})}<\epsilon\theta^{-1}_{\epsilon}.\label{eq:Feps-approx}
\end{equation}
Since $\hat{F}_{\epsilon}\in C^{\infty}_{c}(\mathbb{R})$,
\[
\mathscr{L}_{\epsilon}\hat{F}_{\epsilon}=\epsilon e^{U/\epsilon}\left[e^{-U/\epsilon}\left(\hat{F}_{\epsilon}\right)'\right]'=-U'\left(\hat{F}_{\epsilon}\right)'+\epsilon\left(\hat{F}_{\epsilon}\right)''.
\]
Hence, since $\phi^{n}_{\epsilon}$ is bounded, by Lemma \ref{lem:test1},
\[
\begin{aligned} & \left|\theta_{\epsilon}\int_{\mathbb{R}}\phi^{n}_{\epsilon}(x)(-\mathscr{L}_{\epsilon}F_{\epsilon})(x)e^{-U(x)/\epsilon}\,{\rm d}x-\theta_{\epsilon}\int_{\mathbb{R}^{d}}\phi^{n}_{\epsilon}(x)(-\mathscr{L}_{\epsilon}\hat{F}_{\epsilon})(x)e^{-U(x)/\epsilon}\,{\rm d}x\right|\\
 & \le\epsilon\left|\int_{\mathbb{R}}\phi^{n}_{\epsilon}(x)e^{-U(x)/\epsilon}\,{\rm d}x\right|=o_{\epsilon}(1)\sqrt{\epsilon}e^{-U(\mathfrak{m}_{n})/\epsilon}.
\end{aligned}
\]
Also,
\[
\left|\sqrt{2\pi\epsilon}e^{-U(\mathfrak{m}_{n})/\epsilon}\nu_{n}\mathfrak{L}F_{\epsilon}(\mathfrak{m}_{n})-\sqrt{2\pi\epsilon}e^{-U(\mathfrak{m}_{n})/\epsilon}\nu_{n}\mathfrak{L}\hat{F}_{\epsilon}(\mathfrak{m}_{n})\right|=o_{\epsilon}(1)\sqrt{\epsilon}e^{-U(\mathfrak{m}_{n})/\epsilon}.
\]
Therefore, it suffices to prove that
\begin{equation}
\theta_{\epsilon}\int_{\mathbb{R}}\phi^{n}_{\epsilon}(x)(-\mathscr{L}_{\epsilon}\hat{F}_{\epsilon})(x)e^{-U(x)/\epsilon}\,{\rm d}x=-\sqrt{2\pi\epsilon}e^{-U(\mathfrak{m}_{n})/\epsilon}\nu_{n}\mathfrak{L}{\bf f}_{\epsilon}(n)+o_{\epsilon}(1)\sqrt{\epsilon}e^{-U(\mathfrak{m}_{n})/\epsilon}.\label{eq:l_pR2-0}
\end{equation}

Since $\phi^{n}_{\epsilon}$ is supported on $\mathcal{C}^{n-1}_{\epsilon}\cup\mathcal{W}^{n}_{\epsilon}\cup\mathcal{C}^{n}_{\epsilon}$
and $(\phi^{n}_{\epsilon})'=0$ on $\mathcal{W}^{n}_{\epsilon}$,
by the integration by parts, 
\[
\begin{aligned}\int_{\mathbb{R}}\phi^{n}_{\epsilon}(x)(-\mathscr{L}_{\epsilon}\hat{F}_{\epsilon})(x)e^{-U(x)/\epsilon}\,{\rm d}x & =\epsilon\int_{\mathcal{C}^{n-1}_{\epsilon}}\left(\phi^{n}_{\epsilon}\right)'(x)\left(\hat{F}_{\epsilon}\right)'(x)e^{-U(x)/\epsilon}\,{\rm d}x\\
 & \quad+\epsilon\int_{\mathcal{C}^{n}_{\epsilon}}\left(\phi^{n}_{\epsilon}\right)'(x)\left(\hat{F}_{\epsilon}\right)'(x)e^{-U(x)/\epsilon}\,{\rm d}x.
\end{aligned}
\]
By the definition \eqref{eq:def_phi}, another integration by parts
yields
\[
\begin{aligned}\epsilon\int_{\mathcal{C}^{n}_{\epsilon}}\left(\phi^{n}_{\epsilon}\right)'(x)\left(\hat{F}_{\epsilon}\right)'(x)e^{-U(x)/\epsilon}\,{\rm d}x & =-\epsilon\int_{\mathcal{C}^{n}_{\epsilon}}\left(p^{n}_{\epsilon}\right)'(x)\left(\hat{F}_{\epsilon}\right)'(x)e^{-U(x)/\epsilon}\,{\rm d}x\\
 & =-\epsilon\left[\hat{F}_{\epsilon}(x)\left(p^{n}_{\epsilon}\right)'(x)e^{-U(x)/\epsilon}\right]^{\mathfrak{s}_{n}+2\delta/\sqrt{-U''(\mathfrak{s}_{n})}}_{\mathfrak{s}_{n}-2\delta/\sqrt{-U''(\mathfrak{s}_{n})}}\\
 & \quad+\int_{\mathcal{C}^{n}_{\epsilon}}\hat{F}_{\epsilon}(x)\mathscr{L}_{\epsilon}p^{n}_{\epsilon}(x)e^{-U(x)/\epsilon}\,{\rm d}x.
\end{aligned}
\]
Since $\widehat{F}_{\epsilon}$ is bounded, by Lemma \ref{lem:test0},
the last integral is $o_{\epsilon}(1)\sqrt{\epsilon}e^{-U(\mathfrak{s}_{n})/\epsilon}$.
By the definition \eqref{eq:def_p},
\[
\left(p^{n}_{\epsilon}\right)'(\mathfrak{s}_{n}\pm2\delta/\sqrt{-U''(\mathfrak{s}_{n})})=(1+o_{\epsilon}(1))\sqrt{\frac{-U''(\mathfrak{s}_{n})}{2\pi\epsilon}}\,e^{-2\log\frac{1}{\epsilon}}.
\]
 For $x\in\mathcal{C}^{n}_{\epsilon}$, by the Taylor expansion,
\[
U(x)=U(\mathfrak{s}_{n})+\frac{U''(\mathfrak{s}_{n})}{2}(x-\mathfrak{s}_{n})^{2}+O((x-\mathfrak{s}_{n})^{3})
\]
so that
\[
e^{-U(\mathfrak{s}_{n}\pm2\delta/\sqrt{-U''(\mathfrak{s}_{n})})/\epsilon}=e^{-U(\mathfrak{s}_{n})/\epsilon}e^{2\log\frac{1}{\epsilon}}[1+o_{\epsilon}(1)].
\]
Hence,
\[
\begin{aligned} & -\epsilon\left[\hat{F}_{\epsilon}(x)\left(p^{n}_{\epsilon}\right)'(x)e^{-U(x)/\epsilon}\right]^{\mathfrak{s}_{n}+2\delta/\sqrt{-U''(\mathfrak{s}_{n})}}_{\mathfrak{s}_{n}-2\delta/\sqrt{-U''(\mathfrak{s}_{n})}}\\
 & =\sqrt{\frac{-U''(\mathfrak{s}_{n})\epsilon}{2\pi}}\left[\hat{F}_{\epsilon}\left(\mathfrak{s}_{n}-\frac{2\delta}{\sqrt{-U''(\mathfrak{s}_{n})}}\right)-\hat{F}_{\epsilon}\left(\mathfrak{s}_{n}+\frac{2\delta}{\sqrt{-U''(\mathfrak{s}_{n})}}\right)\right]e^{-U(\mathfrak{s}_{n})/\epsilon}[1+o_{\epsilon}(1)].
\end{aligned}
\]
By Lemma \ref{l:flat_ext}, the last expression is equal to
\[
\sqrt{\frac{-U''(\mathfrak{s}_{n})\epsilon}{2\pi}}\left[\hat{F}_{\epsilon}(\mathfrak{m}_{n})-\hat{F}_{\epsilon}(\mathfrak{m}_{n+1})\right]e^{-U(\mathfrak{s}_{n})/\epsilon}+o_{\epsilon}(1)\sqrt{\epsilon}e^{-U(\mathfrak{s}_{n})/\epsilon}.
\]
In summary, by \eqref{eq:mass},
\[
\begin{aligned} & \epsilon\int_{\mathcal{C}^{n}_{\epsilon}}\left(\phi^{n}_{\epsilon}\right)'(x)\left(\hat{F}_{\epsilon}\right)'(x)e^{-U(x)/\epsilon}\,{\rm d}x\\
 & =\sqrt{\frac{-U''(\mathfrak{s}_{n})\epsilon}{2\pi}}\left[\hat{F}_{\epsilon}(\mathfrak{m}_{n})-\hat{F}_{\epsilon}(\mathfrak{m}_{n+1})\right]e^{-U(\mathfrak{s}_{n})/\epsilon}+o_{\epsilon}(1)\sqrt{\epsilon}e^{-U(\mathfrak{s}_{n})/\epsilon}\\
 & =\sqrt{2\pi\epsilon}\nu_{n}\frac{\omega_{n}}{\nu_{n}}\left[\hat{F}_{\epsilon}(\mathfrak{m}_{n})-\hat{F}_{\epsilon}(\mathfrak{m}_{n+1})\right]e^{-U(\mathfrak{s}_{n})/\epsilon}+o_{\epsilon}(1)\sqrt{\epsilon}e^{-U(\mathfrak{s}_{n})/\epsilon}.
\end{aligned}
\]
By the same computation,
\[
\begin{aligned} & \epsilon\int_{\mathcal{C}^{n-1}_{\epsilon}}\left(\phi^{n}_{\epsilon}\right)'(x)\left(\hat{F}_{\epsilon}\right)'(x)e^{-U(x)/\epsilon}\,{\rm d}x\\
 & =\sqrt{2\pi\epsilon}\nu_{n}\frac{\omega_{n-1}}{\nu_{n}}\left[\hat{F}_{\epsilon}(\mathfrak{m}_{n})-\hat{F}_{\epsilon}(\mathfrak{m}_{n-1})\right]e^{-U(\mathfrak{s}_{n})/\epsilon}+o_{\epsilon}(1)\sqrt{\epsilon}e^{-U(\mathfrak{s}_{n-1})/\epsilon}.
\end{aligned}
\]
Hence,
\[
\begin{aligned} & \theta_{\epsilon}\int_{\mathbb{R}}\phi^{n}_{\epsilon}(x)(-\mathscr{L}_{\epsilon}\hat{F}_{\epsilon})(x)e^{-U(x)/\epsilon}\,{\rm d}x\\
 & =\sqrt{2\pi\epsilon}\nu_{n}\frac{\omega_{n-1}}{\nu_{n}}\left[\hat{F}_{\epsilon}(\mathfrak{m}_{n})-\hat{F}_{\epsilon}(\mathfrak{m}_{n-1})\right]\theta_{\epsilon}e^{-U(\mathfrak{s}_{n-1})/\epsilon}\\
 & \quad+\sqrt{2\pi\epsilon}\nu_{n}\frac{\omega_{n}}{\nu_{n}}\left[\hat{F}_{\epsilon}(\mathfrak{m}_{n})-\hat{F}_{\epsilon}(\mathfrak{m}_{n+1})\right]\theta_{\epsilon}e^{-U(\mathfrak{s}_{n})/\epsilon}\\
 & \qquad+o_{\epsilon}(1)\sqrt{\epsilon}\theta_{\epsilon}e^{-U(\mathfrak{s}_{n-1})/\epsilon}+o_{\epsilon}(1)\sqrt{\epsilon}\theta_{\epsilon}e^{-U(\mathfrak{s}_{n})/\epsilon}.
\end{aligned}
\]
Note that $\theta_{\epsilon}=e^{D/\epsilon}$ and $U(\mathfrak{s}_{n-1})-U(\mathfrak{m}_{n}),U(\mathfrak{s}_{n})-U(\mathfrak{m}_{n})\ge D$.
Hence,
\[
\begin{aligned} & \theta_{\epsilon}\int_{\mathbb{R}}\phi^{n}_{\epsilon}(x)(-\mathscr{L}_{\epsilon}\hat{F}_{\epsilon})(x)e^{-U(x)/\epsilon}dx\\
 & =\sqrt{2\pi\epsilon}\nu_{n}\frac{\omega_{n-1}}{\nu_{n}}\left[F_{\epsilon}(\mathfrak{m}_{n})-F_{\epsilon}(\mathfrak{m}_{n-1})\right]e^{-U(\mathfrak{m}_{n})/\epsilon}{\bf 1}\{\mathfrak{h}^{-}_{n}=D\}\\
 & \quad+\sqrt{2\pi\epsilon}\nu_{n}\frac{\omega_{n}}{\nu_{n}}\left[F_{\epsilon}(\mathfrak{m}_{n})-F_{\epsilon}(\mathfrak{m}_{n+1})\right]e^{-U(\mathfrak{m}_{n})/\epsilon}{\bf 1}\{\mathfrak{h}^{+}_{n}=D\}+o_{\epsilon}(1)\sqrt{\epsilon}e^{-U(\mathfrak{m}_{n})/\epsilon}\\
 & =-\sqrt{2\pi\epsilon}e^{-U(\mathfrak{m}_{n})/\epsilon}\nu_{n}\mathfrak{L}{\bf f}_{\epsilon}(n)+o_{\epsilon}(1)\sqrt{\epsilon}e^{-U(\mathfrak{m}_{n})/\epsilon},
\end{aligned}
\]
which is \eqref{eq:l_pR2-0}.
\end{proof}

Now, we are ready to prove Proposition \ref{p:R}.
\begin{proof}[Proof of Proposition \ref{p:R}]
 Fix $n\in\mathbb{Z}$ and let $\phi^{n}_{\epsilon}$ be defined
as in \eqref{eq:def_phi}. By multiplying both sides of \eqref{eq:res-Lan}
by $\phi^{n}_{\epsilon}$ and integrating over $\mathbb{R}$ with
respect to the measure $e^{-U(x)/\epsilon}\,{\rm d}x$, Lemmas \ref{l:p_R-1}
and \ref{l:p_R-2} yield
\[
\lim_{\epsilon\to0}(\lambda-\mathfrak{L}){\bf f}_{\epsilon}(n)={\bf g}(n)=(\lambda-\mathfrak{L}){\bf f}(n),
\]
which implies
\[
\lim_{\epsilon\to0}(\lambda-\mathfrak{L})({\bf f}_{\epsilon}-{\bf f})(n)=0.
\]
Since $\mathfrak{L}$ is the infinitesimal generator of the Markov
chain $\{{\bf y}(t)\}_{t\ge0}$, i.e., it generates a contraction
semigroup that is the probability semigroup of the chain, by the Hille--Yosida
theorem, $(\lambda-\mathfrak{L})$ is invertible and $(\lambda-\mathfrak{L})^{-1}$
is bounded. Therefore, $(\lambda-\mathfrak{L})^{-1}$ is continuous
so that
\[
\lim_{\epsilon\to0}F_{\epsilon}(\mathfrak{m}_{n})={\bf f}(n)\qquad\text{for all}\quad n\in\mathbb{Z}.
\]
Along with Lemma \ref{l:flat_ext}, this implies condition $\mathfrak{R}$
as desired.
\end{proof}

\subsection{\label{sec4.2}Compactness condition}

For all $n\in\mathbb{Z}$ and $T,\eta>0$, there exists $\mathfrak{k}=\mathfrak{k}(n,T,\eta)\in\mathbb{N}$
such that
\begin{equation}
{\bf Q}_{n}\left[H_{\{-\mathfrak{k},\mathfrak{k}\}}<T\right]<\eta.\label{eq:escape_K}
\end{equation}
Let
\[
\mathcal{K}=\mathcal{K}(n,T,\eta):=\left[\mathfrak{m}_{-\mathfrak{k}}-2r_{0},\mathfrak{m}_{\mathfrak{k}}+2r_{0}\right].
\]
Then, $\mathcal{K}$ satisfies \eqref{eq:K1-1-Lan} since
\[
\left\{ k\in\mathbb{Z}:\mathcal{E}^{k}\cap\mathcal{K}\ne\emptyset\right\} =[-\mathfrak{k},\mathfrak{k}]\cap\mathbb{Z}.
\]
In addition, the hitting time of $\mathcal{E}^{-\mathfrak{k}}\cup\mathcal{E}^{\mathfrak{k}}$
of the diffusion process $\{X_{\epsilon}(t)\}_{t\ge0}$ can be controlled
with the limiting Markov chain $\{{\bf y}(t)\}_{t\ge0}$ as follows.
\begin{lem}
\label{l:K}Fix $n\in\mathbb{Z}$ and $T,\eta>0$. Let $\mathfrak{k}=\mathfrak{k}(n,T,\eta)\in\mathbb{N}$
satisfy \eqref{eq:escape_K}. Then,
\[
\limsup_{\epsilon\to0}\sup_{x\in\mathcal{E}^{n}}\mathbb{Q}^{\epsilon}_{x}\left[\mathcal{H}_{\mathcal{E}^{-\mathfrak{k}}\cup\mathcal{E}^{\mathfrak{k}}}<T\right]<\eta.
\]
\end{lem}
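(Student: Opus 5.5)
We reduce to a system with finitely many wells. On that system the compactness issue disappears, so the machinery already built (Proposition \ref{p:R} and Theorem \ref{t:main2}) gives convergence to a finite chain. We then transfer the hitting estimate back to the original diffusion using the fact that both processes coincide up to the hitting time of $\mathcal{E}^{-\mathfrak{k}}\cup\mathcal{E}^{\mathfrak{k}}$.

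\emph{Step 1 (modified potential).} Construct a $C^{4}$ potential $\widetilde U$ with $\widetilde U=U$ on $[\mathfrak{m}_{-\mathfrak{k}}-r_{0},\mathfrak{m}_{\mathfrak{k}}+r_{0}]$. Outside this interval, choose $\widetilde U$ as follows. To the right of $\mathfrak{m}_{\mathfrak{k}}$ it is strictly increasing (no further critical points) and $\widetilde U\to\infty$. To the left of $\mathfrak{m}_{-\mathfrak{k}}$ it is strictly decreasing, with $\widetilde U\to\infty$ as $x\to-\infty$. Then the local minima of $\widetilde U$ are exactly $\mathfrak{m}_{-\mathfrak{k}},\dots,\mathfrak{m}_{\mathfrak{k}}$. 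The outer barriers of the two extreme wells are $+\infty$, so Assumptions \ref{assu:U} and \ref{assu:M} hold with the same $D$ and $c$; here we use $\mathfrak{k}>|n|$, which we may assume by enlarging $\mathfrak{k}$. The limiting chain $\widetilde{\bf y}$ on $\{-\mathfrak{k},\dots,\mathfrak{k}\}$ has the rates \eqref{eq:rY}, except that the jumps out of $\pm\mathfrak{k}$ in the outward direction are suppressed. Hence $\widetilde{\bf y}$ and ${\bf y}$, both started at $n$, can be coupled to agree up to $H_{\{-\mathfrak{k},\mathfrak{k}\}}$. In particular,
\[
\widetilde{\bf Q}_{n}\big[H_{\{-\mathfrak{k},\mathfrak{k}\}}\le T'\big]={\bf Q}_{n}\big[H_{\{-\mathfrak{k},\mathfrak{k}\}}\le T'\big]\quad\text{for all }T'>0 .
\]

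\emph{Step 2 (metastability of the modified system).} The proof of Proposition \ref{p:R} is local, well by well: it uses Lemmas \ref{l:hit1}--\ref{l:p_R-2}, which involve only the potential between consecutive saddles. Hence it applies verbatim to $\widetilde U$, with the obvious one-sided version of $\phi^{\pm\mathfrak{k}}_{\epsilon}$ at the two extreme wells (on the outer side there is no saddle, and $\phi$ simply stays equal to $1$ on the well and is cut off where $\widetilde U$ is high). Condition $\mathfrak{K}$ for the modified diffusion holds with the compact set $[\mathfrak{m}_{-\mathfrak{k}}-R,\mathfrak{m}_{\mathfrak{k}}+R]$ for $R$ large. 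Reaching beyond it requires climbing a potential barrier far exceeding $D$, which has negligible probability in time $\theta_{\epsilon}T$ by a standard capacity or equilibrium-potential estimate as in Lemma \ref{l:hit2}. Theorem \ref{t:main2} then yields condition $\mathfrak{C}$ for the modified order process $\widetilde Y_{\epsilon}$. By the argument at the start of Lemma \ref{lem2.7}, the convergence is uniform over starting points $x\in\mathcal{E}^{n}$.

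\emph{Step 3 (transfer and conclusion).} For $x\in\mathcal{E}^{n}$ and $|n|<\mathfrak{k}$, pathwise uniqueness for \eqref{eq:SDE} gives that the original and modified diffusions coincide up to $\mathcal{H}_{\mathcal{E}^{-\mathfrak{k}}\cup\mathcal{E}^{\mathfrak{k}}}$. The reason is that they cannot leave $[\mathfrak{m}_{-\mathfrak{k}}-r_{0},\mathfrak{m}_{\mathfrak{k}}+r_{0}]$ before entering $\mathcal{E}^{\pm\mathfrak{k}}$. Hence the probability in the statement equals the corresponding modified probability. Since $T\le S_{\epsilon}(T)$, Lemma \ref{lem:tech}-(1) bounds that modified probability by $\widetilde{\bf Q}^{\epsilon}_{x}[H_{\{-\mathfrak{k},\mathfrak{k}\}}\le T]$.

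Fix $T'>T$ with ${\bf Q}_{n}[H_{\{-\mathfrak{k},\mathfrak{k}\}}<T']<\eta$. This is possible because \eqref{eq:escape_K} holds with strict inequality and $s\mapsto{\bf Q}_{n}[H<s]$ is left-continuous, so we may alternatively just replace $T$ by a slightly larger time when choosing $\mathfrak{k}$. The set $\{H_{\{-\mathfrak{k},\mathfrak{k}\}}\le T\}$ is closed in the Skorokhod topology on a discrete state space, up to boundary issues handled by moving to $T'$. Portmanteau together with the uniform convergence from Step 2 then gives the $\limsup$ bound $<\eta$.

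The main obstacle is Step 2: checking rigorously that the modified diffusion satisfies $\mathfrak{K}$ and $\widehat{\mathfrak{R}}$ at the two extreme wells. I would handle this by choosing $\widetilde U$ steep enough outside, for example $\widetilde U(x)\ge U(\mathfrak{m}_{\pm\mathfrak{k}})+D+1$ at distance $2r_{0}$ past the extreme wells. Then every contribution from the outer regions in Lemmas \ref{lem:test1} and \ref{l:p_R-2} is exponentially negligible.
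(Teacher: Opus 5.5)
Your proposal is correct and is essentially the paper's argument. The paper also reduces to the finitely many wells $-\mathfrak{k},\dots,\mathfrak{k}$, gets convergence of the modified order process to the reflected chain from the resolvent proof and Theorem \ref{t:main2}, couples the two diffusions (and the two chains) up to the hitting time of $\pm\mathfrak{k}$, and concludes by Portmanteau. Your confining modification of $U$ is in fact more robust than the paper's linear extension of $b$, which can create a spurious saddle when $U''(\mathfrak{m}_{\mathfrak{k}}+2r_{0})<0$. One small correction: passing from $T$ to $T'>T$ needs right- rather than left-continuity of $s\mapsto{\bf Q}_{n}[H<s]$, and it holds because the hitting time of $\{\pm\mathfrak{k}\}$ from $n\neq\pm\mathfrak{k}$ has no atom at $T$; the paper implicitly uses the same fact when it replaces ${}<T$ by ${}\le T$.
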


We prove Proposition \ref{p:K} assuming the above lemma.
\begin{proof}[Proof of Proposition \ref{p:K}]
 Fix $n\in\mathbb{Z}$ and $T,\eta>0$. Let $\mathfrak{k}$ and $\mathcal{K}$
be defined as above. Note that starting from $\mathcal{E}^{n}$, the
diffusion process visits $\mathcal{E}^{-\mathfrak{k}}\cup\mathcal{E}^{\mathfrak{k}}$
before exiting from $\mathcal{K}$. Therefore, by Lemma \ref{l:K},
\[
\limsup_{\epsilon\to0}\sup_{x\in\mathcal{E}^{n}}\mathbb{Q}^{\epsilon}_{x}\left[\mathcal{H}_{\mathcal{K}^{c}}\le T\right]\le\limsup_{\epsilon\to0}\sup_{x\in\mathcal{E}^{n}}\mathbb{Q}^{\epsilon}_{x}\left[\mathcal{H}_{\mathcal{E}^{-\mathfrak{k}}\cup\mathcal{E}^{\mathfrak{k}}}\le T\right]<\eta,
\]
which completes the proof.
\end{proof}

In the remainder of the section, we prove Lemma \ref{l:K}.

\subsubsection{Restriction on $\mathcal{K}$}

Define a new function $\tilde{b}:\mathbb{R}\to\mathbb{R}$ as
\[
\tilde{b}(x):=\begin{cases}
b(x) & \text{if}\quad x\in\mathcal{K},\\
b(\mathfrak{m}_{\mathfrak{k}}+2r_{0})+b'(\mathfrak{m}_{\mathfrak{k}}+2r_{0})(x-\mathfrak{m}_{\mathfrak{k}}-2r_{0}) & \text{if}\quad x>\mathfrak{m}_{\mathfrak{k}}+2r_{0},\\
b(\mathfrak{m}_{-\mathfrak{k}}-2r_{0})+b'(\mathfrak{m}_{-\mathfrak{k}}-2r_{0})(x-\mathfrak{m}_{-\mathfrak{k}}+2r_{0}) & \text{if}\quad x\le\mathfrak{m}_{-\mathfrak{k}}-2r_{0},
\end{cases}
\]
and consider a new SDE
\[
{\rm d}\tilde{\bm{x}}_{\epsilon}(t)=\tilde{b}(\tilde{\bm{x}}_{\epsilon}(t))\,{\rm d}t+\sqrt{2\epsilon}\,{\rm d}\bm{w}_{t}.
\]
Then, the process $\{\tilde{\bm{x}}_{\epsilon}(t)\}_{t\ge0}$ is a
diffusion process with finite stable states $\{\mathfrak{m}_{n}:n\in\tilde{S}\}$
where $\tilde{S}:=[-\mathfrak{k},\mathfrak{k}]\cap\mathbb{Z}$. The
infinitesimal generator, denoted by $\tilde{\mathscr{L}}_{\epsilon}:D(\tilde{\mathscr{L}}_{\epsilon})\subset C_{0}(\mathbb{R})\to C_{0}(\mathbb{R})$,
acts on $C^{2}(\mathbb{R})\cap C_{0}(\mathbb{R})$ as (cf. \eqref{eq:gen})
\begin{equation}
\tilde{\mathscr{L}}_{\epsilon}u=\tilde{b}u'+\epsilon u''.\label{eq:gen-1}
\end{equation}

Let $\{\tilde{{\bf y}}(t)\}_{t\ge0}$ be the $\tilde{S}$-Markov chain
obtained by $\{{\bf y}(t)\}_{t\ge0}$ reflected at $\pm\mathfrak{k}$,
i.e., whose jump rates $r_{\tilde{{\bf y}}}:\tilde{S}\times\tilde{S}\to[0,\infty)$
are given by
\[
r_{\tilde{{\bf y}}}(n,k):=r_{{\bf y}}(n,k)\qquad\text{if}\quad n,k\in\tilde{S}.
\]
Denote by $\tilde{\mathfrak{L}}$ the infinitesimal generator of $\{\tilde{{\bf y}}(t)\}_{t\ge0}$.
Then, the same proof of Proposition \ref{p:R} gives the following
result.
\begin{prop}
\label{p:R-ref}For every $\lambda>0$, $\tilde{{\bf g}}:\tilde{S}\to\mathbb{R}$,
and every lift $\tilde{G}\in C_{0}(\mathbb{R})$ such that
\[
\tilde{G}|_{\widehat{\mathcal{E}}^{n}}=\tilde{{\bf g}}(n);\qquad n\in\tilde{S},
\]
the unique solution $\tilde{F}_{\epsilon}\in C_{0}(\mathbb{R})$ of
the resolvent equation
\[
(\lambda-\theta_{\epsilon}\tilde{\mathscr{L}}_{\epsilon})\tilde{F}_{\epsilon}=\tilde{G},
\]
satisfies
\[
\lim_{\epsilon\to0}\sup_{x\in\mathcal{E}^{n}}\left|\tilde{F}_{\epsilon}(x)-{\bf f}(n)\right|=0,
\]
where $\tilde{{\bf f}}:\tilde{S}\to\mathbb{R}$ solves $(\lambda-\tilde{\mathfrak{L}})\tilde{{\bf f}}=\tilde{{\bf g}}$.
\end{prop}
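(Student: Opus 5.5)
The plan is to rerun the proof of Proposition \ref{p:R} almost verbatim for the modified generator $\tilde{\mathscr{L}}_{\epsilon}$, and to check carefully only the places where the modified geometry outside $\mathcal{K}$ enters. I assume $r_{0}$ is small enough that $b'=-U''<0$ on $[\mathfrak{m}_{n}-2r_{0},\mathfrak{m}_{n}+2r_{0}]$ for all $n$; this should follow from Assumption \ref{assu:U}-(3) together with uniform continuity of $U''$ near the minima, which I would verify.

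\textbf{The modified potential.} Define $\tilde{U}$ with $-\tilde{U}'=\tilde{b}$. Beyond $\mathfrak{m}_{\mathfrak{k}}+2r_{0}$ the drift $\tilde{b}$ is affine with negative slope and negative value. So $\tilde{U}$ is a strictly convex quadratic there, increasing to $+\infty$ with no new critical points, and symmetrically on the left. Consequently:
\begin{itemize}
\item the critical points of $\tilde{U}$ are exactly $\mathfrak{m}_{n}$ for $n\in\tilde{S}$ and $\mathfrak{s}_{n}$ for $-\mathfrak{k}\le n\le\mathfrak{k}-1$;
\item the barriers between consecutive minima in $\tilde{S}$ coincide with the original ones;
\item the two extreme wells are $\tilde{\mathcal{W}}^{\mathfrak{k}}_{\epsilon}:=[\mathfrak{s}_{\mathfrak{k}-1}+2\delta/\sqrt{-U''(\mathfrak{s}_{\mathfrak{k}-1})},\infty)$ and its mirror image, which carry no saddle on their outer side.
\end{itemize}
Hence the test functions are $\phi^{n}_{\epsilon}$ as in \eqref{eq:def_phi} for $|n|<\mathfrak{k}$. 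For $n=\pm\mathfrak{k}$ I take $\phi^{n}_{\epsilon}=1$ on the unbounded well and keep only the single inner saddle layer $\mathcal{C}^{\mathfrak{k}-1}_{\epsilon}$ (resp. $\mathcal{C}^{-\mathfrak{k}}_{\epsilon}$).

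\textbf{Step 1: hitting-time estimates.} Lemma \ref{l:hit1} and Corollary \ref{c:hit1} are unchanged on bounded intervals between consecutive minima. On $[\mathfrak{m}_{\mathfrak{k}},\infty)$ we have $(x-\mathfrak{m}_{\mathfrak{k}})\tilde{U}'(x)\ge0$, so the Lyapunov-type supersolution argument, with $\mathfrak{m}_{\mathfrak{k}}$ playing the role of $\mathfrak{s}_{n}$, gives a bound of order $L^{2}/(\epsilon\theta_{\epsilon})$ on the expected hitting time of $\mathfrak{m}_{\mathfrak{k}}$ started from $[\mathfrak{m}_{\mathfrak{k}},\mathfrak{m}_{\mathfrak{k}}+L]$, for each fixed $L$. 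Corollary \ref{c:hit2} and Proposition \ref{p:hit} then carry over to $\tilde{\mathcal{W}}^{\pm\mathfrak{k}}_{\epsilon}\cap[-L',L']$, and Lemma \ref{l:flat_ext} gives flatness of $\tilde{F}_{\epsilon}$ on each $\tilde{\mathcal{W}}^{n}_{\epsilon}\cap[-L',L']$.

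\textbf{Step 2: the test-function identity.} Flatness on all of an unbounded well is not needed. In Lemma \ref{l:p_R-1} the integrals $\int\phi^{\pm\mathfrak{k}}_{\epsilon}\tilde{F}_{\epsilon}e^{-\tilde{U}/\epsilon}$ and $\int\phi^{\pm\mathfrak{k}}_{\epsilon}\tilde{G}e^{-\tilde{U}/\epsilon}$ only require the following:
\begin{itemize}
\item $\tilde{F}_{\epsilon}$ and $\tilde{G}$ are bounded;
\item the tail mass satisfies $\int_{|x|>L'}e^{-\tilde{U}/\epsilon}=o_{\epsilon}(1)\sqrt{\epsilon}\,e^{-U(\mathfrak{m}_{\pm\mathfrak{k}})/\epsilon}$, which holds by the quadratic growth of $\tilde{U}$ with $\tilde{U}>U(\mathfrak{m}_{\pm\mathfrak{k}})+c$ there;
\item flatness holds on the bounded part.
\end{itemize}
Lemma \ref{lem:test1} holds by the same Laplace argument. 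In Lemma \ref{l:p_R-2} I use the $C^{\infty}_{c}$ core approximation as before, and integrate by parts. Now $(\phi^{\pm\mathfrak{k}}_{\epsilon})'$ vanishes except on one saddle layer, and the boundary terms at infinity vanish by compact support of $\hat{F}_{\epsilon}$. So only the jump to the interior neighbour appears, with rate $\omega/\nu$ times the indicator that the barrier equals $D$. This is exactly $-\nu_{n}\sqrt{2\pi\epsilon}\,e^{-U(\mathfrak{m}_{n})/\epsilon}\,\tilde{\mathfrak{L}}\tilde{\mathbf{f}}_{\epsilon}(n)$ for the reflected chain. Here $D$ is still the original minimal barrier, so some boundary wells may have no allowed jumps; this is harmless.

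\textbf{Step 3: conclusion, and the expected obstacle.} Combining the two lemmas gives $(\lambda-\tilde{\mathfrak{L}})\tilde{\mathbf{f}}_{\epsilon}(n)\to\tilde{\mathbf{g}}(n)$ for every $n\in\tilde{S}$. Since $\tilde{S}$ is finite and $\lambda-\tilde{\mathfrak{L}}$ is an invertible matrix, $\tilde{\mathbf{f}}_{\epsilon}\to\tilde{\mathbf{f}}$, and flatness on $\mathcal{E}^{n}$ finishes the proof. The main obstacle I expect is Step 1 for the unbounded extreme wells: making the hitting-time and flatness estimates hold uniformly on a large but bounded window, and confirming that the tail contributions to the weighted integrals are negligible. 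I would handle this by the explicit quadratic supersolution and the truncation at $L'$ described above.
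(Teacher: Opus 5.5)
Your route is the paper's. The paper disposes of this proposition by saying that the proof of Proposition~\ref{p:R} carries over, and your adaptation is the right way to make that precise: test functions equal to $1$ on the two unbounded wells, only the inner saddle layer contributing in Lemma~\ref{l:p_R-2}, and inversion of the finite matrix $\lambda-\tilde{\mathfrak{L}}$.

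One step, however, fails as written: the hitting-time bound on the extreme wells in Step~1. For $w_{\epsilon}(x)=(R^{2}-(x-\mathfrak{m}_{\mathfrak{k}})^{2})/(2\epsilon\theta_{\epsilon})$ one gets $\theta_{\epsilon}\tilde{\mathscr{L}}_{\epsilon}w_{\epsilon}=(x-\mathfrak{m}_{\mathfrak{k}})\tilde{U}'(x)/\epsilon-1$. Your inequality $(x-\mathfrak{m}_{\mathfrak{k}})\tilde{U}'(x)\ge0$ makes this $\ge-1$, which is the wrong direction for the comparison. Lemma~\ref{l:hit1} works precisely because its centre $\mathfrak{s}_{n}$ is a local \emph{maximum} of $U$, so the drift points away from it.

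You also do not need the half-line. In Lemmas~\ref{l:p_R-1} and~\ref{l:p_R-2}, flatness is used only on $\mathcal{E}^{\mathfrak{k}}$ and at the right endpoint of $\mathcal{C}^{\mathfrak{k}-1}_{\epsilon}$; the rest of the well is negligible by boundedness of $\tilde{F}_{\epsilon}$ and $\tilde{G}$. The only new input is for $x\in[\mathfrak{m}_{\mathfrak{k}},\mathfrak{m}_{\mathfrak{k}}+r_{0}]$, and there you can centre the parabola at $c:=\mathfrak{m}_{\mathfrak{k}}+2r_{0}$. Since $(x-c)U'(x)\le0$ on $[\mathfrak{m}_{\mathfrak{k}},c]$, you get ${\rm E}^{\tilde{\mathbb{Q}}^{\epsilon}_{x}}[\mathcal{H}_{\{\mathfrak{m}_{\mathfrak{k}},c\}}]\le2r_{0}^{2}/(\epsilon\theta_{\epsilon})$. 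The explicit equilibrium potential then gives $\tilde{\mathbb{Q}}^{\epsilon}_{x}[\mathcal{H}_{c}<\mathcal{H}_{\mathfrak{m}_{\mathfrak{k}}}]\le2e^{-(U(\mathfrak{m}_{\mathfrak{k}}+3r_{0}/2)-U(\mathfrak{m}_{\mathfrak{k}}+r_{0}))/\epsilon}$. All of this takes place in $\mathcal{K}$, where $\tilde{b}=b$. Proposition~\ref{p:hit} and Lemma~\ref{l:flat_ext} then go through on $[\mathfrak{s}_{\mathfrak{k}-1}+2\delta/\sqrt{-U''(\mathfrak{s}_{\mathfrak{k}-1})},\mathfrak{m}_{\mathfrak{k}}+r_{0}]$, and symmetrically at $-\mathfrak{k}$.

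On $r_{0}$: you are right that a condition is needed, and the paper leaves it implicit. Its affine continuation of $b$ is confining only if $U''(\mathfrak{m}_{\mathfrak{k}}+2r_{0})>0$ and $U''(\mathfrak{m}_{-\mathfrak{k}}-2r_{0})>0$. Otherwise $\tilde{b}$ changes sign outside $\mathcal{K}$, producing an extra saddle (possibly with barrier below $D$) followed by a drift to infinity, and the conclusion can fail.

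Your justification does not go through, though. Assumption~\ref{assu:U}-(3) controls $U''$ only at critical points, and no uniform modulus of continuity of $U''$ is assumed, so no single $r_{0}$ need work for all $n$. Since only $\tilde{b}=b$ on $\mathcal{K}$ is used afterwards (in Lemma~\ref{l:K}), the clean fix is to replace the continuation by any Lipschitz confining one. For example, take $\tilde{b}(x)=b(\mathfrak{m}_{\mathfrak{k}}+2r_{0})-(x-\mathfrak{m}_{\mathfrak{k}}-2r_{0})$ for $x>\mathfrak{m}_{\mathfrak{k}}+2r_{0}$, where $b(\mathfrak{m}_{\mathfrak{k}}+2r_{0})<0$ automatically, and symmetrically on the left. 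The rest of your argument is unaffected.
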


Let $\tilde{\mathcal{E}}:=\bigcup_{n\in\tilde{S}}\mathcal{E}^{n}$.
Define the accelerated process $\tilde{X}_{\epsilon}(t):=\tilde{\bm{x}}_{\epsilon}(\theta_{\epsilon}t)$,
the trace process $\tilde{X}^{{\rm tr}}_{\epsilon}$ of $\tilde{X}_{\epsilon}$
on $\tilde{\mathcal{E}}$, and the order process $\tilde{Y}_{\epsilon}$
in $\tilde{S}$ as in \eqref{eq:YN}. Denote by $\tilde{\mathbb{Q}}^{\epsilon}_{x}$,
$\tilde{{\bf Q}}^{\epsilon}_{x}$, and $\tilde{\mathbf{Q}}_{n}$ the
law of $\{\tilde{X}_{\epsilon}(t)\}_{t\ge0}$, $\{\tilde{Y}_{\epsilon}(t)\}_{t\ge0}$
starting from $x\in\mathbb{R}$ and the law $\{\tilde{{\bf y}}(t)\}_{t\ge0}$
of starting from $n\in\tilde{S}$, respectively.

Condition $\mathfrak{K}$ for $\{\tilde{X}_{\epsilon}(t)\}_{t\ge0}$
is obvious since $\{-\mathfrak{k},\dots,\mathfrak{k}\}$ is finite.
Therefore, by Theorem \ref{t:main2} and Proposition \ref{p:R-ref},
we have:
\begin{thm}
\label{t:Lan-meta}The diffusion process $\{\tilde{X}_{\epsilon}(t)\}_{t\ge0}$
is metastable in the following sense:
\begin{enumerate}
\item For any $n\in\tilde{S}$ and sequence $(x_{\epsilon})_{\epsilon>0}$
in $\mathcal{E}^{n}$, the laws $\tilde{{\bf Q}}^{\epsilon}_{x_{\epsilon}}$
converge weakly to the limit law $\tilde{{\bf Q}}_{n}$ as $\epsilon\to0$.
\item The excursions outside $\tilde{\mathcal{E}}$ is negligible, i.e.
for each $n\in\tilde{S}$, sequence $(x_{\epsilon})_{\epsilon>0}$
in $\mathcal{E}^{n}$, and $T>0$,
\[
\lim_{\epsilon\to0}{\rm E}^{\tilde{\mathbb{Q}}^{\epsilon}_{x_{\epsilon}}}\left[\int^{T}_{0}{\bf 1}\left\{ \tilde{X}_{\epsilon}(t)\notin\tilde{\mathcal{E}}\right\} {\rm d}t\right]=0.
\]
\end{enumerate}
\end{thm}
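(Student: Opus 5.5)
Theorem \ref{t:Lan-meta} will follow from Theorem \ref{t:main2} applied to the family $\{\tilde{X}_{\epsilon}(t)\}_{t\ge0}$. The index set is the finite set $\tilde{S}=[-\mathfrak{k},\mathfrak{k}]\cap\mathbb{Z}$ and the limiting chain is $\tilde{{\bf y}}$. So I will verify Assumption \ref{assu-tech}, condition $\widehat{\mathfrak{R}}$ and condition $\mathfrak{K}$ for this family, and then read off $\mathfrak{C}$ and \eqref{eq:D-Delta}.

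\textbf{Assumption \ref{assu-tech}.} The boundary points are $\mathfrak{m}_{n}\pm r_{0}$ for $n\in\tilde{S}$. These lie in the interior of $\mathcal{K}$, where $\tilde{b}=b$. The same one-dimensional argument used for the original diffusion (Girsanov together with Blumenthal's zero-one law) then shows that the process enters $\mathcal{E}^{n}$ instantaneously with probability one.

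\textbf{Condition $\widehat{\mathfrak{R}}$.} This is exactly Proposition \ref{p:R-ref}, read with $\tilde{{\bf f}}$ in place of ${\bf f}$. Since $\tilde{S}$ is finite, $C_{0}(\tilde{S})$ consists of all functions on $\tilde{S}$, so every lift required in $\widehat{\mathfrak{R}}$ is covered. The one point to check in transferring the proof of Proposition \ref{p:R} is the wells in $\tilde{S}$ next to the edges of $\mathcal{K}$.
\begin{itemize}
\item The modified drift $\tilde{b}$ is affine outside $\mathcal{K}$. Take $\mathfrak{m}_{\mathfrak{k}}$: just past it there is the maximum $\mathfrak{s}_{\mathfrak{k}}$, which is beyond $\mathfrak{m}_{\mathfrak{k}}+2r_{0}$. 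Linear extension of the drift from that point has slope $b'(\mathfrak{m}_{\mathfrak{k}}+2r_{0})$, which is negative near the minimum. So the extended potential $\tilde{U}$ is convex and confining outside $\mathcal{K}$, and no new minima or barriers are created.
\item Consequently, in the computation of Lemma \ref{l:p_R-2}, the boundary wells $\pm\mathfrak{k}$ get no saddle contribution on the outer side. This matches the reflected rates $r_{\tilde{{\bf y}}}$, where the jump out of $\tilde{S}$ is simply absent.
\item The test functions $\phi^{\pm\mathfrak{k}}_{\epsilon}$ must be taken equal to $1$ on the whole outer half-line. The Laplace estimates of Lemma \ref{lem:test1} still hold because $\tilde{U}$ grows quadratically there.
\item Lemma \ref{l:flat_ext} carries over through Proposition \ref{p:hit}. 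On the outer half-line, Lemma \ref{l:hit1} is replaced by a Lyapunov bound: for large $|x|$ the hitting time of the boundary well is $O(1/\theta_{\epsilon})$, using the confining linear drift.
\end{itemize}

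\textbf{Condition $\mathfrak{K}$.} Choose $\mathcal{K}_{\epsilon}\equiv[\mathfrak{m}_{-\mathfrak{k}}-R,\mathfrak{m}_{\mathfrak{k}}+R]$ with $R$ large. The set in \eqref{eq:K1-1} is then contained in the finite set $\tilde{S}$, so that part is immediate. The escape estimate \eqref{eq:K} says that, starting in $\mathcal{E}^{n}$, the process leaves $\mathcal{K}_{\epsilon}$ before time $T$ with probability below $\eta$. Because $\tilde{U}$ increases quadratically beyond $\mathcal{K}$, the capacity/equilibrium-potential bound of Lemma \ref{l:hit2}, applied on $[\mathfrak{m}_{\mathfrak{k}},\mathfrak{m}_{\mathfrak{k}}+R]$, makes each excursion from $\mathfrak{m}_{\pm\mathfrak{k}}$ reach distance $R$ with probability of order $\epsilon^{-1/2}e^{-cR^{2}/\epsilon}$. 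For $R$ large this beats the $\theta_{\epsilon}=e^{D/\epsilon}$ time scale. This is what the paper means by "obvious". Alternatively, one can enlarge $\mathcal{K}_{\epsilon}$ and use non-explosion directly.

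\textbf{Conclusion and main obstacle.} With all three ingredients in place, Theorem \ref{t:main2} gives $\mathfrak{C}$, which is item (1) for every sequence $x_{\epsilon}\in\mathcal{E}^{n}$, and \eqref{eq:D-Delta}, which is item (2). I expect the main obstacle to be the boundary wells $\pm\mathfrak{k}$ in Proposition \ref{p:R-ref}, namely confirming that the linear extension of the drift adds no spurious saddle term. The escape estimate for $\mathfrak{K}$ is the other place where a real, though short, estimate is needed.
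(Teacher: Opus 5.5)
Your proposal is correct and follows the paper's own argument: apply Theorem \ref{t:main2} to $\tilde X_\epsilon$, with $\widehat{\mathfrak{R}}$ supplied by Proposition \ref{p:R-ref} and $\mathfrak{K}$ holding trivially because $\tilde S$ is finite; you also check Assumption \ref{assu-tech}, which the paper leaves implicit. Of your two arguments for $\mathfrak{K}$, prefer the second, which is the reason behind the paper's ``obvious'': \eqref{eq:K1-1} is automatic for finite $\tilde S$, so you may take $\epsilon$-dependent intervals $\mathcal{K}_\epsilon$ chosen via non-explosion, whereas the uniform-$R$ quadratic-growth estimate needs $U''(\mathfrak{m}_{\pm\mathfrak{k}}\pm 2r_0)>0$ (not guaranteed by the fixed choice of $r_0$) and a count of excursions over the time $\theta_\epsilon T$.
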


\subsubsection{Proof of Lemma \ref{l:K}}

We are now ready to prove Lemma \ref{l:K}.
\begin{proof}[Proof of Lemma \ref{l:K}]
 Let $(x_{\epsilon})_{\epsilon>0}$ be a sequence in $\mathcal{E}^{n}$.
By coupling $\{X_{\epsilon}(t)\}_{t\ge0}$ and $\{\tilde{X}_{\epsilon}(t)\}_{t\ge0}$
on $[0,\mathcal{H}_{\mathcal{E}^{-\mathfrak{k}}\cup\mathcal{E}^{\mathfrak{k}}})$
in the canonical way, i.e., by identifying the trajectories before
hitting the boundary $\pm\mathfrak{k}$,
\begin{equation}
\mathbb{Q}^{\epsilon}_{x_{\epsilon}}\left[\mathcal{H}_{\mathcal{E}^{-\mathfrak{k}}\cup\mathcal{E}^{\mathfrak{k}}}<T\right]=\tilde{\mathbb{Q}}^{\epsilon}_{x_{\epsilon}}\left[\mathcal{H}_{\mathcal{E}^{-\mathfrak{k}}\cup\mathcal{E}^{\mathfrak{k}}}<T\right].\label{eq:pf_lem_K2-1}
\end{equation}
Since $\tilde{{\bf Q}}^{\epsilon}_{x_{\epsilon}}$ is the law of the
order process $\tilde{Y}_{\epsilon}$ , 
\begin{equation}
\tilde{\mathbb{Q}}^{\epsilon}_{x_{\epsilon}}\left[\mathcal{H}_{\mathcal{E}^{-\mathfrak{k}}\cup\mathcal{E}^{\mathfrak{k}}}<T\right]\le\tilde{{\bf Q}}^{\epsilon}_{x_{\epsilon}}\left[H_{\{\pm\mathfrak{k}\}}\le T\right].\label{eq:pf_lem_K2-2}
\end{equation}
 By Theorem \ref{t:Lan-meta}, the law $\tilde{{\bf Q}}^{\epsilon}_{x_{\epsilon}}$
converge weakly to the limit law $\tilde{{\bf Q}}_{n}$ as $\epsilon\to0$.
By \eqref{e:hit_conv},
\begin{equation}
\limsup_{\epsilon\to0}\tilde{{\bf Q}}^{\epsilon}_{x_{\epsilon}}\left[H_{\{\pm\mathfrak{k}\}}\le T\right]\le\tilde{{\bf Q}}_{n}\left[H_{\{\pm\mathfrak{k}\}}\le T\right].\label{eq:pf_lem_K2-3}
\end{equation}
By coupling $\{{\bf y}(t)\}_{t\ge0}$ and $\{\tilde{{\bf y}}(t)\}_{t\ge0}$
on $[0,H_{\pm\mathfrak{k}})$ in the canonical way and applying \eqref{eq:escape_K},
\begin{equation}
\tilde{{\bf Q}}_{n}\left[H_{\pm\mathfrak{k}}\le T\right]={\bf Q}_{n}\left[H_{\{\pm\mathfrak{k}\}}\le T\right]<\eta.\label{eq:pf_lem_K2-4}
\end{equation}
By \eqref{eq:pf_lem_K2-1}--\eqref{eq:pf_lem_K2-4}, for any sequence
$(x_{\epsilon})_{\epsilon>0}$ in $\mathcal{E}^{n}$, 
\[
\limsup_{\epsilon\to0}\mathbb{Q}^{\epsilon}_{x_{\epsilon}}\left[\mathcal{H}_{\mathcal{E}^{-\mathfrak{k}}\cup\mathcal{E}^{\mathfrak{k}}}<T\right]<\eta.
\]
\end{proof}

\subsection{Mixing condition}
\begin{proof}[Proof of Proposition \ref{p:M}]
 Fix $n\in\mathbb{Z}$ and $(x_{\epsilon})_{\epsilon}$ in $\mathcal{E}^{n}$.
Bound the probability $\mathbb{Q}^{\epsilon}_{x_{\epsilon}}[\mathcal{H}_{\mathfrak{m}_{n}}>\mathcal{H}_{\breve{\mathcal{E}}^{n}}]$
as
\[
\begin{aligned}\mathbb{Q}^{\epsilon}_{x_{\epsilon}}\left[\mathcal{H}_{\mathfrak{m}_{n}}>\mathcal{H}_{\breve{\mathcal{E}}^{n}}\right] & =\mathbb{Q}^{\epsilon}_{x_{\epsilon}}\left[\mathcal{H}_{\mathfrak{m}_{n}}>\mathcal{H}_{\breve{\mathcal{E}}^{n}},\mathcal{H}_{\breve{\mathcal{E}}^{n}}<t\right]+\mathbb{Q}^{\epsilon}_{x_{\epsilon}}\left[\mathcal{H}_{\mathfrak{m}_{n}}>\mathcal{H}_{\breve{\mathcal{E}}^{n}},\mathcal{H}_{\breve{\mathcal{E}}^{n}}\ge t\right]\\
 & \le\mathbb{Q}^{\epsilon}_{x_{\epsilon}}\left[\mathcal{H}_{\breve{\mathcal{E}}^{n}}<t\right]+\mathbb{Q}^{\epsilon}_{x_{\epsilon}}\left[\mathcal{H}_{\mathfrak{m}_{n}}>t\right]\\
 & \le\mathbb{Q}^{\epsilon}_{x_{\epsilon}}\left[\mathcal{H}_{\breve{\mathcal{E}}^{n}}<t\right]+\mathbb{Q}^{\epsilon}_{x_{\epsilon}}\left[\mathcal{H}_{\mathfrak{m}_{n}}>\frac{1}{\epsilon^{2}\theta_{\epsilon}}\right],
\end{aligned}
\]
where $t>0$ is an arbitrarily chosen positive number, and the last
inequality holds since $\lim_{\epsilon\to0}\epsilon^{2}\theta_{\epsilon}=\infty$.
By Proposition \ref{p:hit}, the last probability vanishes so that
\[
\limsup_{\epsilon\to0}\mathbb{Q}^{\epsilon}_{x_{\epsilon}}\left[\mathcal{H}_{\mathfrak{m}_{n}}>\mathcal{H}_{\breve{\mathcal{E}}^{n}}\right]\le\limsup_{\epsilon\to0}\mathbb{Q}^{\epsilon}_{x_{\epsilon}}\left[\mathcal{H}_{\breve{\mathcal{E}}^{n}}<t\right].
\]
Proposition \ref{p:R} (condition $\widehat{\mathfrak{R}}$) together
with Lemma \ref{lem:2.13} implies condition $\mathfrak{R}$. Moreover,
by Proposition \ref{p:K}, condition $\mathfrak{K}$ is satisfied.
Since conditions $\mathfrak{R}$ and $\mathfrak{K}$ yield condition
$\mathfrak{D}$ by Theorem \ref{t:main1}, condition $\mathfrak{K}_{2}$
is then satisfied by Lemma \ref{lem2.11}. Finally, by Lemma \ref{lem2.3},
condition $\mathfrak{N}$ holds true so that
\[
\lim_{t\to0}\limsup_{\epsilon\to0}\mathbb{Q}^{\epsilon}_{x_{\epsilon}}\left[\mathcal{H}_{\breve{\mathcal{E}}^{n}}<t\right]=0,
\]
which completes the proof.
\end{proof}

\begin{acknowledgement*}
The authors would like to thank CIRM (Centre International de Rencontres
Math\'ematiques, Marseille) for their warm hospitality during their
stay in September 2025, during which the part of the collaboration
was conducted. SK has been supported by the Basic Science Research
Program through the National Research Foundation of Korea funded by
the Ministry of Science and ICT (RS-2025-00518980, RS-2026-25518141),
the Yonsei University Research Fund of 2026 (2026-22-0181), and the
POSCO Science Fellowship of POSCO TJ Park Foundation. JL has been
supported by INHA UNIVERSITY Research Grant.
\end{acknowledgement*}

\appendix

\section{\label{appA}Probabilistic Representation of the Resolvent Solution}

Fix a locally compact, normal, separable metric space $\Omega$, a
Markov process $\{X(t)\}_{t\ge0}$ therein, and its corresponding
infinitesimal generator $\mathscr{L}$ acting on $C_{0}(\Omega)$,
as in Section \ref{sec1.3}. For any given $\lambda>0$ and $G\in C_{0}(\Omega)$,
we prove that the unique solution $F\in C_{0}(\Omega)$ to the following
resolvent equation,
\begin{equation}
(\lambda-\mathscr{L})F=G\quad\text{in}\quad\Omega,\label{e: res-1}
\end{equation}
admits a probabilistic representation given as
\begin{equation}
F(\bm{x})={\rm E}^{\mathbb{Q}_{\bm{x}}}\left[\int^{\infty}_{0}e^{-\lambda t}\,G(X(t))\,{\rm d}t\right],\label{eq:pr}
\end{equation}
where $\mathbb{Q}_{\bm{x}}$ denotes the law of $X(t)$ starting from
$\bm{x}$. To see this, we apply the time-dependent martingale problem
with respect to $(x,t)\mapsto e^{-\lambda t}\,F(x)$ to obtain that
\[
e^{-\lambda t}\,F(X(t))-F(X(0))+\int^{t}_{0}e^{-\lambda s}\,(\lambda-\mathscr{L})F(X(s))\,{\rm d}s
\]
is a $\mathbb{Q}_{\bm{x}}$-martingale. Taking expectation and substituting
\eqref{e: res-1},
\[
F(\bm{x})={\rm E}^{\mathbb{Q}_{\bm{x}}}\,[e^{-\lambda t}\,F(X(t))]+{\rm E}^{\mathbb{Q}_{\bm{x}}}\left[\int^{t}_{0}e^{-\lambda s}\,G(X(s))\,{\rm d}s\right].
\]
The first term in the right-hand side vanishes as $t\to\infty$ since
$F\in C_{0}(\Omega)$ is a bounded function. Thus, sending $t\to\infty$,
we obtain
\[
F(\bm{x})={\rm E}^{\mathbb{Q}_{\bm{x}}}\left[\int^{\infty}_{0}e^{-\lambda s}\,G(X(s))\,{\rm d}s\right],
\]
which is exactly \eqref{eq:pr}. In addition, from \eqref{eq:pr}
we have
\begin{equation}
\|F\|_{\infty}\le\|G\|_{\infty}\int^{\infty}_{0}e^{-\lambda t}\,{\rm d}t=\frac{\|G\|_{\infty}}{\lambda}.\label{eq:FG}
\end{equation}

\section{Convergence in Law}

Fix a finite set $S$. Let $D([0,\infty);S)$ be the space of functions
$\omega:[0,\infty)\to S$ that are right-continuous and have left-hand
limits endowed with the usual $J_{1}$ (Skorokhod) topology. Let $d$
be the metric on $D([0,\infty);S)$ which gives the topology.
\begin{lem}
For all $j\in S$ and $T>0$, the set $\{H_{j}\le T\}$ is closed
in $D([0,\infty);S)$.
\end{lem}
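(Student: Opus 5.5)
We show that the complement $\{H_j>T\}$ is open, equivalently that $\{H_j\le T\}$ contains the limits of its convergent sequences. Since $D([0,\infty);S)$ with the Skorokhod metric $d$ is a metric space, sequential closedness suffices. So take $\omega_k\to\omega$ in $d$ with $H_j(\omega_k)\le T$ for every $k$; we must show $H_j(\omega)\le T$.

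First, for each $k$ pick $t_k:=H_j(\omega_k)\le T$. Because $S$ is finite, it carries the discrete topology, so paths are piecewise constant with finitely many jumps on compact intervals. Right-continuity then gives $\omega_k(t_k)=j$, since the infimum defining the hitting time is attained. Passing to a subsequence, we may assume $t_k\to t_\ast\in[0,T]$.

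Second, convergence in $J_1$ provides time changes $\lambda_k$, continuous increasing bijections of $[0,\infty)$. On compact intervals these satisfy $\sup_{s\le T+1}|\lambda_k(s)-s|\to0$ and $\sup_{s\le T+1}\rho(\omega_k(s),\omega(\lambda_k(s)))\to0$, where $\rho$ is the discrete metric on $S$. This step needs the standard care about whether $T$ and $T+1$ are continuity points of $\omega$; we handle it by working on $[0,T']$ for a continuity point $T'>T+1$ of $\omega$. Because $\rho$ is discrete, for large $k$ we get the exact identity $\omega_k(s)=\omega(\lambda_k(s))$ for all $s\le T'$. Hence $\omega(\lambda_k(t_k))=j$ with $s_k:=\lambda_k(t_k)\to t_\ast$.

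Third, we conclude from $\omega(s_k)=j$ and $s_k\to t_\ast$. The main obstacle sits here. If $s_k\le t_\ast$ along a subsequence, then $H_j(\omega)\le s_k\le t_\ast\le T$ immediately. Otherwise $s_k\downarrow t_\ast$ from above, and right-continuity of $\omega$ at $t_\ast$ in the discrete space $S$ forces $\omega(t_\ast)=j$, so again $H_j(\omega)\le t_\ast\le T$. In both cases $H_j(\omega)\le T$, which proves closedness.

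The delicate point is the boundary case where $s_k$ approaches $t_\ast$ from above while $t_\ast=T$ exactly, and the uniform approximation is only valid on intervals ending at continuity points. Choosing $T'>T$ as a continuity point, as described in the second step, resolves this.
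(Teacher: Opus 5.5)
Your proof is correct and follows the paper's argument. Both proofs use the $J_1$ time changes on $[0,K]$ for a continuity point $K>T$, use the discreteness of $S$ to get $\omega(\lambda_k(t_k))=j$ exactly, and conclude via $H_j(\omega)\le\lambda_k(t_k)\le T+\sup_{s\le K}|\lambda_k(s)-s|$. Your subsequence extraction and the case analysis in the third step are unnecessary: $H_j(\omega)\le s_k$ for all large $k$ already gives $H_j(\omega)\le\liminf_k s_k\le T$, with no appeal to right-continuity at $t_\ast$.
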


\begin{proof}
Let $(w_{n})_{n\ge1}$ be a sequence in the set $\{H_{j}\le T\}$
and $w\in D([0,\infty);S)$. Suppose that $w_{n}\to w$ in the $J_{1}$
topology. Let 
\[
t_{n}:=\inf\{t\ge0:w_{n}(t)=j\},\qquad t_{0}:=\inf\{t\ge0:w(t)=j\}.
\]
It suffices to prove that $w\in\{H_{j}\le T\}$, i.e., $t_{0}\le T$.
Since $w_{n}$ is right-continuous and $S$ is discrete, $w_{n}(t_{n})=j$
and $t_{n}\le T$. Let $K>T$ be such that $w$ is continuous at $K$.
Since $w_{n}\to w$ in $D([0,\infty);S)$,
\[
w_{n}\to w\qquad\text{in}\quad D([0,K];S).
\]
Then, there exists a sequence of increasing functions $\lambda_{n}:[0,K]\to[0,K]$
such that $\lambda_{n}(0)=0$, $\lambda_{n}(K)=K$,
\[
\|\lambda_{n}-I\|_{[0,K]}\to0,\qquad\text{and}\qquad\sup_{0\le s\le K}|w_{n}(s)-w(\lambda_{n}(s))|\to0.
\]
Here, $I$ is the identity function. Hence,
\[
w_{n}(t_{n})-w(\lambda_{n}(t_{n}))=j-w(\lambda_{n}(t_{n}))\to0,
\]
so that since $S$ is discrete, there exists $N\ge1$ such that
\[
n\ge N\quad\Rightarrow\quad w(\lambda_{n}(t_{n}))=j.
\]
Therefore, for $n\ge N$,
\[
t_{0}\le\lambda_{n}(t_{n}).
\]
Since
\[
t_{0}\le\lambda_{n}(t_{n})\le t_{n}+\|\lambda_{n}-I\|_{[0,K]}\le T+\|\lambda_{n}-I\|_{[0,K]},
\]
and $\|\lambda_{n}-I\|_{[0,K]}\to0$as $n\to\infty$, we have
\[
t_{0}\le T,
\]
which completes the proof.
\end{proof}

Let ${\bf Q}$ and ${\bf Q}_{N}$ be probability measures on $D([0,\infty);S)$
such that ${\bf Q}_{N}\to{\bf Q}$ as $N\to\infty$ in the weak topology.
Then, by the above lemma and the Portmanteau theorem,
\begin{equation}
\limsup_{N\to\infty}{\bf Q}_{N}[\mathcal{H}_{j}\le T]\le{\bf Q}[\mathcal{H}_{j}\le T].\label{e:hit_conv}
\end{equation}


\begin{thebibliography}{10}
\bibitem{AGL} I. Armend\'ariz, S. Grosskinsky, M. Loulakis: Metastability
in a condensing zero-range process in the thermodynamic limit. Probab.
Theory Related Fields \textbf{169}, 105--175 (2017)

\bibitem{BL10} J. Beltr\'an, C. Landim: Tunneling and metastability
of continuous time Markov chains. J. Stat. Phys. \textbf{140}, 1065--1114
(2010)

\bibitem{BL12} J. Beltr\'an, C. Landim: Metastability of reversible
condensed zero range processes on a finite set. Probab. Theory Related
Fields \textbf{152}, 781--807 (2012)

\bibitem{BAC96} G. Ben Arous, R. Cerf: Metastability in the three
dimensional Ising model on a torus at very low temperatures. Electron.
J. Probab. \textbf{1}, 1--55 (1997)

\bibitem{Bil99} P. Billingsley: \textit{Convergence of Probability
Measures, Second Edition}. Wiley Series in Probability and Statistics
(1999)

\bibitem{BR16}F. Bouchet, J. Reygner: Generalisation of the Eyring--Kramers
transition rate formula to irreversible diffusion processes. Ann.
Inst. Henri Poincar\'e Probab. Stat. \textbf{17}, 3499--3532 (2016)

\bibitem{BEGK} A. Bovier, M. Eckhoff, V. Gayrard, M. Klein: Metastability
in reversible diffusion process I. Sharp asymptotics for capacities
and exit times. J. Eur. Math. Soc. \textbf{6}, 399--424 (2004)

\bibitem{BdH15} A. Bovier, F. den Hollander: \emph{Metastabillity:
A Potential-Theoretic Approach}. Grundlehren der mathematischen Wissenschaften.
Springer, Cham (2015)

\bibitem{BM02} A. Bovier, F. Manzo: Metastability in Glauber Dynamics
in the Low-Temperature Limit: Beyond Exponential Asymptotics. J. Stat.
Phys. \textbf{107}, 757--779 (2002)

\bibitem{DGLLPN}G. Di Ges\'u, T. Leli\`evre, D. Le Peutrec, B.
Nectoux: Jump Markov models and transition state theory: the quasi-stationary
distribution approach. Faraday Discuss. \textbf{196}, 469--495 (2016)

\bibitem{EK86} S. N. Ethier, T. G. Kurtz: \emph{Markov Processes:
Characterization and Convergence}. Wiley Series in Probability and
Statistics (1986)

\bibitem{FM08} L. R. G. Fontes, P. Mathieu: K-processes, scaling
limit and aging for the trap model in the complete graph. Ann. Probab.
\textbf{36}, 1322--1358 (2008)

\bibitem{GKR07} C. Giardin\`a, J. Kurchan, F. Redig: Duality and
exact correlations for a model of heat conduction. J. Math. Phys.
\textbf{48}, 033301 (2007)

\bibitem{GRV13} S. Grosskinsky, F. Redig, K. Vafayi: Dynamics of
condensation in the symmetric inclusion process. Electron. J. Probab.
\textbf{18}, no 66., 1--23 (2013)

\bibitem{JLT} M. Jara, C. Landim, A. Teixeira: Universality of trap
models in the ergodic time scale. Ann. Probab. \textbf{42}, 2497--2557
(2014)

\bibitem{KL26} S. Kim, J. Lee: Sharp mixing time asymptotics of Glauber
dynamics for the Curie--Weiss--Potts model at low temperatures.
arXiv:2602.19545 (2026)

\bibitem{KS21} S. Kim, I. Seo: Condensation and metastable behavior
of non-reversible inclusion processes. Comm. Math. Phys. \textbf{382},
1343--1401 (2021)

\bibitem{KS25} S. Kim, I. Seo: Approximation method to metastability:
an application to nonreversible, two-dimensional Ising and Potts models
without external fields. Ann. Probab. \textbf{53}, 597--667 (2025)

\bibitem{LLS24} C. Landim, J. Lee, I. Seo: Metastability and Time
Scales for Parabolic Equations with Drift 1: The First Time Scale.\textit{
}Arch. Rational Mech. Anal. \textbf{248}, 78 (2024)

\bibitem{LLS25} C. Landim, J. Lee, I. Seo: Metastability and Time
Scales for Parabolic Equations with Drift 2: The General Time Scale.
arXiv:2402.07695 (2024)

\bibitem{LMS23} C. Landim, D. Marcondes, I. Seo: Metastable behavior
of weakly mixing Markov chains: The case of reversible, critical zero-range
processes. Ann. Probab. \textbf{51}, 157--227 (2023)

\bibitem{LMS25} C. Landim, D. Marcondes, I. Seo: A resolvent approach
to metastability. J. Eur. Math. Soc. (JEMS) \textbf{27}(4), 1563--1618
(2025)

\bibitem{LM26} C. Landim, C. Maura: Full $\Gamma$\textminus expansion
for the level-two large deviation rate functionals of non-reversible
one-dimensional diffusions with periodic boundary conditions. arXiv:2606.17859
(2026)

\bibitem{LS16} C. Landim, I. Seo: Metastability of non-reversible,
mean-field Potts model with three spins. J. Stat. Phys. \textbf{165},
693--726 (2016)

\bibitem{LS19} C. Landim, I. Seo: Metastability of one-dimensional,
non-reversible diffusions with periodic boundary conditions. Ann.
Inst. Henri Poincar\'e Probab. Stat. \textbf{55}, 1850--1889 (2019)

\bibitem{LS22a} J. Lee, I. Seo: Non-reversible metastable diffusions
with Gibbs invariant measure I: Eyring--Kramers formula. Probab.
Theory Related Fields \textbf{182}, 849--903 (2022)

\bibitem{LPM} D. Le Peutrec, L. Michel: Sharp spectral asymptotics
for nonreversible metastable diffusion processes. Probab. Math. Phys.
\textbf{1}, 3--53 (2019)

\bibitem{Lig10} T. M. Liggett: \textit{Continuous Time Markov Processes:
An Introduction}. American Mathematical Society (2010)

\bibitem{Mic95} L. Miclo: Une \'etude des algorithmes de recuit
simul\'e sous-admissibles. Ann. Fac. Sci. Toulouse Math. \textbf{4}(4),
819--877 (1995)

\bibitem{NZ19} F. R. Nardi, A. Zocca: Tunneling behavior of Ising
and Potts models in the low-temperature regime. Stochastic Process.
Appl. \textbf{129}, 4556--4575 (2019)

\bibitem{NS91} E. J. Neves, R. H. Schonmann: Critical Droplets and
Metastability for a Glauber Dynamics at Very Low Temperatures. Comm.
Math. Phys. \textbf{137}, 209--230 (1991)

\bibitem{SV69} D. W. Stroock, S. R. S. Varadhan: Diffusion processes
with continuous coefficients, I. Comm. Pure Appl. Math. \textbf{22}(3),
345--400 (1969)

\end{thebibliography}
\end{document}